\documentclass[11pt,twoside]{article}

\AtBeginDocument{%
  \fontsize{11pt}{13pt}\selectfont
}

\makeatother

\renewenvironment{abstract}
  {\small
   \begin{quote}
   \noindent\textbf{Abstract.}\ }
  {\end{quote}}

\usepackage{amsthm}

\theoremstyle{plain}
\newtheorem{lemma}{Lemma}[section]
\newtheorem{theorem}[lemma]{Theorem}
\newtheorem{proposition}[lemma]{Proposition}
\newtheorem{corollary}[lemma]{Corollary}

\theoremstyle{definition}

\theoremstyle{remark}
\newtheorem{remark}[lemma]{Remark}

\usepackage{graphicx}
\usepackage{caption}

\usepackage{titlesec}
\usepackage{newtxtext,newtxmath}

\titleformat{\section}[block]
  {\centering\large\bfseries\color{black}}
  {\thesection.}{0.6em}{}

\titleformat{\subsection}[hang]
  {\normalfont\large\bfseries\color{black}}
  {\thesubsection}{1em}{}

\titleformat{\subsubsection}[hang]
  {\normalfont\normalsize\bfseries\color{black}}
  {\thesubsubsection}{1em}{}

\usepackage[T1]{fontenc}
\usepackage{amsmath}
\usepackage[margin=0.87in]{geometry}
\usepackage{bm}
\usepackage{xcolor}

\definecolor{A}{RGB}{188,129,222}
\definecolor{C}{RGB}{0,95,195}
\definecolor{B}{RGB}{25,25,112}
\definecolor{S}{RGB}{20,0,240}
\definecolor{G}{RGB}{3,105,215}
\definecolor{R1}{RGB}{139,58,58}
\definecolor{R2}{RGB}{208,32,144}
\definecolor{R3}{RGB}{0,139,139}
\definecolor{br}{RGB}{135,38,87}

\usepackage[
  breaklinks=true,
  colorlinks=true,
  linkcolor=C,
  urlcolor=blue,
  citecolor=G,
  bookmarks=true,
  bookmarksopen=true,
  bookmarksdepth=3
]{hyperref}
\usepackage{orcidlink}

\usepackage{fancyhdr}

\fancypagestyle{plain}{%
  \fancyhf{}

}

\usepackage{xcolor}

\usepackage[backend=biber]{biblatex}
\usepackage{float}
\usepackage{tocloft}

\title{\textbf{Stability of 3D Stratified Plane Couette Flow: A New Lift-up Mechanism and Potential Vorticity}}
\author{Yiting Yao\,\orcidlink{0009-0000-5232-7230}}
\date{}

\makeatletter
\renewcommand{\@maketitle}{%
  \newpage
  \vspace*{3pt}
  \noindent{\LARGE\@title\par}
  \vskip 1em
  \noindent{\large\bfseries\@author\par}
  \vskip 0.4em
  \noindent{\small
    Department of Mathematics, University of Bath, Bath, UK\\
    E-mail: yy2644@bath.ac.uk\par
  }
  \vspace{-1pt}
}
\makeatother

\begin{document}

\maketitle
\vspace{1pt}

\begin{abstract}
Since the pioneering work of Taylor and Goldstein in 1931 laid the foundation for the study of stratified shear flows, non-parallel configurations have received comparatively little attention. We study the stability of the 3D Boussinesq system near the plane Couette flow $V^*=(y,0,0)$ under the vertically stratified background $\eta^*=\alpha z$. The shear and stratification directions are perpendicular. This geometry produces two structural effects that are absent in the parallel configuration: an additional nonlocal coupling in the nonzero-mode system, which obstructs a directly useful symmetrization, and a wave–shear coupled lift-up mechanism for the zero modes. The $L^p$ transient growth due to the lift-up effect can be suppressed either by sufficiently strong stratification (for $p>2$) or under suitable assumptions on the initial data (for $p\geq 2$). To study the nonzero-mode system, we introduce a reformulation based on the linearized potential vorticity (PV), a fundamental quantity in geophysical fluid dynamics, to recover a better energy structure. We establish the inviscid damping of $u_{1,\neq}$ and $u_{2,\neq}$. We further apply this method to study the linear stability of the tilted Couette flow. Using the PV reformulation, we also investigate the nonlinear stability of the plane Couette flow for Sobolev perturbations on $\mathbb{T}\times\mathbb{R}\times\mathbb{T}$. We consider initial data with a finite weighted Sobolev norm, where the weight is chosen to suppress the lift-up effect at the linear level, so that transient growth arises only through nonlinear interactions and the nonlinear lift-up can be suppressed by strong stratification.
\end{abstract}

\vspace{-3pt}
\tableofcontents
\section{Introduction}
Consider the 3D Boussinesq system\begin{flalign}
\left\{
\begin{aligned}
& \partial_t V+(V\cdot\bm{\nabla})V=-\bm{\nabla}P+\nu\Delta V+\eta\vec{\bm{e}}_3,\\
   & \bm{\nabla}\cdot V=0,\\
&   \partial_t\eta+V\cdot\bm{\nabla}\eta=\kappa\Delta\eta.
\end{aligned}
\right.\label{eq:A1}
\end{flalign}Here $V\in\mathbb{R}^3$ is the velocity field, $\eta\in\mathbb{R}$ is the fluid temperature, and $P$ is the pressure. $\nu,\kappa \geq0$ are the viscosity and thermal diffusivity, respectively. The viscosity $\nu$ is inversely proportional to the Reynolds number $\bm{\text{Re}}$, a dimensionless parameter used to predict the behaviour of the fluid flow.

System (\ref{eq:A1}) is a standard model in geophysical fluid dynamics. The
term $\eta\vec{\bm{e}}_3$ represents the buoyancy force. The interaction
between stratification and fluid transport may have either a
stabilizing or a destabilizing effect, depending on the physical regime; see
\cite{Vallis_2017} for further background.\vspace{1pt}

In this article, we are concerned with the following background state\begin{equation}
V^*=(y,0,0)^T,\qquad \eta^*=\alpha z,\qquad \bm{\nabla}P^*=(0,0,\alpha z)^T,\qquad \alpha>0.\label{rotequ}\end{equation}

In the normalization used here, $\sqrt{\alpha}$ is the \textit{Brunt-V\"ais\"al\"a} frequency, measuring the strength of the stratification.  With our sign convention, $\alpha>0$ stands for stable stratification, i.e. colder fluid lies beneath warmer fluid. The flow $V^*$ is known as the \textit{plane Couette flow}, and its
shear direction is perpendicular to the vertical stratification. We are interested in the stability of this steady state, and we consider perturbations on either of the domains
\[\Omega_{\mathbb R}
   :=\mathbb T\times\mathbb R\times\mathbb R,
\qquad
\Omega_{\mathbb T}
   :=\mathbb T\times\mathbb R\times\mathbb T.\]
That is, the perturbations are periodic in \(x\), unbounded in \(y\), and either
unbounded or periodic in \(z\). When \(z\in\mathbb T\), the affine profile
\(\eta^*=\alpha z\) is understood as a prescribed background stratification
on the universal cover, while periodicity is imposed only on the perturbation.
The same convention applies to the corresponding background pressure. We write the perturbations as follows:\begin{align*}
V=V^*(y)+U(t,x,y,z),\qquad \eta=\eta^*(z)+\theta(t,x,y,z)\qquad
\bm{\nabla}P=\bm{\nabla}P^*(y,z)+\bm{\nabla}p(t,x,y,z).
\end{align*}Here $U=(u_1,u_2,u_3)^T,\theta$ and $p$ are defined on $\mathbb{R}^+\times\Omega_{\mathbb{D}}$ for $\mathbb{D}\in\left\{\mathbb{R},\mathbb{T}\right\}$. The perturbed system is formulated as\begin{flalign*}
\left\{
\begin{aligned}
& \partial_t U+(U\cdot\bm{\nabla})U+y\partial_x U+u_2\vec{\bm{e}}_1=-\bm{\nabla}p+\nu\Delta U+\theta\vec{\bm{e}}_3,\\
   & \bm{\nabla}\cdot U=0,\\
&   \partial_t\theta+U\cdot\bm{\nabla}\theta+y\partial_x\theta+\alpha u_3=\kappa\Delta\theta.
\end{aligned}
\right.
\end{flalign*}

The study of hydrodynamic stability dates back to the nineteenth century, both in theoretical and experimental settings, with foundational contributions by Helmholtz, Kelvin, Rayleigh, and Reynolds; see, for instance, \cite{kelvin1887stability, rayleigh1880stability, Drazin_2002, reynolds1883iii}. Plane Couette flow is one of the simplest examples, yet its nonlinear dynamics
are remarkably subtle. In the classical Navier Stokes setting, the linearized
flow has no exponentially growing modes for any Reynolds number. Nevertheless,
experiments and numerical simulations show that, at sufficiently high Reynolds
number, finite-amplitude perturbations may trigger a transition from laminar to
turbulent motion.  This apparent discrepancy between linear stability and
nonlinear transition is commonly referred to as the \textit{Sommerfeld paradox}. To understand this transition mechanism, Trefethen et al. \cite{doi:10.1126/science.261.5121.578} emphasized the role of non-normal transient growth and proposed the stability transition threshold problem. This problem was later given a rigorous mathematical formulation and resolution by Bedrossian, Germain, and Masmoudi \cite{bedrossian2015dynamicsnearsubcriticaltransition, bedrossian2015dynamicsnearsubcriticaltransition1}. The problem can be stated as follows:

 For a specific norm $||\cdot||_{\mathcal{Y}}$, find an exponent $S$, known as the \textit{transition threshold}, such that\begin{align*}
&||u_0||_{\mathcal{Y}}\lesssim \nu^{S}\qquad\Rightarrow\qquad\text{stability},\\
&||u_0||_{\mathcal{Y}}\gg \nu^{S}\qquad\Rightarrow\qquad\text{possible instability}.
 \end{align*}

During the past decade, substantial progress has been made in understanding the stability thresholds of Couette flow and related shear flows. In three dimensions, Bedrossian, Germain, and Masmoudi carried out pioneering work on the classical Navier Stokes system and proved nonlinear stability of Plane Couette flow for perturbations of size $\mathcal{O}\left(\nu^{3/2}\right)$ in Sobolev space $H^{9/2+}$ and described the associated subcritical dynamics \cite{bedrossian2015dynamicsnearsubcriticaltransition, bedrossian2015dynamicsnearsubcriticaltransition1,9e0bb666-1e4e-3aa2-87b0-f916c2336491}. Wei and Zhang subsequently obtained a larger stability threshold $\mathcal{O}\left(\nu\right)$ in $H^2$ \cite{wei2021transition}. Related stability-threshold results have also been developed for several coupled fluid systems, including magnetohydrodynamics, the Boussinesq equations, and the compressible Navier--Stokes equations. In particular, a number of recent works have investigated the influence of rotation and stratification on Couette flow stability. In \cite{coti2025suppression}, Coti Zelati and Del Zotto pointed out the suppression of lift-up effect by stable stratification for vertical Couette flow $(z,0,0)$. The nonlinear stability was also proved by Coti Zelati, Del Zotto and Widmayer by combining mixing and dispersion \cite{coti2024stability}. In particular, this work appears to be the first to obtain a threshold greater than $\nu$ (i.e., $\nu^\sigma$ for some $\sigma<1$) in a 3D fluid system and illustrates the stabilization effect of stable stratification. Besides, Cui, Wang and Wang studied the 3D Boussinesq system without background stratification \cite{cui2025stabilitythresholdcouetteflow}. In this setting, the threshold is much smaller due to the lift-up effects. In the presence of rotation, both instability and stability can happen. In the unstable regime, Fan, Han and Wang rigorously established the nonlinear instability when $f\in\left(\frac{2}{17}(5-2\sqrt{2}),\frac{2}{17}(5+2\sqrt{2})\right)$ \cite{fan2025inertialinstabilitycouetteflow}. In the stable regime, the nonlinear stability was investigated in \cite{huang2024stabilitycouetteflow3d,li2025transitionthresholdnavierstokescoriolishigh,huang2024sobolevstabilitythreshold3d,zelati2025stabilityviscousthreedimensionalrotating}. We mention the work by Li, Sun, Wang, Wei and Zhang \cite{li2025transitionthresholdnavierstokescoriolishigh}, who proved the stability threshold $\mathcal{O}(\nu^{2/3+})$ on $\mathbb{T}\times\mathbb{R}^2$ and $\mathcal{O}(\nu^{5/6})$ on $\mathbb{T}\times\mathbb{R}\times\mathbb{T}$ by exploiting the anisotropic effect together with a new dispersive structure. In the presence of both rotation and stable stratification, recent work by \cite{huang2026stabilitythreshold3dboussinesq} Huang, Xu, Luo and Sun obtained the threshold $\mathcal{O}(\nu^{14/15})$. In two dimensions, vertical Couette flow in the Boussinesq system has also been studied extensively \cite{123,REN2026104421,REN2026111383,deng2021stability}. These developments motivate the study of stratified shear flows beyond the vertical, parallel setting.

\subsection{\textbf{Non-parallel shear flow in the stratified fluid}}

The classical linear theory of parallel stratified shear flows $U^*=U(z)\vec{\bm{e}}_1$ begins with the work of
Taylor and Goldstein in 1931 \cite{40ebecca-35da-3042-86b1-955a8e0fef33,10.1098/rspa.1931.0116}.  Their equation, now known as the Taylor–Goldstein equation, extends Rayleigh’s stability equation by incorporating buoyancy. Subsequent work by Miles, Howard, Booker, and Bretherton developed the spectral theory of parallel stratified shear flows \cite{Miles_1961,Howard_1961,Booker_Bretherton_1967}. In particular, the Miles-Howard stability criterion implies spectral stability of a two-dimensional parallel shear flow when the
Richardson number\footnote{The Richardson number is defined as $\text{Ri}:=-\frac{g}{\rho_0}\frac{d\rho^*}{dz}\left(\frac{dU}{dz}\right)^{-2}$ for parallel stratified shear flow $U=U(z)$.} $\text{Ri}>\frac{1}{4}$. There have also been studies on nonlinear stability criteria in terms of the Richardson number. For instance, in \cite{PhysRevLett.52.2352}, it was shown that $\text{Ri}>1$ could lead to nonlinear stability.

By contrast, non-parallel stratified shear flows are less well understood. It has been shown very recently that, following the Miles-Howard method, no stability criterion can be obtained for non-parallel shear flows \cite{teixeira2025integralconstraintslinearinstability}. Moreover, unlike parallel shear flows, it is possible that the instability of non-parallel shear flow cannot be suppressed, regardless of how strong the stratification is \cite{Candelier_Le}. More interestingly, for plane Couette flow set on domain $\mathbb{T}\times[-1,1]\times\mathbb{T}$, \cite{FacchiniEtAl2018} suggested that spectral instability can occur due to the presence of stable stratification.

From a mathematical perspective, significant progress has been made in understanding the stability of parallel shear flows in both two and three dimensions; see, for example,
\cite{coti2024stability,cui2025stabilitythresholdcouetteflow,nonlinearinvisciddamping,chen2026quantitativestability2dmonotone,10.1063/5.0091052}.
To the best of our knowledge, however, no rigorous result is currently available on the quantitative stability of three-dimensional non-parallel shear flows in the Boussinesq system (in the absence of rotation). This gap motivates us to investigate the stability of the plane Couette flow considered here.

\subsection{\textbf{Main results}}

Our results consist of three parts. First, we establish the linear stability of the plane stratified Couette flow. Second, we apply the method developed in the first part to a generalized configuration: the tilted Couette flow. Third, we establish the nonlinear stability of the plane stratified Couette flow.

\subsubsection{\textbf{Linear stability of the plane Couette flow}}\label{norot}

We start by analyzing the dynamics with $\nu=\kappa$, the linearized system reads
\begin{flalign}
\left\{
\begin{aligned}
&\partial_t U+u_2\vec{\bm{e}}_1+y\partial_x U-\nu\Delta U-\theta\vec{\bm{e}}_3=\bm{\nabla}\Delta^{-1}\left[2\partial_x u_2-\partial_z\theta\right],\\
    &\bm{\nabla}\cdot U=0,\\
    &\partial_t\theta+y\partial_x\theta+\alpha u_3=\nu\Delta\theta,
\end{aligned}
\right.\label{eq:A2}
\end{flalign}To describe the dynamics, we introduce the following mode decompositions: for a function $G=G(x,y,z)$, define its \textit{zero} and \textit{non-zero} modes by\[
G_0(y,z):=\frac{1}{2\pi}\int_{\mathbb{T}}G(x,y,z)\ dx,\qquad G_{\neq}(x,y,z):=G(x,y,z)-G_0(y,z),
\]When $G$ is defined on $\Omega_{\mathbb{T}}$, we further decompose\[
G_0(y,z)=\frac{1}{2\pi}\int_0^{2\pi}G(x,y,z)\ dx=\overline{G}(y,z)+\widetilde{G}(y),\qquad \widetilde{G}(y)=\frac{1}{4\pi^2}\int_{\mathbb{T}^2}G(x,y,z)\ dxdz.
\]We refer to $\overline{G}$ and $\widetilde{G}$ as the \textit{simple zero mode} and \textit{double zero mode}, respectively. For $G$ defined on $\Omega_{\mathbb{R}}$ or $\Omega_{\mathbb{T}}$, the corresponding Fourier transform $\widehat{G}$ is defined as\begin{align}
&\widehat{G}(k,\eta,l)=\int_{\mathbb{T}\times\mathbb{R}^2} G(x,y,z) e^{-ikx-i\eta y-ilz}\ dxdydz,\label{ourier1}\\&\widehat{G}(k,\eta,l)=\int_{\mathbb{T}\times\mathbb{R}\times\mathbb{T}} G(x,y,z) e^{-ikx-i\eta y-ilz}\ dxdydz,\label{ourier2}
\end{align}where $(k,\eta,l)\in (\mathbb{Z}\times\mathbb{R}\times\mathbb{R})$ and $(\mathbb{Z}\times\mathbb{R}\times\mathbb{Z})$ respectively.\vspace{6pt}

We next highlight two features of the system that are essential for establishing the main results.\vspace{8pt}

{ \normalsize\noindent\textbf{Additional coupling in the non-zero mode system.}} \vspace{4pt}

A standard approach to proving decay for the non-zero modes is to construct an energy functional based on suitably symmetrized variables, see, for instance, \cite{dolce2024stability,coti2025suppression,bianchini2022linear}. For the stably stratified Boussinesq equations linearized around the vertical Couette flow \cite{coti2025suppression} set on  $\mathbb{T}^2\times\mathbb{R}$, \[
V^*=(z,0,0),\qquad \eta^*=\alpha z,\qquad \alpha>0,
\]the pair $(q_3:=\Delta u_3,\theta)$ already forms a closed subsystem,\[
(\partial_t+z\partial_x)\begin{pmatrix}
    q_3\\\theta
\end{pmatrix}+\begin{pmatrix}
    -\nu&-\partial^2_x-\partial^2_y\\
    \alpha\Delta^{-1}&-\kappa
\end{pmatrix}\begin{pmatrix}
    q_3\\\theta
\end{pmatrix}=0.
\]This system is symmetrizable, and the decay estimates can be established by constructing an energy functional with a cross term. 

For the plane Couette flow $(y,0,0)$, however, the non-parallel alignment introduces an additional nonlocal coupling through the pressure, see (\ref{eq:A2}). Let us consider the slightly more general case, where rotation is also involved in the system. The (unperturbed) momentum equation in the inviscid case is\[
\partial_t V+(V\cdot\bm{\nabla})V+\widetilde{\gamma}\vec{\bm{e}}_3\times V+\bm{\nabla}p=\theta\vec{\bm{e}}_3.
\]One can derive a closed subsystem for $(u_3,\omega_3,\theta)$ when linearizing around the plane Couette flow:\begin{equation}
(\partial_t+y\partial_x)\begin{pmatrix}
    u_3\\\omega_3\\\theta
\end{pmatrix}+\begin{pmatrix}2\partial_{xy}\partial^2_z\Delta^{-1}\Delta_h^{-1}&\widetilde{\gamma}\partial_z\Delta^{-1}&-\Delta_h\Delta^{-1}\\
(1-\widetilde{\gamma})\partial_z&0&0\\
\alpha&0&0
\end{pmatrix}\begin{pmatrix}
    u_3\\\omega_3\\\theta
\end{pmatrix}-\begin{pmatrix}
    2\partial_{xxz}\Delta^{-1}\Delta^{-1}_h\omega_3\\0\\0
\end{pmatrix}=0.\label{coupledsystem}
\end{equation}When $\widetilde{\gamma}(\widetilde{\gamma}-1)>0$, the system can be symmetrized by certain change of variables and the term $2\partial_{xxz}\Delta^{-1}\Delta^{-1}_h\omega_3$ can be treated as an error term due to the fast decay property of the coefficient in the phase space. However, when $\widetilde{\gamma}=0$, $u_3$ is coupled with $\omega_3$ only through the error term $2\partial_{xxz}\Delta^{-1}\Delta^{-1}_h\omega_3$. The system is still symmetrizable but the symmetrization has the wrong scaling (see Section \ref{maindiff}). We therefore say system \eqref{coupledsystem} has a "bad" symmetrization structure when $\widetilde{\gamma}=0$. To address this issue, we introduce the following quantity\begin{equation}
K=\alpha\omega_3-\partial_z\theta,\label{LPV}
\end{equation}which satisfies the simple transport-diffusion equation\begin{equation}
\partial_tK+y\partial_xK=\nu\Delta K.\label{tde}
\end{equation}By the definition of $\omega_3$ and the divergence-free condition, we obtain a div--curl system for $(u_1,u_2)$,\begin{flalign}
\left\{
\begin{aligned}
&\partial_x u_2-\partial_y u_1=\frac{K+\partial_z\theta}{\alpha},\\
    &\partial_x u_1+\partial_y u_2=-\partial_z u_3.
\end{aligned}
\right.\label{divcurl1}
\end{flalign}This allows $u_1,u_2$ to be expressed in terms of $u_3,\theta$ and $K$, so that a closed system for $u_3,\theta$ can be derived (\ref{eq:B4}), to which a valid symmetrization can be made.\vspace{9pt}

{ \normalsize\noindent\textbf{A new lift-up mechanism.} }\vspace{4pt}

We next consider the simple zero-mode dynamics on $\Omega_{\mathbb{T}}$; the case of $\Omega_{\mathbb{R}}$ is similar. In the classical Navier Stokes setting (see, e.g. \cite{9e0bb666-1e4e-3aa2-87b0-f916c2336491}), the simple zero mode of $u_1$ is given by\[
\overline{u}_1=e^{\nu t\Delta}\left(\overline{u}_1(0)-t\overline{u}_2(0)\right).
\]Thus, the classical lift-up mechanism is generated by\[\mathcal{L}_{\text{NSE}}:= te^{\nu t\Delta}.\]Define the following $L^p$ space for functions with zero $z$-average: \[L^p_0:=\left\{f\in L^p(\mathbb{R}\times\mathbb{T}):\int_{\mathbb{T}} f(y,z)\ dz=0\right\},\]then we have \[\left\|\mathcal{L}_{\text{NSE}}\right\|_{L^2_0\rightarrow L^2_0}\lesssim\nu^{-1}e^{-c\nu t},\qquad \left\|\mathcal{L}_{\text{NSE}}\right\|_{L^2_0\rightarrow L^\infty_0}\lesssim\nu^{-1}e^{-c\nu t}.\]Consequently, $||\overline{u}_1||_{L^p}$ undergoes transient growth of size $\nu^{-1}$ for any $p\in[2,\infty]$ when $\nu\to0$. 

In the presence of vertical stratification, $\overline{u}_1$ can be solved explicitly (\ref{eq:B9}). Unlike the classical NSE setting, the mechanism of lift-up is entirely changed, and it is caused by the following operator \begin{equation}
\mathcal{L}_{\text{Bou}}:=\alpha^{-\frac{1}{2}}\partial_z\partial_y^{-1}e^{\nu t\Delta}\left(e^{\pm\sqrt{\alpha}t\partial_y|\bm{\nabla}_{y,z}|^{-1}}-I_d\right).\label{Bouliftupope}
\end{equation}The anisotropic factor $\partial_z\partial_y^{-1}$ can produce transient growth. This mechanism is structurally different from $\mathcal{L}_{\text{NSE}}$ and can be suppressed under suitable assumptions on the initial data or by strong stratification. Moreover, $\mathcal{L}_{\text{Bou}}$ admits the following bounds on the operator norms.\[\left\|\mathcal{L}_{\text{Bou}}\right\|_{L^2_0\rightarrow L^2_0}\lesssim\nu^{-1}e^{-c\nu t},\qquad \left\|\mathcal{L}_{\text{Bou}}\right\|_{L^2_0\rightarrow L^\infty_0}\lesssim\nu^{-\frac{1}{2}}\alpha^{-\frac{1}{4}}e^{-c\nu t}.\]Thus, the transient growth in $L^p$ for $p>2$ is weaker than that generated by $\mathcal L_{\mathrm{NSE}}$.  \vspace{3pt}

\begin{theorem} (\textbf{Linear stability of the plane Couette flow.})\label{linear, Boussinesq} Assume that $\alpha>0$. Then, for $u,\theta$ satisfying (\ref{eq:A2}), we have:
\begin{itemize}
    \item \textbf{Linear inviscid damping and enhanced dissipation for the non-zero modes.} On $\Omega_{\mathbb{T}}$ or $\Omega_{\mathbb{R}}$, the following estimates hold.\begin{align}
||u_{1,\neq}||_{L^2}+||u_{2,\neq}||_{L^2}&\lesssim C_\alpha\frac{e^{-\frac{\nu t^3}{277}}}{\langle t\rangle}\left(\left\|\frac{\theta_{\neq}(0)}{\alpha^\frac{1}{2}}\right\|_{H^{\sigma+2}}+||u_{\neq}(0)||_{H^{\sigma+2}}\right),\label{eq:A3}\\
 \left\|\frac{\theta_{\neq}}{\alpha^\frac{1}{2}}\right\|_{L^2}+||u_{3,\neq}||_{L^2}&\lesssim C_\alpha e^{-\frac{\nu t^3}{277}}\left(\left\|\frac{\theta_{\neq}(0)}{\alpha^\frac{1}{2}}\right\|_{H^{\sigma}}+||u_{\neq}(0)||_{H^{\sigma}}\right),\label{eq:A4}
\end{align}where $C_\alpha=e^{c_1\alpha^{-1}}\left(1+\alpha^{-1}\right)$ for some $c_1>0$ and $\sigma\approx2.315$\footnote{Here $\sigma:=\frac{2(1-\lambda_m)}{\sqrt{1-\left(\frac{1+2\lambda_m}{2\sqrt{2}}\right)^2}}+\lambda_m+\frac{1}{2}$, where $\lambda_m$ is determined by the algebraic equation \eqref{minreg}, with $\lambda_m\approx 0.7845$. The constant $\frac{1}{277}$ here is also consequently determined for $\lambda=\lambda_m$.}.
\item \textbf{Conditional and weak lift-up of $u_{1,0}$ on $\Omega_{\mathbb{R}}$}. If $\int_{\mathbb{R}}(u_2,\theta)_{0}(0)\ dy=0,$  then $u_{1,0}$ does not experience lift-up,\begin{equation}
||u_{1,0}||_{L^\infty}\lesssim||u_{1,0}(0)||_{H^2}+\alpha^{-\frac{1}{2}}\left\|\left(u_2,\frac{\theta}{\alpha^\frac{1}{2}}\right)_0\right\|_{H^{2+}}+\alpha^{-\frac{1}{2}}\left\|\langle y\rangle^{\frac{3}{2}+}\langle\partial_z\rangle^{\frac{1}{2}+}\left(u_2,\frac{\theta}{\alpha^\frac{1}{2}}\right)_0\right\|_{L^2}.\label{9898}
\end{equation}
Otherwise, $u_{1,0}$ experiences lift-up, for any $p\in[1,2)$,\[
||u_{1,0}||_{L^\infty}\lesssim\left\|(u_1,u_2)_0(0)\right\|_{L^\infty}+\alpha^{-\frac{1}{2}}\left(\nu^{-1}\alpha^\frac{1}{2}\right)^\frac{1}{p}\left\|\left(u,\frac{\theta}{\alpha^\frac{1}{2}}\right)_0(0)\right\|_{L^p}.
    \]
\item\textbf{Conditional and weak lift-up of $\overline{u}_1$ on $\Omega_{\mathbb{T}}$}. If $\int_{\mathbb{R}}(u_2,\theta)_{0}(0)\ dy=0$, then $\overline{u}_1$ does not experience lift-up,\begin{align}
||\overline{u}_{1}||_{L^2}&\lesssim e^{-\nu t}\left(||\overline{u}_{1}(0)||_{L^2}+\alpha^{-\frac{1}{2}}||\overline{u}_{2}(0)||_{L^2}+\alpha^{-\frac{1}{2}}\left\|\langle y\rangle^{\frac{3}{2}+}\partial_z\overline{\left(u_{2},\frac{\theta}{\alpha^\frac{1}{2}}\right)}(0)\right\|_{L^2}\right).\label{cancellation}
\end{align}Otherwise, $\overline{u}_1$ experiences transient growth, \begin{align}
||\overline{u}_1||_{L^2}&\lesssim e^{-\frac{\nu t}{2}}\left(||\overline{u}_{1}(0)||_{L^2}+\alpha^{-\frac{1}{2}}||\overline{u}_{2}(0)||_{L^2}+\nu^{-1}\left\|\overline{\left(u_2,\frac{\theta}{\alpha^\frac{1}{2}}\right)}(0)\right\|_{L^2}\right),\nonumber\\
||\overline{u}_1||_{L^\infty}&\lesssim  e^{-\frac{\nu t}{2}}\left(\left\|\overline{u}_1(0)\right\|_{H^2}+\alpha^{-\frac{1}{2}}\left\|\overline{u}_2(0)\right\|_{H^2}+\nu^{-\frac{1}{2}}\alpha^{-\frac{1}{4}}\left\|\overline{\left(u_2,\frac{\theta}{\alpha^\frac{1}{2}}\right)}(0)\right\|_{L^2}\right)\label{weakliftup2347}
\end{align}
\item \textbf{Double zero modes on $\Omega_{\mathbb{T}}$}. If     $\widetilde{u}(0),\widetilde{\theta}(0)\in L^2_y$, the explicit solution is given by\[
\left\{
\begin{aligned}
&\widetilde{u}_{1}=e^{\nu t\partial^2_y}\widetilde{u}_1(0,y),\qquad\widetilde{u}_{2}=0,\\
    &\widetilde{u}_{3}=e^{\nu t\partial^2_y}\left(\widetilde{u}_3(0,y)\cos(\sqrt{\alpha} t)+\frac{\widetilde{\theta}(0,y)}{\sqrt{\alpha}}\sin(\sqrt{\alpha}t)\right),\\
    &\widetilde{\theta}=e^{\nu t\partial^2_y}\left(-\sqrt{\alpha}\widetilde{u}_3(0,y)\sin(\sqrt{\alpha} t)+\widetilde{\theta}(0,y)\cos(\sqrt{\alpha}t)\right).
\end{aligned}
\right.
\]

\item \textbf{Dispersive estimate of the zero modes}.  On $\Omega_{\mathbb{R}}$, the following dispersive estimate holds\begin{align}
\left\|\left(u_2,u_3,\frac{\theta}{\alpha^\frac{1}{2}}\right)_0\right\|_{L^\infty}\lesssim \langle\alpha^\frac{1}{2}t\rangle^{-\frac{1}{2}}\left\|\left(u_2,u_3,\frac{\theta}{\alpha^\frac{1}{2}}\right)_0(0)\right\|_{W^{3,1}}\label{eq:D1}.
\end{align}On $\Omega_{\mathbb{T}}$, we have\begin{align*}
\left\|\left(\overline{u}_2,\overline{u}_3,\frac{\overline{\theta}}{\alpha^\frac{1}{2}}\right)\right\|_{L^\infty}\lesssim \langle\alpha^\frac{1}{2}t\rangle^{-\frac{1}{3}}e^{-\nu t}\left\|\left(\overline{u}_2,\overline{u}_3,\frac{\overline{\theta}}{\alpha^\frac{1}{2}}\right)(0)\right\|_{W^{3,1}}.
\end{align*}
\end{itemize}
\end{theorem}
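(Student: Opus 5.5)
The plan is to treat the non-zero modes and the zero modes separately, working throughout in Fourier variables after the standard change of coordinates $X=x-ty$, i.e. $\eta\mapsto\eta-kt$. On the profile side, $\partial_y$ becomes $\partial_Y-t\partial_X$, so the Fourier multipliers depend on $k,\xi(t)=\eta-kt,l$. We write $p(t)=k^2+\xi(t)^2+l^2$.

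\textbf{Non-zero modes.} The first step uses the potential vorticity $K=\alpha\omega_3-\partial_z\theta$ from \eqref{LPV}. It solves the pure transport--diffusion equation \eqref{tde}, so $\widehat K(t)=e^{-\nu\int_0^t p}\widehat K(0)$. Since $\int_0^t p\ge c k^2t^3$ for $k\neq0$, this gives the factor $e^{-\nu t^3/277}$ with no loss.

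Using the div--curl system \eqref{divcurl1}, we express $u_1,u_2$ through $u_3,\theta,K$:
\begin{equation*}
\widehat{u}_{1,2}=\frac{\text{(linear combination of } il\widehat u_3,\ il\widehat\theta/\alpha,\ \widehat K/\alpha)}{k^2+\xi^2}.
\end{equation*}
Substituting into \eqref{eq:A2} yields a closed $2\times2$ system for $(\widehat u_3,\widehat\theta/\sqrt\alpha)$, namely \eqref{eq:B4}, forced by $\widehat K$. Its principal part is a wave operator with frequency $\sqrt{\alpha}\,\sqrt{(k^2+\xi^2)/p}$, plus a nonlocal term of size $k\xi l^2/(p(k^2+\xi^2))$ coming from the pressure.

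Next we symmetrize, taking variables of the form $Z_1=(p/(k^2+\xi^2))^{\lambda/2}\widehat u_3$ and $Z_2=\widehat\theta/\sqrt\alpha$, and build an energy $E=|Z_1|^2+|Z_2|^2+\beta\,\mathrm{Re}(\text{cross term})$. The time derivatives of the weights are of size $|k\xi|/(k^2+\xi^2)$, which is integrable in $t$ with total integral bounded by $\pi$. Grönwall therefore costs only a multiplicative constant, polynomial in $p(0)$. Optimising over the exponent $\lambda$ is what fixes $\lambda_m$ and the regularity loss $\sigma$; the $\alpha^{-1}$-dependence in $C_\alpha$ arises from the cross-term coefficient.

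This gives \eqref{eq:A4}, with the $K$ forcing handled by Duhamel. For \eqref{eq:A3}, we return to $u_1,u_2$ and use $|l|/(k^2+\xi^2)^{1/2}\cdot(k^2+\xi^2)^{-1/2}\lesssim\langle t\rangle^{-1}\langle k,\eta,l\rangle^2$. Since $(k^2+\xi^2)^{-1/2}\lesssim\langle t\rangle^{-1}\langle\eta/k\rangle$, this costs two extra derivatives.

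\textbf{Zero modes.} For $k=0$ the system decouples. $K_0$ is purely diffused, and $(u_{2},u_3,\theta)_0$ obey internal-wave dynamics with dispersion relation $\pm\sqrt\alpha\,\eta/|(\eta,l)|$, multiplied by $e^{\nu t\Delta}$.

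The double zero mode reduces to an ODE pair $\partial_t\widetilde u_3=\widetilde\theta$, $\partial_t\widetilde\theta=-\alpha\widetilde u_3$, which we solve explicitly; in addition $\widetilde u_2=0$ by incompressibility.

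For $u_{1,0}$ we use $\partial_y u_{1,0}=-(K_0+\partial_z\theta_0)/\alpha$ together with the explicit wave solution. This yields the formula \eqref{eq:B9}, in which the growth is governed by $\mathcal L_{\rm Bou}$ in \eqref{Bouliftupope}. We bound the multiplier
\begin{equation*}
\frac{l}{\eta}\left(e^{\pm i\sqrt\alpha t\eta/|(\eta,l)|}-1\right)e^{-\nu t(\eta^2+l^2)}
\end{equation*}
near $\eta=0$ in two regimes:
\begin{itemize}
\item in general it is at most $\min(\sqrt\alpha t/|(\eta,l)|,\,|l|/|\eta|)$, which gives the $\nu^{-1}$ and $\nu^{-1/2}\alpha^{-1/4}$ bounds after integration in $\eta$;
\item under the zero-mean hypothesis $\int(u_2,\theta)_0\,dy=0$, the data vanish at $\eta=0$, so $|\widehat f(\eta)|\lesssim|\eta|\,\|\langle y\rangle^{3/2+}f\|$ and the singular factor $1/\eta$ is cancelled. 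This gives \eqref{9898} and \eqref{cancellation}.
\end{itemize}
The $L^p$ version for $p<2$ follows by Hausdorff--Young or interpolation.

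The dispersive estimate \eqref{eq:D1} is a stationary-phase bound for the phase $t\sqrt\alpha\,\eta/|(\eta,l)|$, which is homogeneous of degree $0$. Its Hessian has rank one, giving decay $t^{-1/2}$ on $\mathbb R^2$. On $\Omega_{\mathbb T}$ we apply a van der Corput bound in $\eta$ for each fixed $l$, where the degenerate third derivative gives $t^{-1/3}$, and then sum over $l$ using $W^{3,1}$.

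\textbf{Main obstacle.} The hardest step is the symmetrization of the $(u_3,\theta)$ subsystem with the nonlocal pressure coupling. The cross term must control the wave exchange uniformly while the weight derivatives remain integrable, and the optimal balance is what produces $\lambda_m$ and $\sigma$. We would first try weights that are powers of $p/(k^2+\xi^2)$, and tune $\beta$ so that the energy is coercive with constants $e^{c/\alpha}$.
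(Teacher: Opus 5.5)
Your overall architecture agrees with the paper: the PV $K$, the closed $(u_3,\theta)$ system \eqref{eq:B6}, a symmetrized energy with a cross term, and recovery of $u_1,u_2$ from the div--curl system; for the zero modes, explicit formulas, cancellation at $\eta=0$ under the zero-mean hypothesis, and oscillatory-integral dispersive bounds. Deriving \eqref{eq:B9} from $\partial_y u_{1,0}=-(K_0+\partial_z\theta_0)/\alpha$ with $K_0=e^{\nu t\Delta}K_0(0)$ is a legitimate shortcut. (Minor: the lift-up multiplier is bounded by $\min\{\sqrt\alpha\, t,\,2|l|/|\eta|\}$, not $\sqrt\alpha\, t/|(\eta,l)|$.)

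The genuine gap is the symmetrizer for the non-zero modes. In \eqref{eq:B6} the restoring coefficient in the $\widehat U_3$ equation is $\frac{f}{p}+\frac{2k^2l^2}{\alpha pf}$. Besides buoyancy it has a pressure-induced part, which dominates near $t\approx\eta/k$ when $|l|\gg\sqrt\alpha|k|$, and no pure power of $p/f$ can match it. Take $\widehat U_3=(f/p)^{\lambda/2}Z_1$ and $Z_2=\widehat\Theta/\sqrt\alpha$. The coefficient of $\mathrm{Re}(Z_1\bar Z_2)$ in $\frac12\frac{d}{dt}\bigl(|Z_1|^2+|Z_2|^2\bigr)$ is
\[
\sqrt\alpha\Bigl[(p/f)^{\lambda/2}\Bigl(\frac{f}{p}+\frac{2k^2l^2}{\alpha pf}\Bigr)-(f/p)^{\lambda/2}\Bigr].
\]
Its time integral over the window $|t-\eta/k|\lesssim|l/k|$ is of order $|l|/|k|$ for every $\lambda$, up to $\alpha$-dependent constants. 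For $\lambda\neq1$ the buoyancy parts already fail to cancel. For $\lambda=1$ the leftover $\frac{2k^2l^2}{\sqrt\alpha\,p^{1/2}f^{3/2}}$ still integrates to about $4|l|/(\sqrt\alpha|k|)$ when $|l|\gg|k|$. Grönwall then only gives an exponential loss $e^{c|l|/|k|}$, i.e.\ analytic-type regularity in the vertical direction rather than $H^\sigma$. A cross term with bounded coefficient cannot cancel a coupling whose time integral is unbounded. This is exactly the failure the paper points out right after \eqref{symmetrizer}.

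The missing idea is the ratio $\frac{b}{a}=\frac{1}{\sqrt\alpha}\sqrt{\frac{z}{pf}}$ with $z=2k^2p+\alpha f^2$. It absorbs the pressure term up to a remainder $-\frac{2k^2}{\alpha p}$. That remainder contributes a coefficient $\frac{k^2}{\sqrt\alpha\,p}\frac{a}{b}\le\frac{k^2}{\sqrt\alpha f}$, whose time integral is at most $\pi/\sqrt\alpha$ uniformly in $l$.

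Your Grönwall bookkeeping also needs repair. $|k\xi|/(k^2+\xi^2)\sim1/t$ is not integrable; it is $k^2/f$ that integrates to $\pi$. The log-derivatives of the correct weights, which are functions of the bounded ratios $f/p$ and $z/(pf)$, have total variation of order $\log\langle l/k\rangle$ plus $\alpha$-dependent constants. That is why the loss is polynomial in $p(0)$ rather than constant.

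More importantly, your plan never addresses the vortex-stretching term $\frac{f'l^2}{pf}\widehat U_3$ in \eqref{eq:B6}; note $\frac{f'l^2}{pf}=\frac{d}{dt}\log\frac fp$. The paper handles it in three steps:
\begin{enumerate}
\item A fraction $\lambda$ goes into the weights via $\frac{a'}{a}=\lambda\frac{f'l^2}{pf}-\frac{b'}{b}$.
\item The resulting oscillatory term $\frac{a'}{a}(|A|^2-|B|^2)$ is cancelled by the cross term $s$. This $s$ is fixed by \eqref{crossfunction}, so it is not a free parameter to tune.
\item The remainder $(1-\lambda)\frac{f'l^2}{pf}|B|^2$ is absorbed by the multiplier $m$ of \eqref{multiplier}. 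Viscosity handles times outside $|t-\eta/k|\le20\nu^{-1/3}$, and $m+m(0)/m\lesssim p^{\varrho_1+\varrho_2}(0)$.
\end{enumerate}
Coercivity requires $\zeta=\frac{1+2\lambda}{2\sqrt2}<1$ (Lemma \ref{upperboundofs}). The trade-off among this constraint, $\varrho_1+\varrho_2=(1-\lambda)/\sqrt{1-\zeta^2}$, and the bounds on $a^{-1},b^{-1}$ is what produces $\lambda_m$, $\sigma$, and the rate $\frac{9}{10}\cdot\frac{1-\zeta}{1+\zeta}\cdot\frac{1}{12}\approx\frac{1}{277}$. Without this structure you would at best recover something like the appendix version (Theorem \ref{isisi}, with $s=0$ and the $\bm{1}_{\{f'\geq0\}}$ trick). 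That version has the larger $H^3$/$H^5$ losses, not the stated constants.
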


\begin{remark} Apart from the Navier Stokes system with a specific rotation rate \cite{huang2024stabilitycouetteflow3d}, this appears to be the first result in three dimensions where both horizontal components of the velocity field exhibit damping. This behaviour is also in sharp contrast with that of the vertical Couette flow $(z,0,0)$, for which only $u_{3,\neq}$ undergoes inviscid damping and $\theta_{\neq}$ is mixing. To illustrate the geometric dependence, we consider Couette flow forming an arbitrary angle with the stratification direction in Section \ref{ang} and examine how the decay behaviour changes with the angle.\end{remark}\vspace{6pt}

{\normalsize\noindent\textbf{Fourier interpretation of the lift-up mechanism.} }\vspace{2pt}

From the Fourier space representation, the lift-up operator \eqref{Bouliftupope} is due to the singular limit in the Fourier variable. To be more specific, the explicit formula for $\widehat{u}_{1,0}$ (\ref{eq:B9}) is given by\[
\widehat{u}_{1,0}=e^{-\nu(\eta^2+l^2)t}\left(\widehat{u}_{1,0}(0)+\frac{2l}{\alpha\eta}\widehat{\theta}_0(0)\sin^2\left(\frac{\sqrt{\alpha}t\eta}{2\sqrt{\eta^2+l^2}}\right)-\frac{\sqrt{\eta^2+l^2}}{\sqrt{\alpha}\eta}\widehat{u}_{2,0}(0)\sin\left(\frac{\sqrt{\alpha}t\eta}{\sqrt{\eta^2+l^2}}\right)\right).
\]We see that the growth of the solution arises from the singular limit $\lim_{\eta/l\rightarrow0 }l/\eta\times \sin(\sqrt{\alpha}t\eta/\sqrt{\eta^2+l^2})\sim \sqrt{\alpha}t$. We remark that this type of lift-up mechanism, understood in the sense of the above singular limit, can also occur in other wave-shear coupled systems such as the incompressible MHD when an inverse derivative is combined with an oscillatory propagator. We will discuss this issue further in Section \ref{weaklu}.\vspace{8pt}

{\large\noindent\textbf{"Bad" symmetrization structure.} }\vspace{2pt}

As a remark, we discuss the consequence of having "bad" symmetrization structure (e.g. no valid symmetrizer exists). One possible consequence is the loss of regularity for boundedness/decay. Let us consider the following system set on $\mathbb{R}^+\times\mathbb{R}^2$.\begin{align*}
    \partial_t\begin{pmatrix}
        a\\ b
    \end{pmatrix}+y\partial_x\begin{pmatrix}
        a\\ b
    \end{pmatrix}+M_i\begin{pmatrix}
        a\\ b
    \end{pmatrix}=0.
\end{align*}We set\[
M_1=\begin{pmatrix}
   0&\partial^5_x\Delta^{-1}\\
    \partial^5_x\Delta^{-1}&0
\end{pmatrix},\qquad M_2=\begin{pmatrix}
    0&\partial^5_x\Delta^{-1}\\
  0&0
\end{pmatrix}.
\]For the system associated with $M_1$, standard energy estimate gives\[
||a||^2_{L^2}+||b||^2_{L^2}=||a(0)||^2_{L^2}+||b(0)||^2_{L^2}.
\]For $M_2$, the system is non-symmetrizable. The explicit solution in Fourier variables reads\begin{align*}
   & \widehat{a}(t,k,\eta)=\widehat{a}_{\text{in}}(k,\eta+kt)-ik^3\widehat{b}_{\text{in}}(k,\eta+kt)\left(\tan^{-1}\left(\frac{\eta}{k}\right)-\tan^{-1}\left(\frac{\eta+kt}{k}\right)\right),\\
  &  \widehat{b}(t,k,\eta)=\widehat{b}_{\text{in}}(k,\eta+kt).
\end{align*}By Plancherel’s theorem, we see that the boundedness would require more derivatives than the symmetric situation. In particular, we have\[
||a||_{L^2}^2+||b||_{L^2}^2\lesssim||a_{\text{in}}||_{L^2}^2+||b_{\text{in}}||_{H^3}^2.
\]\vspace{1pt}

{\large\noindent\textbf{General case where $\nu\neq\kappa$.} }\vspace{2pt}

Having understood the simpler case $\nu=\kappa$, we now extend the preceding results to the case $\nu\neq\kappa$. The aim of this section is to establish decay estimates for the non-zero modes. 

No obvious nontrivial conservation law appears to persist. Nevertheless, by treating $K$ as a new unknown, the reformulated $(u_3,\theta)$ system still retains a favorable energy structure. This allows us to carry out the analysis whenever $\nu$ and $\kappa$ are comparable. The relative size of $\nu$ and $\kappa$ is naturally characterized by the Prandtl number $\text{Pr}:=\frac{\nu}{\kappa}$\footnote{In many important physical settings (e.g. in water and air), the Prandtl number $\text{Pr}\sim\mathcal{O}(1)$. }, a fundamental dimensionless parameter in geophysical flows \cite{ozsoy2021geophysical}. We therefore state the theorem in terms of $\text{Pr}$.
\begin{theorem}\label{Pran}For any $\nu,\kappa>0$ such that $\text{Pr}\in(15-4\sqrt{14},15+4\sqrt{14})\backslash\{1\}$, and an arbitrary constant $c_0\in\left(0,\frac{7}{9}\right)$, if $\alpha,\text{Pr}$ and $c_0$ satisfy the following relation\[
\Omega^2\leq\frac{32\left(\text{Pr}-c_0\min\left\{1,\text{Pr}\right\}\right)\left(1-c_0\min\left\{1,\text{Pr}\right\}\right)-\left(\text{Pr}+1\right)^2}{8(\text{Pr}-1)^2}\cdot\alpha
\]for some constant $\Omega>0$, then there exists a positive number $N=N_1+N_2\frac{\sqrt{\alpha}}{\Omega}$, where $N_1,N_2>0$ are universal constants\footnote{Independent of any parameter such as $\alpha,c_0$ and Pr.}, such that the following decay estimates hold for the non-zero modes.
\begin{align*}
    \left\|(u_1,u_2)_{\neq}\right\|_{L^2}&\lesssim\left(1+\frac{1}{\Omega}\right)\frac{e^{-\frac{c_0\mu t^3}{30}}}{\langle t\rangle}\mathcal{J}_{N+\frac{5}{2}},\qquad    \left\|u_{3,\neq}\right\|_{L^2}\lesssim e^{-\frac{c_0\mu t^3}{30}} \mathcal{J}_{N},\\
     \left\|\frac{\theta_{\neq}}{\alpha^\frac{1}{2}}\right\|_{L^2}&\lesssim e^{-\frac{c_0\mu t^3}{30}} \mathcal{J}_{N+1},\qquad 
     \left\|\frac{K_{\neq}}{\alpha}\right\|_{L^2}\lesssim e^{-\frac{c_0\mu t^3}{30}} \mathcal{J}_{N+3},
\end{align*}where $\mu=\min\{\nu,\kappa\}$ and $c>0$ is a universal constant such that\[
\mathcal{J}_N=e^{c\alpha^{-1}}\left(\left\|\frac{\theta_{\neq}(0)}{\alpha^\frac{1}{2}}\right\|_{H^N}+||u_{3,\neq}(0)||_{H^N}+\Omega\left\|\frac{K_{\neq}(0)}{\alpha}\right\|_{H^N}\right).
\]
\end{theorem}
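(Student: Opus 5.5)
We work mode-by-mode in Fourier variables along the Couette characteristics. Set $\eta\mapsto\eta+kt$, so that $\widehat{f}(t,k,\eta-kt,l)$ is the profile, and write $p^2(t)=k^2+(\eta-kt)^2+l^2$ and $p_h^2=k^2+(\eta-kt)^2$. When $\nu\neq\kappa$ the quantity $K=\alpha\omega_3-\partial_z\theta$ no longer solves a closed equation. Instead
\[
(\partial_t+y\partial_x-\nu\Delta)K=(\kappa-\nu)\partial_z\Delta\theta .
\]
We therefore keep $K$ as a third unknown, coupled to the $(u_3,\theta)$ system through this forcing. Using the div--curl system \eqref{divcurl1}, we express $(u_1,u_2)$ through $u_3$, $\theta$ and $K$. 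This yields a closed linear $3\times3$ system for $(\widehat u_3,\widehat\theta/\sqrt\alpha,\widehat K/\alpha)$. Its structure is the following:
\begin{itemize}
\item a skew wave part $\pm i\sqrt\alpha\, p_h/p$ between $u_3$ and $\theta$;
\item a nonlocal lower-order coupling from the pressure and $\omega_3$, whose coefficients carry factors like $k l\,\partial_t(p^{-2})$ and hence are integrable in time, with the loss counted in powers of $\langle \eta/k\rangle$;
\item the dissipative diagonal $(\nu,\kappa,\nu)p^2$;
\item the new off-diagonal term $(\kappa-\nu)\,l\,p^2\,\theta/\alpha$ in the $K$ equation.
\end{itemize}

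The core step is a weighted energy for the $(u_3,\theta)$ pair, in the spirit of the $\nu=\kappa$ case:
\[
E=|\widehat u_3|^2+\alpha^{-1}|\widehat\theta|^2+\beta\,\mathrm{Re}\bigl(\text{cross term}\bigr)+\Omega^2\alpha^{-2}|\widehat K|^2 ,
\]
multiplied by a multiplier $e^{-c\alpha^{-1}}$-type factor and a polynomial weight $\langle t-\eta/k\rangle^{-N}$ or $\langle \eta/k\rangle^{N}$. The purpose of the weight is to absorb the time-integrable nonlocal coefficients; its strength $N=N_1+N_2\sqrt\alpha/\Omega$ reflects the size of the $K$ coupling relative to the weight $\Omega$.

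Differentiating $E$ produces a dissipation of the form
\[
-\,2\bigl(\nu|\widehat u_3|^2+\kappa\alpha^{-1}|\widehat\theta|^2\bigr)p^2
\]
together with three further pieces: cross-diffusion terms $(\nu+\kappa)$ from the cross term, the term $(\kappa-\nu)\,\mathrm{Re}(\overline{\widehat K}\, l p^2\widehat\theta)\,\Omega^2\alpha^{-3}$, and the $K$ dissipation $\nu p^2\Omega^2\alpha^{-2}|\widehat K|^2$. Requiring that the quadratic form in $(\widehat u_3,\widehat\theta,\widehat K)$ retains at least $c_0\mu p^2E$ of dissipation is a $2\times2$/$3\times3$ positivity condition. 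Bounding the $K$-$\theta$ term by Young's inequality, with $l^2\le p^2$ and $\theta$ controlled by the $\kappa$-dissipation after the $c_0$ part is reserved, gives a discriminant inequality. This should be exactly
\[
32(\mathrm{Pr}-c_0\min\{1,\mathrm{Pr}\})(1-c_0\min\{1,\mathrm{Pr}\})-(\mathrm{Pr}+1)^2\ \ge\ 8(\mathrm{Pr}-1)^2\Omega^2/\alpha .
\]
Positivity of its left side at $c_0=0$ is $\mathrm{Pr}^2-30\mathrm{Pr}+1<0$, which recovers the interval $(15-4\sqrt{14},15+4\sqrt{14})$. Matching the constants $32$ and $8$ through the choice of $\beta$ in the cross term is the step I expect to be the main obstacle, since $\beta$ must simultaneously keep $E$ coercive and balance the wave part.

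From $\frac{d}{dt}E\le -c_0\mu p^2(t)E+(\text{integrable})E$, we obtain
\[
E(t)\lesssim e^{-c_0\mu\int_0^t p^2}E(0)\le e^{-c_0\mu t^3/30}E(0),
\]
using $\int_0^t(\eta-ks)^2ds\ge k^2t^3/12$ and absorbing constants; this gives the bounds for $u_3$ and $\theta$. The $K$ estimate follows from the same energy, where the extra derivatives in $\mathcal J_{N+3}$ pay for $p^2$ in the forcing. For the horizontal velocities we invert the div--curl system:
\[
\widehat{(u_1,u_2)}\sim p_h^{-2}\bigl(k,\eta-kt\bigr)\bigl(l\widehat u_3,\ \alpha^{-1}(\widehat K+il\widehat\theta)\bigr).
\]
Using $p_h^{-1}\lesssim\langle t\rangle^{-1}\langle\eta/k\rangle$ gives the $\langle t\rangle^{-1}$ inviscid damping at the cost of roughly $5/2$ more derivatives. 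The factor $(1+\Omega^{-1})$ arises because $K$ is only controlled with weight $\Omega$.
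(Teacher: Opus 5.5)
Your architecture matches the paper's: keep $K$ as a third unknown, build an energy with a cross term and an $\Omega$-weighted $K$ component, derive a Young-type discriminant condition, then invert the div--curl system. However, the energy you actually write down cannot close, and the step you defer is the core of the proof.

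\textbf{The wave block is not skew.} In the paper's notation $p=k^2+(\eta-kt)^2+l^2$, $f=k^2+(\eta-kt)^2$, $z=2k^2p+\alpha f^2$, the wave block in the variables $(\widehat u_3,\widehat\theta/\sqrt\alpha)$ has off-diagonal entries $\sqrt\alpha\,(\frac fp+\frac{2k^2l^2}{\alpha pf})$ and $-\sqrt\alpha$. Hence $\frac{d}{dt}(|\widehat u_3|^2+\alpha^{-1}|\widehat\theta|^2)$ contains the exchange term $2\sqrt\alpha\,(\frac{2k^2l^2}{\alpha pf}-\frac{l^2}{p})\,\mathrm{Re}(\overline{\widehat u_3}\,\widehat\theta/\sqrt\alpha)$. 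Since $\int_{\mathbb R}\frac{l^2}{p}\,dt=\frac{\pi l^2}{|k|\sqrt{k^2+l^2}}$, Gronwall only gives a factor $e^{c\sqrt\alpha|l|/|k|}$, which is not a Sobolev loss.

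A cross term $\beta\,\mathrm{Re}(\overline{\widehat u_3}\,\widehat\theta)$ cannot repair this. Along the wave part its derivative is diagonal, and cancelling the exchange term through $\beta'$ forces $|\beta|$ of size $\sqrt\alpha|l|/|k|$, which destroys coercivity. The missing idea is the time-dependent symmetrizer $\widehat\Theta=\sqrt\alpha\,aA$, $\widehat U_3=bB$ with $b/a=\alpha^{-1/2}\sqrt{z/(pf)}$. Its time dependence generates $\frac{a'}{a}(|A|^2-|B|^2)$, and removing that is what the cross term is for.

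So the cross coefficient is not a free parameter: it is forced by $\frac s2\sqrt\alpha\,\frac ba=\frac{a'}{a}$. Lemma~\ref{upperboundofs} gives $\sup|s/2|=\zeta=\frac{1}{2\sqrt2}$ at $\lambda=0$, which is why the paper takes $\lambda=0$. The cross-diffusion $-\frac{s(\nu+\kappa)}{2}pAB$ then gives $\lambda_2=\zeta(\nu+\kappa)$. Inserting this into $\lambda_1^2+\lambda_2^2\le 4(\nu-c_0\mu)(\kappa-c_0\mu)$, with $\lambda_1=\Omega|\nu-\kappa|/\sqrt\alpha$, gives exactly the displayed condition, with $32=4\zeta^{-2}$ and $8=\zeta^{-2}$. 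You have not derived this, and with a freely tuned $\beta$ you could not.

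\textbf{The $K$ weight is wrong in $l$.} With $\Omega^2\alpha^{-2}|\widehat K|^2$, the forcing $-il(\nu-\kappa)p\widehat\Theta$ couples $\Omega\widehat K/\alpha$ to $\widehat\theta/\sqrt\alpha$ with coefficient $\Omega|\nu-\kappa|\,|l|\,p/\sqrt\alpha$. The extra $|l|$ cannot be absorbed by the dissipation $\nu p$, $\kappa p$ uniformly in $l$. Nor can it be Gronwalled, since $\int_0^t p\,ds\gtrsim t^3$. So no $l$-independent discriminant condition comes out, and the extra derivatives in $\mathcal J_{N+3}$ cannot pay for the forcing as you suggest.

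The paper instead sets $\widehat{\mathcal K}=i\langle l\rangle\alpha cC$ with $c=a/\Omega$, which turns the coupling into $\frac{l}{\langle l\rangle}\frac{\nu-\kappa}{\sqrt\alpha}\Omega\,p$. The price is that the reverse coupling $K\to u_3$ acquires a factor $\langle l\rangle$, with coefficient $\frac{\sqrt\alpha}{\Omega}\frac{k^2|l|\langle l\rangle}{\sqrt{zpf}}$ up to constants. Its time integral is only $\le\sqrt2\log\langle l\rangle+O(1)$, and this logarithm is the true origin of $N_2\sqrt\alpha/\Omega$.

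\textbf{The losses are in $\langle l\rangle$, not $\langle\eta/k\rangle$.} The stretching coefficient $\frac{f'l^2}{pf}=\partial_t\log(f/p)$ has total variation up to $2\log(1+l^2/k^2)$. The time-dependent $c$ also produces the piece $\frac{3|f'|l^2}{4pf}$ of $-\frac{c'}{c}|C|^2$. The paper absorbs both with the multiplier $m$ built from powers of $f/p$. Neither of your weights does this: $\langle\eta/k\rangle^N$ is constant in time. The other weight, $\langle t-\eta/k\rangle^{-N}$, makes the final bound grow like $\langle t-\eta/k\rangle^N$, which $e^{-c\mu t^3}$ offsets only with a $\mu$-dependent constant.
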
 

\begin{remark}Unlike the estimates (\ref{eq:A4}), the regularity required here increases with $\alpha$. Since stronger stratification is stabilizing, this loss is likely an artefact of the method rather than an intrinsic feature of the dynamics.\end{remark}

Theorem \ref{Pran} involves two free parameters, $c_0$ and $\Omega$. A smaller $c_0$ allows a wider range of admissible Prandtl numbers, at the expense of a weaker enhanced dissipation rate. The parameter $\Omega$ should be chosen at an intermediate scale, since both the estimates and the required regularity depend on both $\Omega$ and $\Omega^{-1}$. Moreover, by the homogeneity relation $K\sim \alpha\omega_3$ for large $\alpha$, $\Omega$ should be chosen independently of $\alpha$. For ease of understanding, one may simply take $\Omega$ to be an $\mathcal{O}(1)$ constant, for example $\Omega=1/5$.\vspace{8pt}

\subsubsection{\textbf{Potential vorticity formulation}}\label{sectionpv}
In the previous section, we identified the conserved quantity $K$ (\ref{LPV}) as the key ingredient in the energy estimate. The aim of this section is to relate this conserved quantity to the underlying physical mechanism.

In many geophysical fluid models—such as the Shallow water system, Quasi-geostrophic system and the Boussinesq system—the dynamics exhibit a remarkable feature that is absent in the classical Euler or Navier Stokes equations: there exist quantities that are materially transported by the flow and therefore conserved along fluid particle trajectories. 

The most fundamental of these quantities is potential vorticity (PV), originally introduced by Rossby and later generalized by Ertel. Physically, PV combines the rotational motion of a fluid parcel with the ambient stratification. It measures how the vorticity of a fluid element is constrained by the gradient of buoyancy or temperature. When a stratified fluid column is stretched, compressed, or tilted, its vorticity and stratification generally change, but these changes compensate in such a way that the potential vorticity remains (materially) conserved in the inviscid and adiabatic regime. This conservation law makes PV a natural variable for describing the slowly evolving, vortical component of geophysical flows and for distinguishing it from propagating wave motions. For further physical background and intuition, we refer to \cite{Vallis_2017}. 

Let us consider the 3D inviscid Boussinesq system,
\begin{flalign*}
\left\{
\begin{aligned}
& \partial_t V+(V\cdot\bm{\nabla})V=-\bm{\nabla}p+\theta\vec{\bm{e}}_3\\
&   \partial_t\theta+V\cdot\bm{\nabla}\theta=0,\qquad \bm{\nabla}\cdot V=0.
\end{aligned}
\right.
\end{flalign*}

This system admits the following conservation law\begin{equation}
q:=\bm{\nabla}\theta\cdot\bm{\omega},\qquad\Rightarrow\qquad \partial_tq+V\cdot\bm{\nabla}q=0.\label{PV}
\end{equation}

 Interestingly, the conserved quantity $K$ (\ref{LPV}) can also be interpreted through linearization of potential vorticity (\textcolor{C}{linearized PV}). Setting $V^*=y\vec{\bm{e}}_1$ and $\theta^*=\alpha z$, we have\[
q_l:=\bm{\nabla}\theta^*\cdot\omega+\bm{\nabla}\theta\cdot\omega^*=\alpha\omega_3-\partial_z\theta=K.
\]

The linearized PV provides a useful tool for studying a variety of related problems. In Section \ref{ch2}, we provide two examples of the use of PV and discuss further properties of it, including its analogy with the 2D Euler linearized around a plane shear flow, as well as its applications to related systems such as the compressible Navier Stokes. Further applications of the linearized PV can be found in \cite{yao2026pvwavedecomposition}.  \vspace{4pt}

Nevertheless, the potential-vorticity formulation comes with an increased regularity requirement, since both $\omega$ and $\bm{\nabla}\theta$ contain one additional derivative. Although this is usually harmless in the linear analysis, it becomes more significant in nonlinear energy estimates. However, this loss is not solely caused by the introduction of PV: the symmetrizing structure of the original systems may itself require additional regularity. The PV formulation should therefore be viewed as a trade-off: it makes the derivative loss explicit, but in return restores a symmetrizable structure and a more favorable energy framework.

\vspace{6pt}

{\normalsize\noindent\textbf{(Boussinesq) PV as a 3D feature.} }\vspace{4pt}

A fundamental difference between the 2D stratified fluid (the evolution in the $y$-direction is neglected) and the 3D fluid is the presence of potential vorticity (PV). Indeed, in 2D, the vorticity reduces to\[
\bm{\omega}=(\partial_z u_1-\partial_x u_3)\vec{\bm{e}}_2,\qquad\Rightarrow\qquad  \bm{\omega}\cdot\bm{\nabla}\theta=0.
\]That is, the potential vorticity vanishes in 2D. Another consequence of the occurrence of PV concerns the dispersive nature of the linearized dynamics. Since the PV vanishes in the 2D stably stratified fluid, both $u$ and $\theta$ are \textit{fully dispersive}. In contrast, in 3D, the linearized potential vorticity is nontrivial and is given by $q_l\alpha^{-1}=\omega_3=\partial_x u_2-\partial_y u_1$. Recall the linearized system\begin{flalign*}
\left\{
\begin{aligned}
& \partial_t U+\partial_z\bm{\nabla}\Delta^{-1}\theta=\theta\vec{\bm{e}}_3,\\ 
&   \partial_t\theta+\alpha u_3=0.
\end{aligned}
\right.
\end{flalign*}The solutions can be computed explicitly. In particular, we see that\footnote{Here $|k,\eta|:=\sqrt{k^2+\eta^2}$ and $|k,\eta,l|:=\sqrt{k^2+\eta^2+l^2}$.}\begin{align*}
&\widehat{u}_1=\widehat{u}_1(0)+\frac{kl}{|k,\eta|^2}\widehat{u}_3(0)+a_1\sin\left(\sqrt{\alpha}t\frac{|k,\eta|}{|k,\eta,l|}\right)+a_2\cos\left(\sqrt{\alpha}t\frac{|k,\eta|}{|k,\eta,l|}\right),\\
&\widehat{u}_2=\widehat{u}_2(0)+\frac{\eta l}{|k,\eta|^2}\widehat{u}_3(0)+\frac{\eta}{k}a_1\sin\left(\sqrt{\alpha}t\frac{|k,\eta|}{|k,\eta,l|}\right)+\frac{\eta}{k}a_2\cos\left(\sqrt{\alpha}t\frac{|k,\eta|}{|k,\eta,l|}\right).
\end{align*}The first two terms in each expression are time-independent and therefore $u_1$ and $u_2$ are only \textit{partially dispersive}. Similarly, explicit formulas for $\widehat{u}_3$ and $\widehat{\theta}$ can be computed, and it can be shown that they are fully dispersive.

\subsubsection{\textbf{Applications of linearized PV I: Tilted Couette flow}}\label{Tilted}

Motivated by the inviscid damping phenomenon observed for the plane Couette flow in \eqref{eq:A3} and \eqref{eq:A4}, as well as by the new lift-up mechanism discussed in Section \ref{norot}, we study the linear dynamics around the \textit{tilted Couette flow} on $\Omega_{\mathbb{R}}$. Both phenomena are fundamentally different from the behavior observed in the vertical Couette flow case \cite{coti2025suppression}. More precisely, the background flow and stratification are given by\begin{equation}
V^*=(y\cos\phi+z\sin\phi,0,0),\qquad \eta^*=\alpha z,\qquad \phi\in\left[0,\frac{\pi}{2}\right].\label{tcf1}
\end{equation}In other words, we study the Couette flow whose shear direction forms an arbitrary angle with the stratification. This family interpolates between the plane Couette flow at $\phi=0$ and the vertical Couette flow at $\phi=\frac{\pi}{2}$.\begin{figure}[H]
\centering
\begin{minipage}[c]{0.55\textwidth}
In this setting, the linearized potential vorticity again provides a particularly useful simplification of the system. More precisely, it is given by\[
q_l:=\alpha\omega_3+(\sin\phi\partial_y-\cos\phi\partial_z)\theta.
\]$q_l$ satisfies the following purely transport equation\[
 \partial_tq_l+(y\cos\phi +z\sin\phi)\partial_xq_l=0.
\]Its solution can be computed explicitly as in the fully perpendicular case discussed in Section \ref{norot}. Thus, we can prove the following linear stability estimates.
\end{minipage}%
\hfill
\begin{minipage}[c]{0.40\textwidth}
    \centering
    \includegraphics[width=\linewidth]{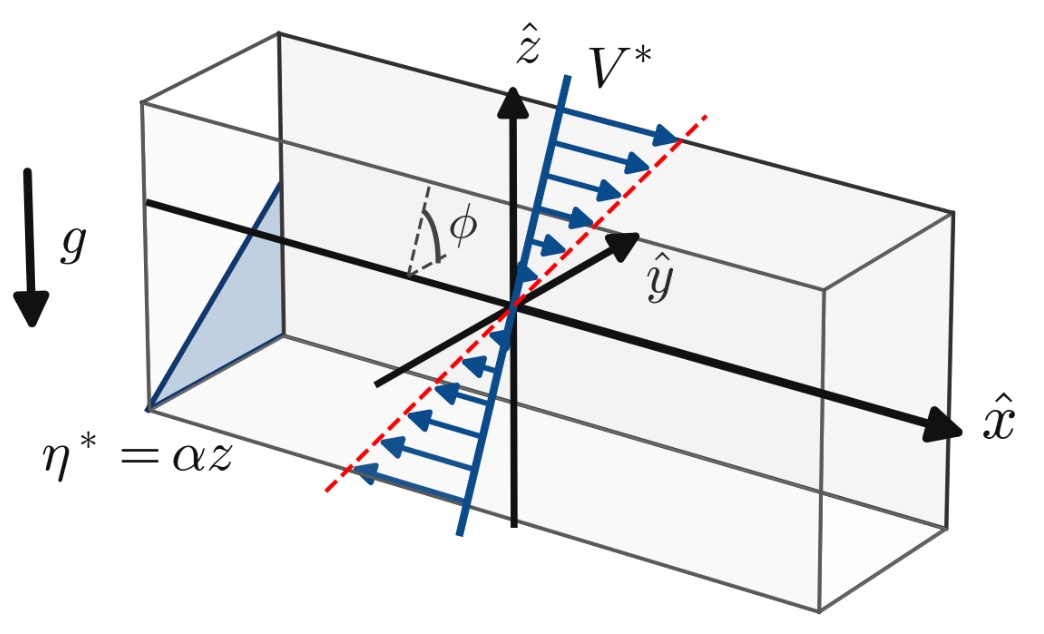}
    \caption{Tilted Couette flow.}
\end{minipage}
\end{figure}

\begin{theorem}Let $(u,\theta)$ be the solution of the linearized 3D inviscid Boussinesq system $(\nu=\kappa=0)$ around (\ref{tcf1}) (solving (\ref{lineearizationaroundtcf})).\begin{itemize}
    \item \textbf{Inviscid damping of the non-zero modes}. For any $\phi\in\left(0,\frac{\pi}{2}\right)$, we have\begin{align*}
      \langle t\cos\phi\rangle  ||u_{1,\neq}||_{L^2}+||u_{2,\neq}||_{L^2}&\lesssim C_{\alpha,\phi}\left(||u_{\neq}(0)||_{H^{17}}+\left\|\frac{\theta_{\neq}(0)}{\alpha^\frac{1}{2}}\right\|_{H^{17}}\right),\\
         ||u_{3,\neq}||_{L^2}+\left\|\frac{\theta_{\neq}}{\alpha^\frac{1}{2}}\right\|_{L^2}&\lesssim C_{\alpha,\phi}\left(||u_{\neq}(0)||_{H^{15}}+\left\|\frac{\theta_{\neq}(0)}{\alpha^\frac{1}{2}}\right\|_{H^{15}}\right),
    \end{align*}where $C_{\alpha,\phi}:=e^{c\alpha^{-1/2}}\sec^{14}\phi$.
    \item \textbf{Asymptotic bound on $u_{2,\neq}.$} When $\phi\to0$, there holds the bound\[
    ||u_{2,\neq}||_{L^2}\lesssim  C_{\alpha,\phi}\max\left\{\frac{1}{\langle t\rangle},\tan\phi\right\}\left(||u_{\neq}(0)||_{H^{17}}+\left\|\frac{\theta_{\neq}(0)}{\alpha^\frac{1}{2}}\right\|_{H^{17}}\right).
    \]
    \item \textbf{Lift-up of $u_{1,0}$.} For any $\phi\in\left[0,\frac{\pi}{2}\right)$, $u_{1,0}$ experiences the lift-up effect, caused by the tilted operator $\cos\phi\cdot\mathcal{L}_{\text{Bou}}$.
\end{itemize}    
\end{theorem}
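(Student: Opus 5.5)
Write $V^*=(Y,0,0)$ with the rotated coordinate $Y:=y\cos\phi+z\sin\phi$, so the linearized inviscid system is $(\partial_t+Y\partial_x)U+(\cos\phi\, u_2+\sin\phi\, u_3)\vec{\bm{e}}_1=-\bm{\nabla}p+\theta\vec{\bm{e}}_3$, $(\partial_t+Y\partial_x)\theta+\alpha u_3=0$. The proof proceeds in the Fourier variables $(k,\eta,l)$ along the characteristics of the shear, i.e.\ in the moving frame $X=x-tY$, where $\widehat f(t,k,\eta+kt\cos\phi,l+kt\sin\phi)$ is the natural unknown. Write $\xi(t)=(k,\eta-kt\cos\phi,l-kt\sin\phi)$ for the time-dependent frequency.

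\textbf{Step 1 (PV).} The quantity $q_l=\alpha\omega_3+(\sin\phi\partial_y-\cos\phi\partial_z)\theta$ is purely transported. Hence $\widehat{q_l}$ is explicit, and $|\widehat{q_l}(t,\xi(t))|=|\widehat{q_l}(0,\xi(0))|$.

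\textbf{Step 2 (reduced system).} As in Section \ref{norot}, use the div--curl system $\partial_xu_2-\partial_yu_1=(q_l-(\sin\phi\partial_y-\cos\phi\partial_z)\theta)/\alpha$, $\partial_xu_1+\partial_yu_2=-\partial_zu_3$ to express $(u_1,u_2)$ through $(u_3,\theta,q_l)$ with $\Delta_h^{-1}$ symbols. This yields a closed $2\times2$ nonautonomous ODE for $(\widehat{u_3},\widehat\theta/\sqrt\alpha)$ along characteristics, forced by $q_l$. For nonzero modes $|k|\ge1$, we have $|k,\eta|\ge1$, so $\Delta_h^{-1}$ is harmless.

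\textbf{Step 3 (symmetrization).} Symmetrize this ODE with a weight such as $|\xi|^2/|k,\eta|^2$ applied to $u_3$ (i.e.\ work with $\Delta u_3$-type variables, as in the vertical case), and add a small cross term $\mathrm{Re}(\widehat{u_3}\overline{\widehat\theta})$ multiplied by a symbol of $\partial_t$-derivative type. The aim is an energy $E(t)$ with
\[
\frac{d}{dt}E\le \frac{C|k|\,|\cos\phi|+\dots}{|\xi(t)|^2}E+(\text{forcing by }q_l).
\]
The key integrability is $\int_0^\infty k^2|\xi(t)|^{-2}dt\lesssim \sec\phi$-type bounds, because $|\xi(t)|^2\gtrsim k^2+(\eta-kt\cos\phi)^2$. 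Integrating gives uniform bounds on $u_3,\theta/\sqrt\alpha$, with the $e^{c\alpha^{-1/2}}$ Gronwall constant. The polynomial losses $\sec^{14}\phi$ and the $H^{15}$ regularity come from converting $\langle \eta-kt\cos\phi\rangle$-weights back to initial Sobolev norms: each factor $\langle t\cos\phi\rangle$ is traded for derivatives of the data at cost $\sec\phi$.

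\textbf{Step 4 (damping).} From the div--curl formulas, $\widehat{u_1},\widehat{u_2}$ carry factors $|k,\eta-kt\cos\phi|^{-1}$ times $(u_3,\theta,q_l)$ quantities. For fixed data this produces $\langle t\cos\phi\rangle^{-1}$ decay for $u_1$ after paying two more derivatives ($H^{17}$). The decay rate of $u_2$ involves the $\sin\phi$ component of the shear in $l$. Tracking the symbol $\eta/(k,\eta)$ versus $l$-terms gives only boundedness in general. As $\phi\to0$ it gives the $\max\{\langle t\rangle^{-1},\tan\phi\}$ bound, since the residual non-decaying piece is proportional to $\sin\phi/\cos\phi$.

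\textbf{Step 5 (lift-up).} For the zero modes, the shear term $\cos\phi\,u_2+\sin\phi\,u_3$ in the $u_1$ equation, combined with the zero-mode wave system for $(u_2,u_3,\theta)$ (now with no transport), gives the explicit formula. This reproduces \eqref{Bouliftupope} multiplied by $\cos\phi$; the $\sin\phi\,u_3$ part contributes an oscillatory bounded term.

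\textbf{Main obstacle.} The hardest step is Step 3: finding a symmetrizer uniform in $\phi\in(0,\pi/2)$ whose error terms are integrable in time, with constants that degenerate only polynomially as $\phi\to\pi/2$. My first attempt will mimic the vertical-Couette energy of \cite{coti2025suppression}, in the rotated frame.
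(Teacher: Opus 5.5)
Your Steps 1, 2, 4 and 5 follow the paper. These are the transport of $K_\phi$, the div--curl reconstruction, the $\langle t\cos\phi\rangle^{-1}$ gain for $u_1$ against the non-decaying $\partial^L_{YZ}\Delta_H^{-1}U_3$ piece of $u_2$ (of relative size $\tan\phi$), and the zero-mode computation in which only $\cos\phi\int_0^t\widehat u_{2,0}\,ds$ is singular.

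The gap is Step 3. The inequality you aim for, with coefficients integrable in time uniformly in frequency, cannot hold. The derivative loss also does not come from trading $\langle t\cos\phi\rangle$ factors.

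Write $f=k^2+(\eta-kt\cos\phi)^2$ and $p=f+(l-kt\sin\phi)^2$. The reduced equation for $\widehat U_3$ then contains the vortex-stretching term $\big(\frac{f'}{f}-\frac{p'}{p}\big)\widehat U_3$. It comes from $\partial^L_{YZ}\Delta_H^{-1}U_3$ (the elimination of $u_2$) and from the $\sin\phi\,\partial_X U_3$ pressure contribution. This term acts on $U_3$ alone and is not of the oscillatory form $\frac{b'}{b}(|A|^2-|B|^2)$. So no choice of symmetrizer removes it, and a bounded cross term absorbs at most a fraction of it (compare Lemma \ref{upperboundofs} at $\phi=0$). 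Its time integral is only bounded by $\log(\sec^2\phi\,p(0))$.

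The resulting growth is real. Take $\phi=0$, $K=0$, and the regime $p\approx l^2$, $k^2p\gg\alpha f^2$. The system reduces to $\widehat\Theta''-\frac{f'}{f}\widehat\Theta'+\frac{2k^2}{f}\widehat\Theta=0$, whose solutions are $c_1 s+c_2(s^2-1)$ with $s=t-\eta/k$. So $|\widehat\Theta|$, which carries no weight in your energy, grows polynomially in $|l|/|k|$ within one critical window.

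The paper does not integrate this term as an error. After symmetrization it becomes $F_1=\frac12\frac{d}{dt}\ln\big(\frac{f^3}{pz}\big)|B|^2$, and it is controlled through the total variation of $\ln(f^3/(pz))$. The ingredients are:
\begin{itemize}
\item the derivative has at most $N\le 6$ zeros;
\item $\frac{f^3}{pz}\le\alpha^{-1}$ and $\frac{pz}{f^3}\lesssim\alpha\frac pf+\frac{p^2}{f^2}$;
\item the time-uniform anisotropy bound $\frac pf\lesssim\sec^2\phi\,p(0)$.
\end{itemize}
Together these give the factor $\sec^{28}\phi\,p^{14}(0)$ in $\mathbb E_3$. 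This is the source of $\sec^{14}\phi$ and of essentially all of the $H^{15}$ regularity.

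Second, mimicking the vertical-Couette symmetrizer is not enough. Whatever normalization you intend, a weight $\sqrt{\alpha p/f}$ on $\widehat U_3$ only matches the $\frac fp\widehat\Theta$ coupling. The $\widehat\Theta$-coefficient in the $U_3$ equation also contains $\frac{2k^2\cos^2\phi}{\alpha f}$, from the $\partial_X\Delta_H^{-1}\mathcal L_1\Theta$ part of the PV reconstruction. This leaves an error $\frac{2k^2\cos^2\phi\sqrt p}{\sqrt\alpha f^{3/2}}|AB|$. Its time integral across $t=\frac{\eta}{k\cos\phi}$ is of order $\cos\phi\,|l-\eta\tan\phi|/(\sqrt\alpha|k|)$, so Gronwall loses a factor $e^{c|l|}$, which no Sobolev norm can pay for. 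A bounded cross term cannot cancel it either, since that would require $s'$ of the same non-uniformly-integrable size.

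The paper therefore builds this term into the symmetrizer: $\widehat U_{3,\neq}=bB$ with $b=\alpha^{-1}\sqrt{z/(pf)}$ and $z=\alpha f^2+2k^2p\cos^2\phi$. The remaining couplings $-\frac{2k^2\cos^2\phi}{\alpha p}$ and $\frac{f'g'}{pf}$ (with $g=(l-kt\sin\phi)^2$), and the $K_\phi$ forcing, then have $O(\alpha^{-1/2})$ time integrals. So the plain energy $\frac12(|A|^2+|B|^2)$, with no cross term, closes once $F_1$ is handled as above.
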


\noindent\textbf{Remark.} We do not attempt to optimize either the regularity assumptions for inviscid damping or the dependence of the decay estimate on $\sec\phi$, since the analysis is considerably more delicate than in the endpoint cases $\phi=0$ and $\phi=\frac{\pi}{2}$. The resulting damping behavior is rather unexpected: the decay rate degenerates as $\phi\to\frac{\pi}{2}$, and the damping disappears completely at $\phi=\frac{\pi}{2}$. Similarly, the lift-up effect persists whenever the shear direction is not parallel to the direction of stratification. This indicates that the orientation of the shear plays an essential role in the dynamics of stratified fluids.

\subsubsection{\textbf{Applications of linearized PV II: Description of leading order behaviour}}
As a simpler application, the linearized PV can be inverted, and can sometimes being be used to describe the leading order dynamics. Recall the vertical Couette flow problem \cite{coti2024stability,coti2025suppression} (we set $\nu=\kappa=0$ here), the linearized PV is given by \[
q_l=\alpha\omega_3+\partial_y\theta.
\]The non-zero modes satisfy the following bound\[
||(u_1,u_2)_{\neq}||_{L^2}+\alpha^{-1/2}\langle t\rangle^\frac{3}{2}||u_{3,\neq}||_{L^2}+\langle t\rangle^\frac{1}{2}||\theta_{\neq}||\lesssim||(u,\alpha^{-1/2}\theta)_{\neq}(0)||_{H^3}.
\]Therefore, we have\[
\partial_x u_{2,\neq}-\partial_y u_{1,\neq}=\partial_x u_{2,\neq}(0,x-tz,y,z)-\partial_y u_{1,\neq}(0,x-tz,y,z)+\frac{1}{\alpha}\partial_y\theta_{\neq}(0,x-tz,y,z)+\mathcal{O}(t^{-1/2}).
\]Combining with the incompressibility, we obtain the leading order behaviour when $t\to\infty$:\begin{align*}
u_{1,\neq}&=-\partial_y\Delta^{-1}_{x,y}\left(\partial_xu_{2,\neq}(0,x-tz,y,z)-\partial_yu_{1,\neq}(0,x-tz,y,z)+\alpha^{-1}\partial_y\theta_{\neq}(0,x-tz,y,z)\right)+\mathcal{O}(t^{-1/2}),\\
u_{2,\neq}&=\partial_x\Delta^{-1}_{x,y}\left(\partial_xu_{2,\neq}(0,x-tz,y,z)-\partial_yu_{1,\neq}(0,x-tz,y,z)+\alpha^{-1}\partial_y\theta_{\neq}(0,x-tz,y,z)\right)+\mathcal{O}(t^{-1/2}).
\end{align*}Thus, although finding explicit solutions for $u_{1,\neq}$ and $u_{2,\neq}$ seems difficult, we are still able to obtain the $\mathcal{O}(1)$ terms at large time. Additionally, we see that the dynamics are being two-dimensionalized.

\subsubsection{\textbf{Nonlinear stability of the plane Couette flow}}\label{thre}
Having established the linear theory, we now turn to the nonlinear problem. The key ingredient is the structural formulation associated with the rescaled linearized PV. Introducing $\Xi=\partial_X U_2-\partial_{Y}^LU_1-\alpha^{-\frac{1}{2}}\partial_Z\Psi$ and $\Psi=\alpha^{-\frac{1}{2}}\Theta$, (see section \ref{Notations} for the definition of capitalized functions), we prove the following nonlinear stability estimates in the case $\nu=\kappa$.

\begin{theorem}\label{nonlinear} Assume that $\alpha>0$, $s\geq4$ and $\nu\in(0,1)$. Suppose that\[
\int_{\mathbb{T}\times\mathbb{R}}u_2(0)\ dxdy=\int_{\mathbb{T}\times\mathbb{R}}\theta(0)\ dxdy=0
\]and\footnote{$C_\alpha=e^{c_1\alpha^{-1}}\left(1+\alpha^{-1}\right)$, as defined in Theorem \ref{linear, Boussinesq}.}\[
\left\|\left(u_{\text{in}},\frac{\theta_{\text{in}}}{\alpha^\frac{1}{2}}\right)\right\|_\mathcal{Y}\leq\varepsilon\lesssim C_\alpha^{-1}\nu^{1+\frac{\sqrt{2}}{3}},
\]where $||\cdot||_{\mathcal{Y}}$ is a weighted Sobolev norm, defined as\[
||G||_\mathcal{Y}:=||G||_{H^{s+1}\cap W^{s+4,1}}+\left\|\langle y\rangle^{\frac{3}{2}+}\partial_z\langle\bm{\nabla}\rangle^{s}G\right\|_{L^2},
\]then the following nonlinear estimates hold:\begin{itemize}
    \item \textbf{Inviscid damping and enhanced dissipation of the non-zero modes.} Let $\varpi>0$ be some small constant, then\[
   \langle t\rangle ||u_{1,\neq}||_{L^2}+\langle t\rangle||u_{2,\neq}||_{L^2}+||u_{3,\neq}||_{L^2}+\left\|\frac{\theta_{\neq}}{\alpha^\frac{1}{2}}\right\|_{L^2}\lesssim e^{-\varpi\nu^\frac{1}{3}t}\varepsilon,
    \]
    \item \textbf{Dispersive and $L^\infty$ bounds on simple zero modes.} There exists a number $\Upsilon\in\left[0,\frac{1}{3}\right]$ such that
\begin{align*}
&\left\|\overline{u}_2\right\|_{L^\infty}+\left\|\overline{u}_3\right\|_{L^\infty}+\left\|\frac{\overline{\theta}}{\alpha^\frac{1}{2}}\right\|_{L^\infty}\lesssim \frac{e^{-\nu t}}{\langle\alpha^\frac{1}{2}t\rangle^\frac{1}{3}}\varepsilon+\alpha^{-\frac{1}{6}}\nu^{-\frac{2}{3}}\varepsilon^2,\\
&\left\|\overline{u}_1\right\|_{L^\infty}\lesssim e^{-\nu t}\varepsilon+\left(\nu^{-1-\Upsilon}+\alpha^{-\frac{5}{12}}\nu^{-\frac{7}{6}}\right)\varepsilon^2.
\end{align*}    \item \textbf{Energy estimates}. 
    \begin{align*}
&\left\|\left(U,\Psi,\Xi\right)_{\neq}\right\|_{L^\infty_tH^s}+\nu^\frac{1}{2}\left\|\bm{\nabla}_L\left(U,\Psi,\Xi\right)_{\neq}\right\|_{L^2_tH^s}+\nu^\frac{1}{6}\left\|\left(U,\Psi,\Xi\right)_{\neq}\right\|_{L^2_tH^s}\lesssim\varepsilon,\\&
\left\|\overline{\left(U_2,U_3,\Psi\right)}\right\|_{L^\infty_tH^s}+\nu^\frac{1}{2}\left\|\bm{\nabla}_{y,z}\overline{\left(U_2,U_3,\Psi\right)}\right\|_{L^2_tH^s}\lesssim\varepsilon,\\
 &
\left\|\overline{U}_1\right\|_{L^\infty_tH^s}+\nu^\frac{1}{2}\left\|\partial_z\overline{U}_1\right\|_{L^2_tH^s}+\nu^{\frac{1}{2}-\Upsilon}\left\|\partial_y\overline{U}_1\right\|_{L^2_tH^s}\lesssim\nu^{-\Upsilon}\varepsilon,\\
&\left\|\widetilde{(U_1,U_3,\Psi)}\right\|_{L^\infty_tH^s}+\nu^\frac{1}{2}\left\|\partial_y\widetilde{(U_1,U_3,\Psi)}\right\|_{L^2_tH^s}\lesssim\varepsilon,
    \end{align*}
\end{itemize}provided that\footnote{This can be understood as follows: when $\alpha\sim 1$, we have $\Upsilon=\frac{2-\sqrt{2}}{3}$. When $\alpha\gtrsim \nu^{-4+2\sqrt{2}}$, we merely need $\Upsilon=0$. That is, strong stratification suppresses the transient growth of $\overline{u}_1$, but does not improve the threshold.}\begin{align*}
    \alpha\gtrsim 1+\nu^{6\Upsilon-\left(4-2\sqrt{2}\right)},
\end{align*}
\end{theorem}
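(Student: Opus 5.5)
The argument is a bootstrap (continuity) argument in the moving frame $X=x-ty$, $Y=y$, $Z=z$, with $\partial_Y^L=\partial_Y-t\partial_X$ and $\bm{\nabla}_L$. The unknowns are $(U,\Psi,\Xi)$ for the non-zero modes, and the zero modes are split into simple and double zero modes.

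\textbf{Bootstrap set-up.} We assume on $[0,T]$ the energy bounds of the third bullet, each with a constant $2C$ in place of the implied one. Using $\varepsilon\lesssim C_\alpha^{-1}\nu^{1+\frac{\sqrt2}{3}}$, we then improve each to constant $C$.

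\textbf{Non-zero modes.} The linear part is the reformulated $(U_3,\Psi)$ system with $\Xi$ as the rescaled PV, which satisfies a transport--diffusion equation plus nonlinear forcing. We reuse the symmetrized energy functional behind Theorem \ref{linear, Boussinesq}: its cross term and Fourier multiplier weights give the $\nu^{1/3}$ enhanced dissipation. To this we add a ghost multiplier of the form $e^{\nu^{1/3}}$-type weight $m(t,k,\eta)$ to capture the $\nu^{1/6}\|\cdot\|_{L^2_tH^s}$ term. The velocity components $U_1,U_2$ are recovered from the div--curl system \eqref{divcurl1} written in terms of $\Xi$, $\Psi$ and $U_3$. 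This gives the $\langle t\rangle^{-1}$ inviscid damping at the cost of derivatives, which is why $s+1$ appears in $\mathcal Y$.

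\textbf{Non-zero nonlinear terms.} The interactions $\neq\cdot\neq$, $0\cdot\neq$ and $\neq\cdot 0$ are bounded by standard paraproduct and enhanced-dissipation estimates. The dangerous one is $\overline{U}_1\partial_X(\cdot)_{\neq}$, because $\overline U_1$ is only bounded by $\nu^{-\Upsilon}\varepsilon$. It contributes roughly $\nu^{-\Upsilon}\varepsilon\cdot\nu^{-1/3}\cdot\varepsilon$. We control it using the $L^2_t$ dissipation and closing the estimate requires $\varepsilon\nu^{-\Upsilon-\dots}\ll 1$.

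\textbf{Zero modes.} For the simple zero modes $(\overline U_2,\overline U_3,\overline\Psi)$ we use the dispersive estimate of Theorem \ref{linear, Boussinesq} via Duhamel. This gives the $\langle\alpha^{1/2}t\rangle^{-1/3}$ decay and the forced term $\alpha^{-1/6}\nu^{-2/3}\varepsilon^2$, the latter from integrating the $\varepsilon^2$ forcing against $t^{-1/3}$ decay up to the dissipative time $\nu^{-1}$.

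For $\overline U_1$ we write the Duhamel formula around the operator $\mathcal L_{\mathrm{Bou}}$. The vanishing-average hypotheses, together with the weight $\langle y\rangle^{\frac32+}\partial_z$ in $\mathcal Y$, remove the linear lift-up exactly as in \eqref{cancellation}. The nonlinear forcing of $\overline U_2$ and $\overline\Psi$ feeds into $\mathcal L_{\mathrm{Bou}}$, whose $L^2\to L^2$ norm is $\nu^{-1}$ and whose $L^2\to L^\infty$ norm is $\nu^{-1/2}\alpha^{-1/4}$. To get a gain we interpolate between these two norms and cost the $\partial_y$ direction separately. This is what produces the intermediate loss $\nu^{-\Upsilon}$ and the condition $\alpha\gtrsim\nu^{6\Upsilon-(4-2\sqrt2)}$: larger $\alpha$ lets us take $\Upsilon$ smaller.

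The double zero modes obey a 1D heat equation coupled with an oscillation, so they are handled by a direct energy estimate with forcing quadratic in the non-zero modes.

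\textbf{Main obstacle.} The hardest step is the feedback between $\overline U_1$ and the non-zero modes. Growth of $\overline U_1$ weakens the non-zero estimates, which in turn feed $\overline U_1$ through the $\overline{U_{\neq}\cdot\nabla U_{\neq}}$ forcing. Balancing the exponents should single out $\Upsilon=\frac{2-\sqrt2}{3}$ and the threshold exponent $1+\frac{\sqrt2}{3}$. I would first optimize a one-parameter interpolation in the $\mathcal L_{\mathrm{Bou}}$ estimate against the $\nu^{-1/3}$ loss from the transport term, and check that the resulting quadratic equation for the exponent has root $\sqrt2$.

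The second difficulty is the derivative loss from the PV formulation in the nonlinear terms of the $\Xi$ equation. These include $\nabla\Theta\cdot\omega$-type terms with one extra derivative. I would handle them by commuting with the symmetrizer and absorbing the extra derivative into the $\nu^{1/2}\bm\nabla_L$ dissipation.
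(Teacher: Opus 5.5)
There is a genuine gap in the non-zero-mode part, and your explanation of where the exponents come from is wrong. Your architecture matches the paper: a bootstrap, a symmetrized energy for the PV-reformulated $(U_3,\Psi,\Xi)$ system, Duhamel around $\mathcal{L}_{\text{Bou}}$ for $\overline U_1$ with the linear lift-up removed by the vanishing averages and the weight, and direct energy estimates for the double zero modes. What is missing is the step the paper identifies as the main difficulty.

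\textbf{The symmetrizer.} You cannot reuse the symmetrized functional behind Theorem~\ref{linear, Boussinesq}, which is built with $\lambda=\lambda_m\approx 0.78$. For $\lambda>\frac12$, the $\Xi$--$B$ coupling, with symbol $\frac{k^2|l|}{pf}b^{-1}$, cannot be absorbed by a multiplier bounded uniformly in $l$. For $\lambda<\frac12$, the weight $\bm b$ is unbounded, so $\|U_{3,\neq}\|_{H^s}$ is not controlled by $\|B_{\neq}\|_{H^s}$ and the nonlinear terms lose derivatives. One must take $\lambda=\frac12$ in \eqref{correctbariables}, with $C$ normalized so that \eqref{balance} holds.

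\textbf{The vortex-stretching term.} More importantly, your plan never deals with $(1-\lambda)\frac{f'l^2}{pf}|B|^2$. Its coefficient is not integrable in time, and the cross term cannot cancel it, since coercivity needs $\zeta<1$. It is absorbed only by the multiplier $m$ of \eqref{multiplier} together with viscosity on $|t-\eta/k|\le 20\nu^{-1/3}$. At $\lambda=\frac12$, Lemma~\ref{upperboundofs} gives $\zeta=\frac{1}{\sqrt2}$. The stretching balance then forces $\varrho_1+\varrho_2=\frac{1-\lambda}{\sqrt{1-\zeta^2}}=\frac{\sqrt2}{2}$, hence $1\lesssim m\lesssim\nu^{-\frac23(\varrho_1+\varrho_2)}=\nu^{-\frac{\sqrt2}{3}}$. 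Because $m$ is anisotropic, it satisfies no commutator estimate, so in every nonlinear term it can only be bounded crudely. This is why $|J_i|\lesssim\nu^{-1-\frac{\sqrt2}{3}}\varepsilon^3+\nu^{-\frac{2+\sqrt2}{3}-\Upsilon}\varepsilon^3$, and it is the sole origin of the threshold $\nu^{1+\frac{\sqrt2}{3}}$. Your count $\nu^{-\Upsilon}\varepsilon\cdot\nu^{-1/3}\varepsilon$ for the $\overline U_1\partial_X(\cdot)_{\neq}$ interaction omits this loss.

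\textbf{Where the exponents actually come from.} The route you propose would therefore fail. Interpolating between the $L^2\to L^2$ and $L^2\to L^\infty$ bounds of $\mathcal{L}_{\text{Bou}}$ against a $\nu^{-1/3}$ loss yields no quadratic equation with root $\sqrt2$, because $\sqrt2$ has nothing to do with the zero modes. In the paper, $\Upsilon$ is simply a bootstrap allowance for the growth of $\|\overline U_1\|_{H^s}$, and no interpolation occurs.
\begin{itemize}
\item The $\overline U_1$ energy estimate (Lemma~\ref{simplezeromodeofu1}) uses only the $L^2$-type symbol bound $t\,e^{-\nu t(\eta^2+l^2)}\lesssim\nu^{-1}(\eta^2+l^2)^{-1}$ for $\mathcal{L}_{\text{Bou}}$.
\item The critical contribution is $\nu^{-1-\Upsilon}\varepsilon\cdot\alpha^{-\frac16}\nu^{-\frac23}\varepsilon^2$, coming from the self-interaction of the dispersive simple zero modes. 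Requiring it to be $\ll\nu^{-2\Upsilon}\varepsilon^2$ gives $\varepsilon\lesssim\alpha^{\frac16}\nu^{\frac53-\Upsilon}$.
\item Compatibility of that bound with $\varepsilon\approx\nu^{1+\frac{\sqrt2}{3}}$ is exactly the condition $\alpha\gtrsim\nu^{6\Upsilon-(4-2\sqrt2)}$. Separately, the non-zero bound above forces $\Upsilon\le\frac13$.
\item The $L^2\to L^\infty$ bound of Lemma~\ref{suppofl3} enters only through $\|\overline U_1\|_{L^\infty}\lesssim\varepsilon$ (Lemma~\ref{inftyonu1}). That bound is what controls the genuinely dangerous interaction $U_{3,\neq}\partial_z\overline U_1$ in the PV equation, after an integration by parts; see \eqref{largebound}.
\end{itemize}
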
\vspace{5pt}

\noindent We make the following comments on the nonlinear stability estimates. \begin{itemize}
    \item The zero average assumptions and the initial data with finite weighted Sobolev norm are used to guarantee the cancellation of the linear lift-up effect. Without these assumptions, the solutions are still stable but with a much smaller threshold $\nu^{5/3+\sqrt{2}/3}$ as $\overline{U}_1\sim\varepsilon\nu^{-1}$ is the best bound one can expect due to the lift-up effect, and this is similar to the classical Navier Stokes setting \cite{9e0bb666-1e4e-3aa2-87b0-f916c2336491}.
    \item Due to the weak lift-up effect caused by the operator $\mathcal{L}_{\text{Bou}}$ (\ref{Bouliftupope}), the threshold found is quantitatively smaller than $\nu$. Even though the linear lift-up is suppressed here for $\overline{U}_1$, it turns out that assuming some small growth of the solution allows us to obtain a larger threshold. In particular, the threshold for zero modes and non-zero modes are $\min\left\{\nu^{\frac{4}{3}-\Upsilon},\alpha^\frac{1}{6}\nu^{\frac{5}{3}-\Upsilon}\right\}$ and $\min\left\{\nu^{1+\frac{\sqrt{2}}{3}},\nu^{\frac{2}{3}+\frac{\sqrt{2}}{3}+\Upsilon}\right\}$, respectively.
    
    \item In addition to the nonlinear lift-up effect, the different energy structures \eqref{coupledsystem} in the presence of rotation can significantly change the stability threshold, see \cite{huang2026stabilitythreshold3dboussinesq}.
\end{itemize}
\subsection{\textbf{Notation}}\label{Notations}

\noindent\textbf{Fourier transform and Sobolev spaces}. For a function $G$ defined on $\Omega_{\mathbb{R}}$ or $\Omega_{\mathbb{T}}$, the corresponding Fourier transform $\widehat{G}$ is defined as (\ref{ourier1}) and (\ref{ourier2}). The inverse Fourier transform is then defined by\begin{align*}
   & G(x,y,z)=\frac{1}{8\pi^3}\sum_{(k,l)\in\mathbb{Z}^2}\int_{\mathbb{R}}\widehat{G}(k,\eta,l) e^{ikx+i\eta y+ilz}\ d\eta,\\
    &G(x,y,z)=\frac{1}{8\pi^3}\sum_{k\in\mathbb{Z}}\int_{\mathbb{R}^2}\widehat{G}(k,\eta,l) e^{ikx+i\eta y+ilz}\ d\eta dl.
\end{align*}We define the Japanese bracket $\langle\cdot\rangle$ by\[
\langle q_1,q_2,q_3,\cdots\rangle:=\sqrt{1+\sum_iq_i^2}.
\]The Sobolev norm $||\cdot||_{H^s}$ is then defined by\begin{align*}
    &||G||_{H^s(\Omega_{\mathbb{T}})}^2=\sum_{(k,l)\in\mathbb{Z}^2}\int_{\mathbb{R}} \langle k,\eta,l\rangle^{2s}|\widehat{G}|^2\ d\eta,\\
    &||G||_{H^s(\Omega_{\mathbb{R}})}^2=\sum_{k\in\mathbb{Z}}\int_{\mathbb{R}^2} \langle k,\eta,l\rangle^{2s}|\widehat{G}|^2\ d\eta dl.
\end{align*}In addition, for a general norm $X$, we write\[
||(A,B)||_{X}:=||A||_X+||B||_X
\]For a function $G=G(t,x,y,z)$ of spatial and temporal variables, we define the Banach space $L^p_t(a,b;X)$ by the norm\[
    ||G||_{L^p_t(a,b)X}=\left\|\left\|G\right\|_{X}\right\|_{L^p_t(a,b)}.
    \]
    
    In particular, if $(a,b)=(0,\infty)$, write it as $||G||_{L^p_tX}$.\vspace{7pt}

\noindent\textbf{Moving coordinates and differential operators}. Define\[
    X=x-ty,\qquad Y=y,\qquad Z=z
    \]The differential operators in this coordinate system are\begin{align}
       \partial^L_Y:=\partial_Y-t\partial_X,\qquad  \bm{\nabla}_L:=\begin{pmatrix}
            \partial_X\\\partial_Y-t\partial_X\\
            \partial_Z
        \end{pmatrix},\qquad \Delta_L:=\partial^2_X+(\partial_Y-t\partial_X)^2+\partial^2_Z.\label{notation1}
    \end{align} In addition to the standard Laplacian, we also define the horizontal and the anisotropic Laplacians\[
\Delta_h:=\partial^2_x+\partial^2_y,\qquad \Delta_{\beta}:=\partial^2_x+\partial^2_y+\beta\partial^2_z,\qquad \beta>0.
    \]In the moving frame, we write\[
    \Delta_H:=\partial^2_X+(\partial_Y-t\partial_X)^2,\qquad  \Delta_{\beta L}:=\Delta_H+\beta\partial^2_Z.
    \]We write $f$ and $p$ as the Fourier symbols of $-\Delta_H$ and $-\Delta_L$\[
    f:=k^2+(\eta-kt)^2,\qquad p:=f+l^2.
    \]The time derivative is denoted as\[
    \partial_tf:=-2k(\eta-kt).
    \]
    
     Apart from this, we use capital letters to denote the corresponding functions in the moving frame. For example $d(t,x,y,z)=D(t,X,Y,Z)$. In addition, we denote $\theta(t,x,y,z)=\Theta(t,X,Y,Z)$ and $K(t,x,y,z)=\mathcal{K}(t,X,Y,Z)$.\vspace{6pt}

\noindent\textbf{Miscellaneous}. \begin{itemize}
\item For two positive numbers $A$ and $B$, $A\lesssim B$ means $A\leq CB$ for some universal constant $C>0$. We write $A\approx B$ or $A\sim B$ if $A\lesssim B\lesssim A$.

    \item A superscript "$+$" denotes $+\delta$ for some sufficiently small $\delta>0$. For instance, $H^{s+}$ and $W^{k+,p}$ stand for $H^{s+\delta}$ and $W^{k+\delta,p}$, respectively.
    \item Approximation for $\min$ and $\max$: for $q>0$, we write $\min\{1,q\}\approx\frac{q}{\langle q\rangle}$ and $\max\{1,q\}\approx\langle q\rangle$. 
\end{itemize}

\subsection{\textbf{Main difficulties and proof strategy}}\label{maindiff}
Here we summarize the main obstacles and the organization of the article.

We first study the linear stability of the plane Couette flow in Section \ref{lsa}. As mentioned in \ref{norot}, we make use of the linearized PV $K$ (\ref{LPV}) to reformulate the $(u_3,\theta)$ system and make symmetrization possible. The zero modes can be solved explicitly in Fourier variables. To prove the decay of the non-zero modes, we focus on the $(u_{3,\neq},\theta_{\neq})$ system and recover $(u_{1,\neq},u_{2,\neq})$ by solving the div-curl system (\ref{divcurl1}). In this case, we introduce the following change of variables\[
\widehat{\Theta}_{\neq}=A_{\neq},\qquad \widehat{U}_{3,\neq}=\frac{1}{\alpha}\sqrt{\frac{z}{pf}}B_{\neq},\qquad z=2k^2p+\alpha f^2.
\]
Assuming, without loss of generality, that $A_{\neq}$ and $B_{\neq}$ are purely real or imaginary, respectively, the enhanced dissipation estimate then follows from the energy functional\[
\mathbb{E}_1:=|A_{\neq}|^2+|B_{\neq}|^2.
 \]The details of this linear argument are given in Appendix \ref{simpleproof}.

Although the above symmetrization is sufficient for the linear problem, it is not directly applicable to the nonlinear analysis in Section \ref{ch3}. The reason is that the change of variable costs too much regularity and the nonlinear energy estimate cannot be closed. Therefore, the main difficulty in the nonlinear problem is to construct a new symmetrization and a suitable energy functional. To this end, we introduce the following family of variables\begin{equation}
\widehat{\Theta}_{\neq}=\sqrt{\alpha}C\alpha^\frac{1}{4}\left(\frac{f}{p}\right)^\frac{\lambda}{2}\frac{p^\frac{1}{4}f^\frac{1}{4}}{z^\frac{1}{4}} A_{\neq},\qquad \widehat{U}_{3,\neq}=\frac{C}{\alpha^\frac{1}{4}}\left(\frac{f}{p}\right)^\frac{\lambda}{2}\frac{z^\frac{1}{4}}{f^\frac{1}{4}p^\frac{1}{4}}B_{\neq},\qquad z=2k^2p+\alpha f^2.\label{correctbariables}
 \end{equation}where $\lambda\in(0,1)$ is a free parameter and $C=C(\alpha,k,\eta,l,\lambda)>0$ is an arbitrary time-independent constant. At the linear level, different choices of \(C\) lead to equivalent energy estimates. We consider the energy functional\[
\mathbb{E}_2=|mA_{\neq}|^2+|mB_{\neq}|^2+|m\widehat{\mathcal{K}}_{\neq}|^2+2c(mA_{\neq}mB_{\neq}),
 \]where $m$ (\ref{multiplier}) is the Fourier multiplier used to control the vortex-stretching effect through enhanced dissipation and $c$ is another function whose upper bound depends on $\lambda$. The cross term is introduced to cancel some "bad" terms.
 
This construction is crucial for the nonlinear analysis: it both improves the regularity requirement for the decay estimates and allows the nonlinear energy estimates to be closed. To be more specific, we define\[
\mathcal{Z}=\mathcal{M}e^{\varpi\nu^\frac{1}{3}t}\langle \bm{\nabla}\rangle^s\mathcal{N}.
 \]Here $\mathcal{M}$ and $\mathcal{N}$ are some Fourier multipliers. We define the following energy functional\begin{equation}
 \mathbb{F}=\frac{1}{2}\left(||\mathcal{Z}A_{\neq}||_{L^2}^2+||\mathcal{Z}B_{\neq}||_{L^2}^2+||\mathcal{Z}\Xi_{\neq}||_{L^2}^2+\langle \mathcal{C}\mathcal{Z}A_{\neq},\mathcal{Z}B_{\neq}\rangle\right),\label{correctenergyfunctional}
 \end{equation}where $\mathcal{C}$ is the multiplier version of $c$. The choice of the coefficient $C(\alpha,k,\eta,l,\lambda)$ in the nonlinear energy estimate turns out to be very restricted. We show that the only case in which the energy estimate can be closed is when $\lambda=\frac{1}{2}$. A reasonable choice of $C$ is then $C^{-1}=1+|l|^\frac{1}{2}/(\alpha^\frac{1}{4}|k|^\frac{1}{2})$. \vspace{4pt}

To prove the nonlinear stability result in Theorem \ref{nonlinear}, our strategy is to prove the following bootstrap argument\begin{theorem}\label{boot}Under the assumptions in Theorem \ref{nonlinear}, suppose that the following bounds hold:\begin{align*}
\left\|\mathcal{Z}F_{\neq}\right\|_{L^\infty_tL^2}^2+\nu\left\|\bm{\nabla}_L\mathcal{Z}F_{\neq}\right\|^2_{L^2_t L^2}+\left\|\sqrt{-\frac{\mathcal{N}'}{\mathcal{N}}}\mathcal{Z}F_{\neq}\right\|_{L^2_tL^2}^2&\leq 200\varepsilon^2,\qquad F\in\left\{U_1,U_2,A,B,\frac{K}{\alpha}\right\},\\
\left\|\overline{U}_{1}\right\|_{L^\infty_tH^s}^2+\nu^{1-2\Upsilon}\left\|\partial_y\overline{U}_{1}\right\|^2_{L^2_t H^s}+\nu\left\|\partial_z\overline{U}_{1}\right\|^2_{L^2_t H^s}&\leq 200\nu^{-2\Upsilon}\varepsilon^2,\\
\left\|\overline{F}\right\|_{L^\infty_tH^s}^2+\nu\left\|\bm{\nabla}\overline{F}\right\|^2_{L^2_t H^s}&\leq 200\varepsilon^2,\qquad F\in\left\{U_2,U_3,\frac{\Theta}{\sqrt{\alpha}}\right\},\\
\left\|\widetilde{F}\right\|_{L^\infty_tH^s}^2+\nu\left\|\bm{\nabla}\widetilde{F}\right\|^2_{L^2_t H^s}&\leq 200\varepsilon^2,\qquad F\in\left\{U_1,U_3,\frac{\Theta}{\sqrt{\alpha}}\right\}.
\end{align*}Then the same bounds hold with $200$ replaced by $100$. 
\end{theorem}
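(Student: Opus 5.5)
The bootstrap will be closed mode by mode: energy estimates on the moving-frame system for the nonzero modes, the simple zero modes and the double zero modes. In each case the linear part gives the good (dissipative or damping) terms, and the nonlinear terms are bounded using the bootstrap hypotheses. This produces an estimate of the form $\text{LHS}\le \varepsilon^2 + C\varepsilon^3\nu^{-a}\alpha^{-b}$, and the smallness $\varepsilon\lesssim C_\alpha^{-1}\nu^{1+\sqrt2/3}$ together with the lower bound on $\alpha$ makes the cubic term at most $\varepsilon^2$. This improves $200$ to $100$.

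\textbf{Nonzero modes.} I would compute $\frac{d}{dt}\mathbb{F}$ for the functional \eqref{correctenergyfunctional} with $\lambda=\frac12$ and $C^{-1}=1+|l|^{1/2}/(\alpha^{1/4}|k|^{1/2})$. First I verify, at the linear level, that the cross term $\langle\mathcal{C}\mathcal{Z}A,\mathcal{Z}B\rangle$ cancels the bad commutator terms coming from $\partial_t$ of the weights $(f/p)^{\lambda/2}$, $z^{1/4}$, and that the terms involving $\partial_t\mathcal{M}$ and $\mathcal{N}'$ absorb the remainder. This yields coercivity $\mathbb F\approx \|\mathcal Z(A,B,\Xi)\|^2$, the dissipation $\nu\|\nabla_L\mathcal Z\cdot\|^2$, and the $\|\sqrt{-\mathcal N'/\mathcal N}\mathcal Z\cdot\|^2$ terms. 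In this framework $\mathcal M$ supplies the $\nu^{1/3}$ enhanced dissipation that pays for the $e^{\varpi\nu^{1/3}t}$ weight. The bounds for $U_1,U_2$ come from the div--curl relation \eqref{divcurl1}, since $\Xi$ and $U_3$ give $\partial_X U_2-\partial_Y^LU_1$ and $\partial_XU_1+\partial^L_YU_2$. The nonlinear terms split into three types:
\begin{itemize}
\item[(i)] $\neq\cdot\neq$ interactions, handled by product estimates in $H^s$ together with the $L^2_t$ control from $\nu^{1/6}$ and $\nu^{1/2}\nabla_L$;
\item[(ii)] zero $\times$ nonzero interactions with $\overline{U}_{2,3},\overline\Psi,\widetilde{\cdot}$, handled with the $L^\infty_tH^s$ bounds;
\item[(iii)] the term $\overline U_1\partial_X F_{\neq}$.
\end{itemize}
Type (iii) is the worst, costing $\nu^{-\Upsilon}\varepsilon\cdot\nu^{-1/3}$ against the enhanced dissipation. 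The weights in $A,B$ (fractional powers of $p,f,z$) must be commuted through products, which requires paraproduct bounds for these multipliers. This is where the choice $\lambda=\frac12$ is needed to avoid losing more than the available derivatives.

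\textbf{Simple zero modes.} Here the $(\overline U_2,\overline U_3,\overline\Psi)$ system is a dispersive wave system with heat dissipation, and a standard energy estimate closes using forcing by $(U_{\neq}\cdot\nabla U_{\neq})_0$. That forcing is bounded by $\nu^{-1/2}\|\cdot\|\cdot\nu^{-1/6}\|\cdot\|$-type $L^2_t$ products, giving $\varepsilon^2\nu^{-2/3}$. For $\overline U_1$ I use the explicit PV relation: $\overline{U}_1$ is recovered from $\overline{K}$ and $\overline\Theta$ through $\partial_y^{-1}$, which is the mechanism of $\mathcal L_{\rm Bou}$. The linear part is controlled by \eqref{cancellation}, via the zero-average hypotheses and the weighted norm $\langle y\rangle^{3/2+}\partial_z$. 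The nonlinear Duhamel part produces transient growth, estimated with the $L^2_0\to L^2_0$ bound $\nu^{-1}$ interpolated against the $\alpha$-dependent $L^\infty$ gain. This yields the $\nu^{-\Upsilon}$ loss, and the $\partial_y$ dissipation is weaker ($\nu^{1-2\Upsilon}$). The relation $\alpha\gtrsim1+\nu^{6\Upsilon-(4-2\sqrt2)}$ is exactly what makes the $\alpha^{-5/12}\nu^{-7/6}$ contribution fit under $\nu^{-\Upsilon}$.

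\textbf{Double zero modes.} These satisfy heat equations coupled with the $\sqrt\alpha$ rotation, and they are easily closed with forcing of size $\varepsilon^2\nu^{-1}$-type products that are absorbed.

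\textbf{Main obstacle.} The hardest step will be the nonzero-mode estimate of $\overline U_1\partial_X(A,B,\Xi)_{\neq}$ and the vortex-stretching terms $U_{\neq}\cdot\nabla\overline U_1$ inside the symmetrized variables. Balancing the $\nu^{-\Upsilon}$ growth of $\overline U_1$ against the enhanced dissipation is what fixes the exponent $1+\sqrt2/3$, through optimizing $\Upsilon$ in $\min\{\nu^{1+\sqrt2/3},\nu^{2/3+\sqrt2/3+\Upsilon}\}$. I would first attempt this term via the $\nu^{1/2-\Upsilon}\|\partial_y\overline U_1\|_{L^2_t}$ and $\nu^{1/2}\|\partial_z\overline U_1\|_{L^2_t}$ bounds, paired with $\nu^{1/6}$ $L^2_t$ control of the nonzero modes.
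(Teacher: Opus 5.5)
\textbf{There is a genuine gap, at the term you single out as the main obstacle. It comes from misreading the multiplier structure.}

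The role of $\mathcal{M}$ is not what you describe. In the paper, $\mathcal{M}$ (symbol $m$ from (\ref{multiplier})) is not the enhanced-dissipation multiplier; that role belongs to $\mathcal{M}_3\subset\mathcal{N}$. Instead, $m$ is the vortex-stretching multiplier. It absorbs the surviving $(1-\lambda)\frac{f'l^2}{pf}$ terms, which cannot be symmetrized away: coercivity forces $\lambda<\frac{2\sqrt2-1}{2}$, and the nonlinear analysis forces $\lambda=\frac12$.

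This $m$ satisfies only $1\lesssim m\lesssim\nu^{-\sqrt2/3}$ and admits no commutator estimate. So essentially every trilinear term loses $\nu^{-\sqrt2/3}$. That loss, multiplied by the $\nu^{-1}$ from two $L^2_t$ gradient factors, is where $\nu^{1+\sqrt2/3}$ comes from. It is not a balance involving $\Upsilon$.

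The paper also never commutes $\bm a,\bm b$ through products. It moves $\bm a^{-1},\bm b^{-1}$ onto the test function and uses the pointwise bounds of Lemma \ref{boundsonthemultipliers}, e.g. $\bm b^{-1}\lesssim 1+p^{1/2}/|k|$, at the cost of one $\bm\nabla_L$. For the same reason, your $\nu^{-1/3}$ for $\overline U_1\partial_X F_{\neq}$ is too optimistic; the paper gets $\nu^{-\sqrt2/3-2/3-\Upsilon}$.

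\textbf{Where your plan breaks.} Consider the $(\neq,0)$ piece $\langle\mathcal Z(U_{3,\neq}\partial_z\overline U_1),\partial_Y^L\mathcal Z\Xi_{\neq}\rangle$ of $J_3$. At top order, both $\partial_Y^L\mathcal Z\Xi_{\neq}$ and $\partial_z\overline U_1$ are controlled only in $L^2_t$, with $\|\partial_z\overline U_1\|_{L^2_tH^s}\lesssim\nu^{-1/2-\Upsilon}\varepsilon$. So the $\nu^{1/6}$ pairing you propose is unavailable, and the best bound is $\nu^{-1-\sqrt2/3-\Upsilon}\varepsilon^3$. This closes only if $\varepsilon\lesssim\nu^{1+\sqrt2/3+\Upsilon}$. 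That is not implied by the hypothesis whenever $\Upsilon>0$, and $\Upsilon\ge\frac{2-\sqrt2}{3}$ is forced when $\alpha\sim1$.

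\textbf{The missing idea has two parts.}
First, integrate by parts in $Z$ and then in $Y$. This trades $\partial_z\overline U_1$, which is sensitive to lift-up, for $\partial_y\overline U_1$, which is not ($\|\partial_y\overline U_1\|_{L^2_tH^s}\lesssim\nu^{-1/2}\varepsilon$). What remains are terms $\bm\nabla_L U_{3,\neq}\,\overline U_1$ with $\overline U_1$ undifferentiated.

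Second, when the derivatives in those terms fall on $U_{3,\neq}$, you need $\overline U_1$ in $L^\infty$ without the $\nu^{-\Upsilon}$ loss; see (\ref{largebound}). This is Lemma \ref{inftyonu1}: $\|\overline U_1\|_{L^\infty}\lesssim\varepsilon$ for $\varepsilon\lesssim\nu^{4/3}$. It rests on the weak $L^2\to L^\infty$ lift-up bound of Lemma \ref{suppofl3}, of size $\nu^{-1/2}\alpha^{-1/4}$ rather than $\nu^{-1}$.

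\textbf{Your zero-mode accounting also misassigns roles.}
The factor $\nu^{-\Upsilon}$ is not produced by interpolation. It is a relaxation built into the bootstrap, chosen to balance the zero-mode constraints $\nu^{4/3-\Upsilon}$ and $\alpha^{1/6}\nu^{5/3-\Upsilon}$ from Lemma \ref{simplezeromodeofu1} against $\nu^{2/3+\sqrt2/3+\Upsilon}$. In that lemma the $H^s$ lift-up is handled by the $L^2\to L^2$ bound $\nu^{-1}$ together with the $\Delta_{y,z}^{-1}$ gain.

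The condition $\alpha\gtrsim1+\nu^{6\Upsilon-(4-2\sqrt2)}$ is exactly $\alpha^{1/6}\nu^{5/3-\Upsilon}\gtrsim\nu^{1+\sqrt2/3}$. It does not come from the $\alpha^{-5/12}\nu^{-7/6}$ term.

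Finally, the identity $\overline u_1=-\alpha^{-1}\partial_y^{-1}(\overline K+\partial_z\overline\theta)$ is valid. But each piece is singular as $\eta\to0$, and the bootstrap gives no control of $\overline K$. You would therefore have to rederive the combined Duhamel formula (\ref{explicitsoluU1}) anyway.
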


In fact, identifying the appropriate variables in (\ref{correctbariables}) and constructing the energy functional (\ref{correctenergyfunctional}) constitute the main technical difficulty of the nonlinear analysis. Once this nonlinear-compatible symmetrization is established, the remaining estimates can be handled within the weighted energy framework. In particular, the chosen variables avoid the delicate derivative distribution issues arising in nonlinear interactions, which are a central difficulty in several other three-dimensional Couette stability problems, such as \cite{coti2024stability,huang2026stabilitythreshold3dboussinesq}.\vspace{4pt}

Now we explain why \eqref{coupledsystem} does not have a valid symmetrizer. If we still want to apply the symmetrization method, by writing $\widehat{U}_3=bB$ and $\omega_3=c C$, $b$ and $c$ must satisfy  $b/c=\sqrt{2}|k|/\sqrt{pf}$, so that the "bad" term $BC$ in the energy estimate is canceled. However, as we have shown in Theorem \ref{linear, Boussinesq}, the correct scaling should be $U_3\sim \mathcal{O}(1)$ and $\omega_3\sim\mathcal{O}(1)$, which contradict the relation of $b/c$ chosen here. Therefore, the system does not have a direct valid symmetrizing pair.\vspace{4pt}

In Section \ref{ch2}, we further investigate the properties and applications of the linearized PV formulation to related stability problems. In particular, we apply the same framework to the tilted Couette flow.

\section{\textbf{Linear stability of the plane Couette flow}}\label{ch1}
We start by analyzing the linearized problem. The linearized system reads\begin{flalign}
\left\{
\begin{aligned}
&\partial_t U+u_2\vec{\bm{e}}_1+y\partial_x U-\nu\Delta U-\theta\vec{\bm{e}}_3=\bm{\nabla}\Delta^{-1}\left[2\partial_x u_2-\partial_z\theta\right],\\
&\bm{\nabla}\cdot U=0,\\
    &\partial_t\theta+y\partial_x\theta+\alpha u_3=\kappa\Delta\theta.
\end{aligned}
\right.\label{eq:B2}
\end{flalign}In order to derive a suitable reformulation, we first set $\nu=\kappa=0$ for simplicity. Denote the vorticity as\[
\bm{\omega}=\bm{\nabla}\times U=\begin{pmatrix}
    \partial_y u_3-\partial_z u_2\\
    \partial_zu_1-\partial_x u_3\\
    \partial_x u_2-\partial_y u_1
\end{pmatrix}=\begin{pmatrix}
    \omega_1\\
    \omega_2\\
    \omega_3
\end{pmatrix}.
\]The linearized velocity equations read\begin{flalign*}
\left\{
\begin{aligned}
&\partial_t u_1+y\partial_x u_1=-u_2+ 2\partial^2_x\Delta^{-1} u_2-\partial_{xz}\Delta^{-1}\theta,\\
    &\partial_t u_2+y\partial_xu_2=2\partial_{xy}\Delta^{-1} u_2-\partial_{yz}\Delta^{-1}\theta,\\
    &\partial_t u_3+y\partial_xu_3=2\partial_{xz}\Delta^{-1} u_2+\Delta_h\Delta^{-1}\theta.
\end{aligned}
\right.
\end{flalign*}The linearized vorticity equations are
\begin{flalign*}
\left\{
\begin{aligned}
&\partial_t \omega_1+y\partial_x \omega_1=\partial_y\theta-\partial_x u_3,\\
    &\partial_t \omega_2+y\partial_x\omega_2=-\partial_x\theta-\partial_z u_2,\\
    &\partial_t \omega_3+y\partial_x\omega_3=-\partial_z u_3.
\end{aligned}
\right.
\end{flalign*}The continuity and temperature equations are\begin{flalign*}
\left\{
\begin{aligned}
&\partial_x u_1+\partial_y u_2+\partial_z u_3=0,\\
    &\partial_t \theta+y\partial_x\theta+\alpha u_3=0.
\end{aligned}
\right.
\end{flalign*}As discussed in \ref{norot} and \ref{maindiff}, the system does not have a good symmetrizing structure, and the analysis becomes more difficult. Nevertheless, we define the following new unknown:\[
K=\alpha\omega_3-\partial_z\theta.
\]This quantity solves the transport equation\[
\partial_tK+y\partial_x K=0.
\]It is then conserved in the moving frame. This quantity can be viewed as an external forcing in the system. With the help of $K$, we are able to derive closed systems for $(u_1,u_2)$ and $(u_3,\theta)$, respectively. Consider the pair $(q_1,\Gamma)$, where $q_1:=\Delta u_1$ and $\Gamma=\partial_x\omega_3$, they solve the following equations\begin{flalign}
\left\{
\begin{aligned}
&\partial_t \Gamma+y\partial_x\Gamma=\Delta_h\Delta^{-1}q_1+\partial^{-1}_x\partial_y\Gamma,\\
    &\partial_t q_1+y\partial_x q_1=-\partial^{-2}_x\Delta\Gamma+(2-\alpha)\Gamma-\partial^{-1}_x\partial_y q_1+\partial_x K
\end{aligned}
\right.\label{eq:B3}
\end{flalign}This is still complicated enough. However, note that we can actually solve for $u_1$ and $u_2$ in terms of other unknowns as they satisfy the following \textit{div-curl system}:\begin{flalign*}
\left\{
\begin{aligned}
&\partial_x u_2-\partial_y u_1=\frac{K+\partial_z\theta}{\alpha},\\
    &\partial_x u_1+\partial_y u_2=-\partial_z u_3.
\end{aligned}
\right.
\end{flalign*}On the other hand, using the above relation, we derive a new system for $(u_3,\theta):$\begin{flalign}
\left\{
\begin{aligned}
&\partial_t\theta+y\partial_x\theta=-\alpha u_3,\\
    &\partial_t u_3+y\partial_x u_3=2\partial_{xz}\Delta_h^{-1}\Delta^{-1}\left(\partial_x\left(\frac{K+\partial_z\theta}{\alpha}\right)-\partial_{zy}u_3\right)+\Delta_h\Delta^{-1}\theta.
\end{aligned}
\right.\label{eq:B4}
\end{flalign}This system is much simpler than (\ref{eq:B3}); notably, the interaction between $(u_1,u_2)$ and $(u_3,\theta)$ is only through the initial data.

We now study the decay properties of the non-zero modes of the velocity and temperature fields using this new system. Taking the Fourier transform of (\ref{eq:B3}) and (\ref{eq:B4}) in the moving frame, the system of equations reduces to
\begin{flalign}
\left\{
\begin{aligned}
&\partial_t \widehat{\Gamma}=\frac{f}{p}\widehat{Q}_1+\frac{\eta-kt}{k}\widehat{\Gamma},\\
    &\partial_t \widehat{Q}_1=-\frac{p}{k^2}\widehat{\Gamma}+(2-\alpha)\widehat{\Gamma}-\frac{\eta-kt}{k}\widehat{Q}_1+ik\widehat{\mathcal{K}},
\end{aligned}
\right.\label{eq:B5}
\end{flalign}while\begin{flalign}
\left\{
\begin{aligned}
&\partial_t\widehat{\Theta}=-\alpha\widehat{U}_3,\\
    &\partial_t \widehat{U}_3=-2\frac{kl}{pf}\left(\frac{ik\widehat{\mathcal{K}}-kl\widehat{\Theta}}{\alpha}+l(\eta-kt)\widehat{U}_3\right)+\frac{f}{p}\widehat{\Theta}.
\end{aligned}
\right.\label{eq:B6}
\end{flalign}We have $\partial_t \mathcal{K}=0$. From the numerical simulation in Figure \ref{numinvi1}, it is natural to expect the following decay rates\begin{equation}
\left|\widehat{U}_{1,\neq}\right|\lesssim\frac{1}{t},\qquad \left|\widehat{U}_{2,\neq}\right|\lesssim\frac{1}{t},\qquad \left|\widehat{U}_{3,\neq}\right|\sim 1,\qquad \left|\widehat{\Theta}_{\neq}\right|\sim 1.\label{predictionoflid}
\end{equation}\begin{figure}[H]
    \centering
    \begin{minipage}[b]{0.48\textwidth}
        \centering
        \includegraphics[width=\textwidth]{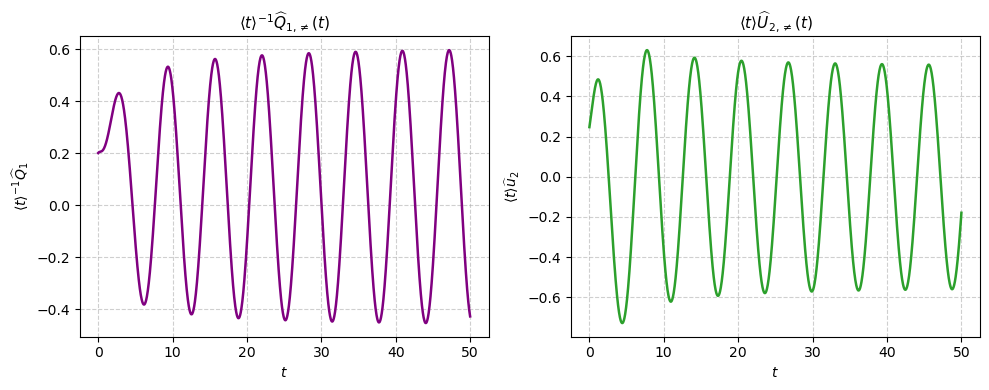}
        
    \end{minipage}
    \hspace{0.04cm}
    \begin{minipage}[b]{0.48\textwidth}
        \centering
        \includegraphics[width=\textwidth]{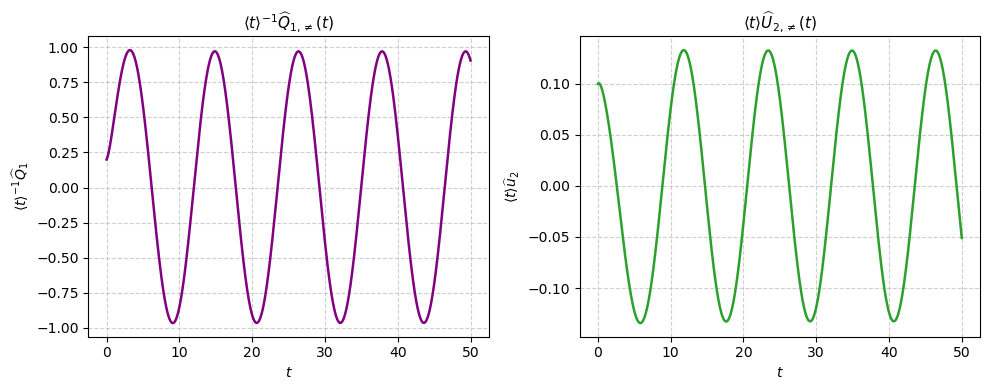}
   
    \end{minipage}
    \hspace{0.04cm}
    \begin{minipage}[b]{0.48\textwidth}
        \centering
        \includegraphics[width=\textwidth]{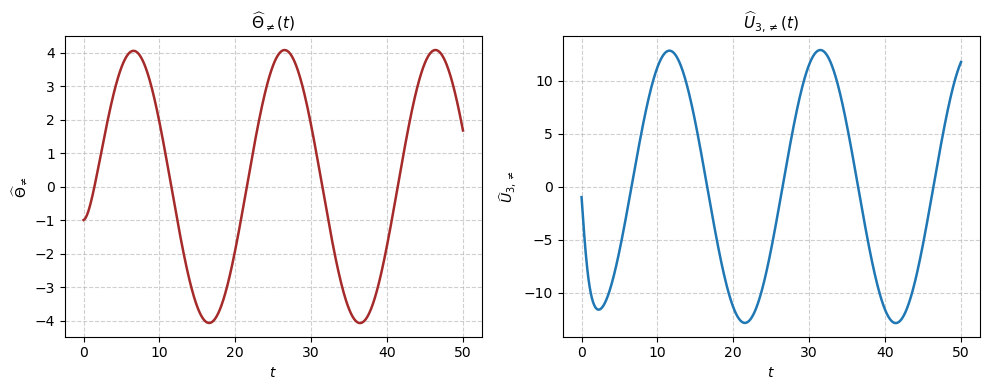}
   
    \end{minipage}
    \hspace{0.04cm}
    \begin{minipage}[b]{0.48\textwidth}
        \centering
        \includegraphics[width=\textwidth]{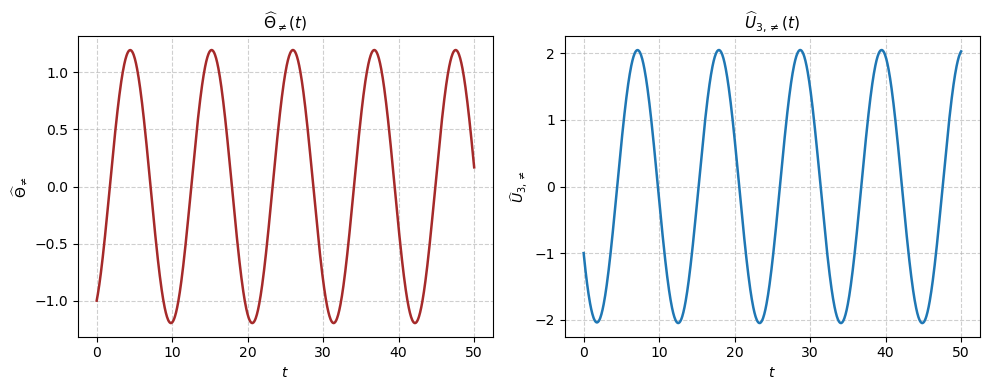}
   
    \end{minipage}
     \caption{Numerical solutions of \eqref{eq:B5} and \eqref{eq:B6}.
Top row: evolution of
$\bigl(\langle t\rangle^{-1}\widehat{Q}_{1,\neq},
\langle t\rangle\widehat{U}_{2,\neq}\bigr)$
for \eqref{eq:B5} with nonzero $\widehat{\mathcal K}$.
The left and right pairs correspond to
$(k,\eta,l,\alpha)=(-1,-0.5,0.8,1)$ and
$(2,-2,0.3,0.3)$, respectively.
Bottom row: evolution of
$(\widehat{\Theta}_{\neq},\widehat{U}_{3,\neq})$
for \eqref{eq:B6}.
The left and right pairs correspond to
$(k,\eta,l,\alpha)=(-1,0,1,0.1)$ and
$(-2,1,0.4,0.34)$, respectively.}\label{numinvi1}
\end{figure}We now show that the predicted decay rates agree with the observed ones.
\subsection{\textbf{Inviscid damping and enhanced dissipation}}\label{lsa}
In this section, we provide a rigorous proof of the prediction (\ref{predictionoflid}). The idea is to analyze the simpler system-$(U_3,\Theta)$ using the energy method.

Consider the linearized system in the viscous case with $\nu=\kappa$.\begin{flalign*}
\left\{
\begin{aligned}
&\partial_t\theta+y\partial_x\theta=-\alpha u_3+\nu\Delta\theta,\\
    &\partial_t u_3+y\partial_x u_3=2\partial_{xz}\Delta_h^{-1}\Delta^{-1}\left(\partial_x\left(\frac{K+\partial_z\theta}{\alpha}\right)-\partial_{zy}u_3\right)+\Delta_h\Delta^{-1}\theta+\nu\Delta u_3.
\end{aligned}
\right.
\end{flalign*}In the moving frame, we take the Fourier transform and perform the change of variables \[\widehat{\Theta}_{\neq}=\sqrt{\alpha}a(t)A,\qquad \widehat{U}_{3,\neq}=b(t)B,\qquad\widehat{\mathcal{K}}_{\neq}=\alpha\Xi.\]The system reads \begin{align*}
\partial_tA=&-\textcolor{blue}{\sqrt{\alpha} \frac{b}{a}}B-\frac{a'}{a}A-\nu p A,\\
    \partial_t B=&-2\frac{ik^2l}{ bpf}\Xi+\sqrt{\alpha}\left(\textcolor{blue}{\frac{2}{\alpha}\frac{k^2}{f}+\frac{f}{p}}-\frac{2}{\alpha}\frac{k^2}{p}\right)\frac{aA}{b}+\left(\frac{p'l^2}{pf}-\frac{b'}{b}\right)B-\nu p B.
\end{align*}To remove the slowly decaying terms in the system (or to partially symmetrize the system), we pick\begin{equation}
\frac{b}{a}=\frac{1}{\sqrt{\alpha}}\frac{\sqrt{z}}{\sqrt{pf}},\qquad z=2k^2p+\alpha f^2,\label{symmetrizer}
\end{equation}so that the three terms with slowly decaying coefficients can be canceled in the energy estimate. Choosing the two terms in $B$ is tricky: if we only choose $\frac{f}{p}$, due to the anisotropy, i.e. when $|l|\rightarrow\infty$, the energy has exponential growth in terms of $|l|$. If we include the term $-\frac{2}{\alpha}\frac{k^2}{p}$ as well, the energy functional which we will specify later requires $\alpha>c$ for some $c>0$ to ensure the coercivity. Since this term is integrable-in-time, we do not have to worry about it too much as it does not cause large growth as long as $\alpha$ is bounded away from zero.

To avoid cumbersome notation, we assume that $A$, $B$ and $i\Xi$ are purely real-valued and purely imaginary-valued functions respectively. This causes no loss of generality, since one may decompose the linear system into its real and imaginary parts and apply the energy estimates to each part separately. Now, we define the following energy functional with a cross term\[
\mathbb{E}=\frac{1}{2}\left[|A|^2+|B|^2+s(AB)\right],
\]where $s=s(t,k,\eta,l)$ is a bounded function which we are going to specify later. The purpose of introducing it is to allow for some cancellation; this is particularly useful for the nonlinear analysis. In fact, it suffices to set $s=0$ for the linear analysis, and the proof is much simpler, see Section \ref{isisi} for details.\vspace{2pt}

Taking the time derivative of $\mathbb{E}$, we see that\begin{align*}
    \partial_t\mathbb{E}&=-\frac{a'}{a}\left(|A|^2-|B|^2\right)+(1-\lambda)\frac{f'l^2}{pf}|B|^2-\frac{2}{\sqrt{\alpha}}\frac{k^2}{p}\frac{a}{b}AB-\frac{s}{\sqrt{\alpha}}\frac{k^2}{p}\frac{a}{b}|A|^2-\frac{2ik^2l}{pfb}\Xi B+\frac{s'}{2}AB\\
    &-\frac{\sqrt{\alpha}s}{2}\frac{b}{a}(|B|^2-|A|^2)+(1-\lambda)\frac{s}{2}\frac{f'l^2}{pf}AB-\frac{isk^2l}{pfb}\Xi A-\nu p|A|^2-\nu p |B|^2-\nu p sAB,
\end{align*}where we have chosen\[
\frac{a'}{a}=\lambda\left(\frac{f'l^2}{pf}\right)-\frac{b'}{b},\qquad \lambda\in[0,1].
\]The reason for such a choice is rather technical. Indeed, if $\lambda=1$, as we will discuss later, the coercivity cannot be guaranteed, i.e. it is possible that $\mathbb{E}<0$ for certain $A$ and $B$. In fact, in the $B$ equation, there is another type of slowly decaying coefficient $\frac{p'l^2}{pf}$, which is due to the vortex stretching effect. This coefficient causes some growth of the solution and this effect cannot be overcome using the symmetrization method. As a result, we use the cross term $s(AB)$ to allow some cancellation. Now we have obtained two equations for $a$ and $b$, solving them gives the following expressions\[
a=C\alpha^\frac{1}{4}\left(\frac{f}{p}\right)^\frac{\lambda}{2}\frac{p^\frac{1}{4}f^\frac{1}{4}}{z^\frac{1}{4}},\qquad b=C\left(\frac{f}{p}\right)^\frac{\lambda}{2}\frac{1}{\alpha^\frac{1}{4}}\frac{z^\frac{1}{4}}{f^\frac{1}{4}p^\frac{1}{4}},
\]where $C$ is some constant that may depend on $k,\eta,l$. To cancel the oscillatory terms $\left(|A|^2-|B|^2\right)$, we set\begin{equation}
\frac{s}{2}\cdot\alpha^\frac{1}{2}\cdot\frac{b}{a}=\frac{a'}{a}.\label{crossfunction}
\end{equation}This simplifies to\[
\frac{s}{2}\sqrt{\frac{z}{pf}}=\frac{\lambda}{2}\left(\frac{f'}{f}-\frac{p'}{p}\right)+\frac{1}{4}\left(\frac{p'}{p}+\frac{f'}{f}-\frac{z'}{z}\right).\]It is crucial to compute the upper bound for  $\frac{s}{2}$  to ensure the coercivity. In particular, we prove the following.

\begin{lemma}\label{upperboundofs} For any $\lambda\geq0$, denote $\zeta=\sup_{t,k\neq0,\eta,l}\left|\frac{s}{2}\right|$, then $\zeta$ can be computed explicitly\[
\zeta=\frac{1}{2\sqrt{2}}+\frac{\lambda}{\sqrt{2}}.
\]
\end{lemma}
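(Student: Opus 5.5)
The plan is to make the definition \eqref{crossfunction} of $s$ explicit, bound the resulting expression uniformly in $(t,k,\eta,l,\alpha)$, and then exhibit a sequence of frequencies along which the bound is attained.

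\textbf{Step 1: explicit formula for $s/2$.} Write $X:=\eta-kt$, so that $f=k^2+X^2$, $p=f+l^2$, $f'=p'=-2kX$, and $z'=(2k^2+2\alpha f)f'$. Two elementary identities follow:
\[
\frac{f'}{f}-\frac{p'}{p}=\frac{f'l^2}{pf},\qquad
\frac{p'}{p}+\frac{f'}{f}-\frac{z'}{z}=\frac{f'\,(2k^2p^2-\alpha f^2l^2)}{pfz}.
\]
For the second one, expand $(p+f)z-(2k^2+2\alpha f)pf$ using $z=2k^2p+\alpha f^2$ and $p-f=l^2$. Substituting both identities into \eqref{crossfunction} gives
\[
\frac{s}{2}=\frac{f'}{4z\sqrt{pfz}}\Big[\,2\lambda l^2 z+2k^2p^2-\alpha f^2l^2\Big]
=\frac{f'}{4z\sqrt{pfz}}\Big[\,\mathsf A\,(p+2\lambda l^2)+\mathsf B\,(2\lambda-1)l^2\Big],
\]
where $\mathsf A:=2k^2p$ and $\mathsf B:=\alpha f^2$, so that $z=\mathsf A+\mathsf B$.

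\textbf{Step 2: upper bound.} Set $c_1=p+2\lambda l^2$ and $c_2=(2\lambda-1)l^2$. Since $p\ge l^2$ and $\lambda\ge0$, we have $|c_2|\le c_1$, and therefore
\[
|\mathsf A c_1+\mathsf B c_2|\le c_1(\mathsf A+\mathsf B)=c_1z.
\]
Using $z\ge \mathsf A=2k^2p$ to bound $z^{1/2}\ge\sqrt2|k|\sqrt p$, and $|f'|=2|k||X|$, we get
\[
\Big|\frac{s}{2}\Big|\le \frac{2|k||X|\,c_1}{4\sqrt{pf}\,\sqrt2|k|\sqrt p}
=\frac{(p+2\lambda l^2)}{2\sqrt2\,p}\cdot\frac{|X|}{\sqrt f}\le\frac{1+2\lambda}{2\sqrt2}
=\frac{1}{2\sqrt2}+\frac{\lambda}{\sqrt2}.
\]
The last inequality uses $l^2\le p$ and $|X|\le\sqrt f$. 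This bound is uniform in $\alpha>0$ and $t$, which is what coercivity of $\mathbb E$ requires.

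\textbf{Step 3: sharpness.} Every inequality above is asymptotically an equality in the following regime. Fix $k\neq0$ and choose $t,\eta$ so that $|X|\to\infty$ with $X$ of the sign making $f'>0$. Then let $l\to\infty$ much faster than $X$, so that $l^2/p\to1$ and $|X|/\sqrt f\to1$. Finally require additionally $\alpha f^2\ll k^2p$, i.e.\ $X^4\ll l^2$ for fixed $\alpha$, so that $\mathsf B/\mathsf A\to0$ and $z\sim\mathsf A$. In this regime
\[
\frac{s}{2}\to\frac{1}{2\sqrt2}+\frac{\lambda}{\sqrt2},
\]
and hence $\zeta$ equals the stated value. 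This shows that the supremum is not attained but is approached along such a sequence.

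\textbf{Main obstacle.} The only delicate point is Step 2 when $\lambda<\tfrac12$. There the $\mathsf B$-contribution has the opposite sign, and bounding the two pieces separately by the triangle inequality would overshoot $\frac{1+2\lambda}{2\sqrt2}$. The fix is the observation $|c_2|\le c_1$. It lets the $\mathsf A$- and $\mathsf B$-pieces be absorbed together into the single factor $z$ before the lower bound $z\ge2k^2p$ is applied.
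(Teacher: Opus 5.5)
Your proof is correct and follows essentially the same route as the paper: the same explicit formula for $s/2$, the same two reductions $|f'|\le 2|k|f^{1/2}$ and $z\ge 2k^2p$, and the same sharpness regime, in which $l\to\infty$ much faster than $|\eta-kt|\to\infty$. The only difference is cosmetic. You get $|2\lambda l^2z+2k^2p^2-\alpha f^2l^2|\le (p+2\lambda l^2)z$ by splitting the numerator into its $2k^2p$ and $\alpha f^2$ pieces and using $|c_2|\le c_1$, whereas the paper checks the two one-sided inequalities against $(1+2\lambda)zp$ directly.
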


\noindent\textit{Proof.} See Appendix \ref{appendix1}.

The coercivity holds true only if $\zeta<1$, which is when $\lambda<\frac{2\sqrt{2}-1}{2}\approx 0.91$. Therefore, it is not possible to cancel all the slowly decaying terms in form of $\frac{f'l^2}{pf}$ in the energy estimate. We now go back to the energy estimate, recall that $\partial_t\mathbb{E}$ satisfies\begin{align*}
    \partial_t\mathbb{E}&=(1-\lambda)\frac{f'l^2}{pf}|B|^2-\frac{2}{\sqrt{\alpha}}\frac{k^2}{p}\frac{a}{b}AB-\frac{s}{\sqrt{\alpha}}\frac{k^2}{p}\frac{a}{b}|A|^2+\frac{2ik^2l}{pfb}\Xi B+\frac{s'}{2}AB\\
    &+(1-\lambda)\frac{s}{2}\frac{f'l^2}{pf}AB+\frac{isk^2l}{pfb}\Xi A-\nu p|A|^2-\nu p |B|^2-\nu p sAB.
\end{align*}Since $2\zeta:=\sup_{k,\eta,l,t}\left|s\right|$, we then have\[
\frac{1}{2}\left(1-\zeta\right)\left[|A|^2+|B|^2\right]\leq\mathbb{E}\leq\frac{1}{2}\left(1+\zeta\right)\left[|A|^2+|B|^2\right].
\]Using this relation and the fact $\frac{1}{\sqrt{\alpha}}\frac{k^2}{p}\frac{a}{b}\leq\frac{k^2}{\alpha^\frac{1}{2}f}$, we have\begin{align*}
\partial_t\mathbb{E}+2\nu p\frac{1-\zeta}{1+\zeta}\mathbb{E}&\leq\frac{2}{1-\zeta}\left(\frac{2k^2}{\alpha^\frac{1}{2}f}+\frac{|s'|}{4}\right)\mathbb{E}+\frac{4k^2|l|}{pfb}\sqrt{\mathbb{E}}|\Xi|\\&+(1-\lambda)\frac{f'l^2}{pf}|B|^2+\frac{s(1-\lambda)}{2}\frac{f'l^2}{pf}(AB)\\
&\leq\frac{2}{1-\zeta}\left(\frac{2k^2}{\alpha^\frac{1}{2}f}+\frac{|s'|}{4}\right)\mathbb{E}+\frac{4k^2|l|}{pfb}\sqrt{\mathbb{E}}|\Xi|\\&+(1-\lambda)\frac{f'l^2}{pf}|B|^2+\frac{s(1-\lambda)}{2}\frac{f'l^2}{pf}(AB).
\end{align*}Since $s'$ has finitely many zeros, \[
\int_0^t|s'(\tau)|\ d\tau\leq\int_{\mathbb{R}}|s'(\tau)|\ d\tau<C
\]for some constant $C>0$ uniformly bounded in terms of $k,\eta$ and $l$. It remains to control the two terms in the last line of the inequality. The presence of these terms is due to the vortex stretching as just mentioned. One strategy to balance these terms is to define an additional multiplier $m$ such that $m\geq0$ and $m'\leq 0$. Let $\mathbb{F}=m^2\mathbb{E}$, we therefore obtain an inequality for $\mathbb{F}$,\begin{align*}
    \partial_t\mathbb{F}+2\nu p\frac{1-\zeta}{1+\zeta}\mathbb{F}&\leq2\frac{m'}{m}\mathbb{F}+(1-\lambda)\frac{f'l^2}{pf}|mB|^2+\frac{s(1-\lambda)}{2}\frac{f'l^2}{pf}(mA mB)\\
    &+\frac{2}{1-\zeta}\left(\frac{2k^2}{\alpha^\frac{1}{2}f}+\frac{|s'|}{4}\right)\mathbb{F}+\frac{4k^2|l|}{pfb}\sqrt{\mathbb{F}}|m\Xi|.
\end{align*}The terms on the second line are integrable and hence we do not need to treat them separately. For the first line, notice that the viscous term can dominate these terms, as long as\[
\nu p\gg\frac{|f'|l^2}{pf}.
\]This condition can be relaxed to \[
\nu f\gtrsim \frac{|k|}{f^\frac{1}{2}}\qquad\Rightarrow\qquad \left|\frac{\eta}{k}-t\right|\gtrsim\nu^{-\frac{1}{3}}.\]Thus we only need to consider the case $\left|\frac{\eta}{k}-t\right|\lesssim\nu^{-\frac{1}{3}}$. We set\begin{flalign}
\frac{m'}{m}=\left\{
\begin{aligned}
&-\varrho_1\frac{f'l^2}{pf}
&t\in\left[\frac{\eta}{k},\frac{\eta}{k}+\frac{20}{\nu^\frac{1}{3}}\right],
\\
&\varrho_2\frac{f'l^2}{pf}&t\in\left[\frac{\eta}{k}-\frac{20}{\nu^\frac{1}{3}},\frac{\eta}{k}\right], \\
  &0&\left|\frac{\eta}{k}-t\right|>\frac{20}{\nu^\frac{1}{3}},
\end{aligned}
\right.\label{multiplier}
\end{flalign}where $\varrho_1,\varrho_2\geq0$ are some constants satisfying\begin{equation}
\boxed{\varrho_1\geq\frac{1-\lambda}{2}\left(1+\frac{1}{\sqrt{1-\zeta^2}}\right),\qquad \varrho_2\geq\frac{1-\lambda}{2}\left(-1+\frac{1}{\sqrt{1-\zeta^2}}\right)}\qquad \forall\lambda\in \left[0,\frac{2\sqrt{2}-1}{2}\right].\label{twocons}
\end{equation}The initial data of $m$ will also be specified later. Now we consider the following stretching terms\begin{equation}
\frac{m'}{m}|mA|^2+\frac{m'}{m}|mB|^2+s\frac{m'}{m}(mAmB)+(1-\lambda)\frac{f'l^2}{pf}|mB|^2+(1-\lambda)\frac{s}{2}\frac{f'l^2}{pf}(mAmB)\label{streching balance}
\end{equation}It suffices to consider the case in which $\left|\frac{\eta}{k}-t\right|\leq\frac{20}{\nu^\frac{1}{3}}$. There are two cases to consider\begin{itemize}
    \item When $f'\geq0$, for (\ref{streching balance}) to be non-positive, we need\[
    -\varrho_1 x^2+s\left(\frac{1-\lambda}{2}-\varrho_1\right)x+(-\varrho_1+1-\lambda)\leq0,
    \]where $x=\frac{mA}{mB}$. To ensure that the above inequality holds for any $x\in\mathbb{R}$, we require
    \[
    \varrho_1\geq\left(\frac{1}{2}+\frac{1}{\sqrt{4-s^2}}\right)(1-\lambda).
    \]
    \item When $f'\leq0$, the non-positivity of (\ref{streching balance}) now requires\[
    \varrho_2 x^2+\left(\frac{1-\lambda}{2}+\varrho_2\right)sx+\varrho_2+1-\lambda\geq0
    \]Similarly, this yields\[
    \varrho_2\geq\left(-\frac{1}{2}+\frac{1}{\sqrt{4-s^2}}\right)(1-\lambda).
    \]
\end{itemize}We now study the multiplier $m$. In order to simplify the analysis, we set $m=m_1m_2$, with each of them corresponding to different parts of $m$: \begin{flalign}
\frac{m'_1}{m_1}=\left\{
\begin{aligned}
&-\varrho_1\frac{f'l^2}{pf}
&t\in\left[\frac{\eta}{k},\frac{\eta}{k}+\frac{20}{\nu^\frac{1}{3}}\right],
\\
  &0&t\notin\left[\frac{\eta}{k},\frac{\eta}{k}+\frac{20}{\nu^\frac{1}{3}}\right].
\end{aligned}
\right.,\qquad \frac{m'_2}{m_2}=\left\{
\begin{aligned}
&\varrho_2\frac{f'l^2}{pf}
&t\in\left[\frac{\eta}{k}-\frac{20}{\nu^\frac{1}{3}},\frac{\eta}{k}\right],
\\
  &0&t\notin\left[\frac{\eta}{k}-\frac{20}{\nu^\frac{1}{3}},\frac{\eta}{k}\right],
\end{aligned}
\right.
\end{flalign}with the initial data\begin{align}
    m_1(0,k,\eta,l)=\left(\frac{k^2+400k^2\nu^{-\frac{2}{3}}}{k^2+400k^2\nu^{-\frac{2}{3}}+l^2}\cdot\frac{k^2+l^2}{k^2}\right)^{\varrho_1},\nonumber\\
    m_2(0,k,\eta,l)=\left(\frac{k^2+400k^2\nu^{-\frac{2}{3}}}{k^2+400k^2\nu^{-\frac{2}{3}}+l^2}\cdot\frac{k^2+l^2}{k^2}\right)^{\varrho_2}.\label{initialdata229}
\end{align}The explicit solutions are computed as follows:
\begin{align*}
    m_1=\left\{
\begin{aligned}
&\left(\frac{k^2+400k^2\nu^{-\frac{2}{3}}}{k^2+400k^2\nu^{-\frac{2}{3}}+l^2}\cdot\frac{k^2+l^2}{k^2}\right)^{\varrho_1}
&\frac{\eta}{k}+\frac{20}{\nu^\frac{1}{3}}<0,
\\
  &\left(\frac{k^2+\eta^2}{k^2+\eta^2+l^2}\cdot\frac{k^2+l^2}{k^2}\right)^{\varrho_1}&\frac{\eta}{k}<0<\frac{\eta}{k}+\frac{20}{\nu^\frac{1}{3}}<t,\\
  & \left(\frac{p}{f}\cdot\frac{k^2+\eta^2}{k^2+\eta^2+l^2}\cdot\frac{k^2+l^2}{k^2}\cdot\frac{k^2+400k^2\nu^{-\frac{2}{3}}}{k^2+400k^2\nu^{-\frac{2}{3}}+l^2}\right)^{\varrho_1}&\frac{\eta}{k}<0<t<\frac{\eta}{k}+\frac{20}{\nu^\frac{1}{3}},\\
  & 1&0<\frac{\eta}{k}<\frac{\eta}{k}+\frac{20}{\nu^\frac{1}{3}}<t,\\
  & \left(\frac{p}{f}\cdot\frac{k^2+400k^2\nu^{-\frac{2}{3}}}{k^2+400k^2\nu^{-\frac{2}{3}}+l^2}\right)^{\varrho_1}&0<\frac{\eta}{k}<t<\frac{\eta}{k}+\frac{20}{\nu^\frac{1}{3}},\\
  &\left(\frac{k^2+400k^2\nu^{-\frac{2}{3}}}{k^2+400k^2\nu^{-\frac{2}{3}}+l^2}\cdot\frac{k^2+l^2}{k^2}\right)^{\varrho_1} &t<\frac{\eta}{k}.
\end{aligned}
\right.
\end{align*}Similarly,
\begin{align*}
m_2=\left\{
\begin{aligned}
&\left(\frac{k^2+400k^2\nu^{-\frac{2}{3}}}{k^2+400k^2\nu^{-\frac{2}{3}}+l^2}\cdot\frac{k^2+l^2}{k^2}\right)^{\varrho_2}
&\frac{\eta}{k}<0,
\\
  &\left(\frac{f}{p}\cdot\frac{(k^2+l^2)^2}{k^4}\cdot\frac{k^2+400k^2\nu^{-\frac{2}{3}}}{k^2+400k^2\nu^{-\frac{2}{3}}+l^2}\right)^{\varrho_2}&\frac{\eta}{k}-\frac{20}{\nu^\frac{1}{3}}<0<\frac{\eta}{k}<t,\\
  & \left(\frac{f}{p}\cdot\frac{k^2+l^2}{k^2}\cdot\frac{k^2+\eta^2+l^2}{k^2+\eta^2}\frac{k^2+400k^2\nu^{-\frac{2}{3}}}{k^2+400k^2\nu^{-\frac{2}{3}}+l^2}\right)^{\varrho_2}&\frac{\eta}{k}-\frac{20}{\nu^\frac{1}{3}}<0<t<\frac{\eta}{k},\\
  & 1&0<\frac{\eta}{k}-\frac{20}{\nu^\frac{1}{3}}<\frac{\eta}{k}<t,\\
  & \left(\frac{f}{p}\cdot\frac{k^2+l^2}{k^2}\right)^{\varrho_2}&0<\frac{\eta}{k}-\frac{20}{\nu^\frac{1}{3}}<t<\frac{\eta}{k},\\
  &\left(\frac{k^2+400k^2\nu^{-\frac{2}{3}}}{k^2+400k^2\nu^{-\frac{2}{3}}+l^2}\cdot\frac{k^2+l^2}{k^2}\right)^{\varrho_2} &t<\frac{\eta}{k}-\frac{20}{\nu^\frac{1}{3}}.
\end{aligned}
\right.
\end{align*}It turns out that $m_1$ and $m_2$ enjoy the following properties due to monotonicity\begin{align*}
1\lesssim m_1\lesssim\nu^{-\frac{2}{3}\varrho_1},&\qquad 1\lesssim m_2\lesssim\nu^{-\frac{2}{3}\varrho_2},\\
m_1+\frac{m_1(0)}{m_1}\lesssim p^{\varrho_1}(0),&\qquad m_2+\frac{m_2(0)}{m_2}\lesssim p^{\varrho_2}(0).
\end{align*}As a result, $m$ satisfies the following bound.\[
1\lesssim m\lesssim \nu^{-\frac{2}{3}\varrho_1-\frac{2}{3}\varrho_2},\qquad m+\frac{m(0)}{m}\lesssim  p^{\varrho_1+\varrho_2}(0).
\]

With this definition of the Fourier multiplier, the energy inequality now becomes\[
\partial_t\mathbb{F}+\frac{9}{5}\nu p\frac{1-\zeta}{1+\zeta}\mathbb{F}\leq\frac{2}{1-\zeta}\left(\frac{k^2}{\alpha^\frac{1}{2}f}+\frac{|s'|}{4}\right)\mathbb{F}+\frac{4k^2|l|}{pfb}\sqrt{\mathbb{F}}|m\Xi|
\]Using change of variables $\mathbb{F}e^{\frac{9}{5}\nu\frac{1-\zeta}{1+\zeta}\int_0^tp\ ds }\to\mathbb{F}$ and $\Xi  e^{\frac{9}{10}\nu\frac{1-\zeta}{1+\zeta}\int_0^tp\ ds }\to\Xi$, and using the inequality (\ref{sqrtineq}), we see that the following bound holds\[
\mathbb{F}^\frac{1}{2}\leq \mathbb{F}^\frac{1}{2}(0) e^{\frac{1}{2}\int_0^t h(s)\ ds}+\frac{1}{2}e^{\frac{1}{2}\int_0^t h(s)\ ds}\int_0^t \frac{k^2|l|}{bpf}|m\Xi|e^{-\frac{1}{2}\int_0^s h(\tau)\ d\tau},
\]with $h=\frac{2}{1-\zeta}\left(\frac{k^2}{\alpha^\frac{1}{2}f}+\frac{|s'|}{4}\right)$. Thus $h$ is obviously integrable and the integral remains bounded as long as $\alpha$ is bounded away from $0$. Thus we have\[
\mathbb{F}^\frac{1}{2}\lesssim \mathbb{F}^\frac{1}{2}(0) +|m(0)\Xi(0)|\int_0^t \frac{k^2|l|}{bpf}\ ds
\]Since $b=C\left(\frac{f}{p}\right)^\frac{\lambda}{2}\frac{z^\frac{1}{4}}{\alpha^\frac{1}{4}f^\frac{1}{4}p^\frac{1}{4}}$, we compute that\begin{align*}
\int_0^t\frac{k^2|l|}{bpf}\ ds&=\frac{\alpha^\frac{1}{4}}{C}\int_0^t\left(\frac{p}{f}\right)^\frac{\lambda}{2}\frac{k^2|l|}{p^\frac{3}{4}f^\frac{3}{4}z^\frac{1}{4}}\ ds\\
&\lesssim\frac{1}{C}\int_0^t\left(\frac{p}{f}\right)^\frac{\lambda}{2}\frac{k^2}{f^{\frac{5}{4}}p^\frac{1}{4}}\ ds\\
&\lesssim\frac{1}{C}\left\{
\begin{aligned}
&\langle l\rangle^{\lambda-\frac{1}{2}}
&\lambda\geq\frac{1}{2},
\\
  &1&\lambda\leq\frac{1}{2}.
\end{aligned}
\right.
\end{align*}

As a result, we have \begin{align*}
\frac{|m(t)\widehat{\Psi}_{\neq}|}{a(t)}+\frac{|m(t)\widehat{U}_{3,\neq}|}{b(t)}\lesssim\frac{|m(0)\widehat{\Psi}_{\neq}(0)|}{a(0)}+\frac{|m(0)\widehat{U}_{3,\neq}(0)|}{b(0)}\\+|m(0)|\left[|\widehat{U}_{\neq}(0)|+\alpha^{-\frac{1}{2}}|\widehat{\Psi}_{\neq}(0)|\right]\left\{
\begin{aligned}
&p^{\frac{\lambda}{2}+\frac{1}{4}}(0)
&\lambda\geq\frac{1}{2},\\
  &p^{\frac{1}{2}}(0)&\lambda\leq\frac{1}{2}.
\end{aligned}
\right.
\end{align*}Notice that the following bounds hold for $a$ and $b$\begin{align*}
a&\lesssim C\alpha^\frac{1}{4}\frac{f^{\frac{1}{4}+\frac{\lambda}{2}}}{p^{\frac{\lambda}{2}-\frac{1}{4}}z^\frac{1}{4}}\lesssim \left\{
\begin{aligned}
&C
&\lambda\geq\frac{1}{2},\\
  &C\langle l\rangle^{\frac{1}{2}-\lambda}&\lambda\leq\frac{1}{2}.
\end{aligned}
\right.,\\
b&\lesssim\frac{C}{\alpha^\frac{1}{4}}\frac{z^\frac{1}{4}f^{\frac{\lambda}{2}-\frac{1}{4}}}{p^{\frac{1}{4}+\frac{\lambda}{2}}}\lesssim C,\\
a^{-1}&\lesssim\frac{1}{C\alpha^\frac{1}{4}}\frac{z^\frac{1}{4}p^{\frac{\lambda}{2}-\frac{1}{4}}}{f^{\frac{1}{4}+\frac{\lambda}{2}}}\lesssim\frac{1}{C\alpha^\frac{1}{4}}\frac{|k|^\frac{1}{2}p^\frac{1}{4}+\alpha^\frac{1}{4}f^\frac{1}{2}}{f^{\frac{1}{4}+\frac{\lambda}{2}}} p^{\frac{\lambda}{2}-\frac{1}{4}}\lesssim\frac{1}{C}(1+l^2)^\frac{\lambda}{2},\\
b^{-1}&\lesssim\frac{\alpha^\frac{1}{4}}{C}\frac{p^{\frac{1}{4}+\frac{\lambda}{2}}}{z^\frac{1}{4}f^{\frac{\lambda}{2}-\frac{1}{4}}}\lesssim\frac{1}{C}(1+l^2)^{\frac{1}{4}+\frac{\lambda}{2}}.
\end{align*}As a result, we can obtain the following estimate\begin{align*}
    \left\|\frac{\theta}{\alpha^\frac{1}{2}}\right\|_{L^2}\lesssim\left\{
\begin{aligned}
& \left\|\frac{\theta}{\alpha^\frac{1}{2}}\right\|_{H^{2(\varrho_1+\varrho_2)+\lambda}}+||u_{3,\neq}(0)||_{H^{2(\varrho_1+\varrho_2)+\lambda+\frac{1}{2}}}+\left\|\left(u_{\neq},\frac{\theta}{\alpha^\frac{1}{2}}\right)_{\neq}(0)\right\|_{H^{2(\varrho_1+\varrho_2)+\lambda+\frac{1}{2}}}
&\lambda\geq\frac{1}{2},\\
  & \left\|\frac{\theta}{\alpha^\frac{1}{2}}\right\|_{H^{2(\varrho_1+\varrho_2)+\frac{1}{2}}}+||u_{3,\neq}(0)||_{H^{2(\varrho_1+\varrho_2)+1}}+\left\|\left(u_{\neq},\frac{\theta}{\alpha^\frac{1}{2}}\right)_{\neq}(0)\right\|_{H^{2(\varrho_1+\varrho_2)+\frac{3}{2}-\lambda}}&\lambda\leq\frac{1}{2}.
\end{aligned}
\right.
\end{align*}Similarly, $||u_{3,\neq}||_{L^2}$ satisfies\[
||u_{3,\neq}||_{L^2}\lesssim \left\|\frac{\theta(0)}{\alpha^\frac{1}{2}}\right\|_{H^{2(\varrho_1+\varrho_2)+\lambda}}+||u_{3,\neq}(0)||_{H^{2(\varrho_1+\varrho_2)+\lambda+\frac{1}{2}}}+\left\{
\begin{aligned}
&\left\|\left(u_{\neq},\frac{\theta}{\alpha^\frac{1}{2}}\right)_{\neq}(0)\right\|_{H^{2(\varrho_1+\varrho_2)+\lambda+\frac{1}{2}}}
&\lambda\geq\frac{1}{2},\\
  &\left\|\left(u_{\neq},\frac{\theta}{\alpha^\frac{1}{2}}\right)_{\neq}(0)\right\|_{H^{2(\varrho_1+\varrho_2)+1}}&\lambda\leq\frac{1}{2}.
\end{aligned}
\right.
\]The enhanced dissipation is due to the factor\footnote{Notice that $\int_0^t k^2+(\eta-ks)^2+l^2\ ds\geq\frac{t^3}{12}$.} $e^{-\frac{9}{10}\frac{1-\zeta}{1+\zeta}\nu\int_0^tp\ ds}$. It remains to choose $\lambda$ such that the regularity is minimized. Here we set \[
\varrho_1=\frac{1-\lambda}{2}\left(1+\frac{1}{\sqrt{1-\zeta^2}}\right),\qquad \varrho_2=\frac{1-\lambda}{2}\left(-1+\frac{1}{\sqrt{1-\zeta^2}}\right).
\]The regularity is minimized when $\lambda>\frac{1}{2}$, the critical value $\lambda_m$ solves the following equation \begin{equation}
5-6\lambda_m=4\left(\frac{7}{8}-\frac{\lambda_m}{2}-\frac{\lambda^2_m}{2}\right)^\frac{3}{2},\qquad \lambda_m\in\left(\frac{1}{2},1\right).\label{minreg}
\end{equation}As can be seen in Figure \ref{fig9}, the root is $\lambda_m\approx 0.7845$, we therefore obtain the following decay estimate\begin{align*}
  ||u_{1,\neq}||_{L^2}+||u_{2,\neq}||_{L^2}&\lesssim C_\alpha\frac{e^{-\frac{1}{277}\nu t^3}}{\langle t\rangle}\left(\left\|\frac{\theta_{\neq}(0)}{\alpha^\frac{1}{2}}\right\|_{H^{\sigma+2}}+||u_{\neq}(0)||_{H^{\sigma+2}}\right),\\
 \left\|\frac{\theta_{\neq}}{\alpha^\frac{1}{2}}\right\|_{L^2}+||u_{3,\neq}||_{L^2}&\lesssim C_\alpha e^{-\frac{1}{277}\nu t^3}\left(\left\|\frac{\theta_{\neq}(0)}{\alpha^\frac{1}{2}}\right\|_{H^{\sigma}}+||u_{\neq}(0)||_{H^{\sigma}}\right),
\end{align*}where $C_\alpha=e^{\frac{c_1}{\alpha}}\left(1+\alpha^{-1}\right)$ for some $c_1>0$ and $\sigma:=\frac{2(1-\lambda_m)}{\sqrt{1-\left(\frac{1+2\lambda_m}{2\sqrt{2}}\right)^2}}+\lambda_m+\frac{1}{2}\approx2.315$. For the bound on $(u_{1,\neq},u_{2,\neq})$, we have used the fact\[
|\widehat{U}_{1,\neq}|+|\widehat{U}_{2,\neq}|\lesssim\frac{p(0)}{\langle t\rangle}\left(|\widehat{U}_{\neq}(0)|+|\widehat{\Theta}_{\neq}(0)|+|\widehat{U}_{3,\neq}|+|\widehat{\Theta}_{\neq}|\right),
\]which is direct by solving the div-curl system (\ref{divcurl1}).\begin{figure}[H]
    \centering
    \begin{minipage}[b]{0.62\textwidth}
        \centering
        \includegraphics[width=\textwidth]{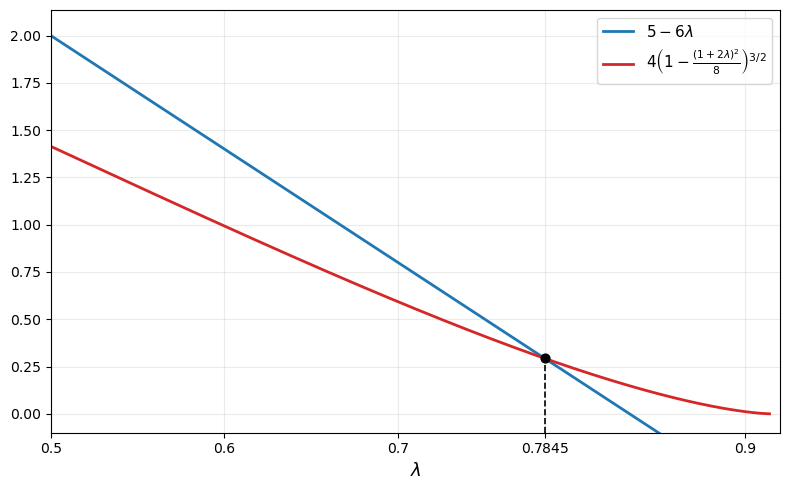}
        
    \end{minipage}
     \caption{The root solving (\ref{minreg}).}\label{fig9}
\end{figure}

\begin{remark} In other 3D Couette flow problems, the Fourier multipliers used, when needed, to control the vortex stretching through viscosity, are different from the one used here. To be specific, in other problems, the multipliers are in form of (see for instance \cite{zelati2025stabilityviscousthreedimensionalrotating,9e0bb666-1e4e-3aa2-87b0-f916c2336491,huang2026stabilitythreshold3dboussinesq})\begin{flalign*}
\frac{M'}{M}=\left\{
\begin{aligned}
&-c\frac{|p'|}{p}
&\left|\frac{\eta}{k}-t\right|\leq\frac{c}{\nu^\frac{1}{3}},
\\
  &0&\left|\frac{\eta}{k}-t\right|>\frac{c}{\nu^\frac{1}{3}}.
\end{aligned}
\right.\qquad\text{or}\qquad \frac{M'}{M}=\left\{
\begin{aligned}
&-c\frac{p'}{p}
&t\in\left[\frac{\eta}{k},\frac{\eta}{k}+\frac{c}{\nu^\frac{1}{3}}\right],
\\
  &0&t\notin\left[\frac{\eta}{k},\frac{\eta}{k}+\frac{c}{\nu^\frac{1}{3}}\right].
\end{aligned}
\right.
\end{flalign*}for some constant $c>0$. Here we mention two differences between $M$ and $m$ used here.
\begin{itemize}
    \item The multiplier $M$ satisfies a commutator-type estimate (uniform in $k$), namely
    \[
    M(t,k,\eta,l)\lesssim\langle \eta-\eta',l-l'\rangle^{c'}M(t,k,\eta',l'),
    \qquad c'\in[0,2].
    \]
    However, $m$ does not satisfy a similar estimate. This is entirely due to the anisotropy- for example, when $\frac{\eta}{k}<0<t<\frac{\eta}{k}+\frac{20}{\nu^{\frac{1}{3}}}$, the presence of a single $k^2$ cannot be used to control $\langle\eta-\eta',l-l'\rangle$ or $\langle\eta',l'\rangle$. Therefore, there is no such type of commutator estimate (with small $c'$). In the nonlinear analysis, we can therefore only use the viscosity to absorb the term $m$.
    \item The time taken for the viscosity to overcome the vortex stretching is still $c\nu^{-\frac{1}{3}}+\frac{\eta}{k}$, even though the derivative $\frac{|f'|l^2}{pf}$ seems to be much smaller than $\frac{p'}{p}$. In particular, we need to find a minimal time $t^*\geq\frac{\eta}{k}$ such that\[
\nu p\geq c\frac{|f'|l^2}{pf}.
\]Setting $k=1$, we see that the requirement now becomes\[
\left(1+(\eta-t)^2\right)\left(1+(\eta-t)^2+l^2\right)^2\geq c|\eta-t|l^2\nu^{-1}.
\]Taking $l\approx\frac{1}{\nu^\frac{1}{3}}$, we see that it is necessary to have $|\eta-t|\gtrsim\frac{1}{\nu^\frac{1}{3}}$, which is the critical time.
\end{itemize}\end{remark}

\subsection{\textbf{Conditional and weak lift-up of the zero modes}}\label{weaklu}

We now turn to the zero modes. It suffices to study the inviscid problem with $\nu=\kappa=0$. The linearized zero-mode system reads\[\left\{
\begin{aligned}
&\partial_tu_{1,0}=-u_{2,0},\\
    &\partial_tu_{2,0}=-\partial_{yz}\Delta^{-1}_{y,z}\theta_0,\\
    &\partial_t u_{3,0}=\partial^2_y\Delta^{-1}_{y,z}\theta_{0},\\
    &\partial_t\theta_0=-\alpha u_{3,0}.
\end{aligned}
\right.\]Taking the Fourier transform, we can compute the exact solutions directly when $\frac{\eta}{l}\neq0$:\begin{flalign}
\left\{
\begin{aligned}
& \widehat{u}_{1,0}=\widehat{u}_{1,0}(0)+\frac{2l}{\alpha\eta}\widehat{\theta}_0(0)\sin^2\left(\frac{\sqrt{\alpha}t\eta}{2\sqrt{\eta^2+l^2}}\right)-\frac{\sqrt{\eta^2+l^2}}{\sqrt{\alpha}\eta}\widehat{u}_{2,0}(0)\sin\left(\frac{\sqrt{\alpha}t\eta}{\sqrt{\eta^2+l^2}}\right),\\&
    \widehat{u}_{2,0}=-\frac{l}{\sqrt{\alpha}\sqrt{\eta^2+l^2}}\widehat{\theta}_0(0)\sin\left(\frac{\sqrt{\alpha}t\eta}{\sqrt{\eta^2+l^2}}\right)+\widehat{u}_{2,0}(0)\cos\left(\frac{\sqrt{\alpha}t\eta}{\sqrt{\eta^2+l^2}}\right),\\&
    \widehat{u}_{3,0}=\widehat{u}_{3,0}(0)\cos\left(\frac{\sqrt{\alpha}t\eta}{\sqrt{\eta^2+l^2}}\right)+\frac{\eta}{\sqrt{\alpha}\sqrt{\eta^2+l^2}}\widehat{\theta}_0(0)\sin\left(\frac{\sqrt{\alpha}t\eta}{\sqrt{\eta^2+l^2}}\right),\\&
    \widehat{\theta}_0=\widehat{\theta}_0(0)\cos\left(\frac{\sqrt{\alpha}t\eta}{\sqrt{\eta^2+l^2}}\right)-\sqrt{\alpha}\frac{\eta\widehat{u}_3(0)-l\widehat{u}_2(0)}{\sqrt{\eta^2+l^2}}\sin\left(\frac{\sqrt{\alpha}t\eta}{\sqrt{\eta^2+l^2}}\right).
\end{aligned}
\right.\label{eq:B9}
\end{flalign}In the exact formula of $\widehat{u}_{1,0}$ (\ref{eq:B9}), there is a singular term $\eta^{-1}$, which can cause some growth of the solution. Specifically, we have\[
\left|\frac{l}{\eta}\sin\left(\frac{\sqrt{\alpha}t\eta}{\sqrt{\eta^2+l^2}}\right)\right|,\ \left|\frac{l}{\eta}\left(1-\cos\left(\frac{\sqrt{\alpha}t\eta}{\sqrt{\eta^2+l^2}}\right)\right)\right|\lesssim\sqrt{\alpha} t,\qquad \text{as }\ \frac{\eta}{l}\rightarrow0.
\]

This phenomenon is known as the \textit{lift-up} effect, the solution experiences large transient growth in the presence of  small viscosity, and it is a common feature in the 3D viscous flow near the shear flow. To be more specific, consider the 3D Navier Stokes equations linearized around the shear flow $U(y)\vec{\bm{e}}_1$, the zero-modes satisfy the non-normal system\[
\partial_t\begin{pmatrix}
    u_{1,0}\\u_{2,0}\\u_{3,0}
\end{pmatrix}=\begin{pmatrix}
    \nu\Delta&-U'(y)&0\\
    0&\nu\Delta&0\\
    0&0&\nu\Delta
\end{pmatrix}\begin{pmatrix}
    u_{1,0}\\u_{2,0}\\u_{3,0}
\end{pmatrix}.
\]It is the non-vanishing $U'(y)$ that induces the transient growth. In particular, $u_{1,0}$ is solved as follows\[
u_{1,0}=e^{\nu t\Delta}u_{1,0}(0)-\int_0^t e^{\nu(t-s)\Delta}\left(U'(y)e^{\nu s\Delta}u_{2,0}(0)\right)\ ds. \]

\noindent\textbf{\textit{Quantifying the "lift-up" effect}}\vspace{4pt}

We start with the $\Omega_{\mathbb{R}}$ case. Setting $U(y)=y$, the zero mode $u_{1,0}$ can be solved explicitly\[u_{1,0}=e^{\nu t\Delta}\left(u_{1,0}(0)-tu_{2,0}(0)\right).
\]Recall the following $L^p-L^q$ estimate of the heat kernel on $\mathbb{R}^2$, \[
\left\|e^{\nu t\Delta}f\right\|_{L^p}\lesssim\frac{1}{(\nu t)^{\frac{1}{q}-\frac{1}{p}}}\left\|f\right\|_{L^q}.
\]Thus, the only available $t$-independent $L^p-L^q$ estimate is the following
\begin{equation}
||u_{1,0}||_{L^\infty}\lesssim||u_{1,0}(0)||_{L^\infty}+\nu^{-1}||u_{2,0}(0)||_{L^1}.\label{nse}
\end{equation}

If we revisit the exact solution (\ref{eq:B9}), the growth-in-time function $\sqrt{\alpha}t$ of $u_{1,0}$ can be interpreted in Fourier space as the singular limit as $ \eta l^{-1}\to 0$. This is fundamentally different from the Navier--Stokes case discussed above. In fact, this type of lift-up effect produces smaller transient growth when measuring it in different norms, and can even be completely suppressed. To see the differences, we first prove the following estimate for the lift-up operator $\mathcal{L}_{\text{Bou}}$ (\ref{Bouliftupope}).

\begin{lemma}\label{nonsup1} For any $p\in[1,2)$, the following estimate holds
\begin{align*}
    \left\|\partial_y^{-1} \partial_z e^{\nu t \Delta}\left(e^{\sqrt{\alpha}t\mathcal{R}_y}-I_d\right)f\right\|_{L^\infty(\mathbb{R}^2)}\lesssim\left(\nu^{-1}\alpha^\frac{1}{2}\right)^\frac{1}{p}||f||_{L^p}.
\end{align*}\end{lemma}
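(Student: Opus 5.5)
The plan is to work entirely on the Fourier side in the variables $(\eta,l)\in\mathbb{R}^2$ and to combine the Hausdorff--Young inequality with a H\"older estimate. Write the operator as a Fourier multiplier with symbol
\[
\mathfrak{m}(t,\eta,l)=\frac{l}{\eta}\,e^{-\nu t(\eta^2+l^2)}\left(e^{i\sqrt{\alpha}t\eta/\sqrt{\eta^2+l^2}}-1\right),
\]
up to harmless constants and the factor $i$ coming from $\partial_y^{-1}\partial_z$. Here $\mathcal{R}_y$ is understood as the symbol $i\eta/|\eta,l|$, consistent with \eqref{Bouliftupope}, and the sign $\pm$ plays no role. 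Then
\[
\|\cdot\|_{L^\infty}\le\|\mathfrak{m}\widehat f\|_{L^1}\le\|\mathfrak{m}\|_{L^p}\|\widehat f\|_{L^{p'}}\lesssim\|\mathfrak{m}\|_{L^p}\|f\|_{L^p}.
\]
The last step is Hausdorff--Young, which is valid since $p\in[1,2)$ gives $p'\in(2,\infty]$. The whole lemma therefore reduces to the uniform-in-time bound $\|\mathfrak{m}(t)\|_{L^p(\mathbb{R}^2)}^p\lesssim\nu^{-1}\alpha^{1/2}$.

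The first step is a pointwise bound on the symbol. Using $|e^{i\vartheta}-1|\le\min\{2,|\vartheta|\}$, we get
\[
|\mathfrak{m}|\lesssim\min\left\{\frac{|l|}{|\eta|},\ \sqrt{\alpha}t\frac{|l|}{|\eta,l|}\right\}e^{-\nu t\rho^2}\le\min\left\{\frac{1}{|\cos\varphi|},A\right\}e^{-\nu t\rho^2},\qquad A:=\sqrt{\alpha}t.
\]
Here we pass to polar coordinates $(\eta,l)=\rho(\cos\varphi,\sin\varphi)$. The key point is that the singular factor depends only on the angle, while the Gaussian depends only on the radius, so the $L^p$ integral factorises:
\[
\|\mathfrak{m}\|_{L^p}^p\lesssim\int_0^\infty e^{-p\nu t\rho^2}\rho\,d\rho\cdot\int_0^{2\pi}\min\left\{|\cos\varphi|^{-1},A\right\}^p d\varphi .
\]
The radial factor is $\lesssim(\nu t)^{-1}$. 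For the angular factor, near $\varphi=\pi/2$ we have $|\cos\varphi|\approx|\varphi-\pi/2|$. For $p>1$ this gives $\lesssim\min\{A^p,\,1+A^{p-1}\}$, and for $p=1$ the second entry becomes $1+\log\langle A\rangle$. In the case $p=1$ one should instead just use $\min\{1,A\}/(\nu t)\le\sqrt{\alpha}/\nu$, since $|l/\eta|\min\{\cdot\}$ is already integrable after the logarithm is absorbed; it is cleaner to use the bound $\min\{A,\,1+\log\langle A\rangle\}\lesssim A^{0}\min\{A,1\}+\log\langle A\rangle$ and note that $\log\langle A\rangle/(\nu t)\lesssim\sqrt{\alpha}/\nu$.

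It remains to split in time. For $A\le1$, i.e.\ $t\le\alpha^{-1/2}$, we get $\|\mathfrak{m}\|_p^p\lesssim A^p/(\nu t)=\alpha^{p/2}t^{p-1}/\nu\le\alpha^{1/2}/\nu$, because $t^{p-1}\le\alpha^{-(p-1)/2}$ when $p\ge1$. For $A\ge1$, we get $\|\mathfrak{m}\|_p^p\lesssim A^{p-1}/(\nu t)=\alpha^{(p-1)/2}t^{p-2}/\nu\le\alpha^{1/2}/\nu$, because $p<2$ and $t\ge\alpha^{-1/2}$. Taking $p$-th roots yields exactly $(\nu^{-1}\alpha^{1/2})^{1/p}$.

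I expect the main obstacle to be the angular integral near the direction $\eta=0$, which is the source of the lift-up. One has to use the two competing bounds $|\cos\varphi|^{-1}$ and $A$ and match them precisely so that the powers of $t$ cancel against the $(\nu t)^{-1}$ from the heat factor in both time regimes. This cancellation is also why the restriction $p<2$ appears: at $p=2$ the large-time bound would leave an uncompensated growth in $t$.
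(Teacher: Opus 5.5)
Your proof is correct and has the same skeleton as the paper's. Both bound the $L^\infty$ norm by the $L^1$ norm of the symbol times $\widehat f$, apply H\"older and Hausdorff--Young, and pass to polar coordinates. The heat factor then gives $(\nu t)^{-1}$ radially, and everything reduces to an angular integral concentrated near $\eta=0$. The paper's write-up interchanges $p$ and $p'$ relative to the statement; your labelling is the consistent one.

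The difference is in the angular factor. The paper uses the single bound $|e^{i\vartheta}-1|^{p}\le 2^{p-1}|\vartheta|$, which spends exactly one power of $A=\sqrt{\alpha}t$. This cancels the $(\nu t)^{-1}$ for all $t$ at once and leaves $\int_0^{2\pi}|\sin\theta|^{p}|\cos\theta|^{1-p}\,d\theta$, which is finite exactly when $p<2$. So the paper needs no splitting in time and no logarithmic case at $p=1$.

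Your two-sided bound $\min\{|\cos\varphi|^{-1},A\}$ with the split $A\le 1$ / $A\ge 1$ costs a case distinction. It also produces the somewhat tangled $p=1$ paragraph, which is nevertheless fine because $1+\log\langle A\rangle\lesssim A$ for $A\ge 1$. In return it gives more: a time-resolved bound $\nu^{-1}\alpha^{1/2}\min\{A^{p-1},A^{p-2}\}$ for the $p$-th power of the $L^p$ norm of the symbol, valid for $1<p\le 2$. For $p<2$ this decays once $t\gg\alpha^{-1/2}$.

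Your closing remark about $p=2$ is wrong. At $p=2$ your own large-time computation gives $A^{p-1}/(\nu t)=A/(\nu t)=\alpha^{1/2}/\nu$, with no leftover power of $t$. The angular integral $\int\min\{|\cos\varphi|^{-2},A^2\}\,d\varphi\approx A$ is finite. With Plancherel in place of Hausdorff--Young, your argument therefore proves the estimate for $p=2$ as well. This matches the $L^2\to L^\infty$ bound $\nu^{-1/2}\alpha^{1/4}$ of Lemma~\ref{suppofl3} on $\mathbb{R}\times\mathbb{T}$.

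The restriction $p<2$ in the statement is an artefact of the paper's one-power bound, whose angular integral $\int|\cos\theta|^{-1}\,d\theta$ diverges logarithmically at $p=2$. Uncompensated growth $t^{p-2}$ of the symbol norm only appears for $p>2$, which is also where Hausdorff--Young fails.
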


\noindent\textsc{Proof}. Taking the Fourier transform, we see that\begin{align*}
    \left|\partial_y^{-1} \partial_z e^{\nu t \Delta}\left(e^{\sqrt{\alpha}t\mathcal{R}_y}-I_d\right)f\right|&\lesssim\left|\int_{\mathbb{R}^2}\frac{l}{\sqrt{\eta^2+l^2}} e^{-\nu t(\eta^2+l^2)}\frac{\left(e^{i\sqrt{\alpha}t\frac{\eta}{\sqrt{\eta^2+l^2}}}-1\right)}{\frac{\eta}{\sqrt{\eta^2+l^2}}}\widehat{f}e^{i\eta y+ilz}\ d\eta dl\right|\\
    &\lesssim\left\|\widehat{f}\right\|_{L^p}\left\|\frac{l}{\eta}e^{-\nu t(\eta^2+l^2)}\left(e^{i\sqrt{\alpha}t\frac{\eta}{\sqrt{\eta^2+l^2}}}-1\right)\right\|_{L^{p'}}
\end{align*}where $\frac{1}{p}+\frac{1}{p'}=1$. When $p\in\left[2,\infty\right]$,  the Hausdorff-Young's inequality (e.g. p104 in \cite{grafakos2008classical}) states that \[
\left\|\hat{f}\right\|_{L^p}\lesssim ||f||_{L^{\frac{p}{p-1}}}.
\]Denote the second norm as $I$, we see that\begin{align*}
  I^{p'}&\lesssim \int_{\mathbb{R}^2}e^{-p'\nu t(\eta^2+l^2)}\left|\frac{l}{\eta}\right|^{p'}\left|e^{i\sqrt{\alpha}t\frac{\eta}{\sqrt{\eta^2+l^2}}}-1\right|^{p'}\ d\eta dl\\
  &\lesssim \int_0^{2\pi}\int_0^\infty r|\tan\theta|^{p'}\left|e^{i\sqrt{\alpha}t\cos\theta}-1\right|^{p'}e^{-p'\nu r^2t}\ drd\theta,
\end{align*}where we have used the polar coordinates $\eta=r\cos\theta$ and $l=r\sin\theta$. Since\[
\int_0^\infty re^{-c\nu r^2 t}\ dr\lesssim \nu^{-1}t^{-1}.
\]we obtain\[
I^{p'}\lesssim \int_0^{2\pi}\nu^{-1}t^{-1}\frac{|\sin\theta|^{p'}}{|\cos\theta|^{p'-1}}\sqrt{\alpha}t\lesssim\left(\nu^{-1}\alpha^\frac{1}{2}\right).
\]The last inequality holds when $p'-1<1$.\hfill \qedsymbol

From the above estimate, we see that, if $u_0(0),\theta(0)\in L^{2-}\cap L^\infty$, then $u_{1,0}$ satisfies the bound\[
||u_{1,0}||_{L^\infty}\lesssim\left(1+\alpha^{-\left(\frac{1}{4}-\right)}\nu^{-\left(\frac{1}{2}+\right)}\right)\left\|\left(u,\frac{\theta}{\alpha^\frac{1}{2}}\right)_0(0)\right\|_{L^\infty\cap L^{2-}}.
\]That is, the transient growth in terms of $\nu$ is weakened and may even be suppressed by strong stratification.
\vspace{5pt}

\noindent\textbf{\textit{Suppression of lift-up}}\vspace{5pt}

On the other hand, since this lift-up is due to the singular behaviour at $\frac{\eta}{l}\rightarrow0$, it is possible to overcome this singular behaviour completely by imposing an additional assumption on the initial data. Here we present one possible method.\vspace{5pt}

The key observation is the following

\begin{lemma}\label{supofl1} Assume that $f:\mathbb{R}^2\rightarrow\mathbb{R}$ satisfies $\int_{\mathbb{R}}f(y,z)\ dy=0$, then\[
||\partial^{-1}_yf||_{L^\infty}\lesssim \left\|\langle\bm{\nabla}\rangle^{1+}f\right\|_{L^2}+\left\|\langle y\rangle^{\frac{3}{2}+}\langle\partial_z\rangle^{\frac{1}{2}+}f\right\|_{L^2}.
\]\end{lemma}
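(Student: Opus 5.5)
The plan is to avoid working with $\partial_y^{-1}$ in Fourier space, where it is singular at $\eta=0$. Instead I define the antiderivative in physical space, $F(y,z):=\int_{-\infty}^y f(y',z)\,dy'$. The zero-mean assumption $\int_{\mathbb{R}}f\,dy=0$ gives the alternative representation $F(y,z)=-\int_y^\infty f(y',z)\,dy'$. So $F$ is the natural choice of $\partial_y^{-1}f$: it is the one decaying as $|y|\to\infty$, and in Fourier it coincides with $\widehat f/(i\eta)$, which is regular at $\eta=0$ because $\widehat f(0,l)=0$.

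\emph{Bound for $|y|\geq 1$.} For $y\le -1$ I use the first representation, for $y\geq1$ the second. Either way,
\[
|F(y,z)|\le \int_{|y'|\ge |y|}|f(y',z)|\,dy'\le \Big(\int \langle y'\rangle^{-3-}\,dy'\Big)^{1/2}\Big(\int\langle y'\rangle^{3+}|f(y',z)|^2dy'\Big)^{1/2}.
\]
The first factor is finite because of the weight $\langle y\rangle^{3/2+}$. For the second factor I need a sup over $z$, so I apply the one-dimensional Sobolev embedding $H^{1/2+}_z\subset L^\infty_z$ to $g(y',z):=\langle y'\rangle^{3/2+}f(y',z)$. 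Concretely, I take the sup in $z$ inside the $y'$ integral, $\sup_z\int|g|^2dy'\le\int\sup_z|g|^2dy'$, and then bound $\sup_z|g(y',\cdot)|\lesssim\|\langle\partial_z\rangle^{1/2+}g(y',\cdot)\|_{L^2_z}$. Integrating in $y'$ gives the term $\|\langle y\rangle^{3/2+}\langle\partial_z\rangle^{1/2+}f\|_{L^2}$. Since $\langle\partial_z\rangle$ commutes with multiplication by functions of $y$, no commutator arises. On $\mathbb{T}$ in $z$ the same embedding holds.

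\emph{Bound for $|y|\le1$.} Here I write $F(y,z)=F(-1,z)+\int_{-1}^y f(y',z)\,dy'$. The first term is already controlled by the previous step. For the second I use Cauchy--Schwarz on an interval of length at most $2$, which gives $\big(\int_{-1}^1|f(y',z)|^2dy'\big)^{1/2}$, and then repeat the sup-in-$z$ argument. This again yields at most $\|\langle y\rangle^{3/2+}\langle\partial_z\rangle^{1/2+}f\|_{L^2}$. The term $\|\langle\nabla\rangle^{1+}f\|_{L^2}$ is then not strictly needed, but I keep it as harmless. Alternatively, one could split in Fourier into $|\eta|\ge1$, bounded by $\int|\widehat f|/|\eta|\lesssim\|\langle\nabla\rangle^{1+}f\|_{L^2}$ via Cauchy--Schwarz, and $|\eta|\le1$, handled with $|\widehat f(\eta,l)|\le|\eta|\,\|\partial_\eta\widehat f(\cdot,l)\|_{L^\infty}$.

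The main obstacle is justifying the interchange of sup in $z$ with the $y$-integral and making sure only $1/2+$ derivatives in $z$ are spent. The physical-space route handles this cleanly, so I will follow it.
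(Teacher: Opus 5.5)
Your proof is correct, but it takes a different route from the paper's. The paper works on the Fourier side throughout. It bounds $|\partial_y^{-1}f|\le\int_{\mathbb{R}^2}|\widehat f|/|\eta|\,d\eta\,dl$ and handles $|\eta|\ge1$ by Cauchy--Schwarz against $\|\langle\bm{\nabla}\rangle^{1+}f\|_{L^2}$. For $|\eta|\le1$ it uses the zero mean in the form $\widehat f(\eta,l)=\int(e^{-i\eta y}-1)\widehat f_l(y)\,dy$, where $\widehat f_l$ is the Fourier transform in $z$ only. This gives $|\widehat f/\eta|\le\|y\,\widehat f_l\|_{L^1_y}$, followed by Cauchy--Schwarz in $y$ and then in $l$; this is essentially the alternative you sketch at the end.

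Your physical-space antiderivative avoids the frequency splitting and shows that the $H^{1+}$ term is superfluous for the bound as stated. For the supremum alone even your case distinction is unnecessary, since $|F(y,z)|\le\|f(\cdot,z)\|_{L^1_y}$ for every $y$ directly from the definition. In your route, the zero mean and the full weight $\langle y\rangle^{\frac{3}{2}+}$ are really needed only to identify $F$ with the multiplier $(i\eta)^{-1}$ that the paper calls $\partial_y^{-1}$. You assert this step rather than prove it, and $\widehat f(0,l)=0$ by itself does not make $\widehat f/\eta$ regular. Your two-sided tail bound supplies the missing step:
\begin{itemize}
\item it gives $|F(y,z)|\lesssim\langle y\rangle^{-1-}\|\langle y\rangle^{\frac{3}{2}+}f(\cdot,z)\|_{L^2_y}$, hence $F\in L^2(\mathbb{R}^2)$;
\item then $i\eta\widehat F=\widehat f$ forces $\widehat F=\widehat f/(i\eta)$ with no $\delta(\eta)$ component.
\end{itemize}
You should write this out.

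In exchange for the extra Sobolev term, the paper's argument proves something strictly stronger: $\int|\widehat f|/|\eta|\,d\eta\,dl$ is bounded by the right-hand side. That bound is unaffected by inserting any Fourier multiplier of modulus at most one. This is how the lemma is actually used for $u_{1,0}$, where the heat factor and the oscillatory factors $\sin\big(\sqrt{\alpha}t\eta/\sqrt{\eta^2+l^2}\big)$ of \eqref{eq:B9} sit between $\partial_y^{-1}$ and the data. Your $L^\infty$ estimate does not transfer in that way, because those oscillatory propagators do not preserve the weighted norm $\|\langle y\rangle^{\frac{3}{2}+}\cdot\|_{L^2}$ uniformly in $t$.
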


\noindent\textsc{Proof}. We have\begin{align*}
    |\partial^{-1}_yf|\leq\int_{\mathbb{R}^2}\frac{|\widehat{f}|}{|\eta|}\ d\eta dl=I_A+I_B.
\end{align*}Here $A:=\left\{(\eta,l)\in\mathbb{R}^2:|\eta|\leq 1\right\}$ and $B=A^c$. It is clear that \[
I_B\lesssim\int_{\mathbb{R}^2}\left|\widehat{\langle\bm{\nabla}\rangle^{1+}}f\right|\cdot\frac{1}{|1,\eta,l|^{1+}}\ d\eta dl\lesssim\left\|\langle\bm{\nabla}\rangle^{1+}f\right\|_{L^2}.
\]By the zero-$y$-average assumption, we have\[
\widehat{f}_{\eta,l}=\int_{\mathbb{R}}\widehat{f}_le^{-i\eta y}\ dy=\int_{\mathbb{R}}\widehat{f}_l\left(e^{-i \eta y}-1\right)\ dy.
\]Thus, we have\[
\left|\frac{\widehat{f}}{\eta}\right|\leq\int\left|\widehat{\left(yf\right)_l}\right|\ dy\lesssim\left\|\widehat{\left(y\langle y\rangle^{\frac{1}{2}+}f\right)_l}\right\|_{L^2_y}=: C\cdot\left\|g\right\|_{L^2_y}(l).
\]Consequently,\[
I_A\lesssim\int ||g||_{L^2_y}(l)\ dl\lesssim\int\frac{\langle l\rangle^{\frac{1}{2}+}||g||_{L^2_y}}{\langle l\rangle^{\frac{1}{2}+}}\ dl\lesssim\left(\int_{\mathbb{R}}\langle l\rangle^{1+}\int |g|^2(y,l)\ dydl\right)^\frac{1}{2}\lesssim\left\|\langle\partial_z\rangle^{\frac{1}{2}+}\langle y\rangle^{\frac{3}{2}+}f\right\|_{L^2_{y,z}}.
\]\hfill \qedsymbol\vspace{4pt}

 As a result, we see that $u_{1,0}$ satisfies the following estimate\begin{align*}
||u_{1,0}||_{L^\infty}\lesssim||u_{1,0}(0)||_{H^2}+\alpha^{-\frac{1}{2}}\left\|\left(u_2,\frac{\theta}{\alpha^\frac{1}{2}}\right)_0\right\|_{H^{2+}}+\alpha^{-\frac{1}{2}}\left\|\langle y\rangle^{\frac{3}{2}+}\langle\partial_z\rangle^{\frac{1}{2}+}\left(u_2,\frac{\theta}{\alpha^\frac{1}{2}}\right)_0\right\|_{L^2}.
\end{align*}In addition, one can show that if weaker spatial decay is assumed for $f$, then there can be some small growth of the solution. For instance, if we can only control $\left|\partial_y\right|^{-1/2}f$, the solution can grow like $t^{1/2}e^{-\nu t}\lesssim\nu^{-1/2}e^{-\nu t}$.\vspace{9pt}

\noindent\textbf{\textit{Lift-up on $\Omega_{\mathbb{T}}$}}\vspace{5pt}

On $\Omega_{\mathbb{T}}$,  the frequency $l$ is discrete, and the dynamics are significantly different when $l=0$ or $l\neq0$, which correspond to the double zero and simple zero modes, respectively. We first consider the simple zero modes. Suppose that $f$ has zero $z$-average, we then observe that.\begin{align*}
     \left\|\partial_y^{-1} \partial_z e^{\nu t \Delta}\left(e^{\sqrt{\alpha}t\mathcal{R}_y}-I_d\right)f\right\|_{L^2(\mathbb{R}\times\mathbb{T})}^2&\approx\sum_{l\neq0}\int_\mathbb{R} \frac{l^2}{\eta^2}e^{-2\nu t(\eta^2+l^2)}\left|\widehat{f}\right|^2\left|e^{i\sqrt{\alpha}t\frac{\eta}{\sqrt{\eta^2+l^2}}}-1\right|^2\ d\eta\\
     &\lesssim\sum_{l\neq0}\int_\mathbb{R}\alpha t^2 e^{-2\nu t(\eta^2+l^2)}\left|\widehat{f}\right|^2\ d\eta\\
    &\lesssim\sum_{l\neq0}\int_\mathbb{R}\frac{\alpha e^{-c\nu t}}{\nu^2(\eta^2+l^2)^2}\left|\widehat{f}\right|^2\ d\eta\\
     &\lesssim\alpha \nu^{-2}e^{-c\nu t}||\Delta^{-1}_{y,z}f||_{L^2}^2.
\end{align*}We then see that $\overline{u}_1$ experiences lift-up, with the transient growth $||\overline{u}_1||_{L^2}\lesssim\nu^{-1}$. Now we show that, as in the $\Omega_{\mathbb{R}}$ case, the $L^p$ transient growth is weaker when $p>2$. In particular, we have the following result\begin{lemma}\label{suppofl3} Suppose that $f$  has zero-$z$ average. Then the following estimate holds\[
   \left\|\partial_y^{-1} \partial_z e^{\nu t \Delta}\left(e^{\sqrt{\alpha}t\mathcal{R}_y}-I_d\right)f\right\|_{L^\infty(\mathbb{R}\times\mathbb{T})}\lesssim  e^{-c\nu t}\nu^{-\frac{1}{2}}\alpha^{\frac{1}{4}}||f||_{L^{2}(\mathbb{R}\times\mathbb{T})}
\]\end{lemma}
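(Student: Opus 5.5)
The plan is to work entirely on the Fourier side, as in the proof of Lemma \ref{nonsup1} and the $L^2$ computation just above, and to use Cauchy--Schwarz in place of Hausdorff--Young. Write $T_t$ for the operator in the statement. Since $f$ has zero $z$-average, $\widehat f(\eta,0)=0$, so only the frequencies $l\in\mathbb{Z}\setminus\{0\}$, $\eta\in\mathbb{R}$ contribute. Fourier inversion gives the pointwise bound
\[
|T_tf(y,z)|\lesssim\sum_{l\neq0}\int_{\mathbb{R}}\Big|\frac{l}{\eta}\Big|e^{-\nu t(\eta^2+l^2)}\Big|e^{i\sqrt{\alpha}t\frac{\eta}{\sqrt{\eta^2+l^2}}}-1\Big|\,|\widehat f(\eta,l)|\,d\eta .
\]
Applying Cauchy--Schwarz in $(\eta,l)$ and then Plancherel yields $|T_tf|\lesssim \|f\|_{L^2}\,I(t)^{1/2}$, where
\[
I(t):=\sum_{l\neq0}\int_{\mathbb{R}}\frac{l^2}{\eta^2}\,e^{-2\nu t(\eta^2+l^2)}\Big|e^{i\sqrt{\alpha}t\frac{\eta}{\sqrt{\eta^2+l^2}}}-1\Big|^2d\eta .
\]
So it suffices to show $I(t)\lesssim \alpha^{1/2}\nu^{-1}e^{-c\nu t}$.

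The first step is the elementary bound
\[
\Big|e^{i\sqrt{\alpha}t\frac{\eta}{\sqrt{\eta^2+l^2}}}-1\Big|^2\le\min\Big\{\alpha t^2\tfrac{\eta^2}{\eta^2+l^2},\,4\Big\}.
\]
Using it, and dropping the factor $e^{-2\nu t\eta^2}\le1$, the integrand is controlled by $\min\{\alpha t^2,\,4l^2/\eta^2\}\,e^{-2\nu t l^2}$. The second step is to split the $\eta$-integral at $\eta_0:=2|l|/(\sqrt{\alpha}t)$. This gives
\[
\int_{\mathbb{R}}\min\Big\{\alpha t^2,\frac{4l^2}{\eta^2}\Big\}d\eta\lesssim \alpha t^2\eta_0+\frac{l^2}{\eta_0}\lesssim\sqrt{\alpha}\,t\,|l| .
\]
This is the same mechanism as in Lemma \ref{nonsup1}: one factor of $t$ is traded against the singularity at $\eta\to0$. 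Consequently
\[
I(t)\lesssim\sqrt{\alpha}\,t\sum_{l\ge1}l\,e^{-2\nu t l^2}.
\]

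The remaining step, which I expect to be the only real obstacle, is to make the lattice sum uniform in $t$. For $l\ge1$ write $e^{-2\nu tl^2}\le e^{-\nu t}e^{-\nu t l^2}$. Comparing the sum with an integral gives
\[
\sum_{l\ge1}l\,e^{-\nu tl^2}\lesssim 1+(\nu t)^{-1}.
\]
Hence $I(t)\lesssim\sqrt{\alpha}\,\big(t+\nu^{-1}\big)e^{-\nu t}$. Finally, $t e^{-\nu t}\lesssim\nu^{-1}e^{-\nu t/2}$, so $I(t)\lesssim\alpha^{1/2}\nu^{-1}e^{-c\nu t}$. Taking square roots gives the claimed estimate $\|T_tf\|_{L^\infty}\lesssim e^{-c\nu t}\nu^{-1/2}\alpha^{1/4}\|f\|_{L^2}$. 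The essential point is that the discreteness of $l$ (with $l=0$ excluded) supplies the $e^{-\nu t}$ decay, while the small-$\nu t$ regime is governed by the $(\nu t)^{-1}$ behaviour of the lattice sum.
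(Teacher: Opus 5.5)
Your proof is correct and follows essentially the same route as the paper's: Cauchy--Schwarz plus Plancherel on the Fourier side, a bound of order $\sqrt{\alpha}\,t\,|l|$ on the $\eta$-integral for each $l$, and the lattice sum $\sum_{l\ge1} l\,e^{-\nu t l^2}$ to cancel the factor $t$. The one difference is in how you bound the $\eta$-integral: the paper substitutes $\eta=|l|\cot\theta$ and invokes Lemma \ref{osci} with $p=q=2$, whereas you split directly at $\eta_0=2|l|/(\sqrt{\alpha}t)$, which is the same mechanism done more elementarily and without the restriction $\sqrt{\alpha}t\gg1$.
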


\noindent\textit{Proof}. See Appendix \ref{coro345}.\vspace{3pt}

By contrast, the lift-up of $\overline{u}_1$ in the classical Navier Stokes is due to the operator $\mathcal{L}_{\text{NSE}}=te^{\nu t\Delta}$. On $\mathbb{R}\times\mathbb{T}$, this operator has the norm $||\mathcal{L}_{\text{NSE}}||_{L^2_0\rightarrow L^2_0}\lesssim\nu^{-1}$ for any $p\geq2$. By interpolation, we see that for any $p>2$, $||\mathcal{L}_{\text{Bou}}||_{L^2_0\rightarrow L^p_0}$ has an upper bound of the form $C_1\nu^{-m}e^{-C_2\nu t}\alpha^{-n}$ for some $m\in[1/2,1)$ and $n\in(0,1/4]$, indicating that this lift-up can be suppressed when the stratification is strong enough.\vspace{4pt}

Similarly, the lift-up effect can also be suppressed on $\Omega_{\mathbb{T}}$. We prove the following $L^2$ and $L^\infty$ bounds.

\begin{lemma}\label{suppofl2} Let $f$ be a function with zero $y$- and $z$- averages, the following bounds hold\begin{align*}
\left\|\partial_y^{-1}\partial_z e^{\nu \Delta t}\left(e^{\sqrt{\alpha}t\mathcal{R}_y}-I_d\right)f\right\|_{L^2}&\lesssim e^{-\nu t}\left(\left\|\langle y\rangle^{\frac{3}{2}+\mu}\partial_zf\right\|_{L^2}+e^{-\nu t}||\partial_z f||_{L^2}\right),\\
\left\|\partial_y^{-1}\partial_z e^{\nu \Delta t}\left(e^{\sqrt{\alpha}t\mathcal{R}_y}-I_d\right)f\right\|_{L^\infty}&\lesssim e^{-\nu t}\left(\left\|\langle y\rangle^{\frac{3}{2}+\mu}|\partial_z|^{\frac{3}{2}+\mu}f\right\|_{L^2}+e^{-\nu t}\left\||\partial_z|^{\frac{3}{2}+\mu}\langle \partial_y\rangle^{\frac{1}{2}+\mu}f\right\|_{L^2}\right)
\end{align*}for any arbitrarily small $\mu>0$.\end{lemma}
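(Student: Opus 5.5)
We work on the Fourier side in $(\eta,l)\in\mathbb{R}\times(\mathbb{Z}\setminus\{0\})$, where the operator has symbol
\[
m(t,\eta,l)=\frac{l}{\eta}\,e^{-\nu t(\eta^2+l^2)}\Big(e^{i\sqrt{\alpha}t\frac{\eta}{\sqrt{\eta^2+l^2}}}-1\Big).
\]
Since $|l|\ge1$, the heat factor gives $e^{-\nu t(\eta^2+l^2)}\le e^{-\nu t}e^{-\nu t\eta^2}$, which produces the prefactor $e^{-\nu t}$ in both bounds. The only singularity is the factor $\eta^{-1}$ near $\eta=0$. We therefore split $\widehat f$ into the regions $|\eta|\le1$ and $|\eta|>1$, and on the low-frequency region use the vanishing $y$-average in the same way as in Lemma \ref{supofl1}.

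\emph{Low frequencies $|\eta|\le1$.} The hypothesis gives $\widehat f(0,l)=0$, so
\[
\widehat f(\eta,l)=\int_{\mathbb{R}}\widehat f_l(y)\,(e^{-i\eta y}-1)\,dy,
\qquad\text{hence}\qquad
|\widehat f(\eta,l)|/|\eta|\le\int|y\,\widehat f_l(y)|\,dy,
\]
where $\widehat f_l$ is the partial transform in $z$. By Cauchy--Schwarz with the weight $\langle y\rangle^{-1/2-\mu}$ this is $\lesssim\|\langle y\rangle^{3/2+\mu}\widehat f_l\|_{L^2_y}=:G(l)$. We bound $|e^{i\cdots}-1|\le2$ and $|e^{-\nu t(\eta^2+l^2)}|\le e^{-\nu t}$.

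For the $L^2$ estimate, $|\eta|\le1$ is a set of bounded measure, so the contribution is
\[
\Big(\sum_{l\neq0}\int_{|\eta|\le1}l^2 G(l)^2e^{-2\nu t}\,d\eta\Big)^{1/2}\lesssim e^{-\nu t}\,\|\langle y\rangle^{3/2+\mu}\partial_zf\|_{L^2}
\]
by Plancherel in $z$. For $L^\infty$ we use $\|g\|_{L^\infty}\le\sum_l\int|\widehat g|\,d\eta$. We need $\sum_l|l|G(l)$, and writing $|l|=|l|^{3/2+\mu}\cdot|l|^{-1/2-\mu}$ and applying Cauchy--Schwarz in $l$ (the series $\sum|l|^{-1-2\mu}$ converges) gives $e^{-\nu t}\|\langle y\rangle^{3/2+\mu}|\partial_z|^{3/2+\mu}f\|_{L^2}$.

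\emph{High frequencies $|\eta|>1$.} Here $|l/\eta|\le|l|$, and we use $e^{-\nu t(\eta^2+l^2)}\le e^{-\nu t}e^{-\nu t\eta^2}\le e^{-2\nu t}$ because $\eta^2\ge1$. This explains the extra $e^{-\nu t}$ factor in the second terms.

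For the $L^2$ estimate, Plancherel gives $\lesssim e^{-2\nu t}\|\partial_zf\|_{L^2}$. For $L^\infty$, we apply Cauchy--Schwarz in $(\eta,l)$ with weight $\langle\eta\rangle^{-1/2-\mu}|l|^{-1/2-\mu}$, which is square-summable/integrable since $\sum_{l\ne0}\int\langle\eta\rangle^{-1-2\mu}|l|^{-1-2\mu}d\eta<\infty$. This yields
\[
e^{-2\nu t}\,\big\||\partial_z|^{3/2+\mu}\langle\partial_y\rangle^{1/2+\mu}f\big\|_{L^2}.
\]

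The only delicate point is the low-frequency cancellation. Pointwise boundedness of $|\widehat f|/|\eta|$ must be combined with square-summability in $l$, which forces the weight $\langle y\rangle^{3/2+\mu}$ and costs an extra $|\partial_z|^{1/2+\mu}$ in the $L^\infty$ case. Note that the oscillatory factor is never exploited, so the bound is uniform in $\alpha$, consistent with the statement.
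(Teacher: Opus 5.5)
Your proposal is correct and is essentially the paper's argument. It uses the same split of the $\eta$-integral at $|\eta|=1$, the same use of the vanishing $y$-average via $|e^{-i\eta y}-1|\le|\eta y|$ followed by Cauchy--Schwarz in $y$, the extra $e^{-\nu t}$ from $\eta^2\ge1$ at high frequencies, and Cauchy--Schwarz in $l$ (and in $\eta$) for the $L^\infty$ bound. The only cosmetic difference is that the paper first proves the estimate without $\partial_z$ and then applies it to $\partial_z f$, whereas you carry the symbol factor $l$ along throughout.
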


\noindent\textsc{Proof}. For simplicity, it suffices to prove the estimate without $\partial_z$. For the $L^2$ bound, we expand it again\begin{align*}
\left\|\partial_y^{-1}e^{\nu\Delta t}\left(e^{\sqrt{\alpha} t\mathcal{R}_y}-I_d\right)f\right\|_{L^2}^2&\lesssim\sum_{l\neq0}\int_{\mathbb{R}}\frac{|\widehat{f}|^2}{\eta^2}e^{-2\nu t(\eta^2+l^2)}\ d\eta\\
&\lesssim e^{-2\nu t}\sum_{l\neq0}\int_{\mathbb{R}}\frac{|\widehat{f}|^2}{\eta^2}e^{-2\nu t\eta^2}\\
&\lesssim e^{-2\nu t}\sum_{l\neq0}\underbrace{\int_{[-1,1]}}_{I_1}+\underbrace{\int_{[-1,1]^c}}_{I_2}\frac{|\widehat{f}|^2}{\eta^2}e^{-2\nu t\eta^2}
\end{align*}Define $\widehat{f}_\eta$ and $\widehat{f}_l$ by\begin{align*}
&\widehat{f}_l=\int_{\mathbb{T}}f(y,z)e^{-ilz} \ dz,\qquad\quad\widehat{f}_\eta=\int_{\mathbb{R}}f(y,z)e^{-i\eta y} \ dy,\\&\mathcal{F}_z\left(\widehat{f}_\eta\right)=\widehat{f}(\eta,l),\qquad \widehat{f}_\eta(z)=\sum_{l\neq0}\widehat{f}(\eta,l)e^{ilz}.
\end{align*}The zero $y$- average assumption implies\[
\widehat{f}(\eta,l)=\int_{\mathbb{R}} \widehat{f}_l e^{-i\eta y}-\widehat{f}_l \ dy.
\]Thus,\[
\frac{|\widehat{f}|}{|\eta|}\lesssim ||y\widehat{f}_l||_{L^1_y}\lesssim\left\|\langle y\rangle^{\frac{3}{2}+\mu}\widehat{f}_l\right\|_{L^2_y}.
\]As a result, we have\begin{align*}
    \sum_{l\neq0}I_1\lesssim\sum_{l\neq 0}\left\|\langle y\rangle^{\frac{3}{2}+\mu}\widehat{f}_l\right\|_{L^2_y}^2=\left\|\langle y\rangle^{\frac{3}{2}+\mu}f\right\|_{L^2}^2.
\end{align*}$I_2$ can be similarly bounded\[
\sum_{l\neq0}I_2\lesssim e^{-2\nu t}\sum_{l\neq0}\int_{\mathbb{R}}|\widehat{f}|^2\lesssim e^{-2\nu t}||f||_{L^2}^2.
\]\vspace{8pt}

For the $L^\infty$ bound, we have\begin{align*}
    \left\|\partial_y^{-1}e^{\nu\Delta t}\left(e^{\sqrt{\alpha}t\mathcal{R}_y}-I_d\right)f\right\|_{L^\infty}\lesssim e^{-\nu t}\sum_{l\neq0}\int_{\mathbb{R}}\frac{|\widehat{f}|}{|\eta|} e^{-\nu \eta^2 t}\ d\eta\lesssim e^{-\nu t}\sum_{l\neq0}\underbrace{\int_{[-1,1]}}_{I_1}+\underbrace{\int_{[-1,1]^c}}_{I_2}\frac{|\widehat{f}|}{|\eta|}e^{-\nu \eta^2 t}\ d\eta.
\end{align*}The method is similar to the one above. For $I_1$, we have\begin{align*}
  \sum_{l\neq0}I_1\lesssim\sum_{l\neq 0}\left\|\langle y\rangle^{\frac{3}{2}+\mu}\widehat{f}_l\right\|_{L^2_y}\lesssim\sum_{l\neq 0} |l|^{-\frac{1}{2}-\mu}\left\|\langle y\rangle^{\frac{3}{2}+\mu}\widehat{|\partial_z|^{\frac{1}{2}+\mu}f}_l\right\|_{L^2_y}\lesssim\left\|\langle y\rangle^{\frac{3}{2}+\mu}|\partial_z|^{\frac{1}{2}+\mu}f\right\|_{L^2}.
\end{align*}For $I_2$, we see that\begin{align*}
\sum_{l\neq0}I_2&\lesssim e^{-\nu t}\sum_{l\neq0}\int|\widehat{f}|\lesssim e^{-\nu t}\sum_{l\neq0}\int\frac{|\widehat{\langle \partial_y\rangle^{\frac{1}{2}+\mu} f}|}{\langle\eta\rangle^{\frac{1}{2}+\mu}}\lesssim e^{-\nu t}\sum_{l\neq0}\left\|\widehat{\langle \partial_y\rangle^{\frac{1}{2}+\mu} f}\right\|_{L^2_\eta}\\
&\lesssim  e^{-\nu t}\sum_{l\neq0}|l|^{-\frac{1}{2}-\mu}|l|^{\frac{1}{2}+\mu}\left\|\widehat{\langle \partial_y\rangle^{\frac{1}{2}+\mu} f}\right\|_{L^2_\eta}\lesssim e^{-\nu t}\left\||\partial_z|^{\frac{1}{2}+\mu}\langle \partial_y\rangle^{\frac{1}{2}+\mu}f\right\|_{L^2}.\end{align*}\hfill \qedsymbol\vspace{3pt}

\noindent\textbf{Presence of Wave-shear coupled lift-up effect in related models.}

We discuss some examples where this wave-shear coupled lift-up effect can occur. For the 3D MHD system linearized around the Couette flow $U^*=(y,0,0)$ on $\Omega_{\mathbb{T}}$ with a constant background magnetic field $B^*=\alpha(\sigma,0,1)$ \cite{liss2020sobolev}, assuming zero viscosity and resistivity, and letting $z^{\pm}:=e^{\alpha t(\sigma\partial_x+\partial_z)}(u\mp b)$, $\widehat{z}_0^{\pm,1}$ are solved as follows.\[
\widehat{z}_0^{\pm,1}=\widehat{z}_0^{\pm,1}(0)-\frac{e^{\pm i\alpha lt}}{\alpha  l}\sin(\alpha lt)\widehat{z}_0^{\mp,2}(0).
    \]Since $z\in\mathbb{T}$, $|l|\geq1$ for simple zero modes and the solution is bounded. If the perturbation is posed on $\Omega_{\mathbb{R}}$, $l$ is continuous. As $l\to 0$, we have $l^{-1}\sin(al t)\sim at$, that is, transient growth still occurs in the zero modes. However, it is not the classical lift-up mechanism, since it is generated by the singular operator $|\partial_z|^{-1}\sin(\alpha t|\partial_z|)$. This lift-up effect is therefore similar to the one identified in the Boussinesq case here through \eqref{Bouliftupope}. In particular, it induces weaker transient growth in the $L^p$ norm for $p>2$ and can be suppressed either by a sufficiently strong background magnetic field or under suitable assumptions on the initial data. Moreover, such type lift-up can also appear in the Boussinesq-MHD system and can be suppressed in the rotating Boussinesq system in the presence of rotation, see \cite{yao2026pvwavedecomposition} for more details. This suggests that operators such as \eqref{Bouliftupope} arise more generally in wave-shear coupled systems, particularly when continuous low frequencies are present. \vspace{8pt}

Having understood the simple zero modes, we now turn to the double zero modes. The linearized system reads
\[\left\{
\begin{aligned}
&\partial_t\widetilde{u}_{1}=-\widetilde{u}_{2}+\nu\partial^2_y\widetilde{u}_1,\\
    &\partial_t\widetilde{u}_{2}=-\partial_y\widetilde{p}+\nu\partial^2_y\widetilde{u}_2,\\
    &\partial_t \widetilde{u}_{3}=\widetilde{\theta}+\nu\partial^2_y\widetilde{u}_3,\\
    &\partial_t\widetilde{\theta}=-\alpha \widetilde{u}_{3}+\nu\partial^2_y\widetilde{\theta}.
\end{aligned}
\right.\]The incompressibility condition now reduces to\[
\partial_y\widetilde{u}_2=0,\qquad\Rightarrow\qquad \widetilde{u}_2(t,y)=\widetilde{u}_2(t).
\]As we require $\widetilde{u}_2\in L^2_y$, the only possibility is that $\widetilde{u}_2=0$. As a result, the explicit solution is given by\[
\left\{
\begin{aligned}
&\widetilde{u}_{1}=e^{\nu t\partial^2_y}\widetilde{u}_1(0,y),\\
    &\widetilde{u}_{2}=0,\\
    &\widetilde{u}_{3}=e^{\nu t\partial^2_y}\left(\widetilde{u}_3(0,y)\cos(\sqrt{\alpha} t)+\frac{\widetilde{\theta}(0,y)}{\sqrt{\alpha}}\sin(\sqrt{\alpha}t)\right),\\
    &\widetilde{\theta}=e^{\nu t\partial^2_y}\left(-\sqrt{\alpha}\widetilde{u}_3(0,y)\sin(\sqrt{\alpha} t)+\widetilde{\theta}(0,y)\cos(\sqrt{\alpha}t)\right).
\end{aligned}
\right.
\]The explicit formula shows that the double zero modes do not experience the lift-up effect.

\subsection{\textbf{Dispersive estimates for the zero modes}}\label{dis1}
In the preceding section, we solved the zero-mode system (\ref{eq:B9}). As is clear from the solution, $u_{2,0}$, $u_{3,0}$, and $\theta_0$ (or the simple zero modes on $\Omega_{\mathbb{T}}$) are oscillatory and therefore enjoy dispersive decay in physical space, while the leading-order dynamics of $u_{1,0}$ (or $\overline{u}_1$ when $\mathbb{D}=\mathbb{T}$) exhibit no decay in the absence of viscosity. Such decay estimates have been studied in many previous works; see, for example \cite{li2025transitionthresholdnavierstokescoriolishigh,coti2024stability}.  We reproduce here the proof of dispersive estimate when $\mathbb{D}=\mathbb{T}$ for the reader's convenience and defer the proof for $\mathbb{D}=\mathbb{R}$ to the Appendix \ref{APPENDIS1}.\vspace{6pt}

\begin{lemma}\label{firstdis} Suppose that $f$ has zero $z$-average, then the following dispersive estimate holds\[
\left\|e^{t\partial_y|\bm{\nabla}_{y,z}|^{-1}}f\right\|_{L^\infty(\mathbb{R}\times\mathbb{T})}\lesssim\frac{1}{\langle t\rangle^\frac{1}{3}}||f||_{W^{3,1}(\mathbb{R}\times\mathbb{T})}
\]\end{lemma}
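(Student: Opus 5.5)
Write the operator on the Fourier side: for $f$ with zero $z$-average only $l\in\mathbb{Z}\setminus\{0\}$ contribute, and
\[
e^{t\partial_y|\bm{\nabla}_{y,z}|^{-1}}f(y,z)=\frac{1}{2\pi}\sum_{l\neq0}e^{ilz}\int_{\mathbb{R}}e^{i\eta y+it\phi_l(\eta)}\widehat{f}(\eta,l)\,d\eta,\qquad \phi_l(\eta)=\frac{\eta}{\sqrt{\eta^2+l^2}}.
\]
The plan is to treat each fixed $l$ as a one-dimensional oscillatory integral and prove a van der Corput type bound. Then sum over $l$, using the $W^{3,1}$ norm to pay for the weights in $\eta$ and $l$. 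For $t\le1$ the trivial bound $\|\widehat f\|_{L^1}\lesssim\|f\|_{W^{3,1}}$ (via $|\widehat f|\lesssim \langle\eta,l\rangle^{-3}\|f\|_{W^{3,1}}$) suffices, so I assume $t\ge1$.

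The phase derivatives are
\[
\phi_l'(\eta)=\frac{l^2}{(\eta^2+l^2)^{3/2}},\qquad \phi_l''(\eta)=-\frac{3\eta l^2}{(\eta^2+l^2)^{5/2}},\qquad \phi_l'''(\eta)=\frac{3l^2(4\eta^2-l^2)}{(\eta^2+l^2)^{7/2}}.
\]
$\phi_l''$ vanishes at $\eta=0$, but there $\phi_l'''=-3|l|^{-3}\neq0$. Meanwhile $\phi_l'''$ vanishes at $\eta=\pm|l|/2$, where $|\phi_l''|\sim|l|^{-2}$. So for each $l$ the sum $|\phi_l''|+|\phi_l'''|$ is bounded below. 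In rescaled variables $\eta=|l|\xi$ one has $\phi_l(\eta)=\xi/\sqrt{1+\xi^2}=:\Phi(\xi)$, independent of $l$. For $|\xi|\le R$ we get $|\Phi''|+|\Phi'''|\gtrsim 1$, while for $|\xi|\ge R$ all derivatives decay like $|\xi|^{-3},|\xi|^{-4}$. This is the source of the $t^{-1/3}$ rate: third-derivative van der Corput near the inflection point $\xi=0$ gives it.

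Concretely, for fixed $l$, change variables $\eta=|l|\xi$. Insert a smooth partition of unity in $\xi$: a bounded region $|\xi|\le R$, split into finitely many pieces where either $|\Phi''|\gtrsim1$ or $|\Phi'''|\gtrsim1$ (and $\Phi'$ is monotone on each piece), plus the tails $|\xi|\ge R$. On bounded pieces, van der Corput (with amplitude, via integration by parts against the amplitude's total variation) gives
\[
\left|\int e^{i(|l|\xi y+t\Phi(\xi))}\chi(\xi)\widehat f(|l|\xi,l)\,|l|\,d\xi\right|\lesssim t^{-1/3}|l|\Big(\|\widehat f(|l|\cdot,l)\|_{L^\infty}+\|\partial_\xi \widehat f(|l|\cdot,l)\|_{L^1}\Big).
\]
The linear term $|l|\xi y$ does not affect the $k$-th derivative bounds for $k\ge2$. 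Rescaling back, the right side is $\lesssim t^{-1/3}\big(|l|\,\|\widehat f(\cdot,l)\|_{L^\infty_\eta}+\|\partial_\eta\widehat f(\cdot,l)\|_{L^1_\eta}\cdot|l|\big)$.

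The main obstacle is the tail $|\xi|\ge R$, i.e.\ $|\eta|\gg|l|$, where the curvature degenerates like $t\,|l|^{2}|\eta|^{-4}$. There I would use the third-derivative bound with $\lambda_3\sim |l|^2|\eta|^{-5}$ on dyadic shells $|\eta|\sim2^j|l|$. This yields the factor $(t|l|^2|\eta|^{-5})^{-1/3}=t^{-1/3}|\eta|^{5/3}|l|^{-2/3}$, times the shell's amplitude size. That costs at most $|\eta|^{5/3}$ in regularity; alternatively, use the trivial bound when it is better. The decay $|\widehat f(\eta,l)|+|\eta||\partial_\eta\widehat f|\lesssim\langle\eta\rangle^{-3}\|f\|_{W^{3,1}}$ (from $\partial^\beta f\in L^1$, and for $\partial_\eta\widehat f$ use $\widehat{yf}$, or alternatively integrate by parts only against $\widehat f$ bounded in $L^\infty$ with total variation controlled by $\|\partial_\eta(\eta^3\widehat f)\|$) should make the dyadic sum converge. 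Three derivatives appear to be what is needed to absorb the $|\eta|^{5/3}$ loss and also the $\sum_l|l|\cdot|l|^{-3}$ summation. Finally, sum over $l\neq0$: each $l$-contribution is $\lesssim t^{-1/3}\langle l\rangle^{-2}\|f\|_{W^{3,1}}$, using $\langle l\rangle^{3}|\widehat f(\eta,l)|\lesssim\|\partial_z^3f\|_{L^1}$. This is summable and gives $\langle t\rangle^{-1/3}\|f\|_{W^{3,1}}$. The bookkeeping of how the three derivatives are shared between $\eta$ and $l$ (and the $y$-moment issue for $\partial_\eta\widehat f$, which I would avoid by applying van der Corput in its total-variation form to $\widehat f$ restricted to each shell) is where I expect most of the care to be needed.
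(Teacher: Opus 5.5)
Your phase analysis is sound and essentially matches the paper's: you rescale $\eta=|l|\xi$, the inflection point at $\xi=0$ gives the $t^{-1/3}$ rate, and the region $|\eta|\gg|l|$ costs powers of $|\eta|/|l|$. The gap is in how you couple this to $f$. You run van der Corput with $\widehat f(\cdot,l)$ itself as the amplitude, so every per-$l$ bound contains $\|\partial_\eta\widehat f(\cdot,l)\|_{L^1_\eta}$, whether on a piece or on a shell. Since $\partial_\eta\widehat f=-i\widehat{yf}$, this is a $y$-moment of $f$, and $\|f\|_{W^{3,1}}$ does not control it. $W^{3,1}$ gives decay of $\widehat f$ in $(\eta,l)$, not regularity in $\eta$. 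So your bound $|\eta|\,|\partial_\eta\widehat f|\lesssim\langle\eta\rangle^{-3}\|f\|_{W^{3,1}}$ is false, and neither restricting to shells nor passing to $\eta^3\widehat f$ removes the problem. In fact $\widehat f(\cdot,l)$ need not be of bounded variation on any interval. Take $f(y,z)=\cos z\sum_{n\ge1}2^{-n}\chi(y-4^n)$ with $\chi\ge0$ a fixed bump. It has zero $z$-average and finite $W^{3,1}$ norm. But $\widehat f(\eta,\pm1)=\pi\widehat\chi(\eta)\sum_n2^{-n}e^{-i4^n\eta}$ is, near $\eta=0$, a smooth nonvanishing multiple of a Weierstrass-type lacunary series, hence nowhere differentiable there and not BV. So the amplitude form of van der Corput is unavailable, and your per-$l$ estimates are not established.

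The missing idea is the one the paper uses: keep the oscillation in a kernel whose amplitude is a smooth cutoff, and pay for $f$ in physical space via Young's inequality.
For each $l$, split with smooth cutoffs $\varphi_j$ (with $\varphi_j=\varphi_j\varphi_j^1$). Write the $j$-th piece as $K_{j,l}*_y\mathbb P^1_j\widehat f_l$, where $K_{j,l}=\mathcal F^{-1}_\eta\bigl(e^{it\phi_l}\varphi_j\bigr)$ and $\widehat f_l(y)=\int_{\mathbb T}f(y,z)e^{-ilz}\,dz$.
Apply van der Corput only to $K_{j,l}$. Its amplitude $\varphi_j$ has $O(1)$ total variation, and $y$ enters the phase only linearly, so the bound is uniform in $y$. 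Then $\|K_{j,l}*\mathbb P^1_j\widehat f_l\|_{L^\infty}\le\|K_{j,l}\|_{L^\infty}\|\mathbb P^1_j\widehat f_l\|_{L^1_y}$.
The derivatives in $W^{3,1}$ now enter only through $\|\widehat f_l\|_{L^1_y}\le|l|^{-3}\|\partial_z^3f\|_{L^1}$ and Bernstein bounds such as $\|\mathbb P^1_j\widehat f_l\|_{L^1_y}\lesssim(2^j|l|)^{-3}\|\partial_y^3f\|_{L^1}$ for a shell $|\eta|\sim2^j|l|$. No moment is ever needed.
With kernel bounds of the type you already derived, this closes:
\begin{itemize}
\item the core $|\eta|\lesssim|l|$ contributes $|l|t^{-1/3}\cdot|l|^{-3}$;
\item a shell contributes $|l|2^{2j}t^{-1/2}\cdot(2^j|l|)^{-3}$ with the paper's second-derivative bound, or $2^{-4j/3}|l|^{-2}t^{-1/3}$ with your third-derivative one.
\end{itemize}
All of these are summable in $j$ and $l\neq0$.
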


\noindent\textsc{Proof}. Boundedness is clear. Therefore, we only consider the case $t\geq 1$. First of all, we have \begin{gather*}
f(y,z)=\frac{1}{2\pi}\sum_{l\neq0}\widehat{f}_l(y,l)e^{ilz},\qquad \hat{f}_l(y,l)=\int_{\mathbb{T}}f(y,z)e^{-ilz}\ dz,\\\widehat{f}(\eta,l)=\int_{\mathbb{R}} \widehat{f}_l(y,l)e^{-i\eta y}\ dy.
\end{gather*}Define the operator $\mathcal{L}:=\partial_y|\bm{\nabla}_{y,z}|^{-1}$ and the following Littlewood-Paley decomposition\begin{flalign*}
\phi\in C^\infty(\mathbb{R}),\qquad \phi(\eta)=\left\{
\begin{aligned}
& 1&|\eta|\leq 1\\
& 0&|\eta|>\frac{4}{3}
\end{aligned}
\right.,\qquad \varphi(\eta)=\phi\left(\frac{\eta}{2}\right)-\phi(\eta),\qquad \varphi_j(\eta)=\varphi(2^{-j}\eta).
\end{flalign*}Denote the projection operator as $\mathbb{P}_j\psi:=\mathbb{F}^{-1}\left(\varphi_j(\eta)\widehat{\psi}\right)$. Thus, we have\begin{align}
\left\|e^{t\mathcal{L}}\widehat{f}_l\right\|_{L^\infty_y}&\leq\sum_{j\in\mathbb{Z}}\left\|\mathcal{F}^{-1}_y\left(e^{it\frac{\eta}{\sqrt{\eta^2+l^2}}}\varphi_j\varphi_j^1\widehat{f}(\eta,l)\right)\right\|_{L^\infty_y}\nonumber\\
    &\lesssim\sum_{j\in\mathbb{Z}}\left\|\mathcal{F}_y^{-1}\left(e^{it\frac{\eta}{\sqrt{\eta^2+l^2}}}\varphi_j\right)\right\|_{L^\infty}\left\|\mathbb{P}^1_j\widehat{f}_l(\cdot,l)\right\|_{L^1_y}.\label{asodyasodasoudsd}
\end{align}The functions $\varphi^1_j$ and $\varphi_j$ are chosen to have similar supports such that $\varphi_j=\varphi_j\varphi_j^1$. Consider the first term in (\ref{asodyasodasoudsd}), i.e.\[
\mathcal{F}_y^{-1}\left(e^{t\hat{\mathcal{L}}}\varphi_j\right)=\int_{\mathbb{R}}e^{it\frac{\eta}{\sqrt{\eta^2+l^2}}}\varphi_j(\eta) e^{i\eta y}\ d\eta
\]It remains to estimate the integral\[
I=\int_{\mathbb{R}} e^{i\eta y+i\frac{t\eta}{\sqrt{\eta^2+l^2}}}\varphi_j(\eta)\ d\eta,
\]Using the change of variable $\eta=l\zeta$ and assuming without loss of generality that $l>0$, we have\[
I=l\int_{\mathbb{R}}e^{it\Phi(\zeta)}\varphi\left(2^{-j}l\zeta\right)\ d\zeta,\qquad \Phi=\frac{\zeta}{\sqrt{\zeta^2+1}}+\Omega\zeta,\qquad \Omega=\frac{ly}{t}.
\]The derivatives of $\Phi$ are given by\[
\frac{d\Phi}{d\zeta}=\frac{1}{\left(\zeta^2+1\right)^\frac{3}{2}}+\Omega,\qquad\frac{d^2\Phi}{d\zeta}=\frac{-3\zeta}{\left(\zeta^2+1\right)^\frac{5}{2}}.
\]Notice that, the zero of $\Phi''$ is at $\zeta=0$. This can lead to some difficulty when we consider the low frequency with $j<0$. Instead, we split the frequencies into the following parts\begin{align}
\left\|e^{t\mathcal{L}}\widehat{f}_l\right\|_{L^\infty_y}&\leq\sum_{j\in\mathbb{Z}}\left\|\mathcal{F}^{-1}_y\left(e^{it\frac{\eta}{\sqrt{\eta^2+l^2}}}\varphi_j\varphi_j^1\widehat{f}(\eta,l)\right)\right\|_{L^\infty_y}\nonumber\\
    &\lesssim\sum_{j\geq j_0}\left\|\mathcal{F}_y^{-1}\left(e^{it\frac{\eta}{\sqrt{\eta^2+l^2}}}\varphi_j\right)\right\|_{L^\infty}\left\|\mathbb{P}^1_j\widehat{f}_l(\cdot,l)\right\|_{L^1_y}+\left\|\mathcal{F}_y^{-1}\left(e^{it\frac{\eta}{\sqrt{\eta^2+l^2}}}\phi_{j_0}^1(\eta)\right)\right\|_{L^\infty}||\widehat{f}_l||_{L^1_y}.
\end{align}Here $\phi_{j_0}^1$ has the similar support as $\phi_{j_0}:=\sum_{j<j_0}\varphi_j$ for some universal integer $j_0$ (e.g. $j_0=-1$). The low frequency part ($\phi^1_{j_0}$) has finite support, say $[-C,C]$. We consider the following integral,\[
\overline{I}:=l\int e^{it\Phi(\zeta)}\phi^1_{j_0}(l\zeta)\ d\zeta.
\]We split the integral $\overline{I}$ into $\overline{I}_1+\overline{I}_2$ such that \[
\overline{I}_1=l\int_{B_\epsilon(0)}e^{it\Phi(\zeta)}\phi^1_{j_0}(l\zeta)\ d\zeta.
\]Simple size estimate gives $|\overline{I}_1|\lesssim |l|\epsilon$. Since $||\phi^1_{j_0}||_{W^{1,1}}\lesssim 1$, by the van der Corput lemma (see \cite{stein1993harmonic}) and the fact that $|\Phi''|\gtrsim \epsilon$, we have\[
|\overline{I}_2|\lesssim|l|(\epsilon t)^{-1/2}.
\]Pick $\epsilon=t^{-1/3}$, we see that\[
\left\|\mathcal{F}_y^{-1}\left(e^{it\frac{\eta}{\sqrt{\eta^2+l^2}}}\phi_{j_0}\right)\right\|_{L^\infty}\left\|\widehat{f}\right\|_{L^1_y}\lesssim|l|t^{-1/3}||\widehat{f}_l||_{L^1_y}.
\]For the high frequency part, via a similar argument in \cite{coti2024stability}, we obtain\[
|I|\lesssim |l|t^{-1/3}\bm{1}_{2^j\leq |l|}+2^{2j}|l|^{-1}t^{-1/2}\bm{1}_{2^j>|l|},
\]the latter bound comes from the fact that $|\Phi''|\gtrsim |\zeta|^{-4}$ for large $\zeta$. We then obtain the following bound\begin{align*}
\left\|e^{t\mathcal{L}}\widehat{f}_l\right\|_{L^\infty_y}&\lesssim \sum_{j\geq j_0,2^j\leq |l|}\frac{|l|}{t^\frac{1}{3}}\left\|\mathbb{P}^1_j\widehat{f}_l(\cdot,l)\right\|_{L^1_y}+\sum_{j\geq j_0,2^j>|l|}\frac{2^{2j}}{|l|t^\frac{1}{2}}\left\|\mathbb{P}^1_j\widehat{f}_l(\cdot,l)\right\|_{L^1_y}+\frac{|l|}{t^\frac{1}{3}}||\widehat{f}_l||_{L^1_y}\\&\lesssim\frac{|l|}{t^\frac{1}{3}}||\widehat{f}_l||_{W^{\varepsilon,1}_y}+\frac{1}{|l|t^\frac{1}{2}}||\widehat{f}_l||_{W^{2+\varepsilon,1}_y}.
\end{align*}Summing over $l$, we have\[
\sum_{l\neq0}\frac{|l|}{t^\frac{1}{3}}||\widehat{f}_l||_{W^{\varepsilon,1}_y}+\frac{1}{|l|t^\frac{1}{2}}||\widehat{f}_l||_{W^{2+\varepsilon,1}_y}\lesssim\frac{1}{t^\frac{1}{3}}\sum_{l\neq0}|l|^{2+\varepsilon}|l|^{-1-\varepsilon}||\widehat{f}_l||_{W^{\varepsilon,1}_y}+|l|^{\varepsilon}|l|^{-1-\varepsilon}||\widehat{f}_l||_{W^{2+\varepsilon,1}_y}\lesssim\frac{1}{t^\frac{1}{3}}||f||_{W^{3,1}}.
\]\hfill \qedsymbol\vspace{3pt}

Based on this estimate, the $L^\infty$ decay of the zero modes of $u_{2,3}$ and $\theta$ follows readily:\begin{align*}
    \left\|\left(u_2,u_3,\frac{\theta}{\alpha^\frac{1}{2}}\right)_0\right\|_{L^\infty}\lesssim\frac{e^{-\nu t}}{\langle\alpha^\frac{1}{2}t\rangle^\frac{1}{3}}  \left\|\left(u_2,u_3,\frac{\theta}{\alpha^\frac{1}{2}}\right)_0(0)\right\|_{W^{3,1}}.
\end{align*}With minor modifications to the proof, one can verify that the following estimate also holds.

\begin{corollary}\label{fdis} For any $a\in\mathbb{Z}^+$, there holds the following estimate for zero-z-average function $f$,\[
\left\||\partial_z|^a|\bm{\nabla}_{y,z}|^{-a}e^{t\mathcal{L}}f\right\|_{L^\infty}+\left\||\partial_y|^a|\bm{\nabla}_{y,z}|^{-a}e^{t\mathcal{L}}f\right\|_{L^\infty}\lesssim\frac{1}{\langle t\rangle^\frac{1}{3}}||f||_{W^{3,1}}.
\]  
\end{corollary}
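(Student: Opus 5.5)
The plan is to repeat the proof of Lemma \ref{firstdis} line by line, inserting the additional Fourier multiplier into the oscillatory integral. For fixed $l\neq0$, the operator $|\partial_z|^a|\bm{\nabla}_{y,z}|^{-a}e^{t\mathcal{L}}$ acts on $\widehat{f}_l$ as the one-dimensional multiplier
\[
m_a(\eta,l)\,e^{it\frac{\eta}{\sqrt{\eta^2+l^2}}},\qquad m_a(\eta,l)=\frac{|l|^a}{(\eta^2+l^2)^{a/2}}.
\]
In the companion term, $|l|^a$ is replaced by $|\eta|^a$. After the substitution $\eta=l\zeta$ (with $l>0$) these become
\[
m_a=(1+\zeta^2)^{-a/2},\qquad \widetilde m_a=|\zeta|^a(1+\zeta^2)^{-a/2}.
\]
Both are smooth bounded functions of $\zeta$ alone; $\widetilde m_a$ is smooth because $a\in\mathbb{Z}^+$, so $|\zeta|^a$ is either a polynomial or $\mathrm{sgn}(\zeta)\zeta^a$, and in the latter case the sign can be split off. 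Moreover, each has uniformly bounded total variation on $\mathbb{R}$, since it is monotone on each half-line. The phase $\Phi(\zeta)=\frac{\zeta}{\sqrt{\zeta^2+1}}+\Omega\zeta$ is unchanged, so the stationary-phase structure is identical to the case $a=0$.

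I then carry out the same Littlewood--Paley splitting in $\eta$, at the same threshold $j_0$, and estimate the kernels
\[
I=l\int e^{it\Phi(\zeta)}\,m(\zeta)\,\varphi(2^{-j}l\zeta)\,d\zeta
\]
together with their low-frequency analogue $\overline I$. The tool is the form of van der Corput's lemma with an amplitude: if $|\Phi''|\gtrsim\lambda$ on an interval $J$, then
\[
\Big|\int_J e^{it\Phi}\psi\Big|\lesssim (\lambda t)^{-1/2}\big(\|\psi\|_{L^\infty}+\|\psi'\|_{L^1(J)}\big).
\]
Since $\|m\,\varphi(2^{-j}l\,\cdot)\|_{L^\infty}+\|(m\,\varphi(2^{-j}l\,\cdot))'\|_{L^1}\lesssim 1$ uniformly in $j$ and $l$ (product of bounded-variation functions), every bound of Lemma \ref{firstdis} survives with constants independent of $j$ and $l$. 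Concretely:
\begin{itemize}
\item near $\zeta=0$, the trivial size bound gives $|l|\epsilon$;
\item away from $\zeta=0$, van der Corput gives $|l|(\epsilon t)^{-1/2}$;
\item choosing $\epsilon=t^{-1/3}$ yields $|l|t^{-1/3}$;
\item in the high-frequency regime, the decay $|\Phi''|\gtrsim|\zeta|^{-4}$ gives the bound $2^{2j}|l|^{-1}t^{-1/2}$ as before.
\end{itemize}

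Summation over $j$ and then over $l\neq0$ is then verbatim. Pairing with $\|\mathbb{P}^1_j\widehat f_l\|_{L^1_y}$ gives
\[
\frac{|l|}{t^{1/3}}\|\widehat f_l\|_{W^{\varepsilon,1}_y}+\frac{1}{|l|t^{1/2}}\|\widehat f_l\|_{W^{2+\varepsilon,1}_y},
\]
and summing over $l$ is bounded by $t^{-1/3}\|f\|_{W^{3,1}}$. For $t\le1$, boundedness follows from the same kernel bounds with no decay factor, or directly from $|m|\le1$ together with the fact that $\sum_l\int|\widehat f|\,d\eta\lesssim\|f\|_{W^{3,1}}$.

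The only point needing care is checking that the extra amplitude costs nothing in the van der Corput step. This comes down to a uniform $W^{1,1}$, or bounded-variation, bound on $m_a$ and $\widetilde m_a$ after the rescaling in $\zeta$. One must not estimate derivatives in $\eta$ directly: $\partial_\eta m_a\sim |l|^{-1}$ blows up as $l\to0$. After rescaling, however, $l$ disappears from the multiplier entirely, so the bound is uniform in $l$.
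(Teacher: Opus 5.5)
Your proposal is correct and is essentially the argument the paper has in mind when it says the corollary follows from the proof of Lemma~\ref{firstdis} ``with minor modifications''. After the rescaling $\eta=l\zeta$ the extra multipliers depend on $\zeta$ alone and have uniformly bounded variation, so van der Corput's lemma with an amplitude reproduces every kernel bound with constants independent of $j$ and $l$; the one slip, calling $|\zeta|^a(1+\zeta^2)^{-a/2}$ smooth for odd $a$, is harmless because only its boundedness and bounded variation are used.
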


\subsection{\textbf{Decay of non-zero modes at a non-unit Prandtl number}}\label{pr}

With the special case $\nu=\kappa$ being understood, we try to derive the corresponding estimates in the more general setting where $\nu\neq \kappa$.

The full linearized system now reads
\begin{flalign*}
\left\{
\begin{aligned}
&\partial_t\theta+y\partial_x\theta=-\alpha u_3+\kappa\Delta\theta,\\
    &\partial_t u_3+y\partial_x u_3=2\partial_{xz}\Delta_H^{-1}\Delta^{-1}\left(\partial_x\left(\frac{K+\partial_z\theta}{\alpha}\right)-\partial_{zy}u_3\right)+\Delta_H\Delta^{-1}\theta+\nu\Delta u_3,\\
    &\partial_tK+y\partial_x K=(\nu-\kappa)\Delta\partial_z\theta+\nu\Delta K.
\end{aligned}
\right.
\end{flalign*}Unlike the case $\nu=\kappa$, the presence of $(\nu-\kappa)\partial_z\Delta\theta$ destroys the conservation of $\mathcal{K}$. Taking the Fourier transform in the moving frame, we see that\begin{flalign*}
\left\{
\begin{aligned}
&\partial_t\widehat{\Theta}=-\alpha \widehat{U}_3-\kappa p\widehat{\Theta},\\
    &\partial_t \widehat{U}_3=-2\frac{kl}{pf}\left(\frac{ik\widehat{\mathcal{K}}-kl\widehat{\Theta}}{\alpha}+l(\eta-kt)\widehat{U}_3\right)+\frac{f}{p}\widehat{\Theta}-\nu p\widehat{U}_3,\\
    &\partial_t\widehat{\mathcal{K}}=-il(\nu-\kappa)p\widehat{\Theta}-\nu p\widehat{\mathcal{K}}.
\end{aligned}
\right.
\end{flalign*}The key step is to set the following change of variables\[
\widehat{\mathcal{K}}_{\neq}=i\langle l\rangle\alpha cC,\qquad \widehat{\Theta}_{\neq}=\sqrt{\alpha}aA,\qquad \widehat{U}_{3,\neq}=bB,
\]where $a(t),b(t),c(t)$ are the functions of $t,k,\eta,l$ to be determined. The new system then reads\begin{flalign*}
\left\{
\begin{aligned}
&\partial_tA=-\sqrt{\alpha}\frac{b}{a} B-\kappa pA-\frac{a'}{a}A,\\
    &\partial_t B=-2\frac{kl}{bpf}\left(-k\langle l\rangle cC-\frac{kl}{\sqrt{\alpha}}aA+bl(\eta-kt)B\right)+\frac{\sqrt{\alpha}af}{bp}A-\nu pB-\frac{b'}{b}B,\\
    &\partial_tC=-\frac{l}{\langle l\rangle}\frac{\nu-\kappa}{\sqrt{\alpha}}p\frac{a}{c}A-\nu pC-\frac{c'}{c}C.
\end{aligned}
\right.
\end{flalign*}Now we pick\[
\frac{b}{a}=\frac{\sqrt{z}}{\sqrt{\alpha}\sqrt{pf}},\qquad \frac{a'}{a}=\lambda\left(\frac{f'l^2}{pf}\right)-\frac{b'}{b},
\]where $z=2k^2p+\alpha f^2$. Now we consider the energy functional\[
\mathbb{E}=\frac{1}{2}\left[|A|^2+|B|^2+s(AB)+|C|^2\right], 
\]where $s$ was defined in the previous section. We see that\begin{align}
    \partial_t\mathbb{E}=&-\kappa p|A|^2-\nu p|C|^2-\nu p|B|^2-\frac{c'}{c}|C|^2-\frac{l}{\langle l\rangle}\frac{\nu-\kappa}{\sqrt{\alpha}}\frac{a}{c}p AC-\frac{s(\nu+\kappa)}{2}p(AB)\label{g1}\\
    +&(1-\lambda)\frac{f'l^2}{pf}|B|^2-\frac{2}{\sqrt{\alpha}}\frac{k^2}{p}\frac{a}{b}AB+\frac{2ck^2l\langle l\rangle}{bpf}BC+\frac{s'}{2}AB+\frac{s(1-\lambda)}{2}\frac{f'l^2}{pf}AB\\
    +&\frac{sck^2l\langle l\rangle}{bpf}AC-\frac{s}{\sqrt{\alpha}}\frac{k^2}{p}\frac{a}{b}|A|^2.\nonumber
\end{align}Set $a=\Omega c$ for some constant $\Omega>0$. Then the first line, excluding the $\frac{c'}{c}C$ term, divided by $p$, can be bounded by \begin{align*}
    &-\kappa |A|^2-\nu |C|^2-\nu |B|^2+\zeta(\nu+\kappa)|AB|+\frac{\Omega}{\sqrt{\alpha}}|\nu-\kappa||AC|\\
    \leq&-\left(\nu-\frac{\lambda_1}{2\xi_1}\right)|C|^2-\left(\nu-\frac{\lambda_2}{2\xi_2}\right)|B|^2-\left(\kappa-\frac{\lambda_1\xi_1}{2}-\frac{\lambda_2\xi_2}{2}\right)|A|^2.
\end{align*}To prove the enhanced dissipation, the parameters should be chosen to satisfy\begin{align*}
    \nu-\frac{\lambda_1}{2\xi_1}\geq c_0\mu,\qquad     \nu-\frac{\lambda_2}{2\xi_2}\geq c_0\mu,\qquad \kappa-\frac{\lambda_1\xi_1}{2}-\frac{\lambda_2\xi_2}{2}\geq c_0\mu,\qquad c_0\in(0,1),
\end{align*}where $\lambda_1=\frac{\Omega|\nu-\kappa|}{\sqrt{\alpha}}$, $\lambda_2=\zeta(\nu+\kappa)$ and $\mu=\min\{\nu,\kappa\}$. Rearranging, this would require\begin{equation}
\lambda_1^2+\lambda_2^2\leq4(\nu-c_0\mu)(\kappa-c_0\mu).\label{AMGMbound}
\end{equation}To ensure $\Omega^2>0$, we need\[
4(\nu-c_0\mu)(\kappa-c_0\mu)>\lambda_2^2.
\]We solve this inequality in terms of the Prandtl number, i.e. $\text{Pr}=\frac{\nu}{\kappa}$.\begin{itemize}
    \item When $\text{Pr}\geq1$, i.e. $\mu=\kappa$, then the above yields\begin{align*}
    &\text{Pr}>\max\left\{1,\frac{2}{\zeta^2}(1-c_0)-1-\frac{2}{\zeta}\sqrt{\frac{(1-c_0)^2}{\zeta^2}-(1-c_0^2)}\right\},\\
  & \text{Pr}<\frac{2}{\zeta^2}(1-c_0)-1+\frac{2}{\zeta}\sqrt{\frac{(1-c_0)^2}{\zeta^2}-(1-c_0^2)}.
    \end{align*}In the case $\text{Pr}\leq 1$, we obtain the following by symmetry\begin{align*}
    &\frac{1}{\text{Pr}}>\max\left\{1,\frac{2}{\zeta^2}(1-c_0)-1-\frac{2}{\zeta}\sqrt{\frac{(1-c_0)^2}{\zeta^2}-\left(1-c_0^2\right)}\right\},\\
  & \frac{1}{\text{Pr}}<\frac{2}{\zeta^2}(1-c_0)-1+\frac{2}{\zeta}\sqrt{\frac{(1-c_0)^2}{\zeta^2}-\left(1-c_0^2\right)}.
    \end{align*}
\end{itemize}The upper bound for $\text{Pr}$ is then maximized when $\zeta$ is minimized (and the lower bound is minimized when $\zeta$ is minimized). By Lemma \ref{upperboundofs}, we see that the smallest $\zeta$ is achieved when $\lambda=0$ with $\zeta=\frac{1}{2\sqrt{2}}$. Therefore, we shall take $\lambda=0$ from now on. In this case, the possible pairs of $(c_0,\text{Pr})$ are plotted in Figure \ref{fig2}. As $c_0\rightarrow0$, the upper and lower bounds for $\text{Pr}$ tend to be $15+4\sqrt{14}$ and $15-4\sqrt{14}$, respectively. Furthermore, to ensure that the square root is real, it is necessary to have $c_0<\frac{7}{9}$.\vspace{2pt}

We now compute the further restriction due to (\ref{AMGMbound}), which is the following\[
\Omega^2\leq\frac{32\left(\text{Pr}-c_0\min\left\{1,\text{Pr}\right\}\right)\left(1-c_0\min\left\{1,\text{Pr}\right\}\right)-\left(\text{Pr}+1\right)^2}{8(\text{Pr}-1)^2}\cdot\alpha.
\]

\begin{figure}[H]
    \centering
    \begin{minipage}[b]{0.6\textwidth}
        \centering
        \includegraphics[width=\textwidth]{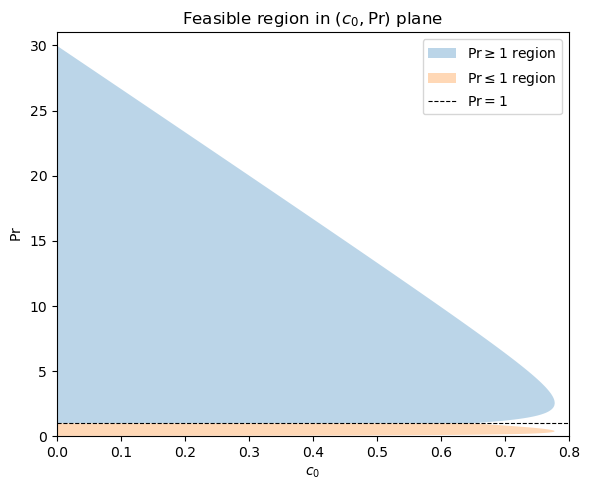}
        
    \end{minipage}
     \caption{Feasible region for $(c_0,\text{Pr})$ when $\lambda=0$.}\label{fig2}
\end{figure}We can now proceed to derive the further energy estimates. Notice that\[
\left|\frac{2ck^2l\langle l\rangle}{bpf}BC+\frac{sck^2l\langle l\rangle}{bpf}AC\right|\leq\frac{\sqrt{2}}{\Omega}\frac{a}{b}\frac{k^2l\langle l\rangle}{pf}\left(\sqrt{2}|BC|+|AC|\right)\leq\frac{c_1}{\Omega}\frac{a}{b}\frac{k^2l\langle l\rangle}{pf} \mathbb{E},
\]where $c_1>0$ is a constant. We have\[
\frac{a}{\sqrt{\alpha}b}\frac{k^2l\langle l\rangle}{pf}=\frac{k^2|l|\langle l\rangle}{\sqrt{zpf}}.
\]It is non-integrable and it can lead to exponential growth in terms of $l$. However, the crucial observation is that the growth in terms of $l$ can be treated as a loss of regularity.

\begin{lemma}
For any $k\in\mathbb{Z}\backslash\{0\}$, the  following estimate holds\[
e^{\int_0^t\frac{k^2|l|\langle l\rangle}{\sqrt{zpf}}\ ds}\lesssim\langle l\rangle^{\sqrt{2}}.
\]
\end{lemma}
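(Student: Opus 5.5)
We need to bound $\int_0^\infty \frac{k^2|l|\langle l\rangle}{\sqrt{zpf}}\,ds\le \sqrt2\log\langle l\rangle + C$, with $C$ uniform in $k,\eta,l,\alpha$; exponentiating then gives the claim. The plan is to drop the $\alpha f^2$ part of $z$ and reduce to an explicit one-dimensional integral.

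\textbf{Step 1: reduction.} Since $z=2k^2p+\alpha f^2\ge 2k^2p$, we have $\sqrt{zpf}\ge \sqrt2\,|k|\,p\,\sqrt f$. Hence the integrand is at most
\[
\frac{|k|\,|l|\langle l\rangle}{\sqrt2\, p\sqrt f}.
\]
Substitute $\tau=(ks-\eta)/k$ (for $k>0$; $k<0$ is symmetric), so $f=k^2(1+\tau^2)$ and $p=k^2(1+\tau^2)+l^2$, with $ds=d\tau$. Enlarging the time interval to all of $\mathbb{R}$, the integral is bounded by
\[
\frac{|l|\langle l\rangle}{\sqrt2}\int_{\mathbb{R}}\frac{d\tau}{\sqrt{1+\tau^2}\,\bigl(k^2(1+\tau^2)+l^2\bigr)}.
\]

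\textbf{Step 2: evaluate with $L=|l|/|k|$.} The integral becomes
\[
\frac{|l|\langle l\rangle}{\sqrt2\,k^2}\int_{\mathbb{R}}\frac{d\tau}{\sqrt{1+\tau^2}\,(1+\tau^2+L^2)}.
\]
Put $\tau=\sinh u$. The integral equals $\int \frac{du}{\cosh^2u+L^2}$, which can be computed exactly:
\[
\int_{\mathbb{R}}\frac{du}{\cosh^2u+L^2}=\frac{2}{L\sqrt{1+L^2}}\,\operatorname{artanh}\!\Bigl(\frac{L}{\sqrt{1+L^2}}\Bigr)=\frac{1}{L\sqrt{1+L^2}}\log\frac{\sqrt{1+L^2}+L}{\sqrt{1+L^2}-L}.
\]
Alternatively, split $|\tau|\le L$ and $|\tau|\ge L$: the first region gives about $L^{-2}\log L$ and the second about $L^{-2}$, when $L\ge1$.

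\textbf{Step 3: collect factors.} Multiplying out, the total is
\[
\frac{|l|\langle l\rangle}{\sqrt2\,k^2}\cdot\frac{|k|}{|l|}\cdot\frac{|k|}{\sqrt{k^2+l^2}}\cdot 2\log\bigl(\sqrt{1+L^2}+L\bigr)
\le \sqrt2\,\frac{\langle l\rangle}{\sqrt{k^2+l^2}}\log\bigl(2\langle L\rangle\bigr).
\]
Since $|k|\ge1$, we have $\langle l\rangle\le\sqrt2\sqrt{k^2+l^2}$ and $\langle L\rangle\le\langle l\rangle$. The bound is therefore $\sqrt2\log\langle l\rangle$ plus a constant, up to the sharp constant in the first ratio.

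\textbf{Main obstacle.} The delicate point is making the coefficient exactly $\sqrt2$. The crude bound $\langle l\rangle/\sqrt{k^2+l^2}\le\sqrt2$ gives a coefficient of $2$ rather than $\sqrt2$. To fix this, use $\langle l\rangle/\sqrt{k^2+l^2}\le 1$ whenever $|k|\ge1$, since $1+l^2\le k^2+l^2$. This is exactly what is needed, giving $\sqrt2\log\langle l\rangle+C$.

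If the $\operatorname{artanh}$ evaluation proves awkward, I would instead fall back on the split $|\tau|\lessgtr L$. That keeps the leading term $\frac{2}{L^2}\log L$ explicit while the remainder stays bounded uniformly in $k$ and $l$.
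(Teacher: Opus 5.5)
Your proof is correct. Its first step is the same as the paper's: use $z\ge 2k^2p$ to discard the $\alpha f^2$ part (which makes the bound uniform in $\alpha$), shift $\eta-ks=kx$, and extend the integral to all of $\mathbb{R}$.

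The two proofs differ in how they handle the resulting one-dimensional integral.

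\textbf{The paper's route.} It first uses $k^2\ge1$ to replace $k^2(1+x^2)+l^2$ by $1+x^2+l^2$. It then splits at $|x|=\delta|l|$ with $\delta^2=\langle l\rangle/|l|$. On the inner region, $|l|\langle l\rangle\le 1+l^2+x^2$ leaves $\int_0^{\delta|l|}(1+x^2)^{-1/2}\,dx=\operatorname{arcsinh}(\delta|l|)$, which is $\log\langle l\rangle+O(1)$. The tail is bounded by $|l|\langle l\rangle\int_{\delta|l|}^\infty x^{-3}\,dx=\tfrac12$. The factor $\sqrt2=\tfrac{1}{\sqrt2}\cdot 2$ comes from the evenness of the integrand.

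\textbf{Your route.} You keep $k$ and evaluate the integral in closed form via $\tau=\sinh u$ (and then $w=\tanh u$). You use $|k|\ge1$ only at the end, through $\langle l\rangle\le\sqrt{k^2+l^2}$ and $L\le|l|$.

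\textbf{What each buys.} Your route gives the exact value $\sqrt2\,\frac{\langle l\rangle}{\sqrt{k^2+l^2}}\log\bigl(\sqrt{1+L^2}+L\bigr)$, hence the explicit constant $2^{\sqrt2}$. Taking $k=1$ and $|l|\to\infty$, it also shows that once $\alpha f^2$ is dropped and the time interval is extended to $\mathbb{R}$, the exponent $\sqrt2$ cannot be improved. The paper's split needs no antiderivative and would survive changes to the integrand; it is essentially your stated fallback.

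\textbf{Two small points.} The remark $\langle l\rangle\le\sqrt2\sqrt{k^2+l^2}$ in Step 3 is unnecessary, since only $\langle l\rangle\le\sqrt{k^2+l^2}$ is used. The case $l=0$ should be set aside explicitly, because your closed form divides by $L$; there the integrand vanishes and the claim is trivial.
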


\noindent\textsc{Proof}. The bound is trivial when $l=0$, we therefore assume that it is not zero. We start by observing that\[
\frac{k^2|l|\langle l\rangle}{\sqrt{zpf}}\leq\frac{1}{\sqrt{2}}\frac{|k||l|\langle l\rangle}{p\sqrt{f}}.
\]Thus the integral can be bounded by\[
I=\int_{0}^t\frac{k^2|l|\langle l\rangle}{\sqrt{zpf}}\ ds\leq\int_{\mathbb{R}}\frac{1}{\sqrt{2}}\frac{|k||l|\langle l\rangle}{\sqrt{k^2+(\eta-kt)^2}(k^2+(\eta-kt)^2+l^2)}\ dt.
\]Using the change of variable $\eta-kt=kx$, we have\begin{align*}
I&\leq\frac{1}{\sqrt{2}}\int_{\mathbb{R}}\frac{|l|\langle l\rangle}{\sqrt{1+x^2}}\cdot\frac{1}{(1+x^2)+l^2}\ dx\\
&=\sqrt{2}\int_{0}^{\delta|l|}+\int_{\delta|l|}^\infty\frac{|l|\langle l\rangle}{1+l^2+x^2}\cdot\frac{1}{\sqrt{1+x^2}}\ dx\\
&:=\sqrt{2}(I_1+I_2).
\end{align*}For $I_2$, we bound it in the following way\[
I_2\leq\int_{\delta|l|}^\infty\frac{|l|\langle l\rangle}{x^3}\ dx=\frac{\langle l\rangle}{2\delta^2|l|}.
\]For $I_1$, we proceed in the following way\[
I_1\leq\int_0^{\delta|l|}\frac{1}{\sqrt{1+x^2}}\ dx=\ln(\delta|l|+\sqrt{1+\delta^2l^2}).
\]Setting $\delta^2=\frac{\langle l\rangle}{|l|}$ yields the desired estimate.\hfill \qedsymbol\vspace{5pt}

Therefore, we see that\[
e^{\int\frac{c_1}{\Omega}\frac{a}{b}\frac{k^2l\langle l\rangle}{pf}}=e^{\frac{c_1\sqrt{\alpha}}{\Omega}\int\frac{k^2 l\langle l\rangle}{\sqrt{zpf}}}\lesssim \langle l\rangle^{c_1\sqrt{2}\frac{\sqrt{\alpha}}{\Omega}}.
\]Lastly, we introduce the same Fourier multiplier $m$ (\ref{multiplier}), with different $\varrho_1$ and $\varrho_2$, to be determined below. Recall that the energy inequality (\ref{g1}) now contains a new term $-\frac{c'}{c}|C|^2.$ Indeed, this term can be split into two parts:\begin{align*}
    \frac{c'}{c}=-\frac{f'}{4}\cdot\frac{2\alpha l^2f^2-l^2z}{zpf}+\frac{f'}{2}\cdot\frac{k^2fp}{fpz}=I+II.
\end{align*}Since \[
|II|\leq\frac{|k|^3}{\alpha f^\frac{3}{2}},
\]we see that $II$ is uniformly integrable. For $I$, we have\[
|I|=\left|\frac{f'l^2}{4fp}-\frac{f'}{2}\cdot\frac{\alpha l ^2f}{zp}\right|\leq\frac{3|f'|l^2}{4pf}.
\]As a result, for $\mathbb{F}=m^2\mathbb{E}$ where $m$ was defined in  \eqref{multiplier}, we have the bound\begin{align*}
    \partial_t\mathbb{F}\leq-c_0\mu p\mathbb{F}+\frac{m'}{m}|mC|^2+\frac{3|f'|l^2}{4pf}|mC|^2+C_1\left(\frac{1}{\sqrt{\alpha}}\frac{k^2}{p}\frac{a}{b}+|s'|+\frac{|k|^3}{\alpha f^\frac{3}{2}}\right)\mathbb{F}+C_2\frac{\sqrt{\alpha}}{\Omega}\frac{k^2\langle l^2\rangle}{\sqrt{zpf}}\mathbb{F},
    \end{align*}where we have chosen $\varrho_1=\frac{1}{2}\left(1+\sqrt{\frac{8}{7}}\right)$ and $\varrho_2=\frac{3}{4}$ so that $\frac{m'}{m}+\frac{3|f'|l^2}{4pf}\leq0$. After integration, we obtain the following bound\[
\mathbb{F}\lesssim e^{\frac{c_2}{\alpha}}\langle l\rangle^{c_3\frac{\sqrt{\alpha}}{\Omega}}e^{-\frac{c_0\mu t^3}{15}}\mathbb{F}(0),
\]where $c_2$ and $c_3$ are universal constants. Using the fact that\begin{align*}
    a\leq C\langle l\rangle^\frac{1}{2},\qquad a^{-1}\lesssim C^{-1}\left(1+\alpha^{-\frac{1}{4}}\right),\\
    b\lesssim C(1+\alpha^{-\frac{1}{4}}),\qquad b^{-1}\lesssim C^{-1}\langle l\rangle^{-\frac{1}{2}},
\end{align*}taking $C=1$, we obtain the following estimates:\begin{align*}
\left|(\widehat{U}_1,\widehat{U}_2)_{\neq}\right|^2 &\lesssim \left(1+\frac{1}{\Omega^2}\right)\frac{1}{\langle t\rangle^2}p^\frac{5}{2}(0)e^{c_2\alpha^{-1}}\langle l\rangle^{c_3\frac{\sqrt{\alpha}}{\Omega}}e^{-\frac{c_0\mu t^3}{15}}\left(\left|\frac{\widehat{\Theta}_{\neq}(0)}{\alpha^\frac{1}{2}}\right|^2+\frac{|\widehat{U}_{3,\neq}(0)|^2}{\langle l\rangle}+\frac{\Omega^2}{\alpha^2}\frac{|\widehat{K}_{\neq}(0)|^2}{\langle l\rangle^2}\right)\\
    \left|\widehat{U}_{3,\neq}\right|^2 &\lesssim e^{c_2\alpha^{-1}}\langle l\rangle^{c_3\frac{\sqrt{\alpha}}{\Omega}}e^{-\frac{c_0\mu t^3}{15}}\left(\left|\frac{\widehat{\Theta}_{\neq}(0)}{\alpha^\frac{1}{2}}\right|^2+\frac{|\widehat{U}_{3,\neq}(0)|^2}{\langle l\rangle}+\frac{\Omega^2}{\alpha^2}\frac{|\widehat{K}_{\neq}(0)|^2}{\langle l\rangle^2}\right),\\
    \left|\frac{\widehat{\Theta}_{\neq}}{\alpha^\frac{1}{2}}\right|^2 &\lesssim |l|e^{c_2\alpha^{-1}}\langle l\rangle^{c_3\frac{\sqrt{\alpha}}{\Omega}}e^{-\frac{c_0\mu t^3}{15}}\left(\left|\frac{\widehat{\Theta}_{\neq}(0)}{\alpha^\frac{1}{2}}\right|^2+\frac{|\widehat{U}_{3,\neq}(0)|^2}{\langle l\rangle}+\frac{\Omega^2}{\alpha^2}\frac{|\widehat{K}_{\neq}(0)|^2}{\langle l\rangle^2}\right),\\
     \left|\frac{\widehat{\mathcal{K}}_{\neq}}{\alpha}\right|^2 &\lesssim \frac{1}{\Omega^2}|l|^3e^{c_2\alpha^{-1}}\langle l\rangle^{c_3\frac{\sqrt{\alpha}}{\Omega}}e^{-\frac{c_0\mu t^3}{15}}\left(\left|\frac{\widehat{\Theta}_{\neq}(0)}{\alpha^\frac{1}{2}}\right|^2+\frac{|\widehat{U}_{3,\neq}(0)|^2}{\langle l\rangle}+\frac{\Omega^2}{\alpha^2}\frac{|\widehat{K}_{\neq}(0)|^2}{\langle l\rangle^2}\right).
\end{align*}As a result, we obtain the inviscid damping and the enhanced dissipation estimates in Theorem \eqref{Pran}.

\section{\textbf{Applications of the linearized potential vorticity}}\label{ch2}
As briefly discussed in Section \ref{sectionpv},  potential
vorticity is a fundamental Lagrangian invariant in geophysical
fluid dynamics.  In this section, we show that the linearized
potential vorticity provides a natural organizing variable for
perturbations of three-dimensional stratified shear flows. Consider again the 3D inviscid Boussinesq system\begin{flalign*}
\left\{
\begin{aligned}
& \partial_t V+(V\cdot\bm{\nabla})V=-\bm{\nabla}P+\eta\vec{\bm{e}}_3\\
   & \bm{\nabla}\cdot V=0,\\
&   \partial_t\eta+V\cdot\bm{\nabla}\eta=0,
\end{aligned}
\right.
\end{flalign*}We consider the more general steady state\begin{equation}
V^*=U(y,z)\vec{\bm{e}}_1,\qquad \eta^*=\alpha z,\qquad q^*=-\alpha\partial_y U(y,z).\label{general}
\end{equation}The linearized PV around this steady state is given by\[
q_l:=\bm{\nabla}(\alpha z)\cdot\begin{pmatrix}
    \partial_y u_3-\partial_z u_2\\
    \partial_zu_1-\partial_x u_3\\
    \partial_x u_2-\partial_y u_1
\end{pmatrix}+\bm{\nabla}\theta\cdot\begin{pmatrix}
    0\\\partial_zU\\-\partial_y U
\end{pmatrix}=\alpha\omega_3-\partial_y U\partial_z\theta+\partial_zU\partial_y\theta.
\]The linearized conservation law (\ref{PV}) then reduces to\begin{equation}
\partial_t  q_l+U(y,z)\partial_x q_l+u\cdot\bm{\nabla}q^*=0.\label{governingequoflpv}
\end{equation}

Using the incompressibility together with the definition of the linearized PV, we can solve for $u_2$ in terms of $q_l$, $u_3$ and $\theta$,\[
u_2=\Delta_h^{-1}\left(\frac{\partial_xq_l+\partial_y U\partial_{xz}\theta-\partial_z U\partial_{xy}\theta}{\alpha}-\partial_{yz}u_3\right).
\]Substituting this identity into (\ref{governingequoflpv}) yields
\begin{equation}
\boxed{\partial_tq_l+U\partial_xq_l-\partial^2_y U\partial_x\Delta^{-1}_hq_l=\alpha u_3\partial_{yz}U+\partial^2_y U\Delta^{-1}_h\left(\partial_y U\partial_{xz}\theta-\partial_z U\partial_{xy}\theta-\alpha\partial_{yz}u_3\right).}\label{structuredPV}
\end{equation}

When $U=U(y)$ is independent of $z$, the operator on the
left-hand side of (\ref{structuredPV}) is precisely the linearized
two-dimensional Euler around the shear profile $U(y)$.
Indeed, let $V^*=U(y)\vec{\bm{e}}_1$, and $\omega_3^*=-U'(y)$, the linearized 2D Euler (in vorticity form) is\[
\partial_t\omega+(U\partial_x-U''\partial_x\Delta^{-1})\omega=0.
\]We see that this coincides with the left-hand side of the $q_l$ equation\footnote{One can similarly derive a 2D forced Rayleigh equation by taking $E:=\Delta^{-1}_h q_l$.}. When the forcing is zero, we recover the standard linearized 2D Euler.
\vspace{3pt}

Moreover, from (\ref{structuredPV}), we see that the dynamics of the linearized PV is particularly simple when $\partial^2_y U=\partial_{yz}U=0$, this corresponds to profiles of the form\begin{equation}U(y,z)=ay+b(z)\label{specialss}\end{equation}where $b$ is an arbitrary function. These profiles consist of
a horizontal Couette component coupled with an arbitrary vertical
shear profile. In this case,\[
\partial_tq_l+U\partial_x q_l=0,\qquad \Rightarrow\qquad q_l(t,x,y,z)=q_l(0,x-tU,y,z).
\]That is, the linearized PV is simply transported by the background flow. It is therefore natural to first examine a representative member of
this class, and then to ask how much of this transport structure
survives for profiles lying just beyond it. This leads us to the
following two model flows.
\begin{itemize}
    \item \textit{The tilted Couette flow.} Let\begin{equation}
    U(y,z)=(y\cos\phi+z\sin\phi,0,0),\qquad \phi\in\left[0,\frac{\pi}{2}\right].\label{tcf}
    \end{equation}This represents a Couette flow whose shear direction forms an arbitrary angle with the stratification direction. Moreover, this is of the form $U=ay+b(z)$, so that the linearized PV can be solved explicitly. We will discuss the linear stability of this flow in Section \ref{ang}.
    \item \textit{The streak flow.} Let\[
    U(y,z)=yz.
    \]For this profile, the linearized PV equation cannot be solved explicitly, but it retains a relatively simple structure. In particular, the potential vorticity solves\[
\partial_tq_l+U\partial_xq_l=\alpha u_3.
\]In this case, we see that $q_l+\theta$ is linearly conserved, which implies a further non-trivial linear conserved quantity $\alpha\omega_3-z\partial_z\theta+y\partial_y\theta+\theta$.  
\end{itemize}

These examples illustrate that linearized potential vorticity can be
a powerful variable for organizing perturbation dynamics, especially
when its evolution equation possesses a closed or explicitly solvable
transport structure. In this case, we can derive the following reformulated $(u_3,\theta)$ system\begin{flalign*}
\left\{
\begin{aligned}
&\partial_t\theta+U\partial_x\theta=-\alpha u_3,  \\
&\partial_t u_3+U\partial_x u_3=\Delta_h\Delta^{-1}\theta+2\partial_{xz}\Delta^{-1}\left(\partial_y U\Delta_h^{-1}\left(\frac{\partial_xq_l+\partial_y U\partial_{xz}\theta-\partial_z U\partial_{xy}\theta}{\alpha}-\partial_{yz}u_3\right)+\partial_z U u_3\right).
\end{aligned}
\right.
\end{flalign*}It is therefore natural to focus on this system only, and the corresponding div-curl system for $(u_1,u_2)$ is given by\begin{flalign*}
\left\{
\begin{aligned}
&\partial_x u_2-\partial_y u_1=\frac{q_l-\partial_z U\partial_y\theta+\partial_y U\partial_z\theta}{\alpha},\\
    &\partial_x u_1+\partial_y u_2=-\partial_z u_3.
\end{aligned}
\right.
\end{flalign*}

Linearized potential vorticity has also played an important role in the
mathematical literature. We mention two representative examples.\vspace{5pt}

 \textsc{\textbf{1}}. In the nonlinear analysis in \cite{coti2024stability} of the zero modes of $u_1$ around the $(z,0,0)$ Couette flow, the following quantity was introduced\[
V:=-\alpha u_1+\theta
\]This is in fact the zero mode of the linearized PV. Indeed, the full linearized PV is given by\[
q_l=\alpha\omega_3+\partial_y\theta.
\]The zero mode of $q_l$ degenerates to\[
q_{l,0}=-\alpha\partial_y u_1+\partial_y\theta,
\]and we see that $\partial^{-1}_yq_l=V$.\vspace{3pt}

\textsc{\textbf{2}}. In \cite{antonelli2021linear}, the stability of Couette flow in 2D isentropic compressible fluid was studied. In this case, the potential vorticity is \[q=\frac{\omega}{\rho}.\]The corresponding steady vorticity and density are\[
\omega^*=-1,\qquad \rho^*=1.
\]The linearized PV in this case becomes\[
q_l=-1+\rho+\omega,
\]which plays a crucial role in the analysis of linear stability. (The constant $1$ can be neglected.) \vspace{4pt}

These examples illustrate that the linearized potential vorticity can be
a powerful tool for analyzing the dynamics of perturbations, particularly
when its evolution equation admits a simple or explicitly solvable
structure. In the remainder of this section, we present two examples
demonstrating how this formulation can be used to investigate linear
stability.
\subsection{\textbf{Tilted Couette flow}}\label{ang}
In this section, we provide another application of the linearized PV to clarify the relationship between the plane and vertical Couette flows. Motivated by results obtained in Sections \ref{lsa} and \ref{weaklu}, in which the inviscid damping and lift-up patterns are very different from the vertical Couette flow setting \cite{coti2025suppression}, we consider the linearized stability of the following steady state \[
V^*=(\cos\phi y+\sin\phi z,0,0)^T,\qquad \theta^*=\alpha z,\qquad\alpha>0,\qquad \phi\in\left[0,\frac{\pi}{2}\right]
\]The perturbation is posed on $\Omega_{\mathbb{R}}$. This steady state is of the form (\ref{specialss}), and it can be regarded as the intermediate state between the two Couette flows as $\phi$ varies from $0$ to $\pi/2$. In this setting, the linearized system is formulated as follows\begin{flalign}
\left\{
\begin{aligned}
&\partial_t u+\mathcal{L} u+(\cos\phi u_2+\sin\phi u_3)\vec{\bm{e}}_1+\bm{\nabla}p=\theta\vec{\bm{e}}_3,\\
    &\partial_t\theta+\mathcal{L}\theta=-\alpha u_3,\\
    &\bm{\nabla}\cdot u=0.
\end{aligned}
\right.\label{lineearizationaroundtcf}
\end{flalign}Here $\mathcal{L}=(y\cos\phi +z\sin\phi)\partial_x$ is the shear operator. The pressure can be solved via the incompressibility,\[
 p=\Delta^{-1}\left(-2(\cos\phi\partial_x u_2+\sin\phi\partial_x u_3)+\partial_z\theta\right).
\]

\noindent\textbf{Non-zero modes.}\vspace{3pt}

As in the analysis of the plane Couette flow in Section \ref{lsa}, $(u_3,\theta)$ does not automatically form a closed system, a direct symmetrization method cannot be applied without reformulation. In this setting, the linearized PV reads\[
K_\phi=\alpha\omega_3+\mathcal{L}_1\theta,
\]where $\mathcal{L}_1=\sin\phi\partial_y-\cos\phi\partial_z$. One can verify that $K_\phi$ solves the transport-diffusion equation\[
\partial_t K_\phi+\mathcal{L}K_\phi=0.
\]Again, we can use $K_\phi$ to reformulate the $(u_{3,\neq},\theta_{\neq})$ system, as well as recover $(u_{1,\neq},u_{2,\neq})$ by solving the following div-curl system\begin{align}
    \left\{
\begin{aligned}
&\partial_x u_2-\partial_y u_1=\frac{K_\phi-\mathcal{L}_1\theta}{\alpha},\\
&\partial_x u_1+\partial_y u_2=-\partial_z u_3
\end{aligned}
\right.\qquad\Rightarrow\qquad  \left\{
\begin{aligned}
&\Delta_h u_1=-\partial_{xz} u_3-\frac{\partial_y K_\phi-\partial_y\mathcal{L}_1\theta}{\alpha},\\
&\Delta_h u_2=-\partial_{yz} u_3+\frac{\partial_x K_\phi-\partial_x\mathcal{L}_1\theta}{\alpha}
\end{aligned}
\right.\label{generaldivcurl}
\end{align}We define the new moving-frame coordinates $X=x-ty\cos\phi -tz\sin\phi,$ $Y=y,$ $ Z=z.$ 
The differential operators in this frame are given by\begin{align*}
&\partial_y=\partial_{Y}-t\cos\phi\partial_X=:\partial_Y^L,\qquad \partial_z=\partial_Z-t\sin\phi\partial_X=:\partial_Z^L\\
&\Delta_{H}=\partial^2_X+(\partial_Y-t\cos\phi \partial_X)^2,\quad\Delta_{L}=\partial^2_X+(\partial_Y-t\cos\phi \partial_X)^2+(\partial_Z-t\sin\phi \partial_X)^2.
\end{align*}We thus derive the following reformulated $(u_3,\theta)$ system in the moving frame,\begin{align}
   \left\{
\begin{aligned}
&\partial_tU_3-2\partial_Z^L\Delta_L^{-1}\left[\cos\phi\partial_X\left(-\partial_{YZ}^L\Delta^{-1}_H U_3+\frac{\partial_X\Delta^{-1}_H\mathcal{K}_\phi}{\alpha}-\frac{\partial_X\Delta^{-1}_H\mathcal{L}_1\Theta}{\alpha}\right)+\sin\phi \partial_X U_3\right]=\Delta_H\Delta_L^{-1}\Theta,\\
&\partial_t \Theta=-\alpha U_3.
\end{aligned}
\right.\label{newsystem}
\end{align}Define the following Fourier multipliers\begin{align*}
    f=k^2+(\eta-kt\cos\phi)^2,\qquad p=f+(l-kt\sin\phi)^2.
\end{align*}Consider the change of variables\[
\widehat{\Theta}_{\neq}=A,\qquad \widehat{U}_{3,\neq}=bB.
\]The Fourier-transformed system (\ref{newsystem}) becomes\begin{align}
   \left\{
\begin{aligned}
&\partial_tB=-\frac{b'}{b}B+\left(\frac{f'}{f}-\frac{p'}{p}\right) B-\frac{2i}{\alpha}\frac{k^2(l-kt\sin\phi)\cos\phi}{bpf}\widehat{\mathcal{K}}_\phi\\&\qquad-\frac{2k(l-kt\sin\phi)\cos\phi}{\alpha bpf}\left(k(\eta-kt\cos\phi)\sin\phi-k(l-kt\sin\phi)\cos\phi\right)A +\frac{f}{bp}A\\
&\partial_t A=-\alpha bB.
\end{aligned}
\right.\label{Fouriernewsystem}
\end{align}We set\[
b=\frac{1}{\alpha}\sqrt{\frac{z}{pf}},\qquad z=\alpha f^2+2k^2 p\cos^2\phi,
\]so that a valid symmetrization can be made. The reason for this choice of multiplier $b$ is the same as that for (\ref{symmetrizer}). Consider the energy functional $\mathbb{E}_3:=\frac{1}{2}\left[|A|^2+|B|^2\right]$, we have\begin{align*}
    \partial_t\mathbb{E}_3&=\left(\frac{3f'}{2f}-\frac{z'}{2z}-\frac{p'}{2p}\right)|B|^2-\frac{2ik^2\cos\phi(l-kt\sin\phi)}{\sqrt{fpz}}\widehat{\mathcal{K}}_{\phi,\neq} B+\frac{\alpha\sqrt{pf}}{\sqrt{z}}\left(-\frac{2k^2\cos^2\phi}{\alpha p}-\frac{1}{2\alpha}\frac{f'g'}{pf}\right)AB\\
    &=F_1+F_2+F_3.
\end{align*}(Here $g:=(l-kt\sin\phi)^2.$) We can bound each of the terms as follows\[
|F_3|\leq\frac{k^2}{\sqrt{\alpha}f}\mathbb{E}_3,\qquad |F_2|\leq\frac{k^2}{\sqrt{\alpha}f}\left(\left|\widehat{\mathcal{K}}_{\phi,\neq}\right|^2+\mathbb{E}_3\right)
\]$F_1$ can cause some regularity loss. We thus bound it as\[
|F_1|\leq\left|\frac{d}{dt}\ln\left(\frac{f^3}{pz}\right)\right|\mathbb{E}_3
\]We see that\begin{equation}
\int_0^t\left|\frac{d}{dt}\ln\left(\frac{f^3}{pz}\right)\right|\ ds\leq (N+1)\sup_{t_1,t_2\in\mathbb{R}}\left|\ln\left(\frac{f^3(t_1)}{p(t_1)z(t_1)}\cdot\frac{p(t_2)z(t_2)}{f^3(t_2)}\right)\right|,\label{NNNNN}
\end{equation}where $N$ is the maximal number of zeros of $\frac{3f'}{2f}-\frac{z'}{2z}-\frac{p'}{2p}$. After some computation, one can verify that $N\leq 6$. Besides, observe that\[
\left|\frac{f^3}{pz}\right|\leq\frac{1}{\alpha},\qquad \left|\frac{pz}{f^3}\right|\lesssim\left|\frac{k^2p+\alpha f^2}{f^2}\cdot\frac{p}{f}\right|\lesssim\alpha\frac{p}{f}+\frac{p^2}{f^2}.
\]Notice that\[
\frac{p}{f}=1+\frac{\left(l-\left(kt-\frac{\eta}{\cos\phi}+\frac{\eta}{\cos\phi}\right)\sin\phi\right)^2}{k^2+(\eta-kt\cos\phi)^2}\lesssim1+\frac{(l-\tan\phi\eta)^2+\tan^2\phi(\eta-kt)^2}{k^2+(\eta-kt)^2}\lesssim\sec^2\phi\cdot p(0).
\]As a result, we obtain the following estimate\[
e^{\int_0^t\left|\frac{d}{dt}\ln\left(\frac{f^3}{pz}\right)\right|\ ds}\lesssim\sec^{28}\phi\cdot p^{14}(0).
\]Consequently,\[
\mathbb{E}_3\lesssim  C_{\alpha,\phi}^2 p^{14}(0)\left(\mathbb{E}_3(0)+\left|\widehat{\mathcal{K}}_{\phi,\neq}\right|^2\right),\qquad C_{\alpha,\phi}:=e^{\frac{c}{\sqrt{\alpha}}}\sec^{14}\phi .
\]Note that\[
b(t)\lesssim\frac{\sqrt{1+\alpha}}{\alpha},\qquad b^{-1}(0)\lesssim \sqrt{\alpha}p^\frac{1}{2}(0),
\]we obtain\[
||u_{3,\neq}||_{L^2}+\left\|\frac{\theta_{\neq}}{\alpha^\frac{1}{2}}\right\|_{L^2}\lesssim  C_{\alpha,\phi}\left(||u_{\neq}(0)||_{H^{15}}+\left\|\frac{\theta_{\neq}(0)}{\alpha^\frac{1}{2}}\right\|_{H^{15}}\right).
\]This estimate implies that, when the shear direction is not parallel to the stratification direction, i.e. when $\phi\neq \pi/2$, the vertical velocity and the temperature components do not undergo inviscid damping. This observation is supported by the numerical simulations shown in Figure~\ref{fig3}. As $\phi\to\pi/2$, the decay pattern changes qualitatively. More precisely, in agreement with our analysis---apart from the possibly artefactual factor $\sec^{14}\phi$, $u_{3,\neq}$ and $\theta_{\neq}$ decay only at the endpoint $\phi=\pi/2$, whereas they exhibit oscillatory behavior for $\phi\neq\pi/2$.

 \begin{figure}[H]
    \centering
    \begin{minipage}[b]{0.49\textwidth}
        \centering
        \includegraphics[width=\textwidth]{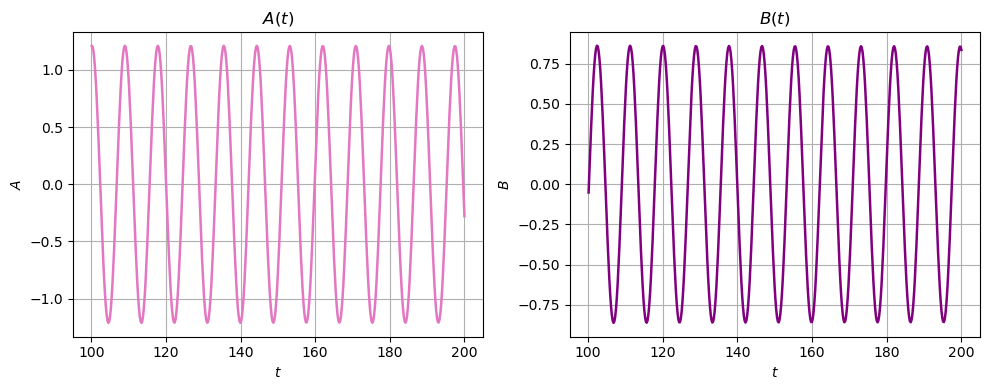}
        
    \end{minipage}
    \hspace{0.04cm}
    \begin{minipage}[b]{0.49\textwidth}
        \centering
        \includegraphics[width=\textwidth]{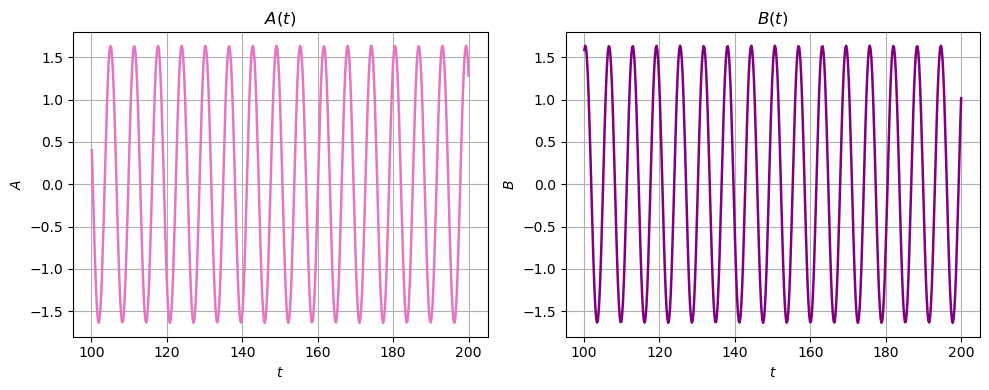}
   
    \end{minipage}
    \hspace{0.04cm}
    \begin{minipage}[b]{0.49\textwidth}
        \centering
        \includegraphics[width=\textwidth]{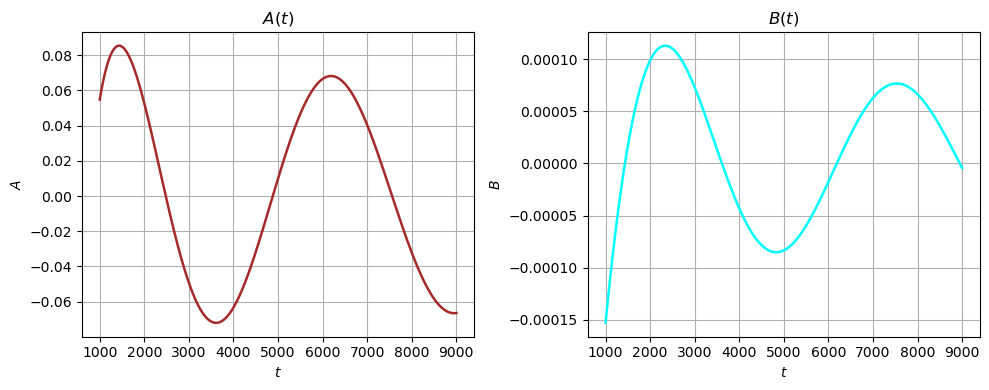}
   
    \end{minipage}
    \hspace{0.04cm}
    \begin{minipage}[b]{0.49\textwidth}
        \centering
        \includegraphics[width=\textwidth]{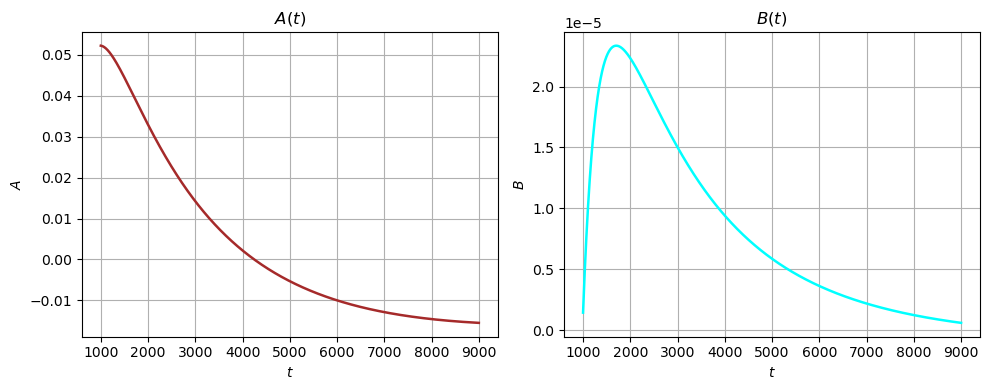}
   
    \end{minipage}
     \caption{Numerical solutions for $\alpha=1$. Top left:
$(k,\eta,l,\phi)=(-1,0,1,\pi/4)$; top right:
$(k,\eta,l,\phi)=(1,-3,2,0)$; bottom left:
$(k,\eta,l,\phi)=(2,-2,-1,\pi/2-0.001)$; bottom right:
$(k,\eta,l,\phi)=(2,2,1,\pi/2)$.}\label{fig3}
\end{figure}For $u_{1,\neq}$ and $u_{2,\neq}$, we use the formula \eqref{generaldivcurl}. $u_{1,\neq}$ experiences inviscid damping due to the bound\[
|\widehat{U}_{1,\neq}|\lesssim\frac{p(0)}{\langle t\cos\phi\rangle}\left[|\widehat{U}_{3,\neq}|+|\widehat{\mathcal{K}}_{\phi,\neq}(0)|+|\widehat{\Theta}_{\neq}|\right].
\]For $u_{2,\neq}$, notice that $\widehat{\partial_{YZ}^L U_{3,\neq}}=-(\eta-t\cos\phi)(l-t\sin\phi)\widehat{U}_{3,\neq}$, which means that the inviscid damping cannot be obtained by this formulation. Indeed, we verify by numerical simulation (see below) that, for any $\phi\in(0,\pi/2]$, $u_{2,\neq}$ does not experience inviscid damping.\begin{figure}[H]
    \centering
    \begin{minipage}[b]{0.49\textwidth}
        \centering
        \includegraphics[width=\textwidth]{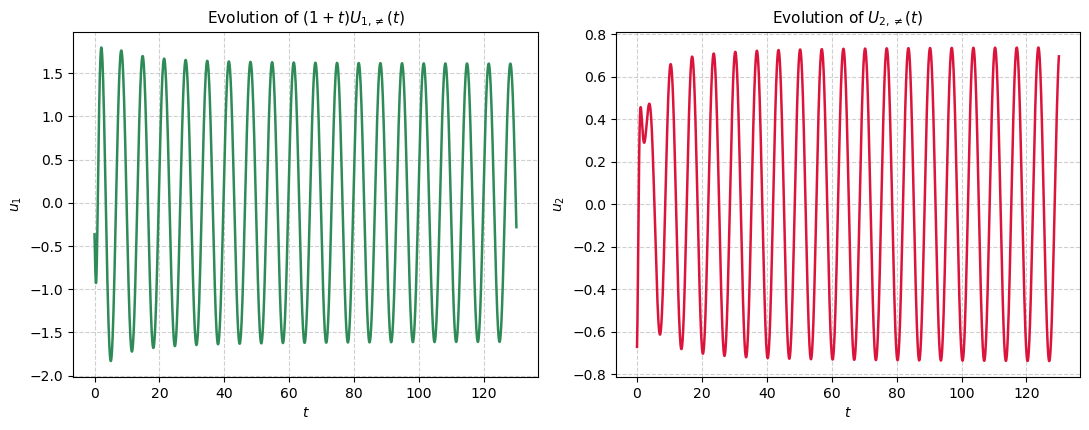}
        
    \end{minipage}
    \hspace{0.04cm}
    \begin{minipage}[b]{0.49\textwidth}
        \centering
        \includegraphics[width=\textwidth]{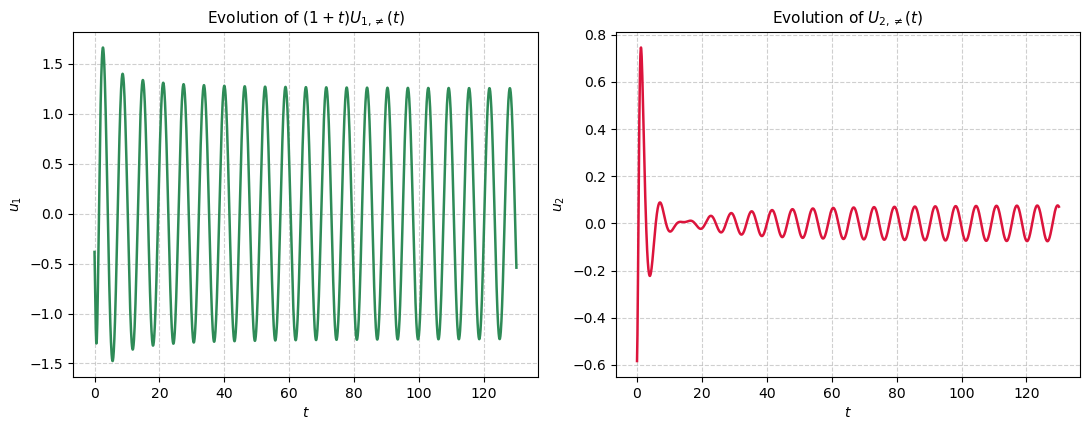}
    \end{minipage}
     \caption{Numerical solutions for $\alpha=1$. Left:
$\phi=\frac{\pi}{9}$. Right:
$\phi=\frac{\pi}{90}$.}
\end{figure}One thing to notice is that, as $\phi\to0$, the solution can first decay and remains small without further time-decay after certain time. To verify this, observe that\[
\left|\frac{(\eta-kt\cos\phi)(l-kt\sin\phi)}{f}\right|\leq\left|\frac{l-kt\sin\phi}{\sqrt{k^2+(\eta-kt\cos\phi^2)}}\right|\lesssim\frac{f^\frac{1}{2}(0)}{1+t\cos\phi}\cdot(1+t\sin\phi)f^\frac{1}{2}(0)\lesssim f(0)\max\left\{\frac{1}{\langle t\rangle},\tan\phi\right\}
\]as $\phi\to0$. Therefore, we obtain the estimate\[
||u_{2,\neq}||_{L^2}\lesssim  C_{\alpha,\phi}\max\left\{\frac{1}{\langle t\rangle},\tan\phi\right\}\left(||u_{\neq}(0)||_{H^{17}}+\left\|\frac{\theta_{\neq}(0)}{\alpha^\frac{1}{2}}\right\|_{H^{17}}\right).
\]In general, the following bound holds true:\[
\langle t\cos\phi\rangle||u_{1,\neq}||_{L^2}+||u_{2,\neq}||_{L^2}\lesssim  C_{\alpha,\phi}\left(||u_{\neq}(0)||_{H^{17}}+\left\|\frac{\theta_{\neq}(0)}{\alpha^\frac{1}{2}}\right\|_{H^{17}}\right).
\]\savegeometry{3pt}

\noindent\textbf{Zero modes.}

The zero mode system reads\begin{flalign*}
\left\{
\begin{aligned}
&\partial_t u_{1,0}+\cos\phi u_{2,0}+\sin\phi u_{3,0}=0,\\
&\partial_tu_{2,0}=-\partial_{yz}\Delta^{-1}\theta_0,\\
&\partial_t u_{3,0}=\partial^2_y\Delta^{-1}\theta_0,\\
    &\partial_t\theta_0=-\alpha u_{3,0}.
\end{aligned}
\right.
\end{flalign*}The \((u_{2,0},u_{3,0},\theta_0)\) system is independent of \(\phi\), since the shear effect vanishes in the linearized zero-mode dynamics. The exact solutions for these three quantities are the same as those in (\ref{eq:B9}), and hence they exhibit the same dispersive properties. On the other hand, for \(u_1\), we have $\widehat{u}_{1,0}=\widehat{u}_{1,0}(0)-\int_0^t\cos\phi u_{2,0}+\sin\phi u_{3,0}$. Integrating for $\widehat{u}_{3,0}$, we have\begin{align*} \int_0^t\widehat{u}_{3,0}(s)\ ds=-\frac{\widehat{\theta}_0(0)}{\alpha}\left(\cos\left(\frac{\sqrt{\alpha}t\eta}{\sqrt{\eta^2+l^2}}\right)-1\right)+\widehat{u}_{3,0}(0)\frac{\sqrt{\eta^2+l^2}}{\sqrt{\alpha}\eta}\sin\left(\frac{\sqrt{\alpha}t\eta}{\sqrt{\eta^2+l^2}}\right). \end{align*}We have\[ \frac{\sqrt{\eta^2+l^2}}{\eta}\widehat{u}_3(0)=\frac{\eta}{\sqrt{\eta^2+l^2}}\widehat{u}_3(0)-\frac{l}{\sqrt{\eta^2+l^2}}\widehat{u}_2(0), \]which does not have any singular term. However, when integrating \(\widehat{u}_{2,0}\) in time, we find that the singular contribution appears as in (\ref{eq:B9}). As a result, we find that a similar lift-up effect is caused by the operator
\[
   \alpha^{-\frac{1}{2}}\cos\phi\,
   \partial_z\partial_y^{-1}
   e^{\nu t\Delta}
   \left(
   e^{\pm\sqrt{\alpha}t\partial_y|\bm{\nabla}_{y,z}|^{-1}}
   -I_d
   \right)=\cos\phi\mathcal{L}_{\text{Bou}}.
\]
The corresponding results regarding transient growth can be found in Section \ref{weaklu}.

\section{\textbf{Nonlinear stability of the plane Couette flow}}\label{ch3}
We now turn to the nonlinear stability of the plane Couette flow. For simplicity, we take $\nu=\kappa$ and consider perturbations set on $\Omega_{\mathbb{T}}$.

To study the dynamics of the non-zero modes, we analyze the system $(U_{3,\neq},\Theta_{\neq},\mathcal{K}_{\neq})$ and recover $U_{1,2,\neq}$ from the div--curl system. We rescale each unknown so that they all have the same order in terms of $\alpha$. Let
\begin{equation}
\Theta=\sqrt{\alpha}\Psi,\qquad K=\alpha\Xi.\label{additional scaling}
\end{equation}Thus, the nonlinear system now reads\begin{flalign}
\left\{
\begin{aligned}
 &\partial_t\Psi=\nu\Delta_L\Psi-\sqrt{\alpha} U_3-N_1,\\
    &\partial_t U_3=\nu\Delta_L U_3+\sqrt{\alpha}\left(\Delta_H\Delta^{-1}_L+\frac{2}{\alpha}\partial^2_{XZ}\Delta^{-1}_L\Delta^{-1}_H\right)\Psi+2\partial^2_X\partial_{Z}\Delta^{-1}_L\Delta^{-1}_H\Xi -2\partial_{XY}^L\partial_Z^2\Delta^{-1}_L\Delta^{-1}_H U_3-N_2,\\
&\partial_t \Xi=\nu\Delta_L \Xi-N_3.
\end{aligned}
\right.\label{full system1}
\end{flalign}The nonlinearities are\begin{align*}
 N_1&=U\cdot\bm{\nabla}_L\Psi,\\
    N_2&=\partial_Z P^{NL}+U\cdot\bm{\nabla}_L U_3,\\
    N_3&=\left[\partial_X U\cdot\bm{\nabla}_L U_2-\partial_Y^L U\cdot\bm{\nabla}_L U_1\right]-\frac{1}{\sqrt{\alpha}}\partial_Z U\cdot\bm{\nabla}_L \Psi+U\cdot\bm{\nabla}_L \Xi,\\
    P^{NL}&=-\Delta^{-1}_L\left(\bm{\nabla}_L\cdot \left((U\cdot\bm{\nabla}_L)U\right)\right)=-\Delta^{-1}_L(\partial_{i}^L U_j\partial_j^L U_i)
\end{align*}The pair $(U_{1,\neq},U_{2,\neq})$ can be expressed as follows\begin{align}   U_{1,\neq}&=\frac{\partial_X\partial_Z}{|\bm{\nabla}_H|^2}U_{3,\neq}+\frac{\partial_Y^L}{|\bm{\nabla}_H|^2}\Xi_{\neq}+\frac{1}{\sqrt{\alpha}}\frac{\partial_Z\partial_Y^L}{|\bm{\nabla}_H|^2}\Psi_{\neq},\label{rec1}\\
U_{2,\neq}&=\frac{\partial_Y^L\partial_Z}{|\bm{\nabla}_H|^2}U_{3,\neq}-\frac{\partial_X}{|\bm{\nabla}_H|^2}\Xi_{\neq}-\frac{1}{\sqrt{\alpha}}\frac{\partial_Z\partial_X}{|\bm{\nabla}_H|^2}\Psi_{\neq}.\label{rec2}
\end{align}

We shall prove the stability of the non-zero modes via the energy method. As in the linear analysis in Section~\ref{lsa}, we define the following symmetrized variables
\[
\Psi=\bm{a}A,\qquad U_3=\bm{b}B,
\]where\[
\widehat{\bm{a}A}=C\alpha^\frac{1}{4}\frac{p^{\frac{1}{4}-\frac{\lambda}{2}}f^{\frac{1}{4}+\frac{\lambda}{2}}}{z^\frac{1}{4}}\widehat{A},\qquad \widehat{\bm{b}B}=\frac{C}{\alpha^\frac{1}{4}}\frac{z^\frac{1}{4}}{p^{\frac{1}{4}+\frac{\lambda}{2}}f^{\frac{1}{4}-\frac{\lambda}{2}}}.
\]Here, the time-independent constant $C=C(\alpha,k,\eta,l)$ is chosen as follows. Considering the solutions (\ref{rec1}) and (\ref{rec2}), we seek to ensure that\begin{equation}
\left|\widehat{U}_{1,2,\neq}\right|\lesssim\left|\widehat{A}_{\neq}\right|+\left|\widehat{B}_{\neq}\right|+\left|\widehat{\Xi}_{\neq}\right|.\label{balance}
\end{equation}As a result, we need to carefully estimate the upper bounds for $\frac{a|l|}{\alpha^\frac{1}{2}f^\frac{1}{2}}$ and $\frac{b|l|}{f^\frac{1}{2}}$ (the coefficients of $U_{3,\neq}$ and $\Psi_{\neq}$).\vspace{3pt} 

\begin{itemize}
    \item When $\lambda\geq\frac{1}{2}$, we have\begin{align*}
    \frac{a|l|}{\alpha^\frac{1}{2}f^\frac{1}{2}}=\frac{C}{\alpha^\frac{1}{4}}\frac{|l|}{f^\frac{1}{2}}\frac{f^{\frac{1}{4}+\frac{\lambda}{2}}}{p^{\frac{\lambda}{2}-\frac{1}{4}}z^\frac{1}{4}}=\frac{C|l|}{\alpha^\frac{1}{4}}\frac{f^{\frac{\lambda}{2}-\frac{1}{4}}}{p^{\frac{\lambda}{2}-\frac{1}{4}}z^\frac{1}{4}}.
\end{align*}When $\alpha f^2\geq k^2p$, we have\[
\frac{C|l|}{\alpha^\frac{1}{4}}\frac{f^{\frac{\lambda}{2}-\frac{1}{4}}}{p^{\frac{\lambda}{2}-\frac{1}{4}}z^\frac{1}{4}}\lesssim\frac{C|l|^{\frac{3}{2}-\lambda}}{\alpha^\frac{1}{4}}\frac{f^{\frac{\lambda}{2}-\frac{1}{4}}}{\alpha^\frac{1}{4}f^{\frac{\lambda}{2}-\frac{1}{4}}f^{\frac{3}{4}-\frac{\lambda}{2}}}.
\]Using the fact that $\frac{|l|}{f}\leq\frac{\sqrt{\alpha}}{|k|}$, we have\[
\frac{C|l|}{\alpha^\frac{1}{4}}\frac{f^{\frac{\lambda}{2}-\frac{1}{4}}}{p^{\frac{\lambda}{2}-\frac{1}{4}}z^\frac{1}{4}}\lesssim\frac{C}{\alpha^\frac{1}{4}}|l|^{\frac{3}{4}-\frac{\lambda}{2}}\frac{\alpha^{\frac{1}{2}\left(\frac{3}{4}-\frac{\lambda}{2}\right)}}{\alpha^\frac{1}{4}|k|^{\frac{3}{4}-\frac{\lambda}{2}}}\lesssim\frac{C}{\alpha^{\frac{\lambda}{4}+\frac{1}{8}}}\frac{|l|^{\frac{3}{4}-\frac{\lambda}{2}}}{|k|^{\frac{3}{4}-\frac{\lambda}{2}}}.
\]When $\alpha f^2\leq k^2p$, we similarly have\[
    \frac{a|l|}{\alpha^\frac{1}{2}f^\frac{1}{2}}\lesssim\frac{C}{\alpha^\frac{1}{4}}\frac{|l|f^{\frac{\lambda}{2}-\frac{1}{4}}}{p^{\frac{\lambda}{2}-\frac{1}{4}}|k|^\frac{1}{2}p^\frac{1}{4}}.
\]Using the fact that $f\leq\frac{|k|\sqrt{p}}{\sqrt{\alpha}}$, we obtain \[
   \frac{a|l|}{\alpha^\frac{1}{2}f^\frac{1}{2}}\lesssim\frac{C}{\alpha^\frac{1}{4}}\frac{|l|}{p^\frac{\lambda}{2}|k|^\frac{1}{2}}\frac{|k|^{\frac{\lambda}{2}-\frac{1}{4}}p^{\frac{\lambda}{4}-\frac{1}{8}}}{\alpha^{\frac{\lambda}{4}-\frac{1}{8}}}\lesssim\frac{C}{\alpha^{\frac{\lambda}{4}+\frac{1}{8}}}\frac{|l|^{\frac{3}{4}-\frac{\lambda}{2}}}{|k|^{\frac{3}{4}-\frac{\lambda}{2}}}.
\]For $\frac{b|l|}{f^\frac{1}{2}}$, we see that\begin{align*}
\frac{b|l|}{f^\frac{1}{2}}&\lesssim\frac{C}{\alpha^\frac{1}{4}}\frac{|l||k|^\frac{1}{2}}{p^\frac{\lambda}{2}f^{\frac{3}{4}-\frac{\lambda}{2}}}+C\frac{|l|f^{\frac{\lambda}{2}-\frac{1}{4}}}{p^{\frac{1}{4}+\frac{\lambda}{2}}}\\
&\lesssim\frac{C}{\alpha^\frac{1}{4}}\frac{|l|^{1-\lambda}}{|k|^{1-\lambda}}+C
\end{align*}Therefore, when $\lambda\geq\frac{1}{2}$, we have\[
 \left|\widehat{U}_{1,2,\neq}\right|\lesssim C\left(\frac{|l|^{1-\lambda}}{\alpha^\frac{1}{4}|k|^{1-\lambda}}+1\right)|B_{\neq}|+\frac{1}{f^{\frac{1}{2}}}|\Xi_{\neq}|+\frac{C}{\alpha^{\frac{\lambda}{4}+\frac{1}{8}}}\frac{|l|^{\frac{3}{4}-\frac{\lambda}{2}}}{|k|^{\frac{3}{4}-\frac{\lambda}{2}}}|A_{\neq}|.
\]Therefore, we choose \begin{equation}
\boxed{C\left(\frac{\langle l\rangle^{\frac{3}{4}-\frac{\lambda}{2}}}{\alpha^\frac{1}{4}}+1\right)=1}\label{sit1}
\end{equation}so that (\ref{balance}) is satisfied. 
\item When $\lambda\leq\frac{1}{2}$, we have\[
    \frac{a|l|}{\alpha^\frac{1}{2}f^\frac{1}{2}}=\frac{C|l|}{\alpha^\frac{1}{4}}\frac{p^{\frac{1}{4}-\frac{\lambda}{2}}}{f^{\frac{1}{4}-\frac{\lambda}{2}}z^\frac{1}{4}}.
\]If $\alpha f^2\geq k^2p$, then we  have\[
    \frac{a|l|}{\alpha^\frac{1}{2}f^\frac{1}{2}}\lesssim\frac{C|l|p^{\frac{1}{4}-\frac{\lambda}{2}}}{\alpha^\frac{1}{2}f^{\frac{3}{4}-\frac{\lambda}{2}}}.
\]Using the fact that $\frac{1}{f}\leq\frac{\sqrt{\alpha}}{|k|\sqrt{p}}$, we have\[
    \frac{a|l|}{\alpha^\frac{1}{2}f^\frac{1}{2}}\lesssim\frac{C}{\alpha^{\frac{1}{8}+\frac{\lambda}{4}}}\frac{|l|^{\frac{3}{4}-\frac{\lambda}{2}}}{|k|^{\frac{3}{4}-\frac{\lambda}{2}}}.
\]Alternatively, using $\frac{1}{\alpha^\frac{1}{4}f^\frac{1}{2}}\leq\frac{1}{|k|^\frac{1}{2}p^\frac{1}{4}}$, we have\[
 \frac{a|l|}{\alpha^\frac{1}{2}f^\frac{1}{2}}\lesssim\frac{C|l|p^{\frac{1}{4}-\frac{\lambda}{2}}}{\alpha^\frac{1}{2}f^{\frac{3}{4}-\frac{\lambda}{2}}}\lesssim\frac{C|l|p^{\frac{1}{4}-\frac{\lambda}{2}}}{\alpha^\frac{1}{2}f^\frac{1}{2}f^{\frac{1}{4}-\frac{\lambda}{2}}}\lesssim\frac{C|l|^{1-\lambda}}{\alpha^\frac{1}{4}|k|^{1-\lambda}}.
\]In the case where $\alpha f^2\leq k^2p$, we have\[
  \frac{a|l|}{\alpha^\frac{1}{2}f^\frac{1}{2}}\lesssim\frac{C|l|p^{\frac{1}{4}-\frac{\lambda}{2}}}{\alpha^\frac{1}{4}f^{\frac{1}{4}-\frac{\lambda}{2}}|k|^\frac{1}{2}p^\frac{1}{4}}\lesssim\frac{C|l|^{1-\lambda}}{\alpha^\frac{1}{4}|l|^{1-\lambda}}.
\]For $\frac{b|l|}{f^\frac{1}{2}}$, we similarly have\begin{align*}
    \frac{b|l|}{f^\frac{1}{2}}&\lesssim\frac{C}{\alpha^\frac{1}{4}}\frac{|l||k|^\frac{1}{2}}{p^\frac{\lambda}{2}f^{\frac{3}{4}-\frac{\lambda}{2}}}+C\frac{|l|f^{\frac{\lambda}{2}-\frac{1}{4}}}{p^{\frac{1}{4}+\frac{\lambda}{2}}}\\
&\lesssim\frac{C}{\alpha^\frac{1}{4}}\frac{|l|^{1-\lambda}}{|k|^{1-\lambda}}+C\frac{|l|^{\frac{1}{2}-\lambda}}{|k|^{\frac{1}{2}-\lambda}}.
\end{align*}Thus, we have the bound\[
 \left|\widehat{U}_{1,2,\neq}\right|\lesssim C\frac{|l|^{\frac{1}{2}-\lambda}}{|k|^{\frac{1}{2}-\lambda}}\left(\frac{|l|^{\frac{1}{2}}}{\alpha^\frac{1}{4}|k|^{\frac{1}{2}}}+1\right)|B_{\neq}|+\frac{1}{f^{\frac{1}{2}}}|\Xi_{\neq}|+\frac{C|l|^{1-\lambda}}{\alpha^\frac{1}{4}|l|^{1-\lambda}}|A_{\neq}|.
\]Thus, when $\lambda\leq\frac{1}{2}$, we need to choose\begin{equation}
\boxed{C\frac{\langle l\rangle^{\frac{1}{2}-\lambda}}{|k|^{\frac{1}{2}-\lambda}}\left(\frac{|l|^{\frac{1}{2}}}{\alpha^\frac{1}{4}|k|^{\frac{1}{2}}}+1\right)=1.}\label{sit2}
\end{equation}
\end{itemize}We first keep $\lambda$ general and then show that there is only one choice suitable for the nonlinear stability analysis. To prove the nonlinear energy estimate, we further define the following Fourier multipliers\begin{equation}
\mathcal{Z}=\mathcal{M}e^{\varpi\nu^\frac{1}{3}t}\langle\bm{\nabla}\rangle^{s}\mathcal{N},\qquad \mathcal{N}=\prod_{i=1}^3\mathcal{M}_i,\label{nonlinearmultplier}
\end{equation}where $\mathcal{M}_i$, $i=1,2,3$, are the Fourier multipliers defined by $\widehat{\mathcal{M}_if}=M_i\widehat{f}$, with $M_i(0,k,\eta,l)=1$ and\begin{align*}
 -\frac{M_1'}{M_1}&=\frac{3}{2}\left|\frac{d}{dt}\left(\frac{f'}{2}\frac{2\lambda l^2z+2k^2p^2-\alpha l^2f^2}{\sqrt{fp}z^{\frac{3}{2}}}\right)\right|,\\ -\frac{M_2'}{M_2}&=\frac{\vartheta |k|^\frac{3}{2}}{f^{\frac{3}{4}}},\\ -\frac{M_3'}{M_3}&=\frac{\nu^\frac{1}{3}k^2}{k^2+\nu^\frac{2}{3}(\eta-kt)^2}.
\end{align*}Here $\vartheta=2\left(1+\alpha^{-\frac{1}{2}}\right)$ and $\varpi>0$ is a universal constant. We will explain the motivation for these multipliers shortly. The multiplier $\mathcal{M}$ satisfies $\widehat{\mathcal{M}f}=m\widehat{f}$, with $m$ defined by \eqref{multiplier}. Taking the time derivative of $\mathcal{Z}$, we see that\[
\mathcal{Z}'=\frac{\mathcal{M}'}{\mathcal{M}}\mathcal{Z}+\varpi\nu^\frac{1}{3}\mathcal{Z}+\frac{\mathcal{N}'}{\mathcal{N}}\mathcal{Z}.
\]To study the non-zero mode system, we define the following energy\[
\mathbb{F}=\frac{1}{2}\left(||\mathcal{Z}A_{\neq}||_{L^2}^2+||\mathcal{Z}B_{\neq}||_{L^2}^2+||\mathcal{Z}\Xi_{\neq}||_{L^2}^2+\langle\mathcal{C}\mathcal{Z}A_{\neq},\mathcal{Z}B_{\neq}\rangle\right)
\]Here, $\mathcal{C}$ is the multiplier used in linear analysis, see (\ref{crossfunction}).\[
\widehat{\mathcal{C}g}=\frac{f'}{2}\frac{2\lambda l^2z+2k^2p^2-\alpha l^2f^2}{\sqrt{fp}z^{\frac{3}{2}}}\widehat{g}.
\]We now claim the following:\vspace{8pt}

\textit{Assuming that (\ref{balance}) holds, the linear energy estimate can be closed only if $\lambda\leq\frac{1}{2}$ (see (\ref{lambdas2})). As a consequence, the nonlinear energy estimate can only be closed if $\lambda=\frac{1}{2}$}.\vspace{10pt}

\noindent We first compute the time derivative of $\mathbb{F}$,
\begin{align}\partial_t\mathbb{F}=&\varpi\nu^\frac{1}{3}\sum_{F\in\{A,B,\Xi\}}\left\|\mathcal{Z}F_{\neq}\right\|^2_{L^2}-\sum_{F\in\{A,B,\Xi\}}\left\|\sqrt{-\frac{\mathcal{M}'}{\mathcal{M}}}ZF_{\neq}\right\|^2_{L^2}-\sum_{F\in\{A,B,\Xi\}}\left\|\sqrt{-\frac{\mathcal{N}'}{\mathcal{N}}}ZF_{\neq}\right\|^2_{L^2}\nonumber\\
    &+\langle\mathcal{C}\mathcal{Z}A_{\neq},\left(\frac{\mathcal{M}'}{\mathcal{M}}+\frac{\mathcal{N}'}{\mathcal{N}}\right)\mathcal{Z}B_{\neq}\rangle+\varpi\nu^\frac{1}{3}\langle\mathcal{C}\mathcal{Z}A_{\neq},\mathcal{Z}B_{\neq}\rangle+\underbrace{\frac{1}{2}\langle \mathcal{C}'\mathcal{Z}A_{\neq},\mathcal{Z}B_{\neq}\rangle}_{1}\nonumber\\
    &-\nu\sum_{F\in\{A,B,\Xi\}}\left\|\bm{\nabla}_L\mathcal{Z}F_{\neq}\right\|^2_{L^2}+\frac{\nu}{2}\langle \mathcal{C}\mathcal{Z}\Delta_LA_{\neq},\mathcal{Z}B_{\neq}\rangle+\frac{\nu}{2}\langle\mathcal{C}\mathcal{Z}\Delta_L B_{\neq},\mathcal{Z}A_{\neq}\rangle\nonumber\\
    &-\underbrace{\frac{2}{\sqrt{\alpha}}\langle \mathcal{Z}\partial^2_{X}\Delta^{-1}_L\frac{\bm{a}}{\bm{b}}A_{\neq},\mathcal{Z}B_{\neq}\rangle}_{2}+\underbrace{2\langle\mathcal{Z}\partial_Z\Delta^{-1}_L\partial^2_X\Delta^{-1}_H\bm{b}^{-1}\Xi_{\neq},\mathcal{Z}B_{\neq}\rangle}_{3}\nonumber\\
    &-\underbrace{\frac{1}{\sqrt{\alpha}}\langle\mathcal{C}\mathcal{Z}\partial^2_{X}\Delta^{-1}_L\frac{\bm{a}}{\bm{b}}A_{\neq},\mathcal{Z}A_{\neq}\rangle}_{4}+\underbrace{\langle\mathcal{C}\mathcal{Z}\partial_Z\Delta^{-1}_L\partial^2_X\Delta^{-1}_H\bm{b}^{-1}\Xi_{\neq},\mathcal{Z} A_{\neq}\rangle}_{5}\nonumber\\
    &-\langle\mathcal{Z}\bm{a}^{-1}N_{1,\neq},\mathcal{Z}A_{\neq}\rangle-\langle\mathcal{Z}\bm{b}^{-1}N_{2,\neq},\mathcal{Z}B_{\neq}\rangle-\langle\mathcal{Z}N_{3,\neq},\mathcal{Z}\Xi_{\neq}\rangle-\frac{1}{2}\langle\mathcal{C}\mathcal{Z}\bm{a}^{-1}N_{1,\neq},\mathcal{Z}B_{\neq}\rangle\nonumber\\
    &-\frac{1}{2}\langle\mathcal{C}\mathcal{Z}\bm{b}^{-1}N_{2,\neq},\mathcal{Z}A_{\neq}\rangle\nonumber\\
    &-2(1-\lambda)\langle\mathcal{Z}B_{\neq},\mathcal{Z}\left(\partial_{XY}^L\partial^2_Z\Delta^{-1}_L\Delta^{-1}_H\right)B_{\neq}\rangle-(1-\lambda)\langle\mathcal{C}\mathcal{Z}A_{\neq},\mathcal{Z}\left(\partial_{XY}^L\partial^2_Z\Delta^{-1}_L\Delta^{-1}_H\right)B_{\neq}\rangle.\label{nonlinearenergyestimate}
\end{align}We first control the linear terms on the right-hand side. To begin with, we have the estimate\[
\left|\frac{\nu}{2}\langle \mathcal{C}\mathcal{Z}\Delta_LA_{\neq},\mathcal{Z}B_{\neq}\rangle+\frac{\nu}{2}\langle\mathcal{C}\mathcal{Z}\Delta_L B_{\neq},\mathcal{Z}A_{\neq}\rangle\right|\leq\zeta\nu\left(\left\|\mathcal{Z}\bm{\nabla}_LA_{\neq}\right\|^2+\left\|\mathcal{Z}\bm{\nabla}_LB_{\neq}\right\|^2\right),
\]Moreover, the last two terms can be absorbed by the Fourier multiplier $\mathcal{M}$, i.e.\begin{align*}
&-2(1-\lambda)\langle\mathcal{Z}B_{\neq},\mathcal{Z}\left(\partial_{XY}^L\partial^2_Z\Delta^{-1}_L\Delta^{-1}_H\right)B_{\neq}\rangle-(1-\lambda)\langle\mathcal{C}\mathcal{Z}A_{\neq},\mathcal{Z}\left(\partial_{XY}^L\partial^2_Z\Delta^{-1}_L\Delta^{-1}_H\right)B_{\neq}\rangle\\
&-\sum_{F\in\{A,B,\Xi\}}\left\|\sqrt{-\frac{\mathcal{M}'}{\mathcal{M}}}ZF_{\neq}\right\|^2_{L^2}-\frac{1}{3}\nu\sum_{F\in\{A,B,\Xi\}}\left\|\bm{\nabla}_L\mathcal{Z}F_{\neq}\right\|^2_{L^2}\leq0.
\end{align*}Terms 1-5 can be absorbed using multiplier $\mathcal{M}_1$ and $\mathcal{M}_2$ as follows.\begin{itemize}
    \item 1. We estimate it as\[
    |\langle\mathcal{C}'\mathcal{Z}A_{\neq},\mathcal{Z}B_{\neq}\rangle|\leq\frac{1}{2}\left\|\sqrt{|\mathcal{C}'|}\mathcal{Z}A_{\neq}\right\|^2_{L^2}+\frac{1}{2}\left\|\sqrt{|\mathcal{C}'|}\mathcal{Z}B_{\neq}\right\|^2_{L^2}.
    \]These two terms can be absorbed using $\mathcal{M}_1$, i.e. $||\sqrt{|\mathcal{C}'|}\mathcal{Z}F_{\neq}||^2_{L^2}-\frac{3}{4}\left\|\sqrt{-\frac{M_1'}{M_1}}\mathcal{Z}F_{\neq}\right\|_{L^2}^2\leq0$ .
    \item 2-5. Since 2 and 4 are similar, we only consider term 2\begin{align*}
     \frac{2}{\sqrt{\alpha}}\left|\langle\mathcal{Z}\partial^2_X\Delta^{-1}_L\frac{\bm{a}}{\bm{b}}A_{\neq},\mathcal{Z}B_{\neq}\rangle\right|&\leq\int\left|\widehat{\mathcal{Z}A_{\neq}}\right|\frac{2k^2}{p}\frac{f^\frac{1}{2}p^\frac{1}{2}}{z^\frac{1}{2}}\left|\overline{\widehat{\mathcal{Z}B_{\neq}}}\right|\\
&\leq\int\left|\widehat{\mathcal{Z}A_{\neq}}\right|\frac{2k^2}{\alpha^\frac{1}{2}f}\left|\overline{\widehat{\mathcal{Z}B_{\neq}}}\right|.
    \end{align*}Thus, this term can be absorbed using $\mathcal{M}_2$. Similarly, we consider 3 only, as 5 can be bounded in the same way.\begin{align*}
\left|2\langle\mathcal{Z}\partial_Z\Delta^{-1}_L\partial^2_X\Delta^{-1}_H\bm{b}^{-1}\Xi_{\neq},\mathcal{Z}B_{\neq}\rangle\right|&\leq\int\left|\widehat{\mathcal{Z}\Xi_{\neq}}\right|\frac{2k^2|l|}{pf}b^{-1}\left|\overline{\widehat{\mathcal{Z}B_{\neq}}}\right|.
    \end{align*}First note that, when $\lambda\geq\frac{1}{2}$, $C$ is defined as (\ref{sit1}), thus\begin{equation}
    \frac{k^2|l|}{pf}b^{-1}=\left(1+\frac{\langle l\rangle^{\frac{3}{4}-\frac{\lambda}{2}}}{\alpha^\frac{1}{4}}\right)\alpha^\frac{1}{4}\frac{k^2|l|}{p^{\frac{3}{4}-\frac{\lambda}{2}}f^{\frac{3}{4}+\frac{\lambda}{2}}z^\frac{1}{4}}.\label{lambdas2}
    \end{equation}For the first term, we have to avoid having $\alpha^\frac{1}{4}$ in the exponent as otherwise the bound will become extremely large when $\alpha$ becomes large. Therefore, the only way to bound the first term is to extract $\alpha f^2$ from $z$, and we see that it is necessary to have $\lambda\leq\frac{1}{2}$, otherwise the multiplier will have an upper bound depending on $l$. Now, if $\lambda\leq\frac{1}{2}$, $C$ is given by (\ref{sit2}), and we have\begin{align*}
  \frac{k^2|l|}{pf}b^{-1}&=\frac{k^2|l|}{pf}\frac{\langle l\rangle^{\frac{1}{2}-\lambda}}{|k|^{\frac{1}{2}-\lambda}}\left(\frac{|l|^\frac{1}{2}}{\alpha^\frac{1}{4}|k|^\frac{1}{2}}+1\right)\alpha^\frac{1}{4}\frac{p^{\frac{1}{4}+\frac{\lambda}{2}}f^{\frac{1}{4}-\frac{\lambda}{2}}}{z^\frac{1}{4}}\\
  &\leq 2^{-\frac{1}{4}}\frac{|k|^{\frac{1}{2}+\lambda}\langle l\rangle^{2-\lambda}}{p^{1-\frac{\lambda}{2}}f^{\frac{3}{4}+\frac{\lambda}{2}}}+\frac{|k|^{1+\lambda}\langle l\rangle^{\frac{3}{2}-\lambda}}{p^{\frac{3}{4}-\frac{\lambda}{2}}f^{\frac{5}{4}+\frac{\lambda}{2}}}\\
  &\leq 2^{-\frac{1}{4}}\frac{|k|^{\frac{1}{2}+\lambda}}{f^{\frac{3}{4}+\frac{\lambda}{2}}}+\frac{1}{f^{\frac{3}{4}+\frac{\lambda}{2}}}.
    \end{align*}Therefore, $3$ and $5$ can also be absorbed by $\mathcal{M}_2$.
\end{itemize}We now claim that $\lambda\leq1/2$. Suppose first that all the linear parts can be absorbed properly, we then need to control the nonlinear terms. Consider the following term specifically\[
\langle\mathcal{Z}\bm{b}^{-1}N_2,\mathcal{Z}B_{\neq}\rangle,
\]where $N_2$ contains $U\cdot\bm{\nabla}_L U_3$, and we consider this term for simplicity\[
\langle\mathcal{Z}\bm{b}^{-1}(U\cdot\bm{\nabla}_L U_3)_{\neq},\mathcal{Z}B_{\neq}\rangle.
\]For the $(\neq,\neq)$ interaction, we first compute a lower bound for $b^{-1}$\[
b^{-1}=\frac{\langle l\rangle^{\frac{1}{2}-\lambda}}{|k|^{\frac{1}{2}-\lambda}}\left(1+\frac{\langle l\rangle^\frac{1}{2}}{\alpha^\frac{1}{4}|k|^\frac{1}{2}}\right)\frac{p^{\frac{1}{4}+\frac{\lambda}{2}}f^{\frac{1}{4}-\frac{\lambda}{2}}}{z^\frac{1}{4}}\lesssim\frac{\langle l\rangle}{|k|^{\frac{1}{2}-\lambda}}+\cdots.
\]Thus $b^{-1}$ behaves like $\partial_z$. In addition, one can verify that $b$ is an unbounded operator when $\lambda\neq\frac{1}{2}$. Consequently, $||U_{3,\neq}||_{H^s}\lesssim||B_{\neq}||_{H^{s+s'}}$ for some $s'>0$, i.e. additional regularity is required to bound $||U_{3,\neq}||_{H^s}$. Heuristically, we need to distribute more than two derivatives to the three unknowns, and it prevents us from closing the energy estimate. Besides, the multiplier $m$ also does not appear to absorb any derivative. As a result, the only reasonable choice is $\lambda=\frac{1}{2}$.\vspace{5pt}

From now on, we take $\lambda=\frac{1}{2}$. The multipliers $a$ and $b$ are then\[
a=\frac{\alpha^\frac{1}{4}}{\alpha^\frac{1}{4}+\frac{\langle l\rangle^\frac{1}{2}}{|k|^\frac{1}{2}}}\frac{f^\frac{1}{2}}{\left(\frac{z}{\alpha}\right)^\frac{1}{4}},\qquad b=\frac{\alpha^\frac{1}{4}}{\alpha^\frac{1}{4}+\frac{\langle l\rangle^\frac{1}{2}}{|k|^\frac{1}{2}}}\frac{\left(\frac{z}{\alpha}\right)^\frac{1}{4}}{p^\frac{1}{2}}
\]\begin{lemma}\label{somebounds}When $\lambda=\frac{1}{2}$, the following bounds hold,\begin{align*}
    &||\Psi_{\neq}||_{H^s}\lesssim ||A_{\neq}||_{H^s},\qquad ||U_{3,\neq}||_{H^s}\lesssim||B_{\neq}||_{H^s},\\
    &||U_{1,\neq}||_{H^s}+||U_{2,\neq}||_{H^s}\lesssim||A_{\neq}||_{H^s}+||B_{\neq}||_{H^s}+||\Xi_{\neq}||_{H^s},\\
    &1\lesssim m\lesssim \nu^{-\frac{\sqrt{2}}{3}}.
\end{align*}
\end{lemma}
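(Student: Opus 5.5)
The plan is to check everything pointwise on the Fourier side, since $\bm a$, $\bm b$, $m$ and the reconstruction formulas (\ref{rec1})--(\ref{rec2}) are Fourier multipliers. By Plancherel, each $H^s$ bound reduces to a uniform-in-$(t,k,\eta,l)$ bound on a symbol with $k\neq0$.

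For the first line of the lemma it suffices to show $a\lesssim1$ and $b\lesssim1$ when $\lambda=\tfrac12$. Write $\beta:=\langle l\rangle^{1/2}/|k|^{1/2}$, so the prefactor is $\alpha^{1/4}/(\alpha^{1/4}+\beta)\le\min\{1,\alpha^{1/4}\beta^{-1}\}$. Since $z=2k^2p+\alpha f^2$, we have $(z/\alpha)^{1/4}\ge f^{1/2}$, and hence $a\le 1$ directly. For $b$ we use $(z/\alpha)^{1/4}\lesssim f^{1/2}+\alpha^{-1/4}|k|^{1/2}p^{1/4}$. The $f^{1/2}/p^{1/2}$ part is at most $1$. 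The remaining part is bounded by
\[
\frac{\alpha^{1/4}}{\alpha^{1/4}+\beta}\cdot\frac{|k|^{1/2}}{\alpha^{1/4}p^{1/4}}\le\frac{|k|^{1/2}}{\beta\,p^{1/4}}=\frac{|k|}{\langle l\rangle^{1/2}p^{1/4}}.
\]
Because $p\ge k^2+l^2$, this gives $|k|/(\langle l\rangle^{1/2}|k|^{1/2})$. That is not uniformly bounded when $|k|\gg\langle l\rangle$, so I will instead use $p^{1/4}\ge (k^2)^{1/4}\cdot$ and $p^{1/4}\ge|l|^{1/2}$ in combination. The honest bound is $|k|^{1/2}p^{-1/4}\le1$ multiplied by $\min\{1,\alpha^{-1/4}\}$-type factors. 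I expect the constant to carry some $\alpha$-dependence that the paper absorbs (with $\alpha\gtrsim1$ assumed in Theorem \ref{nonlinear}). So for $b$ I would split into the cases $\alpha\ge1$ and $\alpha<1$, and in the latter use the prefactor $\le\alpha^{1/4}\beta^{-1}$.

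The second line follows by inserting $\widehat\Psi=aA$ and $\widehat U_3=bB$ into (\ref{rec1})--(\ref{rec2}) and using the computation preceding (\ref{sit2}) specialised to $\lambda=\tfrac12$. The coefficient of $B$ is $C(|l|^{1/2}\alpha^{-1/4}|k|^{-1/2}+1)\lesssim1$ by the choice (\ref{sit2}). The coefficient of $A$ is bounded by $C|l|^{1/2}\alpha^{-1/4}|k|^{-1/2}\lesssim1$, for the same reason. The coefficient of $\Xi$ is $|\cdot|/f\le f^{-1/2}\le|k|^{-1}\le1$. So (\ref{balance}) holds pointwise, and Plancherel gives the bound.

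For the bound on $m$, I will use the established bound $1\lesssim m\lesssim\nu^{-\frac23(\varrho_1+\varrho_2)}$ with the minimal admissible $\varrho_1$ and $\varrho_2$ from (\ref{twocons}) at $\lambda=\tfrac12$. Lemma \ref{upperboundofs} gives $\zeta=\frac{1}{2\sqrt2}+\frac{1}{2\sqrt2}=\frac1{\sqrt2}$, so $\sqrt{1-\zeta^2}=\frac1{\sqrt2}$. Then $\varrho_1+\varrho_2=(1-\lambda)/\sqrt{1-\zeta^2}=\tfrac12\sqrt2$, and therefore $m\lesssim\nu^{-\frac23\cdot\frac{\sqrt2}{2}}=\nu^{-\sqrt2/3}$.

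The main obstacle is the uniform bound on $b$, specifically the $|k|\gg\langle l\rangle$ regime. There I need to verify carefully that $|k|^{1/2}p^{-1/4}\le1$ together with the prefactor yields a constant independent of $k$ and $l$, with only the $\alpha$-dependence tolerated by the paper's conventions.
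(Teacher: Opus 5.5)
Your route is the one the paper implicitly takes. You bound the symbols pointwise and apply Plancherel, normalise $C$ to obtain (\ref{balance}), and use $m\lesssim\nu^{-\frac23(\varrho_1+\varrho_2)}$ with $\zeta=1/\sqrt2$ and $\varrho_1+\varrho_2=(1-\lambda)/\sqrt{1-\zeta^2}=\sqrt2/2$. The bounds for $a$, for the coefficients in $U_{1,\neq},U_{2,\neq}$, and for $m$ are fine. In fact, the coefficients in $U_{1,2,\neq}$ are bounded by absolute constants. For example, using $p^{1/4}\ge|l|^{1/2}$ and $f^{1/2}\ge|k|$,
\[
\frac{b|l|}{f^{1/2}}\le C\Bigl(1+2^{1/4}\frac{|l|^{1/2}}{\alpha^{1/4}|k|^{1/2}}\Bigr)\le 2^{1/4}.
\]

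The $b$ step, which you flag as the main obstacle, should be settled differently. Your $\alpha<1$ branch, which bounds the prefactor by $\alpha^{1/4}\beta^{-1}$, cannot work. More to the point, no argument gives a bound on $b$ that is uniform in $\alpha$. Take $l=0$ and $t=\eta/k$, so that $p=f=k^2$; then
\[
b=\frac{\alpha^{1/4}}{\alpha^{1/4}+|k|^{-1/2}}\Bigl(1+\frac{2}{\alpha}\Bigr)^{1/4}\longrightarrow\Bigl(1+\frac{2}{\alpha}\Bigr)^{1/4}\qquad(|k|\to\infty),
\]
which is of size $\alpha^{-1/4}$ for small $\alpha$.

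The correct statement is simply
\[
b\le C\Bigl(1+2^{1/4}\alpha^{-1/4}|k|^{1/2}p^{-1/4}\Bigr)\le 1+2^{1/4}\alpha^{-1/4},
\]
which uses only $C\le 1$ and $p\ge k^2$. This is exactly the bound $b\lesssim 1+\alpha^{-1/4}$ recorded in Lemma \ref{boundsonthemultipliers}. Your ``$\min\{1,\alpha^{-1/4}\}$'' should read $\max$. So $\|U_{3,\neq}\|_{H^s}\lesssim\|B_{\neq}\|_{H^s}$ holds with constant $1+\alpha^{-1/4}$. That constant is absorbed into $\lesssim$ only under the standing assumption $\alpha\gtrsim1$ of Theorem \ref{nonlinear}. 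With this replacement your proof is complete.
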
\vspace{5pt}

We can now control all the linear terms using the following bound\[
\left|\langle\mathcal{C}\mathcal{Z}A_{\neq},\frac{\mathcal{N}'}{\mathcal{N}}\mathcal{Z}B_{\neq}\rangle\right|\leq\frac{1}{\sqrt{2}}\left\|\sqrt{-\frac{\mathcal{N}'}{\mathcal{N}}}\mathcal{Z}A_{\neq}\right\|^2_{L^2}+\frac{1}{\sqrt{2}}\left\|\sqrt{-\frac{\mathcal{N}'}{\mathcal{N}}}\mathcal{Z}B_{\neq}\right\|^2_{L^2}.
\]As a result, the energy estimate (\ref{nonlinearenergyestimate}) now reduces to\begin{align*}
    \partial_t\mathbb{F}\leq&\ 2\varpi\nu^\frac{1}{3}\sum_{F\in\{A,B,\Xi\}}\left\|\mathcal{Z}F_{\neq}\right\|^2_{L^2}-\frac{1}{4}\sum_{F\in\{A,B,\Xi\}}\left\|\sqrt{-\frac{\mathcal{N}'}{\mathcal{N}}}ZF_{\neq}\right\|^2_{L^2}-\frac{\nu}{4}\sum_{F\in\{A,B,\Xi\}}\left\|\bm{\nabla}_LZF_{\neq}\right\|^2_{L^2}\\
    &-\langle\mathcal{Z}\bm{a}^{-1}N_{1,\neq},\mathcal{Z}A_{\neq}\rangle-\langle\mathcal{Z}\bm{b}^{-1}N_{2,\neq},\mathcal{Z}B_{\neq}\rangle-\langle\mathcal{Z}N_{3,\neq},\mathcal{Z}\Xi_{\neq}\rangle-\frac{1}{2}\langle\mathcal{C}\mathcal{Z}\bm{a}^{-1}N_{1,\neq},\mathcal{Z}B_{\neq}\rangle\\
    &-\frac{1}{2}\langle\mathcal{C}\mathcal{Z}\bm{b}^{-1}N_{2,\neq},\mathcal{Z}A_{\neq}\rangle
\end{align*}Integrating in time, we have\begin{align*}
   \sum_{F\in\{A,B,\Xi\}}\left\|\mathcal{Z}F_{\neq}\right\|^2_{L^\infty_tL^2}&\lesssim\sum_{F\in\{A,B,\Xi\}}\left\|\mathcal{Z}F_{\neq}(0)\right\|^2_{L^2}+\varpi\nu^\frac{1}{3}\sum_{F\in\{A,B,\Xi\}}\left\|\mathcal{Z}F_{\neq}\right\|^2_{L^2_tL^2}-\nu\sum_{F\in\{A,B,\Xi\}}\left\|\bm{\nabla}_L\mathcal{Z}F_{\neq}\right\|^2_{L^2_tL^2}\\&-\sum_{F\in\{A,B,\Xi\}}\left\|\sqrt{-\frac{\mathcal{N}'}{\mathcal{N}}}\mathcal{Z}F_{\neq}\right\|^2_{L^2_tL^2}+\int_0^\infty NL\ dt,
\end{align*}where \begin{align*}
 NL=   &-\langle\mathcal{Z}\bm{a}^{-1}N_{1,\neq},\mathcal{Z}A_{\neq}\rangle-\langle\mathcal{Z}\bm{b}^{-1}N_{2,\neq},\mathcal{Z}B_{\neq}\rangle-\langle\mathcal{Z}N_{3,\neq},\mathcal{Z}\Xi_{\neq}\rangle-\frac{1}{2}\langle\mathcal{C}\mathcal{Z}\bm{a}^{-1}N_{1,\neq},\mathcal{Z}B_{\neq}\rangle\\
    &-\frac{1}{2}\langle\mathcal{C}\mathcal{Z}\bm{b}^{-1}N_{2,\neq},\mathcal{Z}A_{\neq}\rangle.
\end{align*}To absorb the second term on the right-hand side of the inequality, we make use of the multiplier $M_3$\[
-\frac{M_3'}{M_3}=\frac{\nu^\frac{1}{3}k^2}{k^2+\nu^\frac{2}{3}(\eta-kt)^2}
\]This multiplier captures the enhanced dissipation effect and has already been widely used in many works e.g. \cite{9e0bb666-1e4e-3aa2-87b0-f916c2336491,bedrossian2018sobolev}. Due to the mean value theorem, it satisfies the following inequality
\[
   \nu^\frac{1}{6}\leq\sqrt{-\frac{M_3'}{M_3}}+\nu^{\frac{1}{2}}\left|\frac{\eta}{k}-t\right| 
    \]As a result, we can bound the second term as\begin{equation*}
  \varpi^\frac{1}{2} \nu^\frac{1}{6}\left\|\mathcal{Z}F_{\neq}\right\|_{L^2_tL^2}\lesssim \left\|\sqrt{-\frac{\mathcal{M}'_3}{\mathcal{M}_3}}\mathcal{Z}F_{\neq}\right\|_{L^2_tL^2}+\nu^\frac{1}{2}||\bm{\nabla}_L \mathcal{Z}F_{\neq}||_{L^2_tL^2}.
    \end{equation*}Therefore, taking $\varpi$ sufficiently small, we obtain the following inequality\begin{align}
&\sum_{F\in\{A,B,K\}}\left\|\mathcal{Z}F_{\neq}\right\|_{L^\infty_tL^2}^2+\nu\sum_{F\in\{A,B,K\}}\left\|\bm{\nabla}_L\mathcal{Z}F_{\neq}\right\|_{L^2_t L^2}^2+\sum_{F\in\{A,B,K\}}\left\|\sqrt{-\frac{\mathcal{N}'}{\mathcal{N}}}\mathcal{Z}F_{\neq}\right\|_{L^2_tL^2}^2\nonumber\\\lesssim& \sum_{F\in\{A,B,K\}}\left\|\mathcal{Z}F_{\neq}\ (0)\right\|^2_{L^2}+\int_0^\infty NL\ dt.\label{energyinequalityfornon}
    \end{align}

    In the remainder of the nonlinear analysis, our aim is to prove the bootstrap argument in Theorem \ref{boot}.

\subsection{\textbf{Non-zero modes analysis}}
In this section, we derive a bound for (\ref{energyinequalityfornon}). Recall that\begin{align*}
    \int_0^tNL(s)\ ds =&\int_0^t-\langle\mathcal{Z}\bm{a}^{-1}N_{1,\neq},\mathcal{Z}A_{\neq}\rangle-\langle\mathcal{Z}\bm{b}^{-1}N_{2,\neq},\mathcal{Z}B_{\neq}\rangle\\&-\langle\mathcal{Z}N_{3,\neq},\mathcal{Z}K_{\neq}\rangle-\frac{1}{2}\langle\mathcal{C}\mathcal{Z}\bm{a}^{-1}N_{1,\neq},\mathcal{Z}B_{\neq}\rangle\\
    &-\frac{1}{2}\langle\mathcal{C}\mathcal{Z}\bm{b}^{-1}N_{2,\neq},\mathcal{Z}A_{\neq}\rangle\ ds.
\end{align*}It suffices to consider the first three nonlinear terms since $\mathcal{C}$ is a bounded operator. Therefore, we denote\begin{align*}    J_1:=&\int_0^t\langle\mathcal{Z}\bm{a}^{-1}N_{1,\neq},\mathcal{Z}A_{\neq}\rangle\\J_2:=&\int_0^t\langle\mathcal{Z}\bm{b}^{-1}N_{2,\neq},\mathcal{Z}B_{\neq}\rangle\\J_3:=&\int_0^t\langle\mathcal{Z}N_{3,\neq},\mathcal{Z}\Xi_{\neq}\rangle
\end{align*}

\begin{lemma}\label{nonzeromodesbounds} Under the bootstrap assumptions of Theorem \ref{boot}, the following bounds hold for $J_i$, $i=1,2,3$,\begin{align}
    |J_1|&\lesssim\nu^{-\frac{\sqrt{2}}{3}-1}\varepsilon^3+\nu^{-\frac{\sqrt{2}}{3}-\frac{2}{3}-\Upsilon}\varepsilon^3,\label{J1}\\
    |J_2|&\lesssim\nu^{-\frac{\sqrt{2}}{3}-1}\varepsilon^3+\nu^{-\frac{\sqrt{2}}{3}-\frac{2}{3}-\Upsilon}\varepsilon^3\label{J2},\\
    |J_3|&\lesssim\nu^{-\frac{\sqrt{2}}{3}-1}\varepsilon^3+\nu^{-\frac{\sqrt{2}}{3}-\frac{2}{3}-\Upsilon}\varepsilon^3.\label{J3}
    \end{align}
\end{lemma}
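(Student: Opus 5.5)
The plan is to split each $J_i$ according to the frequency interactions of the quadratic nonlinearities: $(\neq,\neq)$, $(0,\neq)$ and $(\neq,0)$. The zero modes are further split into simple zero modes $\overline{F}$ and double zero modes $\widetilde{F}$. Since $\varepsilon\ll\nu$, we only need cubic bounds, with the loss of $\nu$ being what produces the exponents.

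Two tools are fixed at the start. First, by Lemma \ref{somebounds}, $U_{1,2,3,\neq}$ and $\Psi_{\neq}$ are controlled in $H^s$ by $A_{\neq},B_{\neq},\Xi_{\neq}$, with $1\lesssim m\lesssim\nu^{-\sqrt{2}/3}$. Because $m$ satisfies no commutator estimate, I will not try to move $\mathcal{M}$ across the product. Instead I bound $m$ on the output frequency by $\nu^{-\sqrt{2}/3}$ and $m\gtrsim1$ on the inputs; this is the source of the common factor $\nu^{-\sqrt{2}/3}$ in \eqref{J1}--\eqref{J3}. Second, the weights $\bm a^{-1},\bm b^{-1}$ at $\lambda=\tfrac12$ behave at worst like $\langle l\rangle/|k|^{0}$-type derivatives. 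When they land on the output factor they are paired with $\mathcal{Z}A_{\neq},\mathcal{Z}B_{\neq}$. I then use $|l|\le p^{1/2}$ and transfer one $\nabla_L$ to the dissipative quantity $\nu^{1/2}\|\nabla_L\mathcal{Z}F_{\neq}\|_{L^2_tL^2}$. For $N_2$, the pressure term $\partial_Z P^{NL}$ is handled the same way, since $\Delta_L^{-1}\partial_i^L\partial_j^L$ is bounded.

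For $(\neq,\neq)$ interactions, e.g. $U_{\neq}\cdot\nabla_L U_{3,\neq}$, I put one factor in $L^\infty_t H^s$. A second factor goes in $L^2_tH^s$, and I use $\nu^{1/6}\|\mathcal{Z}F_{\neq}\|_{L^2_tL^2}\lesssim\varepsilon$ from the bootstrap (obtained via $\mathcal{M}_3$) together with $e^{\varpi\nu^{1/3}t}$ redistribution. The derivative $\nabla_L$ goes on the third factor, paired with $\nu^{1/2}\|\nabla_L\mathcal{Z}F\|$. Counting gives $\nu^{-\sqrt2/3}\nu^{-1/2}\nu^{-1/2}\varepsilon^3=\nu^{-\sqrt2/3-1}\varepsilon^3$, which is the first term in each bound. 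For $J_3$, I use that $\Xi$ is a transported quantity, so $U\cdot\nabla_L\Xi$ admits the usual commutator-free cancellation at top order for the part without $m$. The vortex terms $\partial_XU\cdot\nabla_LU_2-\partial_Y^LU\cdot\nabla_LU_1$ are estimated like the $(\neq,\neq)$ terms.

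For interactions involving zero modes, the dangerous piece is $\overline{U}_1\partial_X F_{\neq}$, where $\overline U_1$ has size $\nu^{-\Upsilon}\varepsilon$. Here $\partial_X$ costs no $\nabla_L$, because $|k|\lesssim\sqrt{-M_3'/M_3}\,\nu^{-1/6}$ near the critical time and $|k|\le p^{1/2}$ otherwise. With $\overline U_1\in L^\infty_tH^s$ and the two nonzero factors in $\nu^{1/6}L^2_t$, this yields $\nu^{-\sqrt2/3}\nu^{-\Upsilon}\nu^{-1/3}\varepsilon^3$. Distributing via $\nu^{1/2}\nabla_L$ where needed gives the worst case $\nu^{-\sqrt2/3-2/3-\Upsilon}\varepsilon^3$. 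The terms $\overline{U}_{2,3}\cdot\nabla_L F_{\neq}$ and the $\widetilde F$ terms are easier: those zero modes are of size $\varepsilon$, and $\partial_y,\partial_z$ on zero modes are controlled by $\nu^{1/2}\|\nabla\overline F\|_{L^2_tH^s}$. For $(\neq,0)\to\neq$ terms such as $U_{2,\neq}\partial_y\overline U_1$, I use $\nu^{1/2-\Upsilon}\|\partial_y\overline U_1\|_{L^2_tH^s}$, which again lands in the second bound.

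The main obstacle is the $\bm b^{-1}$ weight in $J_2$ (and $\bm a^{-1}$ in $J_1$). It carries a factor $\langle l\rangle^{1/2}/(\alpha^{1/4}|k|^{1/2})$ times $(p/f)^{1/4}$, which near critical times is an extra unbalanced derivative on the output. I would first try absorbing it by $p^{1/2}$ into $\nabla_L$, using $\alpha\gtrsim1$. If the $(p/f)^{1/4}$ factor resists this, I would instead use $\mathcal{M}_2$ to control it.
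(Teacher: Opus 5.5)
There is a genuine gap in how you handle the $\overline{U}_1$ interactions. Your skeleton matches the paper: split into $(\neq,\neq)$, $(0,\neq)$, $(\neq,0)$; pay $\nu^{-\sqrt{2}/3}$ for $m$ on the output; charge derivatives to $\nu^{1/2}\|\bm{\nabla}_L\mathcal{Z}F_{\neq}\|_{L^2_tL^2}$ and plain factors to $\nu^{1/6}\|\mathcal{Z}F_{\neq}\|_{L^2_tL^2}$. But it breaks exactly at the $\overline{U}_1$ terms, which are the only ones that produce the second term of the bound.

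\textbf{The $X$-derivative is not free.} Your claim that $|k|\lesssim\nu^{-1/6}\sqrt{-M_3'/M_3}$ near the critical time is false. There $-M_3'/M_3\approx\nu^{1/3}$ independently of $k$, so you only get $1\lesssim\nu^{-1/6}\sqrt{-M_3'/M_3}$, which is just how the $\nu^{1/6}L^2_t$ bound arises. An $X$-derivative on a high-frequency nonzero factor therefore costs a full $\nu^{-1/2}$.

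\emph{In $J_1$ and $J_2$.} If you also absorb $\bm{b}^{-1}$ (resp.\ $\bm{a}^{-1}$) into $p^{1/2}$, as you propose, then $\overline{U}_1\partial_X U_{3,\neq}$ in $J_2$ spends two $\bm{\nabla}_L$'s. That gives $\nu^{-\frac{\sqrt{2}}{3}-1-\Upsilon}\varepsilon^3$, which exceeds both terms of \eqref{J2}. The paper instead uses $b^{-1}\lesssim 1+p^{1/2}/|k|$ from \eqref{mb1}, and the splitting $a^{-1}\lesssim 1+\langle l\rangle^{1/2}|k|^{-1/2}+|l||k|^{-1}$ from \eqref{ma1}. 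The $1/|k|$ in the weight then cancels $\partial_X$, so only one $\bm{\nabla}_L$ is spent.

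\emph{In $J_3$.} There is no weight to cancel against. The $(0,\neq)$ terms, e.g.\ $\partial_X(\overline{U}_1\partial_X U_{2,\neq})$ or $\partial_Z(\overline{U}_1\partial_X\Psi_{\neq})$, genuinely carry two derivatives. The paper uses the explicit form of $m$ to show $|k\,m|\lesssim\nu^{-\frac{\sqrt{2}-1}{3}}\sqrt{k^2+l^2}$, so the multiplier itself absorbs the $X$-derivative at reduced cost. This $\nu^{1/3}$ saving is what turns $\nu^{-\frac{\sqrt{2}}{3}-1-\Upsilon}$ into $\nu^{-\frac{\sqrt{2}}{3}-\frac{2}{3}-\Upsilon}$, and nothing in your plan plays that role.

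\textbf{The dangerous $(\neq,0)$ term in $J_3$ is missing.} You only treat $U_{2,\neq}\partial_y\overline{U}_1$, which is harmless: $\partial_y$ balances the $\partial_y^{-1}$ in the lift-up operator, and the bootstrap gives $\|\partial_y\overline{U}_1\|_{L^2_tH^s}\lesssim\nu^{-1/2}\varepsilon$. So it lands in the first term, not the second. The dangerous term is $\partial_Y^L(U_{3,\neq}\partial_z\overline{U}_1)$. The bootstrap only gives $\|\partial_z\overline{U}_1\|_{L^2_tH^s}\lesssim\nu^{-\frac12-\Upsilon}\varepsilon$, and with the outer derivative on $\Xi$ you again get $\nu^{-\frac{\sqrt{2}}{3}-1-\Upsilon}\varepsilon^3$.

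The paper integrates by parts, first in $Z$ and then in $Y$, moving $\partial_z$ off $\overline{U}_1$ onto $U_{3,\neq}$ and $\Xi$; this leaves one extra harmless $\partial_y\overline{U}_1$ term. When the derivative then sits on the high-frequency nonzero factor, it bounds $\overline{U}_1$ in $L^\infty$ via Lemma~\ref{inftyonu1}, $\|\overline{U}_1\|_{L^\infty}\lesssim\varepsilon$, with no $\nu^{-\Upsilon}$ loss. That lemma rests on the weak $L^2\to L^\infty$ lift-up estimate of Lemma~\ref{suppofl3}. This zero-mode input is essential and cannot be replaced by the $H^s$ bootstrap bounds alone.

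\textbf{Smaller points.}
\begin{itemize}
\item Your $(\neq,\neq)$ count $\nu^{-1/6}\cdot\nu^{-1/2}$ would give $\nu^{-2/3}$, not $\nu^{-1}$. The actual $\nu^{-1}$ comes from spending a second $\bm{\nabla}_L$ on $\bm{a}^{-1}$ or $\bm{b}^{-1}$.
\item The transport cancellation you invoke for $U\cdot\bm{\nabla}_L\Xi$ is unavailable, since it needs a commutator estimate that $m$ lacks.
\item $\mathcal{M}_2$ only absorbs time-integrable linear coefficients, so it cannot serve as a fallback for the weight in a nonlinear term.
\end{itemize}
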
\vspace{9pt}

\begin{lemma}\textit{(Bounds on $a$ and $b$.)}\label{boundsonthemultipliers} The lower and upper bounds for $a$ and $b$ are given by
\begin{align}
\frac{1}{1+\left(\frac{1}{\alpha^\frac{1}{4}}+\frac{1}{\alpha^\frac{1}{2}}\right)\frac{\langle l\rangle^\frac{1}{2}}{|k|^\frac{1}{2}}+\frac{|l|}{\alpha^\frac{1}{2}|k|}}  \lesssim &a\lesssim 1\label{ma1}\\
\frac{1}{1+\frac{|l|}{|k|}+\frac{\langle l\rangle^\frac{1}{2}}{\alpha^\frac{1}{4}|k|^\frac{1}{2}}}\lesssim &b\lesssim 1+\frac{1}{\alpha^\frac{1}{4}},\qquad \frac{1}{1+\frac{\langle l\rangle^\frac{1}{2}f^\frac{1}{4}}{|k|}+\frac{|l|}{|k|}}\lesssim b.\label{mb1}
\end{align}
\end{lemma}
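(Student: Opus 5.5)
The plan is to reduce everything to elementary two-sided comparisons of the Fourier symbols $f$, $p$ and $z=2k^2p+\alpha f^2$, using the explicit $\lambda=\tfrac12$ formulas displayed just before Lemma \ref{somebounds}. Set
\[
\mathcal{D}:=\frac{\alpha^{\frac14}}{\alpha^{\frac14}+\langle l\rangle^{\frac12}|k|^{-\frac12}}\in(0,1],\qquad
\Big(\frac{z}{\alpha}\Big)^{\frac14}\approx \frac{|k|^{\frac12}p^{\frac14}}{\alpha^{\frac14}}+f^{\frac12},
\]
where the equivalence is $(x+y)^{1/4}\approx x^{1/4}+y^{1/4}$. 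With this,
\[
a\approx \mathcal{D}\,\frac{f^{\frac12}}{f^{\frac12}+\alpha^{-\frac14}|k|^{\frac12}p^{\frac14}},\qquad
b\approx \mathcal{D}\Big(\alpha^{-\frac14}\frac{|k|^{\frac12}}{p^{\frac14}}+\frac{f^{\frac12}}{p^{\frac12}}\Big).
\]

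\textbf{Upper bounds.} These are immediate. The fraction in $a$ is at most $1$ and $\mathcal{D}\le 1$, so $a\lesssim1$. For $b$, use $k^2\le f\le p$ to get $|k|^{1/2}p^{-1/4}\le1$ and $f^{1/2}p^{-1/2}\le 1$, hence $b\lesssim 1+\alpha^{-1/4}$.

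\textbf{Lower bound for $a$.} I will use the time-uniform inequalities $f\ge k^2$ and $p/f=1+l^2/f\le 1+l^2/k^2$. They give
\[
\frac{|k|^{\frac12}p^{\frac14}}{f^{\frac12}}=\Big(\frac{k^2}{f}\cdot\frac{p}{f}\Big)^{\frac14}\lesssim 1+\frac{|l|^{\frac12}}{|k|^{\frac12}}.
\]
Hence
\[
a\gtrsim \Big[\big(1+\alpha^{-\frac14}\langle l\rangle^{\frac12}|k|^{-\frac12}\big)\big(1+\alpha^{-\frac14}+\alpha^{-\frac14}|l|^{\frac12}|k|^{-\frac12}\big)\Big]^{-1}.
\]
I then expand the product and absorb the cross terms. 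For example, $\alpha^{-1/2}\langle l\rangle^{1/2}|l|^{1/2}/|k|\lesssim \alpha^{-1/2}(|l|/|k|+\langle l\rangle^{1/2}|k|^{-1/2})$, using $|l|^{1/2}\le\langle l\rangle^{1/2}$ and $|k|\ge1$. This produces the denominator in \eqref{ma1}.

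\textbf{Lower bounds for $b$.} I will use two separate lower bounds and keep the better one. The first comes from the $f^{1/2}p^{-1/2}$ term together with $f/p\ge k^2/(k^2+l^2)$, which yields $b\gtrsim \mathcal{D}\,(1+|l|/|k|)^{-1}$. The second comes from bounding $p^{1/4}$ in terms of $f$ and $l$, namely $p^{1/4}\lesssim f^{1/4}+|l|^{1/2}$, and inserting this into the $f^{1/2}p^{-1/2}$ term again. Combined with $\mathcal{D}^{-1}\le 1+\alpha^{-1/4}\langle l\rangle^{1/2}|k|^{-1/2}$, this gives the second bound in \eqref{mb1}, which carries the explicit $f^{1/4}$.

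\textbf{Main obstacle.} I expect the difficulty to be bookkeeping rather than conceptual. When $\mathcal{D}^{-1}$ is multiplied by the remaining factor, cross terms such as $\alpha^{-1/4}\langle l\rangle^{1/2}|l|/|k|^{3/2}$ and stray pure powers $\alpha^{-1/4}$ appear, and they must be absorbed into the displayed denominators. In the regime $|l|\lesssim|k|$ this works since $\langle l\rangle^{1/2}|k|^{-1/2}\lesssim 1$. In the opposite regime I would use the other of the two lower bounds.

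The remaining genuine issue is the regime $l=0$ with $|k|$ large. There the $(\alpha f^2/z)^{1/4}$ factor in $a$ is only $\gtrsim(\alpha/(2+\alpha))^{1/4}$, so a constant depending on $\alpha$ enters. I would handle this by invoking the standing assumption $\alpha\gtrsim1$ from Theorem \ref{nonlinear}, under which all such $\alpha$-powers are harmless constants. If that assumption should not be used, I would instead check whether the stated denominators need an extra $\alpha^{-1/4}$ term.
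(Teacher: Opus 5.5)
Your treatment of $a$ and of the upper bound on $b$ is sound. Your caveat about $\alpha$ is also justified. At $l=0$, $t=\eta/k$, $|k|\to\infty$ one has $a\to(\alpha/(2+\alpha))^{1/4}$, while the left side of \eqref{ma1} tends to $1$. So that lower bound holds with a universal constant only when $\alpha\gtrsim1$, which Theorem \ref{nonlinear} assumes; the paper's ``derived similarly'' passes over this.

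The gap is in the lower bounds for $b$. Your first bound, $b^{-1}\lesssim\mathcal{D}^{-1}(1+|l|/|k|)$, has already replaced $f/p$ by its worst value in time. For $|l|\gg|k|$ you fall back on the second bound, which grows like $f^{1/4}$. Together these cannot produce the first, time-uniform bound in \eqref{mb1}. Take $k=1$, $\alpha=1$, $l=L\gg1$ and a time with $f=L^4$. The target is $\approx L$, and indeed $b^{-1}\approx L^{1/2}$ there, yet both of your bounds are $\approx L^{3/2}$.

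Your sketch of the second bound has a related problem. That bound contains no $\alpha$. It only comes out if the summand $\alpha^{-1/4}\langle l\rangle^{1/2}|k|^{-1/2}$ of $\mathcal{D}^{-1}$ is paired with $z\ge2k^2p$ (the term $\alpha^{-1/4}|k|^{1/2}p^{-1/4}$, not $f^{1/2}p^{-1/2}$), while the summand $1$ is paired with $z\ge\alpha f^2$. Multiplying all of $\mathcal{D}^{-1}$ against a single term leaves either the cross term $\alpha^{-1/4}\langle l\rangle^{1/2}|l|\,|k|^{-1/2}f^{-1/2}$ or the term $\alpha^{1/4}f^{1/4}|k|^{-1/2}$. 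The first is too large at $f=k^2$, $\alpha=1$, $|l|\gg|k|$. The second exceeds the target at $l=0$ by the unbounded factor $\alpha^{1/4}|k|^{1/2}$.

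The fix is to distribute $\mathcal{D}^{-1}$ before estimating, as the paper does:
\[
b^{-1}=\frac{\alpha^{1/4}p^{1/2}}{z^{1/4}}+\frac{\langle l\rangle^{1/2}p^{1/2}}{|k|^{1/2}z^{1/4}}.
\]
By $z\ge\alpha f^2$, the first piece is at most $(p/f)^{1/2}\le1+|l|/|k|$.

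In the second piece, write $p^{1/2}\le f^{1/2}+|l|$. Use $z\ge\alpha f^2$ against $f^{1/2}$ and $z\ge2k^2l^2$ against $|l|$. This gives
\[
\alpha^{-1/4}\langle l\rangle^{1/2}|k|^{-1/2}+\langle l\rangle^{1/2}|l|^{1/2}|k|^{-1},
\]
and the last term is $\lesssim1+|l|/|k|$. That proves the first bound in \eqref{mb1}.

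For the second bound, apply $z\ge2k^2p$ to the second piece instead. This cancels $\alpha$ and leaves $\langle l\rangle^{1/2}p^{1/4}/|k|$; then $p^{1/4}\lesssim f^{1/4}+|l|^{1/2}$ finishes. No case split in $|l|$ versus $|k|$ is needed.
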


\noindent\textsc{Proof}. For (\ref{mb1}), the lower bound can be derived as follows\[
b^{-1}=\left(1+\frac{\langle l\rangle^\frac{1}{2}}{\alpha^\frac{1}{4}|k|^\frac{1}{2}}\right)\frac{p^\frac{1}{2}}{\left(\frac{z}{\alpha}\right)^\frac{1}{4}}\lesssim1+\frac{|l|}{|k|}+\frac{\langle  l\rangle^\frac{1}{2}(f^\frac{1}{2}+|l|)}{|k|^\frac{1}{2}z^\frac{1}{4}}.
\]The latter term can be bounded by either $\frac{\langle l\rangle^\frac{1}{2}}{\alpha^\frac{1}{4}|k|^\frac{1}{2}}+\frac{|l|}{|k|}$ or $\frac{\langle l\rangle^\frac{1}{2}f^\frac{1}{4}}{|k|}+\frac{|l|}{|k|}$. The other bounds can be derived similarly.\hfill \qedsymbol

\subsubsection*{\texorpdfstring{\textbf{Estimates for $J_1$}}{Estimates for J1}}
In this subsection, we prove (\ref{J1}). Recall that\[
N_1=U\cdot\bm{\nabla}_L\Psi.
\]We first consider the $(\neq,\neq)$ interaction. We have\begin{align*}
  \int_0^t \left| \langle\mathcal{Z}\bm{a}^{-1}\left(U_{\neq}\cdot(\bm{\nabla}_L \Psi)_{\neq})_{\neq},\mathcal{Z}A_{\neq}\right)\right|\  ds&=\int_0^t   \left| \langle\mathcal{Z}\left(U_{\neq}\cdot(\bm{\nabla}_L \Psi)_{\neq})_{\neq},\bm{a}^{-1}\mathcal{Z}A_{\neq}\right)\right|\ ds\\
   &\lesssim\nu^{-\frac{\sqrt{2}}{3}}\int_0^t\left\|\mathcal{Z}U_{\neq}\right\|_{L^2}\left\|\bm{\nabla}_L\bm{\nabla}_L\Psi_{\neq}\right\|_{L^2}\left\|\bm{\nabla}_L\mathcal{Z}A_{\neq}\right\|_{L^2}\ ds\\
   &\lesssim\nu^{-\frac{\sqrt{2}}{3}}\left\|\mathcal{Z}U_{\neq}\right\|_{L^\infty_tL^2}\left\|\bm{\nabla}_L\mathcal{Z}\Psi_{\neq}\right\|_{L^2_tL^2}\left\|\bm{\nabla}_L\mathcal{Z}A_{\neq}\right\|_{L^2_tL^2}\\
   &\lesssim\nu^{-\frac{\sqrt{2}}{3}-1}\varepsilon^3.
\end{align*}Here we used the bound $1\lesssim  m\lesssim\nu^{-\sqrt{2}/3}$ and (\ref{ma1}), and the algebra property of $H^s$, i.e. for $s>\frac{d}{2}$ \footnote{This inequality still holds with $\mathbb{R}$ replaced by $\mathbb{T}$.},\begin{align*}
||hg||_{H^s\left(\mathbb{R}^d\right)}&\lesssim||h||_{\left(\mathbb{R}^d\right)}||g||_{H^s\left(\mathbb{R}^d\right)}+||h||_{H^s\left(\mathbb{R}^d\right)}||g||_{L^\infty\left(\mathbb{R}^d\right)}\\
&\lesssim||h||_{H^s\left(\mathbb{R}^d\right)}||g||_{H^s\left(\mathbb{R}^d\right)}.
    \end{align*}
    
Next, for the $(\neq,0)$ interaction, we have\begin{align*}
  \int_0^t \left| \langle\mathcal{Z}\bm{a}^{-1}\left(U_{\neq}\cdot(\bm{\nabla}_L \Psi)_{0})_{\neq},\mathcal{Z}A_{\neq}\right)\right|\  ds&=\int_0^t   \left| \langle\mathcal{Z}\left(U_{\neq}\cdot(\bm{\nabla}_L \Psi)_{0})_{\neq},\bm{a}^{-1}\mathcal{Z}A_{\neq}\right)\right|\ ds\\
   &\lesssim\nu^{-\frac{\sqrt{2}}{3}}\left\|\mathcal{Z}U_{2,3,\neq}\right\|_{L^\infty_tL^2}\left\|\bm{\nabla}_{y,z}\Psi_{0}\right\|_{L^2_tH^s}\left\|\bm{\nabla}_L\mathcal{Z}A_{\neq}\right\|_{L^2_tL^2}\\
   &\lesssim\nu^{-\frac{\sqrt{2}}{3}-1}\varepsilon^3.
\end{align*}The $(0,\neq)$ interaction should be considered more carefully, we write the integral as follows\begin{align*}
  \int_0^t \left| \langle\mathcal{Z}\bm{a}^{-1}\left(U_{0}\cdot(\bm{\nabla}_L \Psi)_{\neq})_{\neq},\mathcal{Z}A_{\neq}\right)\right|\  ds=&\int_0^t   \left| \langle\mathcal{Z}\left(U_{0}\cdot(\bm{\nabla}_L \Psi)_{\neq})_{\neq},\bm{a}^{-1}\mathcal{Z}A_{\neq}\right)\right|\ ds\\
  \lesssim&\int_0^t   \left| \langle\mathcal{Z}\left(U_{1,0}\cdot(\partial_X \Psi)_{\neq})_{\neq},\bm{a}^{-1}\mathcal{Z}A_{\neq}\right)\right|\ ds\\
  +&\int_0^t   \left| \langle\mathcal{Z}\left(U_{2,0}\partial_Y^L \Psi_{\neq}+U_{3,0}\partial_Z\Psi_{\neq})_{\neq},\bm{a}^{-1}\mathcal{Z}A_{\neq}\right)\right|\ ds\\
   \lesssim& T+\nu^{-\frac{\sqrt{2}}{3}}\left\|(U_2,U_3)_{0}\right\|_{L^\infty_tH^s}\left\|\bm{\nabla}_{L}\mathcal{Z}\Psi_{\neq}\right\|_{L^2_tL^2}\left\|\bm{\nabla}_L\mathcal{Z}A_{\neq}\right\|_{L^2_tL^2},
\end{align*}where \begin{align*}
    T&\lesssim\int_0^t   \left| \langle\mathcal{Z}\left(U_{1,0}\partial_X\Psi_{\neq})_{\neq},\mathcal{Z}A_{\neq}\right)\right|\ ds+\int_0^t   \left| \langle\mathcal{Z}\left(U_{1,0}|\partial_X|^\frac{1}{2}\Psi_{\neq})_{\neq},|\partial_Z|^\frac{1}{2}\mathcal{Z}A_{\neq}\right)\right|\ ds\\&+\int_0^t   \left| \langle\mathcal{Z}\left(U_{1,0}\Psi_{\neq})_{\neq},|\partial_Z|\mathcal{Z}A_{\neq}\right)\right|\ ds,\\
    &\lesssim\nu^{-\frac{\sqrt{2}}{3}}||\bm{\nabla}_L\mathcal{Z}A_{\neq}||_{L^2_tL^2}||\mathcal{Z}A_{\neq}||_{L^2_tL^2}\left(\left\|\overline{U}_1\right\|_{L^\infty_tH^s}+\left\|\widetilde{U}_{1}\right\|_{L^\infty_tH^s}\right)\\
    &\lesssim\nu^{-\frac{\sqrt{2}}{3}-\frac{2}{3}}\varepsilon^2\left(\nu^{-\Upsilon}\varepsilon+\varepsilon\right),
\end{align*}where we have used (\ref{ma1}). By assuming $\Upsilon\leq\frac{1}{3}$, we see that (\ref{J1}) holds.

\subsubsection*{\texorpdfstring{\textbf{Estimates for $J_2$}}{Estimates for J2}}
We now prove estimate (\ref{J2}) here. Recall that\[
N_2=-\partial_Z\Delta_L^{-1}\left(\partial_i^L U_j \partial^L_j U_i\right)+U\cdot\bm{\nabla}_L U_3
\]Again, we first consider the $(\neq,\neq)$ interaction for $U\cdot\bm{\nabla}_L U_3$, i.e. the term\[
\int_0^t \langle\mathcal{Z}\bm{b}^{-1}\left(U_{\neq}\cdot\bm{\nabla}_L (\bm{b}B_{\neq}))_{\neq},\mathcal{Z}B_{\neq}\right)\ ds.
\]We have\begin{align*}
   &\left| \langle\mathcal{Z}\left(U_{\neq}\cdot\bm{\nabla}_L (U_{3,\neq}))_{\neq},\bm{b}^{-1}\mathcal{Z}B_{\neq}\right)\right|\lesssim\nu^{-\frac{\sqrt{2}}{3}}\left\|\bm{\nabla}_L\mathcal{Z}B_{\neq}\right\|_{L^2_tL^2}^2\left\|\mathcal{Z}U_{\neq}\right\|_{L^\infty_tL^2},
\end{align*}where we used (\ref{mb1}) and the fact that $|\widehat{U}_{3,\neq}|\lesssim|\widehat{B}_{\neq}|$. Therefore\begin{align*}
\left|\int_0^t \langle\mathcal{Z}\bm{b}^{-1}\left(U_{\neq}\cdot\bm{\nabla}_L (\bm{b}B_{\neq})\right)_{\neq},\mathcal{Z}B_{\neq}\rangle\ ds\right|&\lesssim \nu^{-\frac{\sqrt{2}}{3}}||\bm{\nabla}_L\mathcal{Z}(U,B)_{\neq}||_{L^2_tL^2}^2||\mathcal{Z}(U,B)_{\neq}||_{L^\infty_tL^2}\lesssim\nu^{-\frac{\sqrt{2}}{3}-1}\varepsilon^3.
\end{align*}

For the $(\neq,\neq)$ interaction of $\partial_Z\Delta^{-1}_L\left(\partial_i^L U_j\partial_j^L U_i\right)$, the bound can be  derived similarly,\begin{align*}
  \int_0^t \left| \langle\mathcal{Z}\partial_Z\Delta^{-1}_L\bm{\nabla}_L\cdot\left((U_{\neq}\cdot\bm{\nabla}_L) U_{\neq}\right)_{\neq},\bm{b}^{-1}\mathcal{Z}B_{\neq}\rangle\right|=&\int_0^t\left| \langle\mathcal{Z}\left((U_{\neq}\cdot\bm{\nabla}_L) U_{\neq}\right)_{\neq},\bm{\nabla}_L\partial_Z\bm{b}^{-1}\Delta^{-1}_L\mathcal{Z}B_{\neq}\rangle\right|\\
   \lesssim&\nu^{-\frac{\sqrt{2}}{3}}\int_0^t\left\|\bm{\nabla}_L\mathcal{Z}B_{\neq}\right\|_{L^2}\left\|\mathcal{Z}U_{\neq}\right\|_{L^2}\left\|\bm{\nabla}_L\mathcal{Z}U_{\neq}\right\|_{L^2},\\
   \lesssim&\nu^{-\frac{\sqrt{2}}{3}}\left\|\bm{\nabla}_L\mathcal{Z}B_{\neq}\right\|_{L^2_tL^2}^2\left\|\mathcal{Z}U_{\neq}\right\|_{L^\infty_tL^2}\\
\lesssim&\nu^{-\frac{\sqrt{2}}{3}-1}\varepsilon^3.
\end{align*}

We now deal with the $(\neq,0)$ and $(0,\neq)$ interactions. We consider the second nonlinearity first and split the integral into the following two parts:\begin{align*}
    I_1&=\int_0^t\langle\mathcal{Z}(U_{\neq}\cdot(\bm{\nabla}_L U_3)_0)_{\neq},\bm{b}^{-1}\mathcal{Z}B_{\neq}\rangle\ ds,\\
    I_2&=\int_0^t\langle\mathcal{Z}(U_{0}\cdot(\bm{\nabla}_L U_3)_{\neq})_{\neq},\bm{b}^{-1}\mathcal{Z}B_{\neq}\rangle\ ds,
\end{align*}

Consider $I_1$, since $U_{\neq}\cdot(\bm{\nabla}_L U_3)_0=U_{2,\neq}\partial_y U_{3,0}+U_{3,\neq}\partial_z U_{3,0}$, the component $U_{1,0}$ does not contribute to this term. For $j=2,3$, define\[
I_{1j}=\int_0^t\langle\mathcal{Z} (U_{j,\neq}\partial_j U_{3,0})_{\neq},\bm{b}^{-1}\mathcal{Z}B_{\neq}\rangle
\]The bound is obtained as follows,\begin{align*}
|I_{1j}|&\lesssim \nu^{-\frac{\sqrt{2}}{3}}\int_0^t\left\|\bm{\nabla}_L\mathcal{Z}B_{\neq}\right\|_{L^2}\left\|\mathcal{Z}U_{j,\neq}\right\|_{L^2}||\partial_j U_{3,0}||_{H^s},\\
&\lesssim\nu^{-\frac{\sqrt{2}}{3}}\left\|\bm{\nabla}_L\mathcal{Z}B_{\neq}\right\|_{L^2_tL^2}\left\|\mathcal{Z}U_{j,\neq}\right\|_{L^\infty_tL^2}||\partial_j U_{3,0}||_{L^2_tH^s},\\
&\lesssim\nu^{-\frac{\sqrt{2}}{3}-1}
\varepsilon^3.\end{align*}

 For $I_2$, we first consider the term involving $U_{1,0}$. Let\begin{align*}
|I_{21}|&=\left|\int_0^t\langle\mathcal{Z}(U_{1,0}\partial_X U_{3,\neq})_{\neq},\bm{b}^{-1}\mathcal{Z}B_{\neq}\rangle\ ds\right|\\
&\lesssim\int_0^t|\langle\mathcal{Z}(U_{1,0}\partial_X U_{3,\neq})_{\neq},\mathcal{Z}B_{\neq}\rangle\ |+\langle\mathcal{Z}(U_{1,0}\partial_X U_{3,\neq})_{\neq},\left|\partial_X\right|^{-1}\left|\bm{\nabla}_L\right|\mathcal{Z}B_{\neq}\rangle\ |\ ds\\
&\lesssim I_{211}+I_{212},
\end{align*}where we have used the property (\ref{mb1}) that $b^{-1}\lesssim 1+\frac{p^\frac{1}{2}}{|k|}$ . For $I_{212}$, the above becomes\[
I_{212}\approx\int_0^t|\langle\mathcal{Z}(U_{1,0}U_{3,\neq})_{\neq},\bm{\nabla}_L\mathcal{Z}B_{\neq}\rangle|\ ds.
\]We bound the integral as follows\begin{align*}
I_{212}&\lesssim \nu^{-\frac{\sqrt{2}}{3}} \int_0^t\left\|\bm{\nabla}_L\mathcal{Z}B_{\neq}\right\|_{L^2}\left\|\mathcal{Z}B_{\neq}\right\|_{L^2}\left\|U_{1,0}\right\|_{H^s}\ ds\\
   &\lesssim \nu^{-\frac{\sqrt{2}}{3}}\left\|\bm{\nabla}_L\mathcal{Z}B_{\neq}\right\|_{L^2_tL^2}\left\|\mathcal{Z}B_{\neq}\right\|_{L^2_tL^2}\left\|U_{1,0}\right\|_{L^\infty_tH^s}\\
   &\lesssim\nu^{-\frac{\sqrt{2}}{3}-\frac{2}{3}-\Upsilon}\varepsilon^3.
\end{align*}

Next, we bound $I_{211}$ in a similar way,\begin{align*}
    I_{211}&\lesssim\nu^{-\frac{\sqrt{2}}{3}}\int_0^t\left\|\mathcal{Z}B_{\neq}\right\|_{L^2}||\bm{\nabla}_L\mathcal{Z}U_{3,\neq}||_{L^2}||U_{1,0}||_{H^s}\ ds\\
   &\lesssim \nu^{-\frac{\sqrt{2}}{3}}\left\|\bm{\nabla}_L\mathcal{Z}B_{\neq}\right\|_{L^2_tL^2}\left\|\mathcal{Z}B_{\neq}\right\|_{L^2_tL^2}\left\|U_{1,0}\right\|_{L^\infty_tH^s},\\
   &\lesssim\nu^{-\frac{\sqrt{2}}{3}-\frac{2}{3}-\Upsilon}\varepsilon^3.
\end{align*}The terms containing $U_{2,0}$ and $U_{3,0}$ are bounded as follows,\begin{align*}
\left|\int_0^t\langle\mathcal{Z}(U_{2,0}(\partial_{Y}^L U_{3})_{\neq}+U_{3,0}\partial_ZU_{3,\neq})_{\neq},\bm{b}^{-1}\mathcal{Z}B_{\neq}\rangle\right|\lesssim& \nu^{-\frac{\sqrt{2}}{3}}\int_0^t\left\|\bm{\nabla}_L\mathcal{Z}B_{\neq}\right\|_{L^2}^2\left(||U_{2,0}||_{H^s}+||U_{3,0}||_{H^s}\right)\\
    \lesssim&\nu^{-\frac{\sqrt{2}}{3}}\left\|\bm{\nabla}_L\mathcal{Z}B_{\neq}\right\|_{L^2_tL^2}^2\left(||U_{2,0}||_{L^\infty_tH^s}+||U_{3,0}||_{L^\infty_tH^s}\right)\\
    \lesssim&\nu^{-\frac{\sqrt{2}}{3}-1}\varepsilon^3.
\end{align*}

Finally, we compute the bounds for $(\neq,0)$ and $(0,\neq)$ interactions in the pressure term. We have\begin{align*}
    \int_0^t\left| \langle\mathcal{Z}\left((U_{\neq}\cdot\bm{\nabla}_{y,z}) U_{0}\right)_{\neq},\bm{\nabla}_L\partial_Z\bm{b}^{-1}\Delta^{-1}_L\mathcal{Z}B_{\neq}\rangle\right|\lesssim&\nu^{-\frac{\sqrt{2}}{3}}\int_0^t||\bm{\nabla}_L\mathcal{Z}B_{\neq}||_{L^2}\left\|\bm{\nabla}_{y,z}(U_{2},U_3)_0\right\|_{H^s}||\mathcal{Z}U_{\neq}||_{L^2}\\
    \lesssim&\nu^{-\frac{\sqrt{2}}{3}}||\bm{\nabla}_L\mathcal{Z}B_{\neq}||_{L^2_tL^2}\left\|\bm{\nabla}_{y,z}(U_{2},U_3)_0\right\|_{L^2_tH^s}||\mathcal{Z}U_{\neq}||_{L^\infty_tL^2}\\
    \lesssim&\nu^{-\frac{\sqrt{2}}{3}-1}\varepsilon^3,\\
 \int_0^t\left| \langle\mathcal{Z}\left((U_{0}\cdot\bm{\nabla}_{L}) U_{\neq}\right)_{\neq},\bm{\nabla}_L\partial_Z\bm{b}^{-1}\Delta^{-1}_L\mathcal{Z}B_{\neq}\rangle\right|\lesssim&\nu^{-\frac{\sqrt{2}}{3}}\int_0^t||\bm{\nabla}_L\mathcal{Z}B_{\neq}||_{L^2}\left\|\mathcal{Z}U_{\neq}\right\|_{L^2}||U_{1,0}||_{H^s}\\
 +& \nu^{-\frac{\sqrt{2}}{3}}\int_0^t||\bm{\nabla}_L\mathcal{Z}U_{\neq}||_{L^2}\left\|\mathcal{Z}B_{\neq}\right\|_{L^2}||U_{1,0}||_{H^s}\\
 +&\nu^{-\frac{\sqrt{2}}{3}}\int_0^t||\bm{\nabla}_L\mathcal{Z}B_{\neq}||_{L^2}\left\|\bm{\nabla}_{L}\mathcal{Z}U_{\neq}\right\|_{L^2}\left\|(U_2,U_3)_0\right\|_{H^s}\\
    \lesssim&\nu^{-\frac{\sqrt{2}}{3}-1}\varepsilon^3+\nu^{-\frac{\sqrt{2}}{3}-\frac{2}{3}-\Upsilon}\varepsilon^3,
\end{align*}which gives (\ref{J2}).

\subsubsection*{\texorpdfstring{\textbf{Estimates for $J_3$}}{Estimates for J3}}
Now we have\[
 N_3=\partial_X((U\cdot\bm{\nabla}_L)U_2)-\partial_{Y}^L((U\cdot\bm{\nabla}_L) U_1)-\frac{1}{\sqrt{\alpha}}\partial_Z((U\cdot\bm{\nabla}_L)\Psi)
 \]In this case, the $(\neq,\neq)$ interaction yields the bound\begin{align*}
&\left|\int_0^t\langle \mathcal{Z}N_{3,(\neq,\neq)},\mathcal{Z}\Xi_{\neq}\rangle\right|\\&\lesssim  ||\bm{\nabla}_L\mathcal{Z}\Xi_{\neq}||_{L^2_tL^2}\left(\left\|\bm{\nabla}_L\mathcal{Z}\left(U_1,U_2\right)_{\neq}\right\|_{L^2_tL^2}+\alpha^{-\frac{1}{2}}\left\|\bm{\nabla}_L\mathcal{Z}\Psi_{\neq}\right\|_{L^2_tL^2}\right)||\mathcal{Z}U_{\neq}||_{L^\infty_tL^2},\\
&\lesssim  \nu^{-\frac{\sqrt{2}}{3}-1}\varepsilon^3.
 \end{align*}For the $(0,\neq)$ interaction, notice that (\ref{initialdata229}) gives\[
|m|\lesssim\left(\nu^{-\frac{2}{3}}\frac{k^2+l^2}{k^2\nu^{-\frac{2}{3}}+l^2}\right)^\frac{1}{\sqrt{2}}.
\]We thus have the combined bound \[
|km|\lesssim|k|\left(\nu^{-\frac{2}{3}}\cdot\frac{k^2+l^2}{k^2\nu^{-\frac{2}{3}}+l^2}\right)^{\frac{1}{\sqrt{2}}-g}\left(\nu^{-\frac{2}{3}}\cdot\frac{k^2+l^2}{k^2\nu^{-\frac{2}{3}}}\right)^{g}.
\]Taking $g=\frac{1}{2}$, we have\[
|km|\lesssim\nu^{-\frac{\sqrt{2}-1}{3}}\sqrt{k^2+l^2}.
\]That is, the multiplier $m$ can absorb some $x$-derivatives.  Consequently,\begin{align*}
\left|\int_0^t\langle \mathcal{Z}N_{3,(0,\neq)},\mathcal{Z}\Xi_{\neq}\rangle\right|\lesssim&\nu^{-\frac{\sqrt{2}-1}{3}}\left\|\overline{U}_1\right\|_{L^\infty_t H^s}\left\|\bm{\nabla}_{L}\mathcal{Z}\left(U_2,U_3,\Psi\right)_\neq\right\|_{L^2_tL^2}||\bm{\nabla}_L\mathcal{Z}\Xi_{\neq}||_{L^2_tL^2}\\
+&\nu^{-\frac{\sqrt{2}-1}{3}}\left\|\bm{\nabla}_{y,z}\overline{U}_1\right\|_{L^2_t H^s}\left\|\mathcal{Z}\left(U_2,U_3,\Psi\right)_\neq\right\|_{L^\infty_tL^2}||\bm{\nabla}_L\mathcal{Z}\Xi_{\neq}||_{L^2_tL^2}\\
+&\nu^{-\frac{\sqrt{2}}{3}-1}\left(\left\|(U_2,\Psi)_0\right\|_{L^\infty_t H^s}+\left\|\widetilde{U}_1\right\|_{L^\infty_tH^s}\right)\left\|\bm{\nabla}_{L}\mathcal{Z}\left(U_2,U_3,\Psi\right)_\neq\right\|_{L^2_tL^2}||\bm{\nabla}_L\mathcal{Z}\Xi_{\neq}||_{L^2_tL^2}
\\
+&\nu^{-\frac{\sqrt{2}}{3}-1}\left(\left\|\bm{\nabla}_{y,z}(U_2,\Psi)_0\right\|_{L^2_t H^s}+\left\|\partial_y\widetilde{U}_1\right\|_{L^2_tH^s}\right)\left\|\mathcal{Z}\left(U_2,U_3,\Psi\right)_\neq\right\|_{L^\infty_tL^2}||\bm{\nabla}_L\mathcal{Z}\Xi_{\neq}||_{L^2_tL^2}\\
\lesssim&\nu^{-\frac{\sqrt{2}}{3}-\frac{2}{3}-\Upsilon}\varepsilon^3+\nu^{-\frac{\sqrt{2}}{3}-1}\varepsilon^3.
\end{align*}

For the $(\neq,0)$ interaction, the most problematic term is $(U_{2,\neq}\partial_y\overline{U}_1+U_{3,\neq}\partial_z\overline{U}_1)_{\neq}$. Notice that $\partial_y U_1$ does not experience lift-up as $\partial_y$ can balance the singular operator $\partial^{-1}_y$. Therefore, we only need to worry about $(U_{3,\neq}\partial_z\overline{U}_1)_{\neq}$. Integrating by parts, we have\begin{align*}
    &\langle\mathcal{Z}(U_{3,\neq}\partial_z\overline{U}_1)_{\neq},\partial_Y^L\mathcal{Z}\Xi_{\neq}\rangle=-\langle\mathcal{Z}(\partial_ZU_{3,\neq}\overline{U}_1)_{\neq},\partial_Y^L\mathcal{Z}\Xi_{\neq}\rangle-\langle\mathcal{Z}(U_{3,\neq}\overline{U}_1)_{\neq},\partial_Y^L\partial_Z\mathcal{Z}\Xi_{\neq}\rangle\\=&-\langle\mathcal{Z}(\partial_ZU_{3,\neq}\overline{U}_1)_{\neq},\partial_Y^L\mathcal{Z}\Xi_{\neq}\rangle+\langle\mathcal{Z}(\partial_Y^LU_{3,\neq}\overline{U}_1)_{\neq},\partial_Z\mathcal{Z}\Xi_{\neq}\rangle+\langle\mathcal{Z}(U_{3,\neq}\partial_y\overline{U}_1)_{\neq},\partial_Z\mathcal{Z}\Xi_{\neq}\rangle.
\end{align*}Similarly, we can bound the $(\neq,0)$ interaction in the following way:\begin{align}
    \left|\int_0^t\langle\mathcal{Z}\mathcal{N}_{3,(\neq,0)},\mathcal{Z}\Xi_{\neq}\rangle\right|&\lesssim\nu^{-\frac{\sqrt{2}}{3}-1}\varepsilon^3+\nu^{-\frac{\sqrt{2}}{3}}\int_0^t\left\|\bm{\nabla}_L\mathcal{Z}\Xi_{\neq}\right\|_{L^2}\left\|\mathcal{Z}(U_{2},U_3)_{\neq}\right\|_{L^2}\left\|\partial_y\overline{U}_1\right\|_{H^s}\nonumber\\
    &+\nu^{-\frac{\sqrt{2}}{3}}\int_0^t\left\|\bm{\nabla}_L\mathcal{Z}\Xi_{\neq}\right\|_{L^2}\left\|\mathcal{Z}(\bm{\nabla}_{L}U_{3,\neq}\overline{U}_{1})_{\neq}\right\|_{L^2\nonumber}\\
    &\lesssim \nu^{-\frac{\sqrt{2}}{3}-1}\varepsilon^3+\nu^{-\frac{\sqrt{2}}{3}}\int_0^t||\bm{\nabla}_L\mathcal{Z}\Xi_{\neq}||_{L^2}\left(||\mathcal{Z}U_{3,\neq}||_{L^2}\left\|\overline{U}_{1}\right\|_{H^s}+||\bm{\nabla}_L\mathcal{Z}U_{3,\neq}||_{L^2}\left\|\overline{U}_{1}\right\|_{L^\infty}\right)\nonumber\\
    &\lesssim\nu^{-\frac{\sqrt{2}}{3}-1}\varepsilon^3+\nu^{-\frac{\sqrt{2}}{3}-\frac{2}{3}}\left\|\overline{U}_{1}\right\|_{L^\infty_t H^s}\varepsilon^2+\nu^{-\frac{\sqrt{2}}{3}-1}\varepsilon^2\left\|\overline{U}_{1}\right\|_{L^\infty_tL^\infty}.\label{largebound}
\end{align}By Lemma \ref{inftyonu1} and the bootstrap assumptions of Theorem \ref{boot}, inequality (\ref{energyinequalityfornon}) becomes\begin{align*}
\sum_{F\in\{A,B,K\}}\left\|\mathcal{Z}F_{\neq}\right\|_{L^\infty_tL^2}^2+&\nu\sum_{F\in\{A,B,K\}}\left\|\bm{\nabla}_L\mathcal{Z}F_{\neq}\right\|_{L^2_t L^2}^2+\sum_{F\in\{A,B,K\}}\left\|\sqrt{-\frac{\mathcal{N}'}{\mathcal{N}}}\mathcal{Z}F_{\neq}\right\|_{L^2_tL^2}^2\nonumber\\\lesssim& \varepsilon^2+\nu^{-\frac{\sqrt{2}}{3}-1}\varepsilon^3+\nu^{-\frac{\sqrt{2}}{3}-\frac{2}{3}-\Upsilon}\varepsilon^3.
    \end{align*}

\vspace{9pt}

\begin{remark} In principle, $N_3$ can be simplified by expressing some of the terms using $\Xi$. However, this does not seem to help improve the estimate and would require additional zero mode analysis for $\Xi_0$, which is difficult as the unknowns $A$ and $B$ are not well-defined when $k=0$. As a result, we do not simplify the nonlinearity so that $\Xi$ is an auxiliary unknown only for the non-zero-mode analysis.\end{remark}

\subsection{\textbf{Zero modes analysis}}
\subsubsection{\textbf{The simple zero modes system}}
In this part, we study the simple zero mode system. The nonlinear system of the zero modes is written as \begin{flalign}
\left\{
\begin{aligned}
&\partial_t U_{1,0}=-U_{2,0}+\nu\Delta_{y,z}U_{1,0}-(U\cdot\bm{\nabla}_LU_1)_0,\\
&\partial_tU_{2,0}=-\partial_Y P_0+\nu\Delta U_{2,0}-(U\cdot\bm{\nabla}_LU_2)_0,\\
&\partial_t U_{3,0}=-\partial_Z P_0+\sqrt{\alpha}\Psi_0+\nu\Delta U_{3,0}-(U\cdot\bm{\nabla}_LU_3)_0,\\
&\partial_t\Psi_0=-\sqrt{\alpha} U_{3,0}+\nu\Delta\Psi_0-(U\cdot\bm{\nabla}_L\Psi)_0,\\
&\partial_y U_{2,0}+\partial_z U_{3,0}=0.
\end{aligned}
\right.\label{nonliearzeromodessys}
\end{flalign}The simple zero mode system then reduces to\begin{flalign}
\left\{
\begin{aligned}
&\partial_t \overline{U}_{1}=-\overline{U}_{2}+\nu\Delta_{y,z}\overline{U}_{1}-\overline{(U\cdot\bm{\nabla}_LU_1)},\\
&\partial_t\overline{U}_{2}=-\partial_Y \overline{P}+\nu\Delta\overline{U}_{2}-\overline{(U\cdot\bm{\nabla}_LU_2)},\\
&\partial_t \overline{U}_{3}=-\partial_Z \overline{P}+\sqrt{\alpha}\overline{\Psi}+\nu\Delta \overline{U}_{3}-\overline{(U\cdot\bm{\nabla}_LU_3)},\\
&\partial_t\overline{\Psi}=-\sqrt{\alpha} \overline{U}_{3}+\nu\Delta\overline{\Psi}-\overline{(U\cdot\bm{\nabla}_L\Psi)},\\
&\partial_y \overline{U}_{2}+\partial_z \overline{U}_{3}=0.
\end{aligned}
\right.\label{nonliearsimzero}
\end{flalign}

To analyze the simple-zero-mode system, we define the following new unknowns:\begin{itemize}
\item  \textsc{The vorticity}:\[
S:=\partial_z  \overline{U}_2-\partial_y  \overline{U}_3.
\]\item \textsc{Diagonalized variables}:\[
J_{\pm}=S\pm |\bm{\nabla}_{y,z}|\overline{\Psi}.
\]
\end{itemize}Thus, by incompressibility, we find that\begin{align*}
    \overline{U}_2=\partial_z\Delta^{-1}_{y,z}S,\qquad \overline{U}_3=-\partial_y\Delta^{-1}_{y,z}S,
\end{align*}and that\[
    S=\frac{J_++J_-}{2},\qquad \Psi=|\bm{\nabla}_{y,z}|^{-1}\frac{J_+-J_-}{2}.
\]

Thanks to these new unknowns, we are able to derive the explicit expressions for $ \overline{U}_2, \overline{U}_3$ and $ \overline{\Psi}$. It can be shown that $J_{\pm}$, $\Xi$ and $\overline{U}_1$ solve the following system\begin{flalign*}
\left\{
\begin{aligned}
&\partial_t \overline{U}_1-\mathcal{L}_1\overline{U}_1=-\partial_z\Delta^{-1}_{y,z}S+R_1,\\
&\partial_t J_{\pm}-\mathcal{L}_\pm J_{\pm}=R_\pm.
\end{aligned}
\right.
\end{flalign*}Here\[
\mathcal{L}_1:=\nu\Delta_{y,z},\qquad \mathcal{L}_{\pm}:=\nu\Delta_{y,z} \mp \sqrt{\alpha}\partial_y|\bm{\nabla}_{y,z}|^{-1}:=\mathcal{L}_1\mp\sqrt{\alpha}\mathcal{R}_y
\]and the nonlinearities are given by\begin{align*}
    R_1&=-\overline{(U\cdot\bm{\nabla}_LU_1)} ,\\
    R_{\pm}&=-\overline{\partial_z(U\cdot\bm{\nabla}_LU_2)}+\overline{\partial_y(U\cdot\bm{\nabla}_LU_3)}\mp|\bm{\nabla}_{y,z}|\overline{(U\cdot\bm{\nabla}_L\Psi)}.
\end{align*}The solutions can be expressed via the Duhamel formula, i.e.\begin{align}
\overline{U}_1(t,y,z)&=e^{\mathcal{L}_1t} \overline{U}_1(0,y,z)+\int_0^t e^{\mathcal{L}_1(t-s)} \left(-\partial_z\Delta^{-1}_{y,z}S(s,y,z)+R_1(s,y,z)\right)\ ds,\label{duhamel1}\\
    J_{\pm}(t,y,z)&=e^{\mathcal{L}_\pm t}J_{\pm}(0,y,z)+\int_0^t e^{\mathcal{L}_\pm (t-s)}R_{\pm}(s,y,z)\ ds.\label{duhamel2}
\end{align}Unfortunately, due to the presence of $\overline{U}_2$ in the $\overline{U}_1$ equation, it is not possible to apply the energy estimate directly to $\overline{U}_1$. Therefore, to study the dynamics of $\overline{U}_1$, we have to make use of the Duhamel formula. The linear term $-\partial_z\Delta^{-1}S$ in the integral can be computed further. We have\begin{align*}
\int_0^t e^{\mathcal{L}_1(t-s)}\partial_z\Delta^{-1}_{y,z}S\ ds=&\partial_z\Delta^{-1}_{y,z}\int_0^t e^{\mathcal{L}_1(t-s)}\frac{J_++J_-}{2}\ ds,\\
=&\frac{1}{2}\partial_z\Delta^{-1}_{y,z}e^{\mathcal{L}_1 t}\int_0^t e^{(\mathcal{L}_+-\mathcal{L}_1)s}J_+(0)+e^{(\mathcal{L}_--\mathcal{L}_1)s}J_-(0)\ ds\\
+&\frac{1}{2}\partial_z\Delta^{-1}_{y,z}e^{\mathcal{L}_1 t}\int_0^t\int_0^s e^{-\mathcal{L}_1s}e^{\mathcal{L}_+(s-\tau)}R_+(\tau)+e^{-\mathcal{L}_1 s}e^{\mathcal{L}_-(s-\tau)}R_-(\tau)\ d\tau ds.
\end{align*}The second integral can be simplified by using Fubini's theorem. Hence the full Duhamel formula for $\overline{U}_1$ reads
\begin{align}
   \overline{U}_1(t,y,z)&=e^{\mathcal{L}_1t} \overline{U}_1(0,y,z)+\frac{1}{2\sqrt{\alpha}}\partial_z\Delta^{-1}_{y,z}e^{\mathcal{L}_1 t}\mathcal{R}^{-1}_y\left(e^{-\sqrt{\alpha}t\mathcal{R}_y}-I_d\right)J_+(0,y,z)\nonumber\\&-\frac{1}{2\sqrt{\alpha}}\partial_z\Delta^{-1}_{y,z}e^{\mathcal{L}_1 t}\mathcal{R}^{-1}_y\left(e^{\sqrt{\alpha}t\mathcal{R}_y}-I_d\right)J_-(0,y,z)\nonumber\\
    &+\int_0^t e^{\mathcal{L}_1(t-s)} R_1(s,y,z)\ ds\nonumber\\
    &-\frac{1}{2}\partial_z\Delta_{y,z}^{-1}\left[-\frac{e^{\mathcal{L}_+t}}{\sqrt{\alpha}}\mathcal{R}_y^{-1}\int_0^t e^{-\mathcal{L}_+s}R_+(s)\ ds+\frac{e^{\mathcal{L}_1t}}{\sqrt{\alpha}}\mathcal{R}_y^{-1}\int_0^t e^{-\mathcal{L}_1s}R_+(s)\ ds\right]\nonumber\\
    &-\frac{1}{2}\partial_z\Delta_{y,z}^{-1}\left[\frac{e^{\mathcal{L}_-t}}{\sqrt{\alpha}}\mathcal{R}_y^{-1}\int_0^t e^{-\mathcal{L}_-s}R_-(s)\ ds-\frac{e^{\mathcal{L}_1t}}{\sqrt{\alpha}}\mathcal{R}_y^{-1}\int_0^t e^{-\mathcal{L}_1s}R_-(s)\ ds\right].\label{explicitsoluU1}
\end{align}

\subsubsection{\textbf{Dispersive bound}}

At the nonlinear level, the dispersion observed in the linear analysis helps to improve the threshold. In this part, we derive a dispersive bound for the simple zero modes $\overline{U}_2,\overline{U}_3$ and $\overline{\Psi}$. In particular, we prove the following.

\begin{proposition}\label{nonlinear dispersive} Under the bootstrap assumptions of Theorem \ref{boot}, the following dispersive estimate holds.
    \[
\left\|\overline{U}_2\right\|_{W^{1,\infty}}+\left\|\overline{U}_3\right\|_{W^{1,\infty}}+\left\|\overline{\Psi}\right\|_{W^{1,\infty}}\lesssim  \frac{e^{-\nu t}}{\langle \alpha^\frac{1}{2}t\rangle^\frac{1}{3}}\varepsilon+\alpha^{-\frac{1}{6}}\nu^{-\frac{2}{3}}\varepsilon^2.
\]
\end{proposition}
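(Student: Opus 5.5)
We work with the diagonalized unknowns $J_\pm=S\pm|\bm{\nabla}_{y,z}|\overline{\Psi}$ and their Duhamel formula \eqref{duhamel2}. Since
\[
\overline{U}_2=\partial_z\Delta^{-1}_{y,z}\tfrac{J_++J_-}{2},\qquad
\overline{U}_3=-\partial_y\Delta^{-1}_{y,z}\tfrac{J_++J_-}{2},\qquad
\overline{\Psi}=|\bm{\nabla}_{y,z}|^{-1}\tfrac{J_+-J_-}{2},
\]
each of the quantities $\overline{U}_2,\overline{U}_3,\overline{\Psi}$ (and one derivative of each) is obtained from $J_\pm$ by applying $|\bm{\nabla}_{y,z}|^{-1}$ composed with a zeroth-order multiplier of the form $|\partial_z|^a|\bm{\nabla}_{y,z}|^{-a}$ or $|\partial_y|^a|\bm{\nabla}_{y,z}|^{-a}$. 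Because the functions have zero $z$-average, $|\bm{\nabla}_{y,z}|^{-1}$ is harmless: $|l|\ge 1$ on simple zero modes. The propagator is $e^{\mathcal{L}_\pm t}=e^{\nu t\Delta_{y,z}}e^{\mp\sqrt{\alpha}t\mathcal{R}_y}$. The heat factor supplies $e^{-\nu t}$ by the same reason, and the oscillatory factor is exactly the one treated in Lemma \ref{firstdis} and Corollary \ref{fdis}, with time rescaled by $\sqrt{\alpha}$.

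\textbf{Linear part.} Apply Corollary \ref{fdis} to $e^{\mathcal{L}_\pm t}J_\pm(0)$, with the multipliers above. This gives
\[
\langle\alpha^{1/2}t\rangle^{-1/3}e^{-\nu t}\,\|(\overline{U}_2,\overline{U}_3,\overline{\Psi})(0)\|_{W^{4,1}},
\]
where one extra derivative accounts for the $W^{1,\infty}$ norm and another for passing from $\overline{U}$ to $S$. This is bounded by $\varepsilon$ because the $\mathcal{Y}$ norm contains $W^{s+4,1}$.

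\textbf{Nonlinear part.} For the Duhamel term $\int_0^t e^{\mathcal{L}_\pm(t-s)}R_\pm(s)\,ds$ we split the time integral.

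On the region $t-s\le \alpha^{-1/2}$ (or more generally where dispersion is not yet effective), we do not use dispersion. Instead we bound $L^\infty$ by Sobolev embedding $H^{2+}\subset L^\infty$ together with $\|R_\pm\|_{H^{s-1}}$.

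On the remaining region we use the dispersive bound with decay $\langle\sqrt{\alpha}(t-s)\rangle^{-1/3}e^{-\nu(t-s)}$ and $W^{4,1}$ norms of $R_\pm(s)$. These are quadratic: $R_\pm$ are $z$-Fourier projections of $\overline{\partial(U\cdot\nabla_L U)}$-type products, so by Cauchy–Schwarz
\[
\|R_\pm\|_{W^{4,1}}\lesssim \|U\|_{H^{s}}\,\|\nabla U\|_{H^{s}} + \text{(similar terms)},
\]
with $s\ge 4$ giving enough room. The bootstrap assumptions of Theorem \ref{boot} then bound this by products of energies. The non-zero modes contribute $\|\mathcal{Z}F_{\neq}\|_{L^2_tL^2}\lesssim \nu^{-1/6}\varepsilon$, with additional decay $e^{-\varpi\nu^{1/3}t}$. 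The zero–zero interactions are controlled by $\nu^{-1/2}$ through the $L^2_t\dot H^1$ dissipation bounds.

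The time integral is handled by Hölder in $s$. The factor
\[
\int_0^t\langle\sqrt{\alpha}(t-s)\rangle^{-1/3}e^{-\nu(t-s)}\,ds
\]
is evaluated with the dissipation bound in $L^2_t$ on one factor and $L^\infty_t$ on the other. One pays $\|\langle\sqrt{\alpha}\tau\rangle^{-1/3}e^{-\nu\tau}\|_{L^2_\tau}\lesssim \alpha^{-1/6}\nu^{-1/6}$. Combined with $\nu^{-1/2}$ from the dissipation factor this gives $\alpha^{-1/6}\nu^{-2/3}\varepsilon^2$. That is the claimed size, which is a consistency check on the choice of Hölder split.

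\textbf{Main obstacle.} The hard part is the self-interaction of $\overline{U}_1$ inside $R_\pm$. This is the term $\overline{U}_1\partial_X(\cdot)_{\neq}$ from $(0,\neq)$ interactions, and also $\widetilde{U}_1\partial_X$. Here $\overline{U}_1$ is only bounded by $\nu^{-\Upsilon}\varepsilon$. For zero–zero interactions, $\overline{U}_1$ does not appear, since $U_{1,0}\partial_X$ vanishes on zero modes; so only $\neq\neq$ terms carry it. Those terms are paired with $\nu^{-1/3}$-enhanced-dissipation decay, which should absorb the loss. I would first verify that every $\overline{U}_1$ contribution enters multiplied by a non-zero mode factor. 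Then I would use the $e^{-\varpi\nu^{1/3}s}$ decay to gain $\nu^{-1/3}$ in place of $\nu^{-1/2}$, so the $\nu^{-\Upsilon}$ loss is compensated provided $\Upsilon\le 1/3$.
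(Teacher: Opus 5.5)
Your core argument is the paper's proof. You write $\overline{U}_2,\overline{U}_3,\overline{\Psi}$ through $J_\pm$ and apply the dispersive bound of Lemma \ref{firstdis} and Corollary \ref{fdis} both to the data and inside the Duhamel integral. You then use Cauchy--Schwarz in $s$ with $\|e^{-\nu\tau}\langle\alpha^{1/2}\tau\rangle^{-1/3}\|_{L^2_\tau}\lesssim\alpha^{-1/6}\nu^{-1/6}$. Finally you bound the quadratic terms in $L^2_tW^{3,1}$ by $\nu^{-1/2}\varepsilon^2$, with one factor in $L^\infty_t$ and the differentiated factor in $L^2_t$. The short-time splitting is superfluous, since the kernel already carries $\langle\alpha^{1/2}(t-s)\rangle^{-1/3}$, but it is harmless.

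There is one derivative-counting slip. For the $W^{1,\infty}$ norm you need $R_\pm$ only in $W^{3,1}$, because $R_\pm$ sits at the level of $J_\pm$, one derivative above the velocity. That amounts to $\|\overline{(U\cdot\bm{\nabla}_LF)}\|_{W^{4,1}}\lesssim\|U\|_{H^4}\|\bm{\nabla}F\|_{H^4}$, which is exactly what $s\geq4$ provides. Asking for $R_\pm\in W^{4,1}$, as you do, would require $s\geq5$.

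Your \emph{main obstacle} does not exist, and the reasoning behind it is mistaken. $R_\pm$ involves only the simple zero modes $\overline{(U\cdot\bm{\nabla}_LF)}$ with $F\in\{U_2,U_3,\Psi\}$. A product of an $x$-independent factor with an $x$-nonzero mode has zero $x$-average, so the $(0,\neq)$ and $(\neq,0)$ interactions contribute nothing at all. This is the decomposition \eqref{simplezeromoded9}:
$$\overline{(U\cdot\bm{\nabla}_LF)}=\overline{(U_{\neq}\cdot\bm{\nabla}_LF_{\neq})}+\overline{(\overline{U}\cdot\bm{\nabla}_{y,z}\overline{F})}+\overline{(\overline{U}_2\partial_y\widetilde{F})}+\overline{(\widetilde{U}_3\partial_z\overline{F})}.$$
In the $(0,0)$ part, $U_{1,0}\partial_X$ annihilates $F_0$. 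In the $(\neq,\neq)$ part the relevant component is $U_{1,\neq}$, which the bootstrap controls at size $\varepsilon$ with no $\nu^{-\Upsilon}$ loss. So neither $\overline{U}_1$ nor $\widetilde{U}_1$ ever enters this estimate, and no restriction on $\Upsilon$ is needed for this proposition. What actually remains is the three zero--zero terms above, using $\widetilde{U}_2=0$. Each is handled by putting the undifferentiated factor in $L^\infty_tH^s$ and the differentiated one in $L^2_tH^s$, at cost $\nu^{-1/2}$.
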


\noindent\textsc{Proof}.  Here we only prove the $L^\infty$ bound, as the $\dot{W}^{1,\infty}$ bound can be similarly deduced. The first term on the right-hand side directly follows from the linear estimate \ref{dis1}. As a result, we see that\begin{align*}
\left\|\overline{U}_2\right\|_{L^\infty}+\left\|\overline{U}_3\right\|_{L^\infty}+\left\|\overline{\Psi}\right\|_{L^\infty}&\lesssim \frac{e^{-\nu t}}{\langle\alpha^\frac{1}{2}t\rangle^\frac{1}{3}}\varepsilon+\left\|(\partial_z,\partial_y)\Delta^{-1}_{y,z}\int_0^t e^{\mathcal{L}_+(t-s)}R_++e^{\mathcal{L}_-(t-s)}R_-\ ds\right\|_{L^\infty}\\
    &+\left\||\bm{\nabla}_{y,z}|^{-1}\int_0^t e^{\mathcal{L}_+(t-s)}R_+-e^{\mathcal{L}_-(t-s)}R_-\ ds\right\|_{L^\infty}\\
    &\lesssim \frac{e^{-\nu t}}{\langle\alpha^\frac{1}{2}t\rangle^\frac{1}{3}}\varepsilon+\int_0^t \frac{e^{-\nu(t-s)}}{\langle \alpha^\frac{1}{2}(t-s)\rangle^\frac{1}{3}}\left(\left\||\bm{\nabla}_{y,z}|^{-1}R_+\right\|_{W^{3,1}}+\left\||\bm{\nabla}_{y,z}|^{-1}R_-\right\|_{W^{3,1}}\right)\ ds\\
    &\lesssim\frac{e^{-\nu t}}{\langle\alpha^\frac{1}{2}t\rangle^\frac{1}{3}}\varepsilon+\alpha^{-\frac{1}{6}}\nu^{-\frac{1}{6}}\left(\left\||\bm{\nabla}_{y,z}|^{-1}R_+\right\|_{L^2_tW^{3,1}}+\left\||\bm{\nabla}_{y,z}|^{-1}R_-\right\|_{L^2_tW^{3,1}}\right),
\end{align*}where we have used the fact that\[
\left\|\frac{e^{-\nu t}}{\langle \alpha^\frac{1}{2}t\rangle^\frac{1}{3}}\right\|_{L^2}\lesssim\alpha^{-\frac{1}{6}}\nu^{-\frac{1}{6}}.
\]Consider the single term\begin{align*}
\left\||\bm{\nabla}_{y,z}|^{-1}R_+\right\|_{L^2_tW^{3,1}}&\lesssim\left\|\overline{(U\cdot\bm{\nabla}_L U_2)}\right\|_{L^2_tW^{3,1}}+\left\|\overline{(U\cdot\bm{\nabla}_L U_3)}\right\|_{L^2_tW^{3,1}}+\left\|\overline{(U\cdot\bm{\nabla}_L \Psi)}\right\|_{L^2_tW^{3,1}}\\
&\lesssim\left\|\bm{\nabla}_L\mathcal{Z}(U,\Psi)_{\neq}\right\|_{L^2_tL^2}\left\|\mathcal{Z}U_{\neq}\right\|_{L^\infty_t L^2}+\left\|(\overline{U}_2,\overline{U}_3)\right\|_{L^\infty_tH^s}\left\|\bm{\nabla}\left(\overline{U}_2,\overline{U}_3,\overline{\Psi}\right)\right\|_{L^2_tH^s}\\
&+\left\|\overline{U}_2\right\|_{L^\infty_t H^s}\left\|\partial_y\left(\widetilde{U}_3,\widetilde{\Psi}\right)\right\|_{L^2_tH^s}+\left\|\widetilde{U}_3\right\|_{L^\infty_t H^s}\left\|\partial_z\left(\overline{U}_2,\overline{U}_3,\overline{\Psi}\right)\right\|_{L^2_tH^s}\\
&\lesssim\nu^{-\frac{1}{2}}\varepsilon^2,
\end{align*}where we have used the simple zero mode decomposition: for $\overline{(U\cdot\bm{\nabla}_L F)}$, since $\widetilde{U}_2=0$, we have\begin{align}
\overline{(U\cdot\bm{\nabla}_L F)}&=\overline{(U_{\neq}\cdot\bm{\nabla}_L F_{\neq})}+\overline{(U_0\cdot\bm{\nabla}_{y,z} F_0)}\nonumber\\
&=\overline{(U_{\neq}\cdot\bm{\nabla}_L F_{\neq})}+\overline{(\overline{U}\cdot\bm{\nabla}_{y,z}\overline{F})}+\overline{(\overline{U}_2\partial_y\widetilde{F})}+\overline{(\widetilde{U}_3\partial_z\overline{F})}.\label{simplezeromoded9}
\end{align}\hfill \qedsymbol 

 Besides, we also decompose a function $\overline{\psi}$ into the following form\begin{equation}
 \overline{\psi}:=\overline{\psi}_{\text{in}}(y,z)+\overline{\psi}_{\text{nl}}(y,z),\label{functiondecomposition999}
 \end{equation}where $\overline{\psi}_{\text{in}}$ corresponds to the linear part of the solution, while $\overline{\psi}_{\text{nl}}$ refers to the nonlinear part. This decomposition is well-defined via the Duhamel formula for $\psi\in\{U_1,U_2,U_3,\Psi\}$ (\ref{duhamel1}) and (\ref{duhamel2}). As an example, we denote\begin{align*}
\overline{U}_2=&\underbrace{\frac{1}{2}\partial_z\Delta^{-1}_{y,z}\left(e^{\mathcal{L}_+ t}J_+(0,y,z)+e^{\mathcal{L}_-t}J_-(0,y,z)\right)}_{\overline{U}_{2,\text{in}}(y,z)}\\
+&\underbrace{\frac{1}{2}\partial_z\Delta^{-1}_{y,z}\int_0^t e^{\mathcal{L}_+(t-s)}R_+(s,y,z)+e^{\mathcal{L}_-(t-s)}R_-(s,y,z)\ ds}_{\overline{U}_{2,\text{nl}}(y,z)}.
\end{align*}Here the first term corresponds to the linear part, while the latter corresponds to the nonlinear part. Such decomposition is used to capture different properties of the linear part and the nonlinear part to improve the estimate. 

\begin{lemma}\label{modedeco} For $J\in\{U_2,U_3,\Psi\}$, under the bootstrap assumptions of Theorem \ref{boot}, assume that $\varepsilon\lesssim \nu$, then\begin{align}
\left\|\langle\bm{\nabla}_{y,z}\rangle^s\overline{J}_{\text{in}}\right\|_{L^\infty}&\lesssim \frac{e^{-\nu t}}{\langle\alpha^\frac{1}{2}t\rangle^\frac{1}{3}}\varepsilon,\label{d1}\\
\left\|\overline{J}_{\text{nl}}\right\|_{L^\infty_tH^s}+\nu^\frac{1}{2}\left\|\bm{\nabla}_{y,z}\overline{J}_{\text{nl}}\right\|_{L^2_tH^s}&\lesssim\nu^{-\frac{2}{3}}\varepsilon^2.
\label{d2}
\end{align}\end{lemma}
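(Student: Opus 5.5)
The plan is to treat the two parts of Lemma \ref{modedeco} separately, using the decomposition \eqref{functiondecomposition999} via the Duhamel formulas \eqref{duhamel2}. Write $\overline{U}_2,\overline{U}_3,\overline{\Psi}$ in terms of $J_\pm$: $\overline{U}_2=\partial_z\Delta^{-1}_{y,z}\frac{J_++J_-}{2}$, $\overline{U}_3=-\partial_y\Delta^{-1}_{y,z}\frac{J_++J_-}{2}$, $\overline{\Psi}=|\bm{\nabla}_{y,z}|^{-1}\frac{J_+-J_-}{2}$. All three are zeroth-order Riesz-type multipliers, namely $|\partial_z|^a|\bm{\nabla}_{y,z}|^{-a}$ or $|\partial_y|^a|\bm{\nabla}_{y,z}|^{-a}$ with $a=1$, applied to $|\bm{\nabla}_{y,z}|^{-1}J_\pm$.

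For \eqref{d1}, I would write $e^{\mathcal{L}_\pm t}=e^{\nu t\Delta_{y,z}}e^{\mp\sqrt{\alpha}t\mathcal{R}_y}$. Since the functions have zero $z$-average, $l\neq0$, and the heat factor therefore gives $e^{-\nu t}$. The dispersive factor is handled by Corollary \ref{fdis}, applied with rescaled time $\sqrt{\alpha}t$, to $\langle\bm{\nabla}_{y,z}\rangle^s|\bm{\nabla}_{y,z}|^{-1}J_\pm(0)$. This yields the bound
\[
\frac{e^{-\nu t}}{\langle\alpha^{1/2}t\rangle^{1/3}}\bigl\|\langle\bm{\nabla}\rangle^{s}(\overline{U}_2,\overline{U}_3,\overline{\Psi})(0)\bigr\|_{W^{3,1}},
\]
which is controlled by $\|\cdot\|_{W^{s+4,1}}\subset\mathcal{Y}$, hence by $\varepsilon$. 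One must note that $|\bm{\nabla}_{y,z}|^{-1}J_\pm(0)$ is a Riesz combination of the initial data, and boundedness of these multipliers on $W^{3,1}$ is not automatic. I would sidestep this by inserting the Riesz factors into the oscillatory integral of Lemma \ref{firstdis}, exactly as in Corollary \ref{fdis}, so that the estimate falls directly on $(\overline{U}_2,\overline{U}_3,\overline{\Psi})(0)$.

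For \eqref{d2}, I would use an energy estimate rather than the Duhamel formula. The nonlinear parts $\overline{J}_{\text{nl}}$ solve the linear zero-mode system with zero data and forcing given by the nonlinear terms; equivalently, $|\bm{\nabla}_{y,z}|^{-1}J_{\pm,\text{nl}}$ solves $\partial_t-\mathcal{L}_\pm$ with forcing $|\bm{\nabla}_{y,z}|^{-1}R_\pm$. Since $\mathcal{R}_y$ is skew-adjoint, pairing in $H^s$ leaves only viscous dissipation:
\[
\tfrac12\tfrac{d}{dt}\|G\|_{H^s}^2+\nu\|\bm{\nabla}G\|_{H^s}^2=\langle|\bm{\nabla}|^{-1}R_\pm,G\rangle_{H^s}.
\]
The forcing $|\bm{\nabla}_{y,z}|^{-1}R_\pm$ is a divergence-type expression, a sum of $\partial_{y,z}$ applied to $|\bm{\nabla}|^{-1}\overline{(U\cdot\bm{\nabla}_L F)}$. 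I would therefore integrate by parts, moving one derivative onto $G$ and absorbing it with $\nu\|\bm{\nabla}G\|_{L^2_tH^s}^2$. This leaves the requirement
\[
\nu^{-1/2}\|\overline{(U\cdot\bm{\nabla}_LF)}\|_{L^2_tH^s}\lesssim\nu^{-2/3}\varepsilon^2 \quad\text{after multiplying by }\nu^{-1/2}.
\]
To establish it, I would expand the forcing via \eqref{simplezeromoded9} and treat each piece in turn.

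\textbf{The $(\neq,\neq)$ part.} This is the key step and the main obstacle. I would bound it by $\|\mathcal{Z}U_{\neq}\|_{L^\infty_tL^2}\|\bm{\nabla}_L\mathcal{Z}F_{\neq}\|_{L^2_tL^2}\lesssim\nu^{-1/2}\varepsilon^2$, which gives only $\nu^{-1}\varepsilon^2$, worse than the claimed $\nu^{-2/3}\varepsilon^2$. To recover $\nu^{-2/3}$, I would instead use the enhanced dissipation: $\|\mathcal{Z}F_{\neq}\|_{L^2_tL^2}\lesssim\nu^{-1/6}\varepsilon$ from the $M_3$ bootstrap term. I would also write $U_{\neq}\cdot\bm{\nabla}_LF_{\neq}=\bm{\nabla}_L\cdot(U_{\neq}F_{\neq})$; its zero mode is $\bm{\nabla}_{y,z}\cdot(U_{\neq}F_{\neq})_0$, so the derivative moves onto $G$. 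Dropping the $\nu^{-1/2}$ factor then leaves
\[
\nu^{-1/2}\|U_{\neq}\|_{L^2_tH^s}\|F_{\neq}\|_{L^\infty_tH^s}\lesssim\nu^{-1/2}\nu^{-1/6}\varepsilon^2=\nu^{-2/3}\varepsilon^2.
\]

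\textbf{The zero-mode parts.} The terms $\overline{U}\cdot\bm{\nabla}\overline{F}$, $\overline{U}_2\partial_y\widetilde{F}$ and $\widetilde{U}_3\partial_z\overline{F}$ are handled in the same way, using divergence-freeness of $(\overline{U}_2,\overline{U}_3)$. Note that $\overline{U}_1$ does not appear, since $\partial_x$ kills it. This gives
\[
\nu^{-1/2}\|(\overline{U},\widetilde{U}_3)\|_{L^2_t\cdots}
\]
with an $L^2_t$ norm, which is problematic. I would use that $\overline{F}$ has zero $z$-average, so that $\|\overline{F}\|_{H^s}\lesssim\|\partial_z\overline{F}\|_{H^s}$. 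This gives an $L^2_t$ bound $\nu^{-1/2}\varepsilon$, hence the contribution $\nu^{-1}\varepsilon^2$. To improve this, I would split $\overline{F}$ into its linear and nonlinear parts: on the linear part use \eqref{d1} together with the $L^2_t$ integrability of $e^{-\nu t}\langle\alpha^{1/2}t\rangle^{-1/3}$, and on the nonlinear part use a continuity argument, closed by the assumption $\varepsilon\lesssim\nu$. This decomposition is the step I expect to be delicate.
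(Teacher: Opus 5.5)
Your proposal is correct. For \eqref{d1} it matches the paper, which treats the bound as immediate from the linear dispersive estimates of Section \ref{dis1}: Corollary \ref{fdis} at time $\sqrt{\alpha}t$, with $e^{-\nu l^2 t}\le e^{-\nu t}$ for $l\neq 0$.

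For \eqref{d2} you take a genuinely different route. The paper uses the Duhamel formula \eqref{duhamel2} together with the spectral gap, namely $\|\overline{J}_{\text{nl}}(t)\|_{H^s}\lesssim\int_0^t e^{-\nu(t-s)}\||\bm{\nabla}_{y,z}|^{-1}R_\pm\|_{H^s}\,ds$. The forcing is therefore measured in $L^1_t$, and the paper proceeds term by term:
\begin{itemize}
\item In the $(\neq,\neq)$ term it keeps the derivative on $F_{\neq}$ and pays $\|e^{-\varpi\nu^{1/3}s}\|_{L^2_s}\|\bm{\nabla}_L\mathcal{Z}F_{\neq}\|_{L^2_tL^2}\lesssim\nu^{-1/6}\nu^{-1/2}\varepsilon$.
\item For $\overline{U}\cdot\bm{\nabla}_{y,z}\overline{F}$ it invokes the $W^{1,\infty}$ bound of Proposition \ref{nonlinear dispersive}.
\item Only for the terms containing $\widetilde{F}$ or $\widetilde{U}_3$ does it split into in/nl parts. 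It then absorbs $\nu^{-1}\varepsilon\|\overline{U}_{2,\text{nl}}\|_{L^\infty_tH^s}$ and $\nu^{-1/2}\varepsilon\|\partial_z\overline{F}_{\text{nl}}\|_{L^2_tH^s}$ using $\varepsilon\lesssim\nu$.
\end{itemize}

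You instead run an $H^s$ energy estimate on $|\bm{\nabla}_{y,z}|^{-1}J_{\pm,\text{nl}}$, using skew-adjointness of $\mathcal{R}_y$. You then use $\overline{U\cdot\bm{\nabla}_L F}=\bm{\nabla}_{y,z}\cdot\overline{(UF)}$ to move the derivative onto the unknown, so the forcing is measured in $\nu^{-1/2}L^2_t$. That is why you need the $M_3$ bound $\|\mathcal{Z}U_{\neq}\|_{L^2_tL^2}\lesssim\nu^{-1/6}\varepsilon$ instead of the time weight. It is also why you handle all zero--zero interactions by the in/nl split rather than by the dispersive proposition. The exponents agree, including the same $\alpha^{-1/6}$ factors the paper carries, which are harmless for $\alpha\gtrsim 1$.

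Your route has two advantages. It yields the dissipative term $\nu^{1/2}\|\bm{\nabla}_{y,z}\overline{J}_{\text{nl}}\|_{L^2_tH^s}$ directly from the energy identity; by contrast, \eqref{ob1}--\eqref{ob2} as written only turn an $L^\infty_tH^s$ bound into an undifferentiated $L^2_tH^s$ bound. It is also independent of Proposition \ref{nonlinear dispersive}. The paper's semigroup route does not need the divergence structure. It is also the same machinery reused for $\overline{U}_1$ in \eqref{explicitsoluU1} and Lemma \ref{inftyonu1}, where the lift-up coupling rules out a direct energy estimate.

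Two small fixes are needed.
\begin{itemize}
\item Your displayed requirement should read $\nu^{-1/2}\|\overline{(UF)}\|_{L^2_tH^s}\lesssim\nu^{-2/3}\varepsilon^2$, after using the divergence form, rather than involving $\overline{U\cdot\bm{\nabla}_L F}$.
\item Absorbing the $\nu^{-1}\varepsilon\,\|\overline{J}_{\text{nl}}\|$-type terms, including the nl$\cdot$nl product in $\overline{U}\,\overline{F}$, needs $\varepsilon\le c\nu$ with $c$ small. Alternatively, use your continuity argument, seeded by the a priori bound $\|\overline{J}_{\text{nl}}\|_{L^\infty_tH^s}+\nu^{1/2}\|\bm{\nabla}_{y,z}\overline{J}_{\text{nl}}\|_{L^2_tH^s}\lesssim\varepsilon$, which follows from the bootstrap assumptions and the linear energy bound for $\overline{J}_{\text{in}}$.
\end{itemize}
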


\noindent\textit{Proof.} The estimate (\ref{d1}) is trivial. For (\ref{d2}), notice that \begin{align}
&\left\|e^{\nu \Delta (t-s)}A\right\|_{L^2_s(0,t) H^s}^2\lesssim\int_0^t e^{-\nu(t-s)}\left\|e^{\frac{\nu}{2}(t-s)\Delta}A\right\|_{H^s}^2\ ds\lesssim\nu^{-1}\left\|e^{\frac{\nu}{2} (t-s)\Delta }A\right\|_{L^\infty_t H^s}^2\label{ob1}\\
&\left\|e^{\nu t\Delta }A\right\|_{L^2_t H^s}\lesssim\nu^{-\frac{1}{2}}\left\|e^{\frac{\nu}{2} t\Delta}A\right\|_{L^\infty_t H^s},\label{ob2}
\end{align}i.e. we only need to obtain a bound on $\left\|\overline{J}_{\text{nl}}\right\|_{L^\infty_t H^s}$. Besides, it suffices to prove the bound for $U_2$, as the bounds for $U_3$ and $\Psi$ can be derived similarly. This is equivalent to bounding the integral\begin{align*}
    &\left\|\partial_z\Delta^{-1}_{y,z}\int_0^t e^{\mathcal{L}_+(t-s)}R_+(s,y,z)+e^{\mathcal{L}_-(t-s)}R_-(s,y,z)\ ds\right\|_{H^s}\\\lesssim&\int_0^t\left\|e^{\nu\Delta (t-s)}|\bm{\nabla}_{y,z}|^{-1}R_+\right\|_{H^s}+\left\|e^{\nu\Delta (t-s)}|\bm{\nabla}_{y,z}|^{-1}R_-\right\|_{H^s}\ ds\\
    \lesssim&\int_0^t e^{-\nu (t-s)}\left\||\bm{\nabla}_{y,z}|^{-1}R_+\right\|_{H^s}+e^{-\nu (t-s)}\left\||\bm{\nabla}_{y,z}|^{-1}R_-\right\|_{H^s}\ ds
\end{align*}Using (\ref{simplezeromoded9}), we have\[
\left\|\overline{U\cdot\bm{\nabla}_LF}\right\|_{H^s}\leq\underbrace{\left\|\overline{U_{\neq}\cdot\bm{\nabla}_L F}\right\|_{H^s}}_{D_1}+\underbrace{\left\|\overline{\overline{U}\cdot\bm{\nabla}_{y,z}\overline{F}}\right\|_{H^s}}_{D_2}+\underbrace{\left\|\overline{\overline{U}_2\partial_y\widetilde{F}}\right\|_{H^s}}_{D_3}+\underbrace{\left\|\overline{\widetilde{U}_3\partial_z\overline{F}}\right\|_{H^s}}_{D_4},
\]for $F\in\{U_2,U_3,\Psi\}$. For each term, we have\begin{align*}
    D_1&\lesssim e^{-\varpi\nu^{\frac{1}{3}}t}||\mathcal{Z}U_{\neq}||_{L^2}||\bm{\nabla}\mathcal{Z}F_{\neq}||_{L^2},\\
    D_2&\lesssim\left\|\overline{(U_2,U_3)}\right\|_{H^s}\left\|\bm{\nabla}_{y,z}\overline{F}\right\|_{L^\infty}+\left\|\overline{(U_2,U_3)}\right\|_{L^\infty}\left\|\bm{\nabla}_{y,z}\overline{F}\right\|_{H^s}\\
    &\lesssim\left(\frac{e^{-\nu t}}{\langle\alpha^\frac{1}{2}t\rangle^\frac{1}{3}}\varepsilon+\alpha^{-\frac{1}{6}}\nu^{-\frac{2}{3}}\varepsilon^2\right)\sum_{F\in{\{U_2,U_3,\Psi\}}}\left\|\bm{\nabla}_{y,z}\overline{F}\right\|_{H^s},\\
    D_3&\lesssim\left\|\langle\bm{\nabla}_{y,z}\rangle^s\overline{U}_{2,\text{in}}\right\|_{L^\infty}\left\|\partial_y\widetilde{F}\right\|_{H^s}+\left\|\overline{U}_{2,\text{nl}}\right\|_{H^s}\left\|\partial_y\widetilde{F}\right\|_{H^s}\\
    &\lesssim\frac{e^{-\nu t}}{\langle\alpha^\frac{1}{2}t\rangle^\frac{1}{3}}\varepsilon\left\|\partial_y\widetilde{F}\right\|_{H^s}+\left\|\overline{U}_{2,\text{nl}}\right\|_{H^s}\left\|\partial_y\widetilde{F}\right\|_{H^s},\\
    D_4&\lesssim\left\|\widetilde{U}_3\right\|_{H^s}\left(\left\|\langle\bm{\nabla}_{y,z}\rangle^s\partial_z\overline{F}_{\text{in}}\right\|_{L^\infty}+\left\|\partial_z\overline{F}_{\text{nl}}\right\|_{H^s}\right)\\
    &\lesssim\left(\frac{e^{-\nu t}}{\langle\alpha^\frac{1}{2}t\rangle^\frac{1}{3}}\varepsilon+\left\|\partial_z\overline{F}_{\text{nl}}\right\|_{H^s}\right)\left\|\widetilde{U}_3\right\|_{H^s}.
\end{align*}We then see that\begin{align*}
    \int_0^tD_1&\lesssim\nu^{-\frac{2}{3}}\varepsilon^2,\\
    \int_0^t e^{-\nu (t-s)}D_2&\lesssim\sum_{F\in{\{U_2,U_3,\Psi\}}}\int_0^t \frac{e^{-\nu s}}{\langle\alpha^\frac{1}{2}s\rangle^\frac{1}{3}}\varepsilon\left\|\bm{\nabla}_{y,z}\overline{F}\right\|_{H^s}+\alpha^{-\frac{1}{6}}\nu^{-\frac{2}{3}}\varepsilon^2 e^{-\nu (t-s)}\left\|\bm{\nabla}_{y,z}\overline{F}\right\|_{H^s}\\
    &\lesssim\alpha^{-\frac{1}{6}}\nu^{-\frac{2}{3}}\varepsilon^2+\alpha^{-\frac{1}{6}}\nu^{-\frac{5}{3}}\varepsilon^3,\\
    \int_0^t e^{-\nu (t-s)}D_3&\lesssim \alpha^{-\frac{1}{6}}\nu^{-\frac{2}{3}}\varepsilon^2+\nu^{-1}\varepsilon\left\|\overline{U}_{2,\text{nl}}\right\|_{L^\infty_tH^s},\\
    \int_0^t e^{-\nu(t-s)} D_4&\lesssim\alpha^{-\frac{1}{6}}\nu^{-\frac{2}{3}}\varepsilon^2+\nu^{-\frac{1}{2}}\varepsilon\left\|\partial_z\overline{F}_{\text{nl}}\right\|_{L^2_tH^s}.
\end{align*}\hfill \qedsymbol

\subsubsection{\textbf{Energy estimates of the simple zero modes-I}}

In this subsection, we derive the energy estimate for $\overline{U}_2,\overline{U}_3$ and $\overline{\Psi}$. We defer the energy estimate for $\overline{U}_1$ to Section \ref{enzero2} as it requires the bounds derived in the next section \ref{double}.

\begin{lemma}\label{simplezeromodeof} Under the bootstrap assumptions of Theorem \ref{boot}, there holds the following energy estimate
\begin{align*}
&\sum_{F\in\{U_2,U_3,\Psi\}}\left\|\overline{F}\right\|_{L^\infty_tH^s}^2+\nu\sum_{F\in\{U_2,U_3,\Psi\}}\left\|\bm{\nabla}_{y,z}\overline{F}\right\|_{L^2_tH^s}^2\lesssim\varepsilon^2.\end{align*}
\end{lemma}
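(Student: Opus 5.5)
The plan is to run a standard $H^s$ energy estimate on the simple-zero-mode system \eqref{nonliearsimzero} for the triple $(\overline{U}_2,\overline{U}_3,\overline{\Psi})$. It helps to work in the diagonal variables $J_\pm=S\pm|\bm{\nabla}_{y,z}|\overline{\Psi}$, or equivalently with the energy
\[
\mathcal{E}=\tfrac12\big(\|\overline{U}_2\|_{H^s}^2+\|\overline{U}_3\|_{H^s}^2+\|\overline{\Psi}\|_{H^s}^2\big).
\]
The pressure term drops out by incompressibility, $\partial_y\overline{U}_2+\partial_z\overline{U}_3=0$. The wave coupling $\pm\sqrt{\alpha}$ between $\overline{U}_3$ and $\overline{\Psi}$ is antisymmetric and cancels exactly. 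The operator $\sqrt{\alpha}\mathcal{R}_y$ is skew-adjoint, so it gives no contribution either. Since $\overline{F}$ has zero $z$-average, $|l|\ge1$ and the Laplacian is coercive. We are therefore left with
\[
\frac{d}{dt}\mathcal{E}+\nu\sum_F\|\bm{\nabla}_{y,z}\overline{F}\|_{H^s}^2=-\sum_F\langle\langle\bm{\nabla}\rangle^s\overline{(U\cdot\bm{\nabla}_LF)},\langle\bm{\nabla}\rangle^s\overline{F}\rangle .
\]
Integrating in time, the initial term is bounded by $\varepsilon^2$, and it remains to show that the nonlinear contribution is $\lesssim\varepsilon^2$ once $\varepsilon\lesssim C_\alpha^{-1}\nu^{1+\sqrt2/3}$.

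I split the nonlinearity using \eqref{simplezeromoded9} and treat the four pieces in turn.

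\emph{The $(\neq,\neq)$ piece.} This I bound by $\|\overline{F}\|_{L^\infty_tH^s}\int_0^t\|U_{\neq}\|_{H^s}\|\bm{\nabla}_LF_{\neq}\|_{H^s}$. The bootstrap hypotheses of Theorem \ref{boot} provide the enhanced dissipation weight $e^{\varpi\nu^{1/3}t}$, and Lemma \ref{somebounds} together with $m\gtrsim1$ lets me compare $U_{\neq}$ with $(A,B,\Xi)$. Using $\|\mathcal{Z}U_\neq\|_{L^2_tL^2}\lesssim\nu^{-1/6}\varepsilon$ and $\|\bm{\nabla}_L\mathcal{Z}F_\neq\|_{L^2_tL^2}\lesssim\nu^{-1/2}\varepsilon$, this piece is $\lesssim\nu^{-2/3}\varepsilon^3$.

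\emph{The $(\overline{U}\cdot\bm{\nabla}\overline{F})$ piece.} Here I use the commutator structure: the top-order part $\langle\overline{U}\cdot\bm{\nabla}\langle\bm{\nabla}\rangle^s\overline{F},\langle\bm{\nabla}\rangle^s\overline{F}\rangle$ vanishes because $\overline{U}_{2,3}$ is divergence free in $(y,z)$. The Kato--Ponce remainder is then bounded by $\|\bm{\nabla}\overline{U}\|_{L^\infty}\|\overline{F}\|_{H^s}^2+\|\overline{U}\|_{H^s}\|\bm{\nabla}\overline{F}\|_{L^\infty}\|\overline{F}\|_{H^s}$.

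To control this remainder, I would plug in the $W^{1,\infty}$ dispersive bound of Proposition \ref{nonlinear dispersive}. Its linear part $e^{-\nu t}\langle\alpha^{1/2}t\rangle^{-1/3}\varepsilon$ alone is not time-integrable with a good constant. So instead I use the Lemma \ref{modedeco} splitting $\overline{F}=\overline{F}_{\rm in}+\overline{F}_{\rm nl}$.
\begin{itemize}
\item For the linear part, I estimate the time integral by Cauchy--Schwarz, pairing $\|\langle\bm{\nabla}\rangle^s\overline{F}_{\rm in}\|_{L^\infty}$ in $L^2_t$ (which costs $\alpha^{-1/6}\nu^{-1/6}\varepsilon$) against $\|\bm{\nabla}\overline{F}\|_{H^s}$ in $L^2_t$ (which costs $\nu^{-1/2}\varepsilon$), after moving a derivative onto the $H^s$ factor. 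This gives $\alpha^{-1/6}\nu^{-2/3}\varepsilon^3$.
\item For the nonlinear part, I use the $H^s$ bound $\nu^{-2/3}\varepsilon^2$ from \eqref{d2} together with $\nu^{1/2}\|\bm{\nabla}\overline{F}\|_{L^2_tH^s}\lesssim\varepsilon$. Even after paying a factor $\nu^{-1}$ from the $L^2_t$ norms, this is at most $\nu^{-5/3}\varepsilon^4$.
\end{itemize}

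\emph{The $\overline{U}_2\partial_y\widetilde{F}$ and $\widetilde{U}_3\partial_z\overline{F}$ pieces.} For $\overline{U}_2\partial_y\widetilde{F}$, I again split $\overline{U}_2$ into its linear and nonlinear parts and use the bootstrap for the double zero modes, $\nu\|\partial_y\widetilde{F}\|^2_{L^2_tH^s}\le200\varepsilon^2$. For $\widetilde{U}_3\partial_z\overline{F}$, the term $\widetilde{U}_3\partial_z$ is a transport term with a divergence-free field (since $\widetilde{U}_3=\widetilde{U}_3(y)$), so its top order cancels. The commutator then places $\partial_y\widetilde{U}_3$ against $\|\overline{F}\|_{H^s}$ and $\|\bm{\nabla}\overline{F}\|_{H^s}$, and both of these are handled with the $L^2_t$ dissipation bounds.

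Collecting everything gives
\[
\text{RHS}\lesssim\varepsilon^2+\nu^{-2/3}\varepsilon^3+\alpha^{-1/6}\nu^{-2/3}\varepsilon^3+\nu^{-1}\varepsilon^3 ,
\]
and the smallness assumption on $\varepsilon$ closes the estimate. The main obstacle I expect is the $(0,0)$ self-interaction $\overline{U}\cdot\bm{\nabla}\overline{F}$ and the $\overline{U}_2\partial_y\widetilde{F}$ piece: there the factors decay only like $\nu^{-1/2}$ in $L^2_t$, so the dispersive in/nl splitting of Lemma \ref{modedeco} has to be used carefully to avoid losing a full $\nu^{-1}$ more than the threshold allows.
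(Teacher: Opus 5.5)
Your proposal is correct and follows the paper's proof: the same $H^s$ energy identity (the pressure and the $\sqrt{\alpha}$ coupling cancel), the same splitting \eqref{simplezeromoded9}, and bootstrap bounds that leave at worst $\nu^{-1}\varepsilon^3$. The only difference is that the paper does not use your commutator cancellations or the in/nl dispersive splitting. It applies Poincar\'e, $\|\overline{F}\|_{H^s}\lesssim\|\bm{\nabla}_{y,z}\overline{F}\|_{H^s}$, together with the dissipation $\nu\|\bm{\nabla}_{y,z}\overline{F}\|_{L^2_tH^s}^2$ to absorb the derivative in each product, which already gives $\nu^{-1}\varepsilon^3$ for every piece, the same order as your worst term.
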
\vspace{5pt}

\noindent\textsc{Proof}. Since the linear part of the system (\ref{nonliearsimzero}) is already in symmetrized form, standard energy estimates give\begin{align*}
&\sum_{F\in\{U_2,U_3,\Psi\}}\left\|\overline{F}\right\|_{L^\infty_tH^s}^2+\nu\sum_{F\in\{U_2,U_3,\Psi\}}\left\|\bm{\nabla}_{y,z}\overline{F}\right\|_{L^2_tH^s}^2\\&\leq\varepsilon^2-\int_0^\infty\left\langle\overline{(U\cdot\bm{\nabla}_LU_2)},\overline{U}_{2}\right\rangle_{H^s}+\left\langle\overline{(U\cdot\bm{\nabla}_LU_3)} \overline{U}_{3}\right\rangle_{H^s}+\left\langle\overline{(U\cdot\bm{\nabla}_L\Psi)} \overline{\Psi}\right\rangle_{H^s}.
\end{align*}Consider the following integral\[
I_F=\int_0^\infty\left\langle\overline{(U\cdot\bm{\nabla}_LF)},\overline{F}\right\rangle_{H^s},\qquad F\in\left\{U_2,U_3,\Psi\right\}.
\]Due to the decomposition (\ref{simplezeromoded9}), we have\begin{align*}
|I_F|&\lesssim\int_0^\infty\left\|\overline{F}\right\|_{H^s}\left\|\overline{U\cdot\bm{\nabla}_LF}\right\|_{H^s}\\
    &\lesssim\int_0^\infty\left\|\bm{\nabla}_{y,z}\overline{F}\right\|_{H^s}\left(\left\|\overline{U_{\neq}\cdot\bm{\nabla}_L F_{\neq}}\right\|_{H^s}+\left\|\overline{\overline{U}\cdot\bm{\nabla}_{y,z}\overline{F}}\right\|_{H^s}+\left\|\overline{\overline{U}_{2}\partial_y\widetilde{F}}\right\|_{H^s}+\left\|\overline{\widetilde{U}_3\partial_z\overline{F}}\right\|_{H^s}\right)\\
    &\lesssim \nu^{-1}\varepsilon^3.
\end{align*}\hfill \qedsymbol

\subsubsection{\textbf{Double zero modes}}\label{double}
Due to the incompressibility, the double zero mode system reads\begin{flalign*}
\left\{
\begin{aligned}
&\partial_t \widetilde{U}_1=\nu\partial^2_y\widetilde{U}_{1}-\partial_y\widetilde{(U_2U_1)},\\
&\partial_t \widetilde{U}_{3}=\sqrt{\alpha}\widetilde{\Psi}+\nu\partial^2_y \widetilde{U}_{3}-\partial_y\widetilde{(U_2U_3)},\\
&\partial_t\widetilde{\Psi}=-\sqrt{\alpha}\widetilde{U}_{3}+\nu\partial^2_y\widetilde{\Psi}-\partial_y\widetilde{(U_2\Psi)},\\
&\partial_y \widetilde{U}_{2}=0.
\end{aligned}
\right.
\end{flalign*}The last line is the continuity equation for the double zero modes. Since $\widetilde{U}_2\in L^2_y$, the only possibility is that $\widetilde{U}_2=0$ for all $t\geq0$.

\begin{lemma}\label{doublezero} 
Under the bootstrap assumptions of Theorem \ref{boot}, assuming that $\varepsilon\lesssim\nu$ and $\Upsilon\leq 1/3$, then the following estimates hold, \begin{align*}
&\left\|\widetilde{U}_1\right\|_{L^\infty_tH^s}^2+\nu\left\|\partial_y\widetilde{U}_1\right\|_{L^2_tH^s}^2 \lesssim\varepsilon^2,\\
  \sum_{W\in\{U_3,\Psi\}}&\left\|\widetilde{W}\right\|_{L^\infty_tH^s}^2+\nu\sum_{W\in\{U_3,\Psi\}}\left\|\partial_y\widetilde{W}\right\|_{L^2_tH^s}^2 \lesssim\varepsilon^2.
\end{align*}
\end{lemma}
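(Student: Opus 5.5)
We will run standard $H^s$ energy estimates on the one-dimensional double zero mode system. The linear parts are harmless. For $\widetilde U_1$ the operator is just $\nu\partial_y^2$. For $(\widetilde U_3,\widetilde\Psi)$ the coupling $\pm\sqrt{\alpha}$ is skew-symmetric, so it cancels in $\frac{d}{dt}\big(\|\widetilde U_3\|_{H^s}^2+\|\widetilde\Psi\|_{H^s}^2\big)$. Pairing each equation with the unknown in $H^s$ and integrating in time, we obtain for $W\in\{U_1,U_3,\Psi\}$
\[
\|\widetilde W\|_{L^\infty_tH^s}^2+\nu\|\partial_y\widetilde W\|_{L^2_tH^s}^2\lesssim\varepsilon^2+\int_0^\infty\left|\left\langle \widetilde{(U_2W)},\partial_y\widetilde W\right\rangle_{H^s}\right|\,dt .
\]
Here we integrated by parts once, moving the $\partial_y$ from the divergence-form nonlinearity onto $\widetilde W$; this is possible precisely because $\widetilde U_2=0$ makes the nonlinearity conservative. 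It therefore suffices to show that the nonlinear integral is $\lesssim\nu^{-1}\varepsilon^3$ (or similar), which is $\ll\varepsilon^2$ under $\varepsilon\lesssim\nu$.

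Next we decompose $\widetilde{(U_2W)}$. Since $\widetilde U_2=0$ and the $x$- and $z$-averages kill mixed products, we get
\[
\widetilde{(U_2W)}=\widetilde{(U_{2,\neq}W_{\neq})}+\widetilde{(\overline U_2\,\overline W)}.
\]
\textbf{The $(\neq,\neq)$ part.} We use $\|U_{2,\neq}\|_{H^s}+\|W_{\neq}\|_{H^s}\lesssim e^{-\varpi\nu^{1/3}t}\|\mathcal Z(\cdot)_{\neq}\|_{L^2}$, together with Lemma \ref{somebounds} (which controls $U_{1,2,3,\neq}$ and $\Psi_{\neq}$ by $A,B,\Xi$) and $m\gtrsim1$. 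We then put one factor in $L^\infty_t$ and the other in $L^2_t$ via $\nu^{1/6}\|\mathcal ZF_{\neq}\|_{L^2_tL^2}\lesssim\varepsilon$, and $\partial_y\widetilde W$ in $L^2_t$ with weight $\nu^{-1/2}$. This gives a bound $\nu^{-1/2-1/6}\varepsilon^3=\nu^{-2/3}\varepsilon^3$.

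\textbf{The $(\overline{\ },\overline{\ })$ part.} Here $\overline U_2\in L^\infty_tH^s$ with size $\varepsilon$ and $\nabla\overline U_2\in L^2_tH^s$ with size $\nu^{-1/2}\varepsilon$. For $W\in\{U_3,\Psi\}$ the same holds for $\overline W$. For $W=U_1$ the bootstrap only gives $\|\overline U_1\|_{L^\infty_tH^s}\lesssim\nu^{-\Upsilon}\varepsilon$, with worse dissipation in $y$. The naive estimate
\[
\|\overline U_2\overline W\|_{L^1_tH^s}\|\partial_y\widetilde W\|_{L^\infty_t}
\]
is unavailable, so we instead use the Poincaré inequality in $z$: $\overline U_2$ and $\overline W$ have zero $z$-average, so
\[
\|\overline F\|_{H^s}\lesssim\|\partial_z\overline F\|_{H^s}.
\]
This yields $\|\overline U_2\overline W\|_{L^2_tH^s}\lesssim\|\overline U_2\|_{L^\infty_tH^s}\|\partial_z\overline W\|_{L^2_tH^s}$. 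Pairing with $\nu^{-1/2}\varepsilon$ for $\partial_y\widetilde W$ in $L^2_t$, we get $\nu^{-1}\varepsilon^3$ for $W\in\{U_3,\Psi\}$. For $W=U_1$ we get $\nu^{-1/2}\cdot\nu^{-1/2}\varepsilon^3$, since the bootstrap controls $\nu\|\partial_z\overline U_1\|_{L^2_tH^s}^2\lesssim\nu^{-2\Upsilon}\varepsilon^2$; this is $\nu^{-1-\Upsilon}\varepsilon^3$. It is absorbed provided $\varepsilon\lesssim\nu^{1+\Upsilon}$, which is compatible with the threshold $\nu^{1+\sqrt2/3}$ since $\Upsilon\le 1/3<\sqrt2/3$.

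\textbf{Main obstacle.} The delicate term is $\widetilde{(\overline U_2\,\overline U_1)}$, because of the possible $\nu^{-\Upsilon}$ growth of $\overline U_1$. If the $\partial_z$-dissipation route above proves too lossy, we will fall back on the dispersive decay of $\overline U_2$ in Proposition \ref{nonlinear dispersive}. Concretely, we would split $\overline U_2=\overline U_{2,\mathrm{in}}+\overline U_{2,\mathrm{nl}}$ as in Lemma \ref{modedeco}. The linear part is handled in $L^\infty$ with the time-integrable factor $e^{-\nu t}\langle\alpha^{1/2}t\rangle^{-1/3}$, contributing $\nu^{-1/2}\cdot\nu^{-1/2}$ after Cauchy--Schwarz. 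The nonlinear part is of size $\nu^{-2/3}\varepsilon^2$, which costs an extra $\varepsilon$ that is harmless when $\varepsilon\lesssim\nu$. Either way, the total is bounded by $\varepsilon^2$ plus $o(\varepsilon^2)$ contributions, and this closes the bootstrap bounds.
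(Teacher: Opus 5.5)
Your setup matches the paper: the energy estimate with $\partial_y$ moved onto $\widetilde W$, the splitting $\widetilde{(U_2W)}=\widetilde{(U_{2,\neq}W_{\neq})}+\widetilde{(\overline U_2\,\overline W)}$, and the $\nu^{-2/3}\varepsilon^3$ bound for the $(\neq,\neq)$ part. For $W\in\{U_3,\Psi\}$, your Poincar\'e-in-$z$ argument gives $\nu^{-1}\varepsilon^3$. That is a valid and simpler substitute for the paper's dispersive route. It closes only with a small implicit constant in $\varepsilon\lesssim\nu$, whereas the paper's $\nu^{-2/3}\varepsilon^3+\alpha^{-1/6}\nu^{-5/3}\varepsilon^4$ has room to spare.

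The gap is in the $\widetilde U_1$ estimate. Your main route yields $\nu^{-1-\Upsilon}\varepsilon^3$, which is $\lesssim\varepsilon^2$ only if $\varepsilon\lesssim\nu^{1+\Upsilon}$. This is strictly stronger than the lemma's hypothesis $\varepsilon\lesssim\nu$, so you prove a weaker statement. It would still suffice at the final threshold $\nu^{1+\sqrt2/3}$, but it is not the lemma, and the assumption $\Upsilon\le 1/3$ plays no role in it. Your fallback does not repair this as written. If $\overline U_{2,\mathrm{in}}$ is credited only with the factor $e^{-\nu t}$, so that Cauchy--Schwarz gives $\nu^{-1/2}\cdot\nu^{-1/2}$, you recover exactly the same $\nu^{-1-\Upsilon}\varepsilon^3$.

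The missing ingredient, which is what the paper uses, is the $L^2_t$ integrability of the dispersive factor: $\|e^{-\nu t}\langle\alpha^{1/2}t\rangle^{-1/3}\|_{L^2_t}\lesssim\alpha^{-1/6}\nu^{-1/6}$, not $\nu^{-1/2}$.
\begin{itemize}
\item Linear part. Use the bound $\|\langle\nabla_{y,z}\rangle^s\overline U_{2,\mathrm{in}}\|_{L^\infty}\lesssim e^{-\nu t}\langle\alpha^{1/2}t\rangle^{-1/3}\varepsilon$ from Lemma \ref{modedeco}. Put $\partial_y\widetilde U_1$ in $L^2_tH^s$ (size $\nu^{-1/2}\varepsilon$) and $\overline U_1$ in $L^\infty_tH^s$ (size $\nu^{-\Upsilon}\varepsilon$). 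The $\overline U_{2,\mathrm{in}}\overline U_1$ term then costs $\alpha^{-1/6}\nu^{-2/3-\Upsilon}\varepsilon^3$.
\item Nonlinear part. Use $\|\overline U_{2,\mathrm{nl}}\|_{L^\infty_tH^s}\lesssim\nu^{-2/3}\varepsilon^2$ and $\|\overline U_1\|_{L^2_tH^s}\lesssim\|\partial_z\overline U_1\|_{L^2_tH^s}\lesssim\nu^{-1/2-\Upsilon}\varepsilon$. The $\overline U_{2,\mathrm{nl}}\overline U_1$ term costs $\nu^{-5/3-\Upsilon}\varepsilon^4$.
\end{itemize}
Both contributions are $\lesssim\varepsilon^2$ under $\varepsilon\lesssim\nu$ (with $\alpha\gtrsim1$ as in Theorem \ref{nonlinear}) precisely because $\Upsilon\le 1/3$. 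Indeed, this gives $\nu^{2/3+\Upsilon}\ge\nu$ and $\nu^{5/6+\Upsilon/2}\ge\nu$, which is the role of the hypothesis $\Upsilon\le1/3$ in the lemma.
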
\vspace{3pt}

\noindent\textit{Proof.} For $\widetilde{U}_3$ and $\widetilde{\Psi}$, a standard energy estimate yields \[
\sum_{W\in\{U_3,\Psi\}}\left\|\widetilde{W}\right\|_{L^\infty_tH^s}^2+\nu\sum_{W\in\{U_3,\Psi\}}\left\|\partial_y\widetilde{W}\right\|_{L^2_tH^s}^2\lesssim\varepsilon^2+\left|\left\langle\int_0^\infty\widetilde{(U_2U_3)}\partial_y\widetilde{U}_3+\widetilde{(U_2\Psi)}\partial_y\widetilde{\Psi}\ ds\right\rangle_{H^s}\right|.
\]Notice that\begin{align*}
\widetilde{(U_2F)}&=\widetilde{(U_{2,\neq}F_{\neq})}+\widetilde{(\overline{U}_2\overline{F})},\qquad F\in\{U_1,U_3,\Psi\}.
\end{align*}Thus, using the fact\[
\left\|\overline{U}\right\|_{H^s}\lesssim\left\|\bm{\nabla}_{y,z}\overline{U}\right\|_{H^s},
\]we have the following bound\begin{align*}
\left\|\int^\infty_0\partial_y\widetilde{(U_2F)}\widetilde{F}\ ds\right\|_{H^s}&\leq\int_0^\infty\left\|\partial_y\widetilde{F}\right\|_{H^s}\left\|\widetilde{U_2F}\right\|_{H^s}\ ds\\
&\lesssim\sum_{E\in\{U_2,F\}}\int_0^\infty\left\|\partial_y\widetilde{F}\right\|_{H^s}\left(e^{-\varpi\nu^{\frac{1}{3}}s}||\mathcal{Z}U_{2,\neq}||_{L^2}||\mathcal{Z}F_{\neq}||_{L^2}+\left\|\overline{E}\right\|_{L^\infty}\left\|\overline{E}\right\|_{H^s}\right) ds\\
&\lesssim\nu^{-\frac{1}{6}}\varepsilon^2\left\|\partial_y\widetilde{F}\right\|_{L^2_tH^s}+\sum_{E\in\{U_2,F\}}\int_0^\infty\left\|\partial_y\widetilde{F}\right\|_{H^s}\left(\frac{e^{-\nu s}}{\langle\alpha^\frac{1}{2}s\rangle^\frac{1}{3}}\varepsilon+\alpha^{-\frac{1}{6}}\nu^{-\frac{2}{3}}\varepsilon^2\right)\left\|\overline{E}\right\|_{H^s}\\
&\lesssim\nu^{-\frac{2}{3}}\varepsilon^3+\alpha^{-\frac{1}{6}}\nu^{-\frac{5}{3}}\varepsilon^4.
\end{align*}
For $\widetilde{U}_1$, the estimate can be derived similarly. We have\begin{align*}
\left\|\widetilde{U}_1\right\|_{L^\infty_tH^s}^2+\nu\left\|\partial_y\widetilde{U}_1\right\|_{L^2_tH^s}^2&\lesssim\varepsilon^2+\nu^{-\frac{2}{3}}\varepsilon^3+\int_0^\infty\left\|\partial_y\widetilde{U}_1\right\|_{H^s}\left(\left\|\langle\bm{\nabla}_{y,z}\rangle^s\overline{U}_{2,\text{in}}\right\|_{L^\infty}+\left\|\overline{U}_{2,\text{nl}}\right\|_{H^s}\right)\left\|\overline{U}_1\right\|_{H^s}
    \lesssim\varepsilon^2\\
    &\lesssim\varepsilon^2+\alpha^{-\frac{1}{6}}\nu^{-\frac{2}{3}-\Upsilon}\varepsilon^3+\nu^{-\frac{5}{3}-\Upsilon}\varepsilon^4.
\end{align*}where we have used the fact $\varepsilon\lesssim\nu$ and assumed $\Upsilon\leq1/3$.\hfill \qedsymbol

\subsubsection{\texorpdfstring{\textbf{A bound on $\left\|\overline{U}_1\right\|_{L^\infty}$}}{A bound on U1}}\label{linfty}

Prior to the energy estimate of $\overline{U}_1$, we establish the following $L^\infty$ bound on $\overline{U}_1$, which can be used to improve the threshold.

\begin{lemma}\label{inftyonu1} Assume that $\varepsilon\lesssim\nu^{\frac{4}{3}}$. Then the following estimate holds\[
\left\|\overline{U}_1\right\|_{L^\infty}\lesssim\varepsilon.
\]
\end{lemma}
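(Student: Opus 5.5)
We will bound $\overline{U}_1$ in $L^\infty$ term by term from the explicit Duhamel representation \eqref{explicitsoluU1}. The plan is to show that the linear part is $O(\varepsilon)$, using the cancellation assumptions built into $\|\cdot\|_{\mathcal Y}$, and that every nonlinear contribution is $O(\nu^{-4/3}\varepsilon^2)$. The hypothesis $\varepsilon\lesssim\nu^{4/3}$ then turns the latter into $O(\varepsilon)$.

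\emph{Linear part.} For the first term, $\|e^{\mathcal L_1 t}\overline U_1(0)\|_{L^\infty}\lesssim\|\overline U_1(0)\|_{H^{2}}\lesssim\varepsilon$. The next two terms are $\alpha^{-1/2}\partial_z\Delta_{y,z}^{-1}e^{\mathcal L_1t}\mathcal R_y^{-1}(e^{\mp\sqrt\alpha t\mathcal R_y}-I_d)J_\mp(0)$. Since $\mathcal R_y^{-1}=|\bm\nabla_{y,z}|\partial_y^{-1}$ and $J_\pm(0)=S(0)\pm|\bm\nabla_{y,z}|\overline\Psi(0)$, these are exactly $\mathcal L_{\mathrm{Bou}}$ applied to a combination of $(\overline u_2,\overline\theta/\alpha^{1/2})(0)$ of order zero. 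The zero $y$-average assumptions on $u_2(0),\theta(0)$ in Theorem \ref{nonlinear} let us apply the $L^\infty$ bound of Lemma \ref{suppofl2}. This controls these terms by $\|\langle y\rangle^{3/2+}\langle\partial_z\rangle^{3/2+}(\cdot)\|_{L^2}$ plus Sobolev norms, both of which are dominated by $\|\cdot\|_{\mathcal Y}\le\varepsilon$ since $s\ge4$. This gives the linear contribution $\lesssim\varepsilon$.

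\emph{Nonlinear parts.} For $\int_0^te^{\mathcal L_1(t-s)}R_1\,ds$ we will use the Sobolev embedding $H^{1+}\hookrightarrow L^\infty$ on $\mathbb R\times\mathbb T$ and the heat decay $e^{-\nu(t-s)}$ of the simple zero mode. This gives a bound by $\int_0^t e^{-\nu(t-s)}\|R_1\|_{H^{2}}ds$. We then split $R_1$ as in \eqref{simplezeromoded9}:
\begin{itemize}
\item the $(\neq,\neq)$ part gives $\nu^{-1/6}\cdot\nu^{-1/2}\varepsilon^2$ by the bootstrap hypotheses;
\item the $\overline U\cdot\nabla\overline U_1$ part, where we use Proposition \ref{nonlinear dispersive} together with the $L^2_t$ bound $\nu^{1/2}\|\nabla\overline U_1\|_{L^2_tH^s}\lesssim\nu^{-\Upsilon}\varepsilon$;
\item the $\widetilde U_3\partial_z\overline U_1$ part, controlled by Lemma \ref{doublezero}.
\end{itemize}
Together these give at most $\nu^{-1}\varepsilon^2+\nu^{-1-\Upsilon}\varepsilon^2\cdot(\text{small})$.

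For the last two brackets of \eqref{explicitsoluU1}, each bracket is the difference of a dispersive propagator and the heat propagator acting on $\alpha^{-1/2}\mathcal R_y^{-1}$. We rewrite it as a time integral $\int_0^t\mathcal L_{\mathrm{Bou}}$-type operator applied to $|\nabla|^{-1}R_\pm(s)$ over the interval $(s,t)$. We then apply Lemma \ref{suppofl3}, which gives the weak lift-up bound $\nu^{-1/2}\alpha^{-1/4}e^{-c\nu(t-s)}$ in $L^2\to L^\infty$. This gives a contribution
\[
\nu^{-1/2}\alpha^{-1/4}\int_0^te^{-c\nu(t-s)}\||\bm\nabla_{y,z}|^{-1}R_\pm(s)\|_{L^2}\,ds\lesssim \nu^{-1}\alpha^{-1/4}\||\bm\nabla|^{-1}R_\pm\|_{L^2_tL^2}\lesssim\nu^{-3/2}\varepsilon^2 .
\]
Here $\||\nabla|^{-1}R_\pm\|_{L^2_tH^s}\lesssim\nu^{-1/2}\varepsilon^2$ is the bound already obtained in the proof of Proposition \ref{nonlinear dispersive}.

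\emph{Main obstacle.} This last term is the main obstacle: $\nu^{-3/2}\varepsilon^2$ is too large for $\varepsilon\lesssim\nu^{4/3}$. To improve it, we will first try to extract the $(\neq,\neq)$ part of $R_\pm$, which carries the enhanced-dissipation factor $e^{-\varpi\nu^{1/3}s}$. For that part we can use the $L^1_t$ bound $\nu^{-1/3}\varepsilon^2$ in place of the $L^2_t$ bound, giving $\nu^{-1/2}\cdot\nu^{-1/3}\varepsilon^2$. For the zero-mode quadratic parts, we will use the dispersive decay of Proposition \ref{nonlinear dispersive} for one factor, which yields $\nu^{-1/2}(\varepsilon\,\alpha^{-1/6}\nu^{-1/3}+\nu^{-2/3}\varepsilon)\varepsilon$. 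If the weak lift-up operator cannot be avoided, we will instead use that $\partial_z\mathcal R_y^{-1}$ is applied to a divergence structure $\partial_y(\cdot)$ coming from the $\overline{\partial_y(U\cdot\nabla U_3)}$ terms, which cancels $\partial_y^{-1}$. Altogether this should bring all nonlinear contributions to $\lesssim\nu^{-4/3}\varepsilon^2\lesssim\varepsilon$.
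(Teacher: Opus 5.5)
Your proposal is correct, and once the crude $L^2_t$ estimate is replaced by the refinement you describe, it is essentially the paper's proof. The paper also works from the Duhamel formula \eqref{explicitsoluU1} and bounds the $R_1$ term by $\nu^{-1-\Upsilon}\varepsilon^2$ using heat decay and the bootstrap norms; note this step uses $\Upsilon\le\frac13$. It then treats the $R_\pm$ brackets as time integrals of the lift-up operator and applies Lemma \ref{suppofl3}. The $(\neq,\neq)$ part is controlled in $L^1_t$ by enhanced dissipation ($\nu^{-1/3}\varepsilon^2$), and the zero-mode products by the dispersive bound of Proposition \ref{nonlinear dispersive}. That dispersive contribution is $\alpha^{-1/6}\nu^{-2/3}\varepsilon^2$ rather than your $\alpha^{-1/6}\nu^{-1/3}$, which still gives $\nu^{-1/2}\alpha^{-1/4}(\nu^{-1/3}+\alpha^{-1/6}\nu^{-2/3})\varepsilon^2\lesssim\varepsilon$. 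Your divergence-structure fallback is unnecessary, and it would not cover the $\partial_z$ and $|\bm{\nabla}_{y,z}|$ parts of $R_\pm$ anyway.
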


\noindent\textit{Proof}. We use the Duhamel formula (\ref{explicitsoluU1}), the $L^\infty$ bound on the initial condition is clearly of order $\varepsilon$. We therefore estimate the following two integrals\begin{align*}
    Y_1:=&\left\|\int_0^t e^{\mathcal{L}_1(t-s)}R_1(s,y,z)\ ds\right\|_{L^\infty},\\
    Y_2:=&\frac{1}{\sqrt{\alpha}}\left\|\int_0^t\partial_z\partial^{-1}_ye^{\nu (t-s)\Delta}\left(e^{\pm\sqrt{\alpha}(t-s)\mathcal{R}_y}-I_d\right)|\bm{\nabla}_{y,z}|^{-1}R_{\pm}\right\|_{L^\infty}.
\end{align*}The first one can be simply bounded by\begin{align*}
    Y_1\leq\int_0^t\left\|e^{\nu (t-s)\Delta}\overline{(U\cdot\bm{\nabla}_LU_1)}\right\|_{H^s}\ ds,
\end{align*}this also appears later, in (\ref{jiuni1}), so that\[
Y_1\lesssim\nu^{-1-\Upsilon}\varepsilon^2\lesssim\varepsilon.
\]Next, we consider $Y_2$, by lemma \ref{suppofl3}, we have\begin{align*}
    Y_2\lesssim \nu^{-\frac{1}{2}}\alpha^{-\frac{1}{4}}\int_0^t e^{-c\nu (t-s)}\left\||\bm{\nabla}_{y,z}|^{-1}R_{\pm}\right\|_{L^2}.
\end{align*}Recall that\[
 R_{+}=-\overline{\partial_z(U\cdot\bm{\nabla}_LU_2)}+\overline{\partial_y(U\cdot\bm{\nabla}_LU_3)}-|\bm{\nabla}_{y,z}|\overline{(U\cdot\bm{\nabla}_L\Psi)}.
\]Due to the mode decomposition formula (\ref{simplezeromoded9}), we therefore consider the following integral for simplicity\begin{align*}
\int_0^t e^{-c\nu (t-s)}\left\|\overline{(U\cdot\bm{\nabla}_L F)}\right\|_{L^2}&\lesssim\int_0^te^{-c\nu(t-s)}\left(\left\|\overline{U_{\neq}\cdot\bm{\nabla}_L F}\right\|_{L^2}+\left\|\overline{\overline{U}\cdot\bm{\nabla}_{y,z}\overline{F}}\right\|_{L^2}+\left\|\overline{\overline{U}_2\partial_y\widetilde{F}}\right\|_{L^2}+\left\|\overline{\widetilde{U}_3\partial_z\overline{F}}\right\|_{L^2}\right)\\
&\lesssim Y_{21}+Y_{22}+Y_{23}+Y_{24},
\end{align*}where $F\in\{U_2,U_3,\Psi\}$. We derive a bound for each term.\begin{align*}
    Y_{21}&\lesssim\int_0^te^{-\varpi\nu^\frac{1}{3}s}\left\|\mathcal{Z}U_{\neq}\right\|_{L^2}||\mathcal{Z}F_{\neq}||_{L^2} \\
    &\lesssim\nu^{-\frac{1}{3}}\varepsilon^2.\\
    Y_{22}&\lesssim\sum_{F\{U_2,U_3,\Psi\}}\int_0^te^{-c\nu (t-s)}\left(\left\|\overline{U}\right\|_{L^{\infty}}\left\|\overline{F}\right\|_{H^s}+\left\|\overline{U}\right\|_{H^s}\left\|\overline{F}\right\|_{W^{1,\infty}}\right)\\
    &\lesssim\sum_{F\{U_2,U_3,\Psi\}}\int_0^te^{-c\nu (t-s)}\left(\frac{e^{-\nu s}}{\langle\alpha^\frac{1}{2}s\rangle^\frac{1}{3}}\varepsilon+\alpha^{-\frac{1}{6}}\nu^{-\frac{2}{3}}\varepsilon^2\right)\left\|\overline{F}\right\|_{H^s}\\
    &\lesssim\alpha^{-\frac{1}{6}}\nu^{-\frac{2}{3}}\varepsilon^2.\\
    Y_{23}&\lesssim\int_0^t e^{-c\nu(t-s)}\left(\left\|\overline{U}_{2,\text{in}}\right\|_{L^\infty}+\left\|\overline{U}_{2,\text{nl}}\right\|_{H^s}\right)\left\|\widetilde{F}\right\|_{H^s}\\
    &\lesssim\int_0^t e^{-c\nu(t-s)}\left(\frac{e^{-\nu s}}{\langle\alpha^\frac{1}{2}s\rangle^\frac{1}{3}}\varepsilon+\nu^{-\frac{2}{3}}\varepsilon^2\right)\left\|\widetilde{F}\right\|_{H^s}\\
    &\lesssim\alpha^{-\frac{1}{6}}\nu^{-\frac{2}{3}}\varepsilon^2.\\
    Y_{24}&\lesssim\int_0^te^{-c\nu (t-s)}\left(\left\|\overline{F}_{\text{in}}\right\|_{W^{1,\infty}}+\left\|\overline{F}_{\text{nl}}\right\|_{H^s}\right)\left\|\widetilde{U}_3\right\|_{H^s}\\
    &\lesssim\alpha^{-\frac{1}{6}}\nu^{-\frac{2}{3}}\varepsilon^2.
\end{align*}\hfill \qedsymbol

\begin{remark} Thanks to the weak lift-up effect, we are able to derive a smaller bound on $\left\|\overline{U}_1\right\|_{L^\infty}$. This is the key estimate to improve the estimate of $J_3$ in the non-zero-mode analysis (\ref{largebound}). From the proof, we can actually derive a more precise bound on $\left\|\overline{U}_1\right\|_{L^\infty}$ as follows (although Lemma \ref{inftyonu1} is sufficient.)\[
\left\|\overline{U}_1\right\|_{L^\infty}\lesssim e^{-\nu t}\varepsilon+\left(\nu^{-1-\Upsilon}+\alpha^{-\frac{1}{4}}\nu^{-\frac{5}{6}}+\alpha^{-\frac{5}{12}}\nu^{-\frac{7}{6}}\right)\varepsilon^2.
\]\end{remark}

\subsubsection{\textbf{Energy estimates of the simple zero modes-II}}\label{enzero2}
Finally, using the Duhamel formula (\ref{explicitsoluU1}), we establish the energy estimate for $\overline{U}_1$.

\begin{lemma}\label{simplezeromodeofu1} Under the bootstrap assumptions of Theorem \ref{boot}, if $\varepsilon\lesssim\nu$, then
\begin{align}
\left\|\overline{U}_1\right\|_{L^\infty_tH^s}^2+&\nu\left\|\bm{\nabla}_{y,z}\overline{U}_1\right\|_{L^2_tH^s}^2\lesssim\nu^{-\Upsilon}\varepsilon^2+\nu^{-1-2\Upsilon}\varepsilon
^3+\nu^{-2\Upsilon}\varepsilon^2\left(\alpha^{-\frac{1}{6}}\nu^{-\frac{5}{3}+\Upsilon}\varepsilon+\nu^{-\frac{4}{3}+\Upsilon}\varepsilon\right)\nonumber\label{fin}
\end{align}
\end{lemma}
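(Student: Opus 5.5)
I would prove Lemma \ref{simplezeromodeofu1} by combining a weighted energy estimate for $\overline{U}_1$ with the Duhamel representation (\ref{explicitsoluU1}). A direct energy estimate fails because of the linear forcing $-\overline{U}_2=-\partial_z\Delta^{-1}_{y,z}S$. So I would split
\[
\overline{U}_1=\overline{U}_{1,\text{in}}+\overline{U}_{1,\text{nl}},
\]
following (\ref{functiondecomposition999}). Here $\overline{U}_{1,\text{in}}$ collects the first line of (\ref{explicitsoluU1}): the heat flow of $\overline{U}_1(0)$ and the two $\mathcal{L}_{\text{Bou}}$-type terms acting on $J_\pm(0)$. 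The part $\overline{U}_{1,\text{nl}}$ collects the $R_1$ and $R_\pm$ integrals.

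\textbf{Linear part.} I would apply the $L^2$ estimate of Lemma \ref{suppofl2} with $\langle\bm{\nabla}\rangle^s$ inserted. It applies because the hypothesis $\int u_2(0)=\int\theta(0)=0$ and the finiteness of the weighted norm $\|\langle y\rangle^{\frac32+}\partial_z\langle\bm{\nabla}\rangle^s(\cdot)\|_{L^2}$ in $\|\cdot\|_{\mathcal Y}$ give exactly the zero-$y$-average and weight conditions on $J_\pm(0)=S(0)\pm|\bm{\nabla}_{y,z}|\overline{\Psi}(0)$. This yields $\|\overline{U}_{1,\text{in}}\|_{H^s}\lesssim e^{-\nu t}\varepsilon$. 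The dissipation bound then follows from (\ref{ob2}), giving $\nu\|\bm{\nabla}\overline{U}_{1,\text{in}}\|^2_{L^2_tH^s}\lesssim\varepsilon^2$. So the linear part contributes at most $\varepsilon^2\le\nu^{-\Upsilon}\varepsilon^2$, and there is no linear lift-up.

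\textbf{Nonlinear part.} For $\overline{U}_{1,\text{nl}}$ I would do a direct $H^s$ energy estimate on the equation
\[
\partial_t\overline{U}_{1,\text{nl}}-\nu\Delta\overline{U}_{1,\text{nl}}=-\overline{U}_{2,\text{nl}}+R_1+(\text{corrections}),
\]
and write the $U\cdot\bm{\nabla}U_1$ contributions as in (\ref{simplezeromoded9}), treating the pieces separately.
\begin{itemize}
\item The forcing $\overline{U}_{2,\text{nl}}$: Lemma \ref{modedeco} gives $\|\overline{U}_{2,\text{nl}}\|_{L^\infty_tH^s}\lesssim\nu^{-\frac23}\varepsilon^2$. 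Since it enters linearly, its contribution to $\|\overline{U}_1\|_{L^\infty_tH^s}$ is handled by the $R_\pm$-terms of (\ref{explicitsoluU1}) together with Lemma \ref{suppofl3}'s $L^2$ analogue (the $\nu^{-1}$ $L^2$ bound). This produces the term $\nu^{-2\Upsilon}\varepsilon^2\cdot\nu^{-\frac43+\Upsilon}\varepsilon$.
\item The $(\neq,\neq)$ interactions: these are bounded by enhanced dissipation, $e^{-\varpi\nu^{1/3}t}\|\mathcal Z U_{\neq}\|\,\|\nabla_L\mathcal Z U_{\neq}\|$, and give $\nu^{-1}\cdot$ terms after pairing with $\nu^{-1/2}\|\nabla\overline{U}_1\|_{L^2_tH^s}$.
\item The transport $\overline{U}\cdot\bm{\nabla}\overline{U}_1$: this is handled by the dispersive bound of Proposition \ref{nonlinear dispersive}, producing $\alpha^{-\frac16}\nu^{-\frac53}\varepsilon^3$ and the bootstrap factor $\nu^{-2\Upsilon}$.
\item The term $\overline{U}_2\partial_y\widetilde{U}_1$: this uses Lemma \ref{doublezero}.
\item The term $\widetilde{U}_3\partial_z\overline{U}_1$: this is absorbed using $\nu\|\partial_z\overline{U}_1\|^2$ from the bootstrap.
\end{itemize}

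\textbf{Assembling.} Summing the contributions, each cubic term carries the bootstrap size $\nu^{-\Upsilon}\varepsilon$ for $\overline{U}_1$, which yields the claimed right-hand side. The main obstacle is the $\partial_y\overline{U}_1$ dissipation, which the bootstrap only controls with the weaker weight $\nu^{1-2\Upsilon}$. This forces the $\nu^{-1-2\Upsilon}\varepsilon^3$ term. I would handle it by integrating by parts so that $\partial_y$ lands on $\overline{U}_2$ or on non-zero modes, where better $\nu$-bounds are available.
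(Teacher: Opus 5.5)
\textbf{There is a genuine gap in the step that carries the lemma: the nonlinear lift-up through the forcing $-\overline{U}_{2,\text{nl}}$.}

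First, an $H^s$ energy estimate on $\partial_t\overline{U}_{1,\text{nl}}-\nu\Delta\overline{U}_{1,\text{nl}}=-\overline{U}_{2,\text{nl}}+R_1$ produces the pairing $\int_0^t\langle\overline{U}_{2,\text{nl}},\overline{U}_{1,\text{nl}}\rangle_{H^s}\,ds$. That pairing cannot be ``handled by the $R_\pm$-terms'' of (\ref{explicitsoluU1}). You would have to split off the $R_\pm$-part of $\overline{U}_{1,\text{nl}}$ and treat it purely through the Duhamel representation.

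Second, the inputs you quote do not give the power you claim. The $L^2$ lift-up bound acts on $|\nabla_{y,z}|^{-1}R_\pm(\tau)$, not on $\overline{U}_{2,\text{nl}}$. If you combine $\|\overline{U}_{2,\text{nl}}\|_{L^\infty_tH^s}\lesssim\nu^{-2/3}\varepsilon^2$ from Lemma \ref{modedeco} with a $\nu^{-1}$ loss (equivalently $\int_0^te^{-\nu(t-s)}ds\lesssim\nu^{-1}$), you get a contribution $\nu^{-5/3}\varepsilon^2$ to $\|\overline{U}_1\|_{H^s}$. That gives $\nu^{-5/3-\Upsilon}\varepsilon^3$ with no factor $\alpha^{-1/6}$, not $\nu^{-4/3-\Upsilon}\varepsilon^3$. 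This is not dominated by the stated right-hand side when $\alpha\gg1$. The $\alpha^{-1/6}$ is exactly what lets strong stratification remove the $\nu^{-\Upsilon}$ growth of $\overline{U}_1$.

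\textbf{The missing idea, which is how the paper argues.} Use no energy identity at all: pair $\overline{U}_1$ in $H^s$ with every term of (\ref{explicitsoluU1}).
\begin{itemize}
\item In the $R_\pm$-terms, the $\nu^{-1}$ comes from $\frac{|l|}{|\eta|}\,|e^{i\sqrt{\alpha}T\eta/\sqrt{\eta^2+l^2}}-1|\,e^{-\nu T(\eta^2+l^2)}\lesssim\sqrt{\alpha}\,\nu^{-1}(\eta^2+l^2)^{-1}e^{-c\nu T}$, with $T=t-\tau$. So the $\nu^{-1}$ is paid for with a $\Delta^{-1}_{y,z}$.
\item Writing $\overline{U\cdot\nabla_LF}=\nabla_{y,z}\cdot\overline{(U_{\neq}F_{\neq})}+\dots$ in divergence form lets that $\Delta^{-1}_{y,z}$ absorb the derivative.
\item The $(\neq,\neq)$ part is then $\int_0^te^{-\varpi\nu^{1/3}s}\|\mathcal{Z}U_{\neq}\|_{L^2}\|\mathcal{Z}U_{\neq}\|_{L^2}\,ds\lesssim\nu^{-1/3}\varepsilon^2$. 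No $\nabla_L$ and no $\nu^{-1/2}$ appear; this is the source of $\nu^{-4/3-\Upsilon}\varepsilon^3$.
\item The zero--zero interactions $\overline{U}\cdot\nabla\overline{F}$, $\overline{U}_2\partial_y\widetilde{F}$, $\widetilde{U}_3\overline{F}$ with $F\in\{U_2,U_3,\Psi\}$ go through the dispersive bound and the in/nl splitting of $\overline{U}_2$ and $\overline{F}$. They give $\alpha^{-1/6}\nu^{-2/3}\varepsilon^2$, hence the $\alpha^{-1/6}\nu^{-5/3-\Upsilon}\varepsilon^3$ term. So that term comes from these $R_\pm$ zero-mode interactions, not from the transport of $\overline{U}_1$ as you state.
\item The $R_1$-term is just $\nu^{-1/2}\|\overline{U}_1\|_{L^\infty_tH^s}\|\overline{(U\cdot\nabla_LU_1)}\|_{L^2_tH^s}$. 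Its worst piece, $\overline{U}_3\partial_z\overline{U}_1$, uses $\|\partial_z\overline{U}_1\|_{L^2_tH^s}\lesssim\nu^{-1/2-\Upsilon}\varepsilon$ and produces $\nu^{-1-2\Upsilon}\varepsilon^3$.
\end{itemize}

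\textbf{You have also misread the bootstrap.} The bound $\nu^{1-2\Upsilon}\|\partial_y\overline{U}_1\|^2_{L^2_tH^s}\le200\nu^{-2\Upsilon}\varepsilon^2$ gives $\|\partial_y\overline{U}_1\|_{L^2_tH^s}\lesssim\nu^{-1/2}\varepsilon$. That is the \emph{better} of the two dissipation bounds, so the integration by parts you propose for $\partial_y\overline{U}_1$ addresses a non-issue. Your linear part and the $(\neq,\neq)$ piece of $R_1$ are fine. The in/nl split of $\overline{U}_1$ is harmless, though the paper notes it gains nothing.
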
\vspace{5pt}

\noindent\textsc{Proof}. We now prove the energy estimate for $\overline{U}_1$. By (\ref{ob1}) and (\ref{ob2}), it suffices to obtain a bound for $\left\|\overline{U}_1\right\|_{L^\infty_tH^s}$.\begin{align*}
 \left\|\overline{U}_1\right\|_{H^s}^2=\langle \overline{U}_1,\overline{U}_1\rangle_{H^s}=I_1+I_2+I_3+I_4+I_5,
\end{align*}where we have used the explicit Duhamel formula (\ref{explicitsoluU1}). Here $I_i$ represents the $H^s$ inner product of$\overline{U}_1$ and each term in (\ref{explicitsoluU1}). Since $I_1$ and $I_2$ are the initial conditions, they can be directly controlled\[
\sup_t (I_1+I_2)\lesssim \left(1+\alpha^{-\frac{1}{2}}\right)\varepsilon\left\|\overline{U}_1\right\|_{L^\infty_tH^s}\lesssim\nu^{-\Upsilon}\varepsilon^2.
\]For $I_3$, we have\begin{align}
|I_3|&=\left|\left\langle  \overline{U}_1,\int_0^t e^{\nu(t-s)\Delta}\overline{(U\cdot\bm{\nabla}_L U_1)}\ ds\right\rangle_{H^s}\right|\nonumber\\
&\lesssim \left\|\overline{U}_1\right\|_{H^s}\int_0^t\left\|e^{\nu (t-s)\Delta}\overline{(U\cdot\bm{\nabla}_LU_1)}\right\|_{H^s}\ ds\label{jiuni1}\\
&\lesssim \left\|\overline{U}_1\right\|_{H^s}\int_0^te^{-\frac{\nu}{2}(t-s)}\left\|e^{\frac{\nu}{2} (t-s)\Delta}\overline{(U\cdot\bm{\nabla}_LU_1)}\right\|_{H^s}\ ds\nonumber\\
&\lesssim \left\|\overline{U}_1\right\|_{H^s}\left\|e^{-\frac{\nu}{2}(t-s)}\right\|_{L^2_s(0,t)}\left\|\overline{(U\cdot\bm{\nabla}_LU_1)}\right\|_{L^2_tH^s}\nonumber\\
&\lesssim\nu^{-\frac{1}{2}} \left\|\overline{U}_1\right\|_{L^\infty_tH^s}\left(\left\|U_{\neq}\cdot\bm{\nabla}_L U_{1,\neq}\right\|_{L^2_tH^s}+\left\|\overline{U}\cdot\bm{\nabla}_{y,z} \overline{U}_1\right\|_{L^2_tH^s}+\left\|\overline{U}_2\partial_y\widetilde{U}_1\right\|_{L^2_tH^s}+\left\|\widetilde{U}_3\partial_z\overline{U}_1\right\|_{L^2_t H^s}\right)\nonumber\\
&\lesssim\nu^{-1-2\Upsilon}\varepsilon^3. \nonumber
\end{align}Since $I_4$ and $I_5$ can be treated similarly, we only consider $I_4$. Notice that\begin{align*}
    |I_4|&=\frac{1}{\sqrt{\alpha}}\left|\left\langle  \overline{U}_1,\int_0^t \partial_z\Delta^{-1}_{y,z}\mathcal{R}_y^{-1}e^{\nu(t-s)\Delta}\left(e^{-\sqrt{\alpha}\mathcal{R}_y(t-s)}-I_d\right)R_+\ ds\right\rangle_{H^s}\right|\\
    &\lesssim\frac{1}{\sqrt{\alpha}}\left\|\overline{U}_1\right\|_{H^s}\int_0^t\left\|e^{\nu(t-s)\Delta}\partial_z\partial_y^{-1}\left(e^{-\sqrt{\alpha}\mathcal{R}_y(t-s)}-I_d\right)|\bm{\nabla}_{y,z}|^{-1}R_+\right\|_{H^s}\ ds\\
    &\lesssim\nu^{-1-\Upsilon}\varepsilon\int_0^t e^{-\frac{\nu}{2} (t-s)}\left\|\Delta^{-1}_{y,z}|\bm{\nabla}_{y,z}|^{-1}R_+\right\|_{H^s}.
\end{align*}Note that\[
\overline{U\cdot\bm{\nabla}_L F}=\bm{\nabla}_{y,z}\cdot\overline{(U_{\neq}F_{\neq})}+\bm{\nabla}_{y,z}\cdot\overline{\left[(\overline{U}_2,\overline{U}_3)^T,\overline{F}\right]}+\overline{(\overline{U}_2\partial_y\widetilde{F})}+\partial_z\overline{(\widetilde{U}_3\overline{F})}.
\]Thus, the regularity gain $\Delta^{-1}_{y,z}$ can cancel some derivatives in the nonlinear interactions. In particular, for $F\in\{U_2,U_3,\Psi\}$, we have\begin{align*}
&\int_0^t e^{-\frac{\nu}{2}(t-s)}\left\|\Delta^{-1}_{y,z}\overline{(U\cdot\bm{\nabla}_L F)}\right\|_{H^s}\ ds\\
\lesssim&\int_0^t e^{-\frac{\nu}{2}(t-s)}\left(\left\|U_{\neq} F_{\neq}\right\|_{H^s}+\left\|(\overline{U}_2,\overline{U}_3)^T,\overline{F}\right\|_{H^s}+\left\|\overline{U}_2\partial_y\widetilde{F}\right\|_{H^s}+\left\|\widetilde{U}_3\overline{F}\right\|_{H^s}\right)\\
\lesssim& P_1+P_2+P_3+P_4.
\end{align*}We derive a bound for each term.\begin{align*}
    P_1&\lesssim\int_0^t e^{-\varpi\nu^\frac{1}{3}s}||\mathcal{Z}U_{\neq}||_{L^2}||\mathcal{Z}U_{3,\neq}||_{L^2}\\
    &\lesssim\nu^{-\frac{1}{3}}\varepsilon^2.\\
    P_2&\lesssim\sum_{F\in\{U_2,U_3,\Psi\}}\int_0^t e^{-\frac{\nu}{2}(t-s)}\left\|\overline{F}\right\|_{L^\infty}\left\|\overline{F}\right\|_{H^s}\\
    &\lesssim\sum_{F\in\{U_2,U_3,\Psi\}}\int_0^t e^{-\frac{\nu}{2}(t-s)} \left(\frac{e^{-\nu s}}{\langle\alpha^\frac{1}{2}s\rangle^\frac{1}{3}}\varepsilon+\alpha^{-\frac{1}{6}}\nu^{-\frac{2}{3}}\varepsilon^2\right)\left\|\overline{F}\right\|_{H^s}\\
    &\lesssim\alpha^{-\frac{1}{6}}\nu^{-\frac{2}{3}}\varepsilon^2+\alpha^{-\frac{1}{6}}\nu^{-\frac{5}{3}}\varepsilon^3.\\
    P_3&\lesssim \int_0^t e^{-\frac{\nu}{2}(t-s)}\left\|\overline{U}_2\partial_y\widetilde{F}\right\|_{H^s}\\
    &\lesssim \int_0^t  \left\|\langle\bm{\nabla}_{y,z}\rangle^s\overline{U}_{2,\text{in}}\right\|_{L^\infty}\left\|\partial_y\widetilde{F}\right\|_{H^s}+ e^{-\frac{\nu}{2}(t-s)}\left\|\overline{U}_{2,\text{nl}}\right\|_{H^s}\left\|\partial_y\widetilde{F}\right\|_{H^s}\\
    &\lesssim\alpha^{-\frac{1}{6}}\nu^{-\frac{2}{3}}\varepsilon^2+\nu^{-\frac{5}{3}}\varepsilon^3.\\
    P_4&\lesssim\int_0^t e^{-\frac{\nu}{2}(t-s)}\left\|\widetilde{F}\right\|_{H^s}\left(\left\|\langle\bm{\nabla}_{y,z}\rangle^s\overline{F}_{\text{in}}\right\|_{L^\infty}+\left\|\overline{F}_{\text{nl}}\right\|_{H^s}\right)\\
    &\lesssim \alpha^{-\frac{1}{6}}\nu^{-\frac{2}{3}}\varepsilon^2+\nu^{-\frac{5}{3}}\varepsilon^3.
\end{align*}\hfill \qedsymbol\vspace{3pt}

\noindent\textbf{Remark}. Decomposing $\overline{U}_1=\overline{U}_{1,\text{in}}+\overline{U}_{1,\text{nl}}$ will not improve the estimates. Moreover, a similar argument yields\[
\left\|\partial_y\overline{U}_1\right\|_{L^2_tH^s}\lesssim\nu^{-\frac{1}{2}}\varepsilon.
\]Gathering all the estimates, we obtain the threshold in the form of the following inequality,\[
\varepsilon\lesssim\min\left\{\nu^{1+\frac{\sqrt{2}}{3}},\nu^{\frac{2+\sqrt{2}}{3}+\Upsilon},\nu^{\frac{5}{3}-\Upsilon}\alpha^{\frac{1}{6}},\nu^{\frac{4}{3}-\Upsilon}\right\}.
\]

\appendix
\section{Appendix}

\subsection*{Bound on the cross-term coefficient}

Here, we prove Lemma \ref{upperboundofs}.

\begin{lemma}\label{appendix1}
For $\lambda\geq0$, define $\zeta=\sup_{t,k\neq0,\eta,l}\left|\frac{s}{2}\right|$. Then\[
\zeta=\frac{1}{2\sqrt{2}}+\frac{\lambda}{\sqrt{2}}.
\]
\end{lemma}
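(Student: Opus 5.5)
The plan is to compute $s/2$ explicitly from \eqref{crossfunction}, bound the result term by term using only $z\geq 2k^2p$ and $z\geq \alpha f^2$, and then show that the bound is attained in a limiting regime. Recall $f=k^2+(\eta-kt)^2$, $p=f+l^2$, $z=2k^2p+\alpha f^2$. Since $p'=f'$ and $z'=2k^2f'+2\alpha ff'$, the identity defining $s$ becomes
\[
\frac{s}{2}=\sqrt{\frac{pf}{z}}\left[\frac{\lambda}{2}\frac{f'l^2}{pf}+\frac{f'}{4}\left(\frac1p+\frac1f-\frac{2k^2+2\alpha f}{z}\right)\right].
\]
The one algebraic step is the simplification
\[
(f+p)z-pf(2k^2+2\alpha f)=2k^2p^2-\alpha l^2f^2 ,
\]
which gives
\[
\frac{s}{2}=\underbrace{\frac{\lambda f'l^2}{2\sqrt{pfz}}}_{S_1}+\underbrace{\frac{f'\,(2k^2p^2-\alpha l^2f^2)}{4\sqrt{pf}\,z^{3/2}}}_{S_2}.
\]

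\textbf{Upper bound.} Change variables to $x=(\eta-kt)/k$, so that $f=k^2(1+x^2)$ and $|f'|=2k^2|x|$.
\begin{itemize}
\item For $S_1$, use $z\geq 2k^2p$. This gives $|S_1|\leq \frac{\lambda}{\sqrt2}\,\frac{l^2}{p}\,\frac{|x|}{\sqrt{1+x^2}}\leq\frac{\lambda}{\sqrt2}$.
\item For $S_2$, first use $|2k^2p^2-\alpha l^2f^2|\leq 2k^2p^2+\alpha f^2p=pz$, since $l^2\leq p$. Hence $|S_2|\leq \frac{|f'|\sqrt p}{4\sqrt f\sqrt z}$. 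Then use $z\geq 2k^2p$ again to get $|S_2|\leq \frac{2k^2|x|}{4\sqrt2\,k^2\sqrt{1+x^2}}\leq\frac{1}{2\sqrt2}$.
\end{itemize}
Therefore $\zeta\leq \frac{1}{2\sqrt2}+\frac{\lambda}{\sqrt2}$, uniformly in $\alpha$, $t$, $k$, $\eta$, $l$.

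\textbf{Sharpness.} I need a regime where both inequalities become equalities and $S_1$, $S_2$ carry the same sign. Fix $k$, $\alpha$ and $x$, and send $l\to\infty$. Then $l^2/p\to1$, and $\alpha f^2/(k^2p)\to0$, so $z\sim 2k^2p$ and $\alpha l^2f^2\ll 2k^2p^2$.
\begin{itemize}
\item In this limit $S_1\to \frac{\lambda}{\sqrt2}\,\frac{\operatorname{sgn}(f')|x|}{\sqrt{1+x^2}}$.
\item Likewise $S_2\to\frac{1}{2\sqrt2}\,\frac{\operatorname{sgn}(f')|x|}{\sqrt{1+x^2}}$, with the same sign as $S_1$.
\end{itemize}
Now let $|x|\to\infty$. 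Only the order of limits needs care: one must keep $\alpha x^2\ll l^2$, for instance by taking $l=x^2$. Under that choice $|s/2|\to \frac{1}{2\sqrt2}+\frac{\lambda}{\sqrt2}$, so the supremum equals this value.

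The main obstacle is this sharpness step, namely arranging the joint limit so that $z\approx 2k^2p$ remains valid while $|x|/\sqrt{1+x^2}\to1$. The upper bound itself is routine once the simplification of the $z'$-term is in hand.
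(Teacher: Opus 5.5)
Your computation of $s/2$, the upper bound (which uses only $l^2\le p$ and $z\ge 2k^2p$), and the sharpness mechanism ($l\to\infty$ first, then $|\eta-kt|\to\infty$) follow the paper's proof. The paper bounds the combined numerator, $|2\lambda l^2z+2k^2p^2-\alpha f^2l^2|\le(1+2\lambda)zp$, instead of splitting $s/2$ into $S_1$ and $S_2$, but that difference is immaterial.

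One step in your sharpness paragraph is false, though: the joint limit along $l=x^2$ does not reach the supremum. For $z\sim 2k^2p$ you need $\alpha f^2\ll k^2p$, i.e. $l^2\gg\alpha k^2(1+x^2)^2$. Your condition $l^2\gg\alpha x^2$ is too weak. With $l=x^2$ you get $p\sim x^4$ and $\alpha f^2\sim\alpha k^4x^4$, so $z\sim k^2x^4(2+\alpha k^2)$. Likewise $\alpha l^2f^2\sim\alpha k^4x^8$ is of the same order as $2k^2p^2\sim 2k^2x^8$. Along this curve the limit is
\[
\left|\frac{\lambda}{\sqrt{2+\alpha k^2}}+\frac{2-\alpha k^2}{2(2+\alpha k^2)^{3/2}}\right|\le\frac{\lambda+\frac12}{\sqrt{2+\alpha k^2}}<\frac{1}{2\sqrt2}+\frac{\lambda}{\sqrt2},
\]
which falls strictly short for every $\alpha>0$ and $k\neq0$.

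Taking $l=x^3$ would repair this, but you do not need a joint limit at all. Since $\zeta$ is a supremum, your first computation already gives, for each fixed $x$,
\[
\zeta\ge\frac{|x|}{\sqrt{1+x^2}}\Bigl(\frac{1}{2\sqrt2}+\frac{\lambda}{\sqrt2}\Bigr).
\]
Letting $|x|\to\infty$ then finishes the proof. This iterated limit is exactly the paper's argument, so simply drop the ``order of limits'' sentence.
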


\noindent\textit{Proof.} We can write $\frac{s}{2}$ as\[
\frac{s}{2}=\frac{f'}{4}\frac{2\lambda l^2z+2k^2p^2-\alpha f^2l^2}{z^{\frac{3}{2}}f^\frac{1}{2}p^\frac{1}{2}}.
\]We first claim that\[
\left|\frac{f'}{4}\frac{2\lambda l^2z+2k^2p^2-\alpha f^2l^2}{z^{\frac{3}{2}}f^\frac{1}{2}p^\frac{1}{2}}\right|\leq\frac{1+2\lambda}{2\sqrt{2}}.
\]To prove the claim, observe that\begin{align*}
    \left|\frac{f'}{4}\frac{2\lambda l^2z+2k^2p^2-\alpha f^2l^2}{z^{\frac{3}{2}}f^\frac{1}{2}p^\frac{1}{2}}\right|&\leq\frac{|k|f^\frac{1}{2}}{2}\frac{|2\lambda l^2z+2k^2p^2-\alpha f^2l^2|}{\sqrt{2k^2p}zf^\frac{1}{2}p^\frac{1}{2}}=\frac{1}{2\sqrt{2}}\cdot\frac{|2\lambda l^2z+2k^2p^2-\alpha f^2l^2|}{zp}.
\end{align*}
It then remains to show that\[
\frac{|2\lambda l^2z+2k^2p^2-\alpha f^2l^2|}{zp}\leq 1+2\lambda.
\]This is equivalent to showing that\[
\left(2\lambda l^2z+2k^2p^2-\alpha f^2l^2-(1+2\lambda)zp\right)\left(2\lambda l^2z+2k^2p^2-\alpha f^2l^2+(1+2\lambda)zp\right)\leq0.
\]For the second term, notice that\[
zp-\alpha f^2l^2\geq \alpha f^2p-\alpha f^2l^2\geq0.
\]Thus the second term is positive. For the first term, we need to show that\[
2\lambda l^2z+2k^2p^2\leq \alpha f^2l^2+zp+2\lambda zp,
\]which is immediate. It remains to prove that this bound is sharp. Letting $l^2\to\infty$ gives\[
\left|\frac{f'}{4}\frac{2\lambda l^2z+2k^2p^2-\alpha f^2l^2}{z^{\frac{3}{2}}f^\frac{1}{2}p^\frac{1}{2}}\right|\rightarrow\left|\frac{f'}{4}\frac{2\lambda l^2\cdot2k^2l^2+2k^2l^4}{(2k^2l^2)^\frac{3}{2}f^\frac{1}{2}|l|}\right|=\frac{|\eta-kt|}{2}\cdot\frac{4\lambda+2}{2\sqrt{2}f^\frac{1}{2}},
\]sending $|\eta-kt|\rightarrow\infty$, $\zeta$ attains the upper bound.\hfill \qedsymbol

\subsection*{Simpler proof of linear inviscid damping}\label{simpleproof}

Indeed, the linear inviscid damping and enhanced dissipation of the nonzero modes can be derived in a much simpler way, though the method cannot be extended to study the nonlinear problem. Here we provide a simple approach to obtain the decay estimates in the inviscid case. Recall the $(U_3,\Theta)$ system in the Fourier variables,\begin{flalign}
\left\{
\begin{aligned}
&\partial_t\widehat{\Theta}=-\alpha\widehat{U}_3,\\
    &\partial_t \widehat{U}_3=-2\frac{kl}{pf}\left(\frac{ik\widehat{\mathcal{K}}-kl\widehat{\Theta}}{\alpha}+l(\eta-kt)\widehat{U}_3\right)+\frac{f}{p}\widehat{\Theta}.
\end{aligned}
\right.\label{eq:B7}
\end{flalign}\begin{theorem}\label{isisi}The following estimates hold.\begin{align*}
    &||u_{1,\neq}||_{L^2}+||u_{2,\neq}||_{L^2}\lesssim \frac{C_\alpha}{\langle t\rangle}\left[\left\|\frac{\theta_{\neq}(0)}{\alpha^\frac{1}{2}}\right\|_{H^5}+||u_{\neq}(0)||_{H^5}\right],\\
   & \left\|\frac{\theta_{\neq}}{\alpha^\frac{1}{2}}\right\|_{L^2}+||u_{3,\neq}||_{L^2}\lesssim C_\alpha \left[\left\|\frac{\theta_{\neq}(0)}{\alpha^\frac{1}{2}}\right\|_{H^3}+||u_{\neq}(0)||_{H^3}\right].
\end{align*}
\end{theorem}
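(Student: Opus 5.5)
We work directly with the Fourier system \eqref{eq:B7} in the moving frame, where $\widehat{\mathcal K}$ is constant in time. We use the symmetrizer \eqref{symmetrizer} without any cross term (that is, $s=0$) and without the vortex-stretching multiplier $m$. Concretely, set $\widehat\Theta_{\neq}=\sqrt{\alpha}\,a A$ and $\widehat U_{3,\neq}=bB$, where
\[
\frac{b}{a}=\frac{1}{\sqrt\alpha}\sqrt{\frac{z}{pf}},\qquad z=2k^2p+\alpha f^2 .
\]
We take the simplest normalization $a=1$, so that $A=\widehat\Theta/\sqrt\alpha$ and $b=\alpha^{-1}\sqrt{z/(pf)}$.

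For the energy $\mathbb E_1=\tfrac12(|A|^2+|B|^2)$, the three slowly decaying off-diagonal coefficients cancel exactly, as in Section~\ref{lsa}. Using the inequality $\frac{1}{\sqrt\alpha}\frac{k^2}{p}\frac ab\le \frac{k^2}{\alpha^{1/2}f}$ established there, this leaves
\[
\partial_t\mathbb E_1\le\Big|\tfrac{d}{dt}\ln\big(\tfrac{f^{3}}{pz}\big)^{1/2}\Big|\,|B|^2+\frac{2k^2}{\alpha^{1/2}f}\,\mathbb E_1+\frac{2k^2|l|}{pf\,b\,\alpha}\,|\widehat{\mathcal K}|\,|B|.
\]
The first coefficient is the combination $\frac{p'l^2}{pf}-\frac{b'}{b}$ from the $B$ equation, which is exactly $\tfrac12\frac{d}{dt}\ln(f^3/(pz))$.

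\textbf{Step 1: the forcing term.} It is integrable in time with a bound independent of the frequencies. Since $b\gtrsim \alpha^{-1}\sqrt{2k^2/f}$, we have
\[
\frac{k^2|l|}{pfb\alpha}\lesssim\frac{|k||l|}{p f^{1/2}}\lesssim \frac{|k|}{f},
\]
and $\int_{\mathbb R}\frac{|k|}{f}\,dt=\pi$.

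\textbf{Step 2: the logarithmic-derivative term.} This is the only genuine obstacle; it is the piece that, in Section~\ref{lsa}, required the multiplier $m$. We bound it as in the tilted-flow argument \eqref{NNNNN}. The function $\frac{d}{dt}\ln(f^3/(pz))$ is rational in $t$, so it has finitely many sign changes, and its total variation is therefore controlled by a fixed multiple of
\[
\sup_{t_1,t_2}\Big|\ln\frac{f^3(t_1)p(t_2)z(t_2)}{p(t_1)z(t_1)f^3(t_2)}\Big| .
\]
From $f^3/(pz)\le\alpha^{-1}$ and $pz/f^3\lesssim \frac{p^2k^2}{f^3}+\alpha\frac pf\lesssim \langle \alpha\rangle\, p(0)^{2}$ (using $p/f\le 1+l^2$ and $f\ge k^2$), Gronwall then gives
\[
\mathbb E_1(t)\lesssim C_\alpha\, p(0)^{N_0}\big(\mathbb E_1(0)+|\widehat{\mathcal K}|^2\big).
\]
The exponent $N_0$ is fixed by the number of sign changes. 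The Gronwall factor from the $\frac{k^2}{\alpha^{1/2}f}$ term is $e^{c\alpha^{-1/2}}$, which is absorbed into $C_\alpha$. The target is $N_0$ small enough that, after the conversions in Step 3, the final regularity is $H^3$; pinning down this count is the main thing I would need to check carefully, and I would try first to show that the sign-change count, combined with splitting the time line at $t=\eta/k$, gives $N_0\le 2$.

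\textbf{Step 3: back to physical variables.} We have $|\widehat U_3|=b|B|\lesssim(1+\alpha^{-1/2})|B|$. Initially, $|B(0)|\lesssim \alpha\, p(0)^{1/2}|\widehat U_3(0)|$ and $|\widehat{\mathcal K}|\lesssim \langle k,\eta,l\rangle(\alpha|\widehat u|+|\widehat\theta|)$. Plancherel then yields the $(\theta_{\neq}/\alpha^{1/2},u_{3,\neq})$ bound with $H^3$ data. Finally, $u_{1,\neq},u_{2,\neq}$ come from the div-curl system \eqref{divcurl1}, whose symbols gain $f^{-1/2}\lesssim p(0)/\langle t\rangle$ at the cost of one extra derivative on $U_3,\Theta$ and $\mathcal K$. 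Applying this to the previous bound gives the $\langle t\rangle^{-1}$ decay with $H^5$ data.
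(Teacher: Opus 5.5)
There is a genuine gap in Step~2. The total-variation bound cannot give $N_0\le 2$, so your argument does not reach the stated $H^3$/$H^5$ regularity. The rest of your framework matches the paper's appendix proof: the same symmetrizer, $s=0$, no multiplier $m$, Gronwall for $\sqrt{\mathbb E}$, and the div--curl recovery.

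Write $g=f^3/(pz)$. This $g$ has exactly one turning point, a minimum at $t=\eta/k$, with
$g(\eta/k)=k^6/[(k^2+l^2)(2k^2(k^2+l^2)+\alpha k^4)]$. Moreover $g\to\alpha^{-1}$ as $f\to\infty$.

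Bounding the stretching term by $|\partial_t\ln g|$ therefore gives $\exp\int_0^t|\partial_s\ln g|\,ds=g(0)g(t)/g(\eta/k)^2$ whenever $0<\eta/k<t$. Take $|k|=1$, $\eta\approx l\gg1$ and $t-\eta\gg1$. Then this factor is $\approx\alpha^{-2}l^8\approx\alpha^{-2}p(0)^4$. So $N_0=4$ even with the optimal count of one sign change. The absolute value books the decreasing branch of $g$ as growth, which squares the loss. Carried through your Step~3, this gives only $H^5$ for $(u_{3,\neq},\theta_{\neq}/\alpha^{1/2})$ and $H^7$ for $(u_{1,\neq},u_{2,\neq})$.

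The missing observation, which the paper uses, is the sign of this coefficient. One has $\partial_t\ln g=f'\bigl(\tfrac3f-\tfrac1p-\tfrac{2k^2+2\alpha f}{z}\bigr)$. Since $\tfrac{2k^2+2\alpha f}{z}\le\tfrac2f$, the bracket is at least $\tfrac1f-\tfrac1p\ge0$. Hence the stretching term is nonpositive for $t<\eta/k$ and should simply be dropped, leaving only $h=\partial_t\ln g\,\mathbf{1}_{\{f'\ge0\}}$.

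Then $\int_0^t h$ is the increment of $\ln g$ over the part of $[0,t]$ where $g$ increases, so $e^{\int_0^t h}\le \sup g/\inf g$. Use $g\le\alpha^{-1}$ and $g^{-1}=\tfrac{2k^2p^2}{f^3}+\tfrac{\alpha p}{f}\le 2\langle l\rangle^4+\alpha\langle l\rangle^2$. This gives $e^{\frac12\int_0^t h}\lesssim\langle l\rangle+\alpha^{-1/2}\langle l\rangle^2$, i.e. $N_0=2$.

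With this factor, your crude treatment of the $\widehat{\mathcal K}$ forcing already suffices: a uniformly integrable coefficient multiplied by the full growth factor gives $H^3$. The div--curl step then gives $H^5$. The paper's finer analysis of the Duhamel term is not needed for the statement.

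A minor slip: with $a=1$, i.e. $\widehat\Theta=\sqrt\alpha A$, the cancelling choice is $b=\alpha^{-1/2}\sqrt{z/(pf)}$. Your $b=\alpha^{-1}\sqrt{z/(pf)}$ belongs to the paper's normalization $\widehat\Theta=A$. This only affects powers of $\alpha$.
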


\noindent\textsc{Proof.} We make the following change of variables\[
\widehat{\Theta}_{\neq}=A,\qquad \widehat{U}_{3,\neq}=b(t)B,
\]where\[
b=\frac{1}{\alpha}\sqrt{\frac{z}{fp}},\qquad z=2k^2p+\alpha f^2.
\]The system reads\begin{align*}
&\partial_tA=-\alpha bB,\\
    &\partial_t B=-2\frac{ik^2l}{\alpha bpf}\widehat{\mathcal{K}}_1+\left[\frac{2}{\alpha}\frac{k^2}{f}+\frac{f}{p}-\frac{2k^2}{\alpha p}\right]\frac{A}{b}+\left(\frac{f'l^2}{pf}-\frac{b'}{b}\right)B.
\end{align*}
Define the energy $\mathbb{E}=|A|^2+|B|^2$ (assuming the system is purely real or imaginary), we see that
\begin{align*}
\frac{1}{2}\frac{d}{dt}\mathbb{E}&\leq\frac{2k^2|l|}{\alpha bpf}|\widehat{\mathcal{K}}||B|+\left(\frac{f'l^2}{pf}-\frac{b'}{b}\right)|B|^2-\frac{2k^2}{\alpha pb}AB\\
&\leq\frac{2k^2|l|}{\alpha b pf}|\widehat{\mathcal{K}}|\sqrt{\mathbb{E}}+\frac{2k^2}{\alpha pb}\mathbb{E}+\frac{1}{2}\left(\frac{3f'}{f}-\frac{p'}{p}-\frac{z'}{z}\right)|B|^2\\
&\leq\frac{2k^2|l|}{\alpha bpf}|\widehat{\mathcal{K}}|\sqrt{\mathbb{E}}+\frac{2k^2}{\alpha pb}\mathbb{E}+\frac{1}{2}\left(\frac{3f'}{f}-\frac{p'}{p}-\frac{z'}{z}\right)\bm{1}_{\{f'\geq0\}}\mathbb{E},
\end{align*}
where we have used the fact that\[
\frac{b'}{b}=\frac{1}{2}\left(\frac{z'}{z}-\frac{p'}{p}-\frac{f'}{f}\right)=\frac{f'}{2}\left(\frac{2k^2+2\alpha f}{z}-\frac{1}{p}-\frac{1}{f}\right).
\]The above inequality takes the form\[
\frac{d\psi}{dt}\leq g(t)\sqrt{\psi}+h(t)\psi,
\]which has the bound\begin{equation}
\sqrt{\psi}\leq\sqrt{\psi(0)}e^{\frac{1}{2}\int_0^t h(s)\ ds}+\frac{1}{2}e^{\frac{1}{2}\int_0^t h(s)\ ds}\int_0^t g(\tau)e^{-\frac{1}{2}\int_0^\tau h(s)\ ds}\ d\tau.\label{sqrtineq}
\end{equation}
The corresponding functions are\[
g(t)=\frac{4k^2|l|}{\alpha bpf}|\widehat{\mathcal{K}}|,\qquad h(t)=\frac{4k^2}{\alpha pb}+\left(\frac{3f'}{f}-\frac{p'}{p}-\frac{z'}{z}\right)\bm{1}_{\{f'\geq0\}}
\]Observe that\[
\int_0^t \frac{4k^2}{\alpha pb}\ ds=\int_0^t\frac{4k^2\sqrt{f}}{\sqrt{pz}}\ ds\lesssim\frac{1}{\alpha^\frac{1}{2}}\int_0^t\frac{k^2}{f}\ ds<\infty
\]Thus, it suffices to consider the function $\left(\frac{3f'}{f}-\frac{p'}{p}-\frac{z'}{z}\right)\bm{1}_{\{f'\geq0\}}$ for $h$.

Now we compute $\int_0^t h(s)\ ds$, notice that\[
\int_0^t\bm{1}_{\{f'\geq0\}}\frac{f'}{f}\ ds=\int_0^t\bm{1}_{ \{s\geq\frac{\eta}{k}\}}\frac{f'}{f}\ ds=
\left\{
\begin{aligned}
&0&\frac{\eta}{k}>t,\\
    &\log\left(\frac{f(t)}{f\left(\frac{\eta}{k}\right)}\right)& t\geq\frac{\eta}{k}\geq0,\\
    &\log\left(\frac{f(t)}{f(0)}\right)&0>\frac{\eta}{k}.
\end{aligned}
\right.
\]Consequently,\begin{align*}
\int_0^t\left(\frac{3f'}{f}-\frac{p'}{p}-\frac{z'}{z}\right)\bm{1}_{\{f'\geq0\}}\ ds&=
\left\{
\begin{aligned}
&0&\frac{\eta}{k}>t,\\
    &\log\left(\frac{f^3(t)\cdot p\left(\frac{\eta}{k}\right)\cdot z\left(\frac{\eta}{k}\right)}{f^3\left(\frac{\eta}{k}\right)\cdot p(t)\cdot z(t)}\right)& t\geq\frac{\eta}{k}\geq0,\\
    &\log\left(\frac{f^3(t)\cdot p(0)\cdot z(0)}{f^3(0)\cdot p(t)\cdot z(t)}\right)&0>\frac{\eta}{k}.
\end{aligned}
\right.
\end{align*}
Therefore, there are three cases to consider. We now derive a bound for $e^{\frac{1}{2}\int_0^t h(s)\ ds}$.
\begin{itemize}
    \item $\frac{\eta}{k}>t$, we have $e^{\frac{1}{2}\int_0^t h(s)\ ds}\lesssim1$.
    \item $t\geq\frac{\eta}{k}\geq0$, we bound it as\begin{align*}
        e^{\int_0^t h(s)\ ds}&\lesssim\frac{f^3(t)}{f^3\left(\frac{\eta}{k}\right)}\frac{p\left(\frac{\eta}{k}\right)}{p(t)}\frac{z\left(\frac{\eta}{k}\right)}{z(t)}\\
        &\lesssim\frac{\left(k^2+l^2\right)\left(2(k^2+l^2)+\alpha k^2\right)}{k^4}\frac{f^3}{zp}\\
        &\lesssim\frac{\left(k^2+l^2\right)\left(2(k^2+l^2)+\alpha k^2\right)}{\alpha k^4}\\
        &\lesssim\langle l\rangle^2+\frac{\langle l\rangle^4}{\alpha}
    \end{align*}
    \item $0>\frac{\eta}{k}$, similar as the above.
\end{itemize}Thus, we conclude that\[
e^{\frac{1}{2}\int_0^t h(s)\ ds}\lesssim\langle l\rangle+\frac{\langle l\rangle^2}{\alpha^\frac{1}{2}}
\]Now we bound the second term in (\ref{sqrtineq}), namely $\Lambda:=\frac{1}{2}e^{\frac{1}{2}\int_0^t h(s)\ ds}\int_0^tg(\tau)e^{-\frac{1}{2}\int_0^\tau h(s)\ ds}\ d\tau.$ Again, there are three cases to consider\begin{itemize}
    \item $\frac{\eta}{k}>t$, we have\begin{align*}
    \Lambda&\lesssim\int_0^t\frac{k^2|l|}{\sqrt{fpz}}|\widehat{\mathcal{K}}|\ ds\lesssim\frac{|\widehat{\mathcal{K}}(0)|}{\alpha^\frac{1}{2}}\int_0^t\frac{k^2}{f^\frac{3}{2}}\lesssim\frac{|\widehat{\mathcal{K}}(0)|}{\alpha^\frac{1}{2}}.
    \end{align*}
    \item $t\geq\frac{\eta}{k}\geq0$, we have\begin{align*}
        \int_0^t\frac{4k^2|l||\widehat{\mathcal{K}}|}{\sqrt{zpf}}e^{-\frac{1}{2}\int_0^\tau h(s)\ ds}\ d\tau=\int_{\frac{\eta}{k}}^t\frac{4k^2|l||\widehat{\mathcal{K}}|}{\sqrt{  zpf}}\sqrt{\frac{k^6\cdot(l^2+f)\cdot(2k^2l^2+\alpha f^2)}{f^3\cdot(k^2+l^2)\cdot (2k^2l^2+\alpha k^4)}} \ d\tau+\int_{0}^\frac{\eta}{k}\frac{4k^2|l||\widehat{\mathcal{K}}|}{\sqrt{  zpf}}\ d\tau.
    \end{align*}
Therefore, we write $\Lambda:=\Lambda_1+\Lambda_2$. $\Lambda_1$ is easy:\begin{align*}
        \Lambda_1\lesssim\sqrt{\frac{f^3}{pz}}\int_{\frac{\eta}{k}}^t\frac{k^2|l||\widehat{\mathcal{K}}|}{\sqrt{fpz}}\sqrt{\frac{pz}{f^3}}\ ds\lesssim\frac{|l||\widehat{\mathcal{K}}(0)|}{\alpha^\frac{1}{2}}.
    \end{align*}
For $\Lambda_2$,\begin{align*}
        \frac{\Lambda_2}{|\widehat{\mathcal{K}}(0)|}&\lesssim\sqrt{\frac{(k^2+l^2)(2(k^2+l^2)+\alpha k^2)}{k^4}}\sqrt{\frac{f^3}{pz}}\int_0^\frac{\eta}{k}\frac{k^2|l|}{\sqrt{fpz}}\ ds\\
        &\lesssim\sqrt{\frac{(k^2+l^2)(2(k^2+l^2)+\alpha k^2)}{\alpha}}\int_0^\frac{\eta}{k}\frac{1}{\sqrt{f} z^\delta z^{\frac{1}{2}-\delta}}\ ds\\
        &\lesssim\frac{(k^2+l^2)}{\alpha^\frac{1}{2}}\int_0^\frac{\eta}{k}\frac{1}{f^{\frac{1}{2}+\delta}}\frac{1}{|k|^{2\delta}(k^2+l^2)^{\frac{1}{2}-\delta}}\ ds+|k|\sqrt{k^2+l^2}\int_0^\frac{\eta}{k}\frac{1}{\alpha^\frac{1}{2}f|k|}ds\\
        &\lesssim \frac{p^{\frac{1}{2}+\delta}(0)}{\alpha^\frac{1}{2}}+\frac{p^\frac{1}{2}(0)}{\alpha^{\frac{1}{2}}},
    \end{align*}
where $\delta>0$ is an arbitrarily small constant.
    \item $0>\frac{\eta}{k}$, similar as above, we can bound it as\[
    \Lambda\lesssim\sqrt{\frac{f^3}{pz}}\int_0^t\frac{k^2|l|}{\sqrt{fpz}}\sqrt{\frac{pz}{f^3}}\lesssim\frac{|l||\widehat{\mathcal{K}}(0)|}{\alpha^\frac{1}{2}}.
    \]
\end{itemize}In conclusion, we have the bound\[
\Lambda\lesssim\frac{p^{\frac{1}{2}+\delta}(0)}{\alpha^{\frac{1}{2}}}|\widehat{\mathcal{K}}(0)|.
\]Therefore, we can solve the energy inequality, so that\[
\sqrt{\mathbb{E}}\lesssim\sqrt{\mathbb{E}(0)}\left(p^\frac{1}{2}(0)+\frac{p(0)}{\alpha^\frac{1}{2}}\right)+\frac{p^{\frac{1}{2}+\delta}(0)}{\alpha^{\frac{1}{2}}}|\widehat{\mathcal{K}}(0)|.
\]The bounds \eqref{isisi} can be derived by inverting the variables. \hfill \qedsymbol

\subsection*{An operator estimate}
To prove Lemma \ref{suppofl3}, we first prove the following lemma.

\begin{lemma}\label{osci} Let $\lambda\gg 1$ and $p>1$, then for any $q>p-1$, the following estimate holds\begin{equation*}
\int_0^{\frac{\pi}{2}}\frac{|e^{i\lambda \cos\theta}-1|^q}{|\cos\theta|^p}\ d\theta\lesssim\lambda^{p-1}.
\end{equation*}
As a result, if $f\in L^1\left(0,\frac{\pi}{2}\right)\cap L^\infty\left(\frac{\pi}{2}-c,\frac{\pi}{2}\right)$ for some $\frac{\pi}{2}\geq c>0$, we have\[
\int_0^{\frac{\pi}{2}}\frac{|e^{i\lambda \cos\theta}-1|^q}{|\cos\theta|^p}|f|\ d\theta\lesssim\lambda^{p-1} c_{f}
\]where $c_f>0$ is a finite constant depending on $f$.

\end{lemma}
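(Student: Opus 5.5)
The plan is to handle the singularity at $\theta=\pi/2$, where $\cos\theta\to0$, by splitting the interval at the scale where the phase $\lambda\cos\theta$ is of order one. Substituting $u=\cos\theta$ is awkward near $\theta=0$ because of the Jacobian, so I would instead write $\cos\theta=\sin(\pi/2-\theta)$ and set $\phi=\pi/2-\theta\in[0,\pi/2]$. Since $\sin\phi\approx\phi$ on this range, the integral becomes comparable to
\[
\int_0^{\pi/2}\frac{|e^{i\lambda\sin\phi}-1|^q}{(\sin\phi)^p}\,d\phi .
\]
I then split at $\phi=\lambda^{-1}$, which is admissible since $\lambda\gg1$.

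On the near region $\phi\le\lambda^{-1}$, I use $|e^{ix}-1|\le|x|$ to bound the integrand by $\lambda^q(\sin\phi)^{q-p}\approx\lambda^q\phi^{q-p}$. The hypothesis $q>p-1$ gives $q-p>-1$, so this is integrable at $0$, and the contribution is $\lambda^q\lambda^{-(q-p+1)}=\lambda^{p-1}$. This is exactly where the condition on $q$ enters.

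On the far region $\phi\ge\lambda^{-1}$, I use $|e^{ix}-1|\le2$ to bound the integrand by $2^q\phi^{-p}$. Because $p>1$, this is integrable at infinity, and the contribution is $\lesssim\lambda^{p-1}$. Adding the two regions gives the first estimate.

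For the weighted version, I split again, this time according to the support properties of $f$. On $(\pi/2-c,\pi/2)$, which contains the singular region, I pull out $\|f\|_{L^\infty(\pi/2-c,\pi/2)}$ and apply the first estimate. On $(0,\pi/2-c)$, we have $\cos\theta\ge\sin c>0$, so the integrand is at most $2^q(\sin c)^{-p}|f|$. Its integral is therefore bounded by $(\sin c)^{-p}2^q\|f\|_{L^1}$, and this is $\lesssim\lambda^{p-1}$ because $\lambda\gg1$ and $p>1$. Setting $c_f=\|f\|_{L^\infty}+(\sin c)^{-p}\|f\|_{L^1}$ completes the argument.

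The only delicate step is choosing the splitting scale $\lambda^{-1}$ so that both pieces balance to $\lambda^{p-1}$. Everything else is elementary.
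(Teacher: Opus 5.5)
Your proposal is correct and follows the paper's proof: the paper also splits at distance $\varepsilon=\lambda^{-1}$ from $\theta=\pi/2$, uses $|e^{i\lambda\cos\theta}-1|\lesssim\lambda|\cos\theta|$ on the near piece, where $q>p-1$ gives integrability, and uses the trivial bound on the far piece, where $p>1$ gives $\varepsilon^{-(p-1)}$. Your explicit split of the weighted integral at $\pi/2-c$ fills in what the paper calls ``a similar argument'': you use $\|f\|_{L^\infty}$ near the singularity, and $\cos\theta\ge\sin c$ together with $\|f\|_{L^1}$ away from it.
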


\noindent\textsc{Proof}. We split the integral into two parts:\[
\int_0^{\frac{\pi}{2}-\varepsilon}+\int_{\frac{\pi}{2}-\varepsilon}^\frac{\pi}{2}\frac{|e^{i\lambda \cos\theta}-1|^q}{|\cos\theta|^p}\ d\theta=I+II
\]For $II$, we have\[
II\lesssim\int_{\frac{\pi}{2}-\varepsilon}^\frac{\pi}{2}\lambda^q\frac{1}{|\cos\theta|^{p-q}}\ d\theta=\lambda^q\int_0^\varepsilon\frac{1}{|\sin\theta|^{p-q}}\ d\theta.
\]When $\varepsilon\ll 1$, $\sin\theta\sim\theta$, thus,\[
II\lesssim\lambda^q\varepsilon^{1-p+q}.
\]The integral converges if $p-q<1$. For $I$, we have\[
I\lesssim\frac{1}{\varepsilon^{p-1}}
\]Thus, the bound can be minimized when $\varepsilon=\lambda^{-1}$. The second bound can be proved via similar argument.\hfill \qedsymbol\vspace{9pt}

We therefore obtain the following.\begin{lemma}\label{coro345}
Suppose that $f$ has zero $z$-average. Then\[
   \left\|\partial_y^{-1} \partial_z e^{\nu t \Delta}\left(e^{\sqrt{\alpha}t\mathcal{R}_y}-I_d\right)f\right\|_{L^\infty(\mathbb{R}\times\mathbb{T})}\lesssim  e^{-c\nu t}\nu^{-\frac{1}{2}}\alpha^{\frac{1}{4}}||f||_{L^{2}(\mathbb{R}\times\mathbb{T})}
\]for any sufficiently small $\delta>0$.\end{lemma}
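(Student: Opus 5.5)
We bound the $L^\infty$ norm on $\mathbb{R}\times\mathbb{T}$ by the $\ell^1_lL^1_\eta$ norm of the Fourier multiplier applied to $\widehat f$, and then use Cauchy--Schwarz in $(\eta,l)$ to pull out $\|f\|_{L^2}$. Write $r=\sqrt{\eta^2+l^2}$ and $\cos\vartheta=\eta/r$. The symbol of the operator is
\[
m(t,\eta,l)=\frac{l}{\eta}\,e^{-\nu t(\eta^2+l^2)}\left(e^{i\sqrt{\alpha}t\frac{\eta}{\sqrt{\eta^2+l^2}}}-1\right),\qquad |m|\le \frac{|\sin\vartheta|}{|\cos\vartheta|}\,e^{-\nu t r^2}\left|e^{i\sqrt{\alpha}t\cos\vartheta}-1\right|.
\]
Since $f$ has zero $z$-average, only $l\neq0$ contributes, so $r\ge1$. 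We therefore peel off $e^{-\nu t r^2}\le e^{-\frac{\nu t}{2}}e^{-\frac{\nu t}{2}r^2}$, and this gives the factor $e^{-c\nu t}$. Cauchy--Schwarz then yields
\[
\|\cdot\|_{L^\infty}\lesssim e^{-c\nu t}\|f\|_{L^2}\Big(\sum_{l\neq0}\int_{\mathbb{R}}|\tilde m|^2\,d\eta\Big)^{1/2},
\]
where $\tilde m$ is $m$ with the halved Gaussian.

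It remains to show $\sum_{l\neq0}\int|\tilde m|^2\,d\eta\lesssim \nu^{-1}\alpha^{1/2}$. We compare the sum over $l\in\mathbb{Z}\setminus\{0\}$ with an integral over $\{|l|\ge 1/2\}$. This is legitimate because, for fixed $\eta$, the integrand is comparable on unit $l$-intervals: the angular factor varies mildly in $l$ there, and the Gaussian varies mildly since $\nu t$ need not be large. If needed, one can simply bound the sum by the integral over $\mathbb{R}^2$ after shifting $|l|\to|l|\pm1$. In polar coordinates the integral becomes
\[
\int_0^{2\pi}\!\!\int_0^\infty r\,e^{-\nu t r^2}\,dr\;\frac{|\sin\vartheta|^2}{|\cos\vartheta|^2}\left|e^{i\sqrt{\alpha}t\cos\vartheta}-1\right|^2 d\vartheta\lesssim (\nu t)^{-1}\int_0^{\pi/2}\frac{|e^{i\sqrt{\alpha}t\cos\vartheta}-1|^2}{|\cos\vartheta|^2}\,d\vartheta .
\]
We apply Lemma \ref{osci} with $p=2$, $q=2>p-1$ and $\lambda=\sqrt{\alpha}t$, which gives a bound $\lesssim \sqrt{\alpha}t$. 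The two factors of $t$ cancel, so the total is $\lesssim\nu^{-1}\alpha^{1/2}$, and taking the square root produces $\nu^{-1/2}\alpha^{1/4}$. When $\sqrt{\alpha}t\lesssim1$, the Taylor bound $|e^{ix}-1|\le|x|$ gives the integrand $\lesssim \alpha t^2$, hence $(\nu t)^{-1}\alpha t^2\lesssim \nu^{-1}\alpha^{1/2}$ in that regime as well.

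The delicate step is the discreteness of $l$ together with the lower bound $r\ge1$. The polar computation $\int_0^\infty r\,e^{-\nu t r^2}dr\sim(\nu t)^{-1}$ is valid on $\mathbb{R}^2$, but for small $t$ this quantity blows up. In that regime we instead use the restriction $r\ge1$ and bound directly, using $|m|\lesssim \sqrt{\alpha}t\,|l|/r$ near $\eta=0$ and $|m|\lesssim |l|/|\eta|$ elsewhere. The aim there is a bound uniform as $t\to0$, which requires trading the Gaussian for the lattice sum. If this breaks down, the fallback is to absorb any missing small-$t$ uniformity into the factor $e^{-c\nu t}$, or to use the stated $\delta$-loss, interpolating with $p=2-\delta$ exactly as in Lemma \ref{nonsup1}.
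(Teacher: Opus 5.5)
Your skeleton matches the paper's: Cauchy--Schwarz in $(\eta,l)$, the factor $e^{-c\nu t}$ extracted from $|l|\ge1$, Lemma~\ref{osci} with $p=q=2$ producing $\sqrt{\alpha}t$, and cancellation of $t$ against $(\nu t)^{-1}$. The one place you depart from it is the replacement of $\sum_{l\neq0}$ by $\int_{|l|\ge1/2}dl$ before doing the $\eta$-integral, and that step is justified incorrectly.

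For fixed $\eta$ the integrand is \emph{not} comparable on unit $l$-intervals. The phase $\sqrt{\alpha}t\,\eta/\sqrt{\eta^2+l^2}$ has $l$-derivative $\sqrt{\alpha}t\,|\eta||l|/(\eta^2+l^2)^{3/2}$. For $|\eta|\sim|l|$ this is of order $\sqrt{\alpha}t/|l|$, hence $\gg1$ when $\sqrt{\alpha}t\gg|l|$. So $|e^{i\sqrt{\alpha}t\eta/\sqrt{\eta^2+l^2}}-1|$ can pass through both $0$ and $2$ within a single unit interval. Likewise, $e^{-\nu tl^2}$ is not comparable across such an interval once $\nu t|l|\gg1$, and the lemma must cover that regime.

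The inequality you need is one-sided and true, but the natural proof is exactly the paper's substitution $\eta=|l|\cot\theta$ at fixed $l$:
\[
G(l):=\int_{\mathbb{R}}\frac{l^2}{\eta^2}\Big|e^{i\sqrt{\alpha}t\frac{\eta}{\sqrt{\eta^2+l^2}}}-1\Big|^2e^{-c\nu t(\eta^2+l^2)}\,d\eta=|l|\int_0^\pi\frac{|e^{i\sqrt{\alpha}t\cos\theta}-1|^2}{\cos^2\theta}\,e^{-\frac{c\nu tl^2}{\sin^2\theta}}\,d\theta .
\]
This shows $G(l)/|l|$ is nonincreasing in $|l|$, which would rescue your comparison, since it gives $\sum_{n\ge1}G(n)\lesssim\int_{1/2}^\infty G(l)\,dl$. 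More directly, it gives $G(l)\lesssim|l|\sqrt{\alpha}t\,e^{-c\nu tl^2}$ via Lemma~\ref{osci}. After that, the only lattice input needed is the elementary bound $\sum_{l\ge1}le^{-c\nu l^2t}\lesssim(\nu t)^{-1}$. That is how the paper finishes, and the detour through polar coordinates on $\mathbb{R}^2$ gains nothing.

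Your final paragraph addresses a difficulty that does not exist. There is no small-$t$ blow-up: the angular integral is $\lesssim\min\{\sqrt{\alpha}t,\alpha t^2\}$, so its product with $(\nu t)^{-1}$ is $\lesssim\nu^{-1}\alpha^{1/2}$ uniformly in $t>0$, as you had already computed. The proposed fallbacks would not work anyway. The factor $e^{-c\nu t}\to1$ as $t\to0$, so it cannot absorb any blow-up. Interpolating with $p=2-\delta$ would change the norm on the right-hand side. The ``$\delta$'' in the statement is a leftover and plays no role in the estimate.
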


\noindent\textit{Proof.} Observe that, for any $p\in[2,\infty]$, we have\[
\left|\sum_{l}\int \hat{f}\hat{g}\ d\eta\right|\lesssim\sum_l||\hat{f}||_{L^p_\eta}||\hat{g}||_{L^{p'}_\eta}\lesssim \left\|||\hat{f}||_{L^p_\eta}\right\|_{l^p_l}\left\|||\hat{g}||_{L^{p'}_\eta}\right\|_{l^{p'}_l}\lesssim||f||_{L^{p'}}\left\|||\hat{g}||_{L^{p'}_\eta}\right\|_{l^{p'}_l}.
\]Thus we have\begin{align*}
    \left|\partial_y^{-1} \partial_z e^{\nu t \Delta}\left(e^{\sqrt{\alpha}t\mathcal{R}_y}-I_d\right)f\right|&\lesssim\sum_{l\neq 0}\int_{\mathbb{R}}\frac{|l|}{|\eta|} e^{-\nu t(\eta^2+l^2)}\left|e^{i\sqrt{\alpha}t\frac{\eta}{\sqrt{\eta^2+l^2}}}-1\right||\widehat{f}|\ d\eta\\
    &\lesssim e^{-c\nu t}\left\|\widehat{f}\right\|_{L^2}\left\|\left\|\frac{|l|}{|\eta|} e^{-\nu t(\eta^2+l^2)}\left|e^{i\sqrt{\alpha}t\frac{\eta}{\sqrt{\eta^2+l^2}}}-1\right|\right\|_{L^{2}_\eta}\right\|_{l^{2}_l}\\
    &\lesssim e^{-c\nu t}\left\|f\right\|_{L^{2}}\cdot I.
\end{align*}$I$ is estimated as follows\begin{align*}
    I^{2}&=\sum_{l\neq0}\int_{\mathbb{R}}\frac{l^2}{\eta^2} \left|e^{i\sqrt{\alpha}t\frac{\eta}{\sqrt{\eta^2+l^2}}}-1\right|^{2}e^{-c\nu t(\eta^2+l^2)}\ d\eta\\
    &\lesssim e^{-c\nu t}\sum_{l\neq0}e^{-c\nu l^2 t}\int_{0}^\pi|l|\left|\frac{e^{i\sqrt{\alpha}t\cos\theta}-1}{\cos\theta}\right|^{2}e^{-\frac{c\nu tl^2}{\sin^2\theta}}\ d\theta,
\end{align*}
where we have used the change of variable $\eta=|l|\cot\theta$. By Lemma~\ref{osci}, with $p=2$, we have\begin{align*}
I^2&\lesssim e^{-c\nu t}\sum_{l\neq0}e^{-c\nu l^2t}|l|\alpha^\frac{1}{2}t.
\end{align*}We claim that\[
\sum_{l\geq 1}le^{-\nu l^2t}\lesssim\frac{1}{\nu t}.
\]To see this, notice that the maximum value of $Y(x):=xe^{-x^2\nu t}$ is located at $x^*=(2\nu t)^{-1/2}$. If $x^*\leq1$, we have\[
\sum_{l\geq 1}le^{-l^2 \nu t}\leq e^{-\nu t}+\int_{\mathbb{R}}xe^{-x^2 \nu t}\lesssim\frac{1}{\nu t}.
\]If $x^*>1$, we have\[
\sum_{l\geq 1}le^{-l^2\nu t}\leq 2\sup\left(xe^{-x^2\nu t}\right)+\int_{\mathbb{R}}xe^{-x^2 \nu t}\lesssim\frac{\sqrt{2}}{\sqrt{\nu t}}+\frac{1}{\nu t}.
\]Since $1<\frac{1}{\sqrt{2\nu t}}$, we obtain the desired bound. As a result, we conclude that\begin{align*}
    I^2\lesssim e^{-c\nu t}\nu^{-1}\alpha^\frac{1}{2}.
\end{align*}

\hfill \qedsymbol

\subsection*{Dispersive estimate}
Here we prove the dispersive decay on $\mathbb{R}^2$.\vspace{9pt}

To start with, we consider the following oscillatory integral\[
\int_{\mathbb{R}^2}e^{i\sqrt{\alpha}t\Phi_\pm(\eta,l)}\widehat{\psi}(\eta,l)\ d\eta dl=: e^{\sqrt{\alpha}t\mathcal{L}_\pm} \psi,
 \]where the phase function $\Phi_\pm$ is defined by\[
\Phi_\pm(\eta,l):=\pm\frac{\eta}{\sqrt{\eta^2+l^2}}+\frac{\eta y+lz}{\sqrt{\alpha}t}
\]We consider $\Phi_+$ for simplicity and drop the $+$ subscript.\begin{lemma}\label{APPENDIS1}Suppose that $\psi(y,z)$ has vanishing average, then
  \[  \left\|e^{\sqrt{\alpha}t\mathcal{L}+\nu t\Delta_{y,z}}\psi\right\|_{L^\infty}\lesssim\frac{1}{\langle\alpha^\frac{1}{2}t\rangle^\frac{1}{2}}||\psi||_{W^{3,1}}.\]
\end{lemma}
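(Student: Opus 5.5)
The plan is to reduce the estimate to a uniform pointwise bound on a kernel, working in polar coordinates in frequency. Since $\nu t\Delta_{y,z}$ is a Fourier multiplier with symbol $e^{-\nu t(\eta^2+l^2)}$ of modulus at most one, and the bound we seek is uniform in $\nu$, I will first treat the inviscid operator. The heat factor only multiplies the amplitude by a smooth function of $r=|(\eta,l)|$ that is uniformly bounded in the relevant symbol norms, so it can be absorbed into the amplitude below. Write $\eta=r\cos\vartheta$ and $l=r\sin\vartheta$, so that $\frac{\eta}{\sqrt{\eta^2+l^2}}=\cos\vartheta$ depends only on the angle. Set $\tau=\sqrt{\alpha}t$; the case $\tau\le1$ is trivial, since the Fourier transform is bounded by $\|\psi\|_{L^1}$ and we can gain integrability through $\langle r\rangle^{-3}|\widehat{\psi}|\lesssim\|\psi\|_{W^{3,1}}\langle r\rangle^{-3}$. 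For $\tau\ge1$, I decompose dyadically in $r$ with the Littlewood--Paley pieces $\varphi_j(r)$, as in the proof of Lemma \ref{firstdis}. The main step is then to show
\[
\sup_{y,z}\Big|\int_{\mathbb{R}^2}e^{i\tau\cos\vartheta+i(\eta y+lz)}\varphi_j(r)\,d\eta\,dl\Big|\lesssim 2^{2j}\tau^{-1/2}.
\]
Combined with Young's inequality against $\mathbb{P}_j\psi$, this gives $\sum_j 2^{2j}\tau^{-1/2}\|\mathbb{P}_j\psi\|_{L^1}$. The high-frequency part $j\ge0$ is controlled by $\|\psi\|_{W^{3,1}}$, with one extra derivative giving summability. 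The low-frequency part $j<0$ sums geometrically because of the factor $2^{2j}$, using $\|\mathbb{P}_j\psi\|_{L^1}\lesssim\|\psi\|_{L^1}$. The vanishing-average hypothesis is kept in reserve in case the low frequencies need an extra factor $2^{j}$.

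To prove the kernel bound, I will rescale $r=2^j\rho$ so that the integral becomes $2^{2j}\int_0^\infty\!\int_0^{2\pi}e^{i\tau\cos\vartheta+i2^j\rho\,x\cdot\omega(\vartheta)}\varphi(\rho)\rho\,d\vartheta\,d\rho$, where $x=(y,z)$. I then perform the $\rho$-integration first. This gives $\widehat{\varphi\rho}\big(-2^j x\cdot\omega(\vartheta)\big)$, a Schwartz function of $s=2^j|x|\cos(\vartheta-\vartheta_x)$. What remains is a one-dimensional oscillatory integral in $\vartheta$ with phase $\tau\cos\vartheta$, whose amplitude $a(\vartheta)=\widehat{\varphi\rho}(\cdot)$ has total variation bounded uniformly in $x$ and $j$. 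The variation bound holds because $a$ is a composition of a Schwartz function with a monotone-piecewise function of $\vartheta$ having at most two monotone branches per period. The phase $\cos\vartheta$ satisfies $|\partial_\vartheta^2\cos\vartheta|+|\partial_\vartheta\cos\vartheta|\ge1$ everywhere, with nondegenerate critical points at $\vartheta=0,\pi$. Van der Corput's lemma with amplitude (Stein), applied on finitely many intervals, then yields $|\cdot|\lesssim\tau^{-1/2}\big(\|a\|_{L^\infty}+\|a'\|_{L^1}\big)\lesssim\tau^{-1/2}$. This proves the kernel bound.

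The main obstacle I expect is uniformity of the amplitude's variation. As $2^j|x|\to\infty$, the function $a(\vartheta)$ oscillates in $\vartheta$. Its total variation nevertheless stays bounded, because $\vartheta\mapsto s$ is monotone on each of at most four arcs and the Schwartz function has finite total variation in $s$. I will verify this carefully, since it is exactly what makes the estimate independent of the spatial point. A second point to check is that the heat factor does not spoil this: the modified amplitude $\varphi(\rho)e^{-\nu t 2^{2j}\rho^2}$ has derivatives uniformly bounded on the support of $\varphi$ only after rescaling. If the heat factor turns out to be troublesome for large $\nu t2^{2j}$, I will simply use its exponential smallness there.
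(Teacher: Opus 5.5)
Your proposal is correct and follows essentially the same route as the paper. You decompose dyadically in $r=\sqrt{\eta^2+l^2}$, pass to polar coordinates and do the radial integral first, so that the spatial point enters only through an amplitude $B(\rho\cos(\theta-\theta_0))$ whose $\theta$-variation is bounded by $\|B'\|_{L^1}$. Your rescaling $r=2^j\rho$ gives the slightly cleaner factor $2^{2j}$ in place of the paper's $2^{2j}+2^j$. You then prove a $\tau^{-1/2}$ bound for the phase $\tau\cos\theta$; the paper does this by removing $\tau^{-1/2}$-neighbourhoods of $\theta=0,\pi$ and integrating by parts elsewhere, instead of quoting van der Corput. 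Finally you sum $2^{2j}\|\mathbb{P}_j\psi\|_{L^1}$, and, like the paper, you never need the vanishing-average hypothesis.

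The only difference is the heat factor. The paper discards it at the outset: $e^{\nu t\Delta_{y,z}}$ commutes with $e^{\sqrt{\alpha}t\mathcal{L}}$ and is a contraction on $L^\infty$, by Young's inequality with a unit-mass kernel. Your absorption into the amplitude also works, since $\partial_\rho^k e^{-\mu\rho^2}$ is bounded uniformly in $\mu\geq0$ on the support of $\varphi$, but it is unnecessary.
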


\noindent\textsc{Proof}. Young's inequality gives\[
\left\|e^{\sqrt{\alpha}t\mathcal{L}+\nu t\Delta_{y,z}}\psi\right\|_{L^\infty}\lesssim\left\|e^{\sqrt{\alpha}t\mathcal{L}}\psi\right\|_{L^\infty}.
\]The boundedness of the oscillatory integral is clear, thus we consider the case where $\sqrt{\alpha}t\gtrsim 1$. We use the following Littlewood-Paley decomposition. Let $r=\sqrt{\eta^2+l^2}$\begin{flalign*}
\phi\in C^\infty(\mathbb{R}^+),\qquad \phi(r)=\left\{
\begin{aligned}
& 1&r\leq 1\\
& 0&r>\frac{4}{3}
\end{aligned}
\right.,\qquad \varphi(r)=\phi\left(\frac{r}{2}\right)-\phi(r),\qquad \varphi_j(r)=\varphi(2^{-j}r),
\end{flalign*}
with supp$(\phi)=\left[0,\frac{4}{3}\right]$. Therefore we can decompose $\psi$ as follows\[
\psi=\sum_{j=-\infty}^\infty\mathbb{P}_{j}\psi,\qquad \mathbb{P}_j\psi=\mathcal{F}^{-1}\left(\varphi_j(r)\cdot \widehat{\psi}\right).
\]Therefore, for $\psi\in\mathcal{S}$, we have the following estimates:\begin{align*}
    \left\|e^{\sqrt{\alpha}t\mathcal{L}} \psi\right\|_{L^\infty}&\leq\sum_{j=-\infty}^\infty\left\|e^{\sqrt{\alpha }t\mathcal{L}}\mathbb{P}_{j}\psi \right\|_{L^\infty},\\
    &\leq\sum_{j=-\infty}^\infty\left\|\mathcal{F}^{-1}\left(e^{i\sqrt{\alpha}t\frac{\eta}{\sqrt{\eta^2+l^2}}}\varphi_{j}\right)*\mathcal{F}^{-1}\left(\varphi^1_j\cdot\widehat{\psi}\right) \right\|_{L^\infty},\\
    &\leq\sum_{j=-\infty}^\infty \underbrace{\left\|\mathcal{F}^{-1}\left(e^{i\sqrt{\alpha}t\frac{\eta}{\sqrt{\eta^2+l^2}}}\varphi_{j}\right)\right\|_{L^\infty}}_{I_1}\underbrace{\left\|\mathcal{F}^{-1}\left(\varphi^1_j\cdot\widehat{\psi}\right)\right\|_{L^1}}_{I_2},
\end{align*}
where we have used the convolution inequality. Here $\varphi^1$ has similar support property such that $\varphi=\varphi\varphi^1$.

Let us first consider $I_1$. It suffices to bound the integral\[
\mathcal{S}_1:=\int_{\mathbb{R}^2} e^{i\sqrt{\alpha}t\Phi}\varphi\left(2^{-j}\sqrt{\eta^2+l^2}\right)\ d\eta dl,\qquad \Phi=\frac{\eta}{\sqrt{\eta^2+l^2}}+\frac{\eta y+lz}{\sqrt{\alpha}t}
\]We use polar coordinates to deal with this integral. Let $\eta=r\cos\theta$, $l=r\sin\theta$, so that\begin{equation}
\mathcal{S}_1=\int_0^\infty\int_0^{2\pi} r\varphi(2^{-j}r)e^{i\sqrt{\alpha}t\Phi(r,\theta)}\ d\theta dr,\qquad \Phi=\cos\theta+\frac{\rho r}{\sqrt{\alpha}t}\cos(\theta-\theta_0),\label{oscinotation}
\end{equation}
where\begin{equation}
\rho=\sqrt{y^2+z^2},\qquad \cos\theta_0=\frac{y}{\rho},\qquad \sin\theta_0=\frac{z}{\rho}.\label{oscnotation2}
\end{equation}
Let us define \[
\int_0^\infty r\varphi_j(r)e^{i\rho r\cos(\theta-\theta_0)}\ dr=\int_{\mathbb{R}^+}r\varphi_j(r)e^{irz}\ dr=:B(z),\qquad z=\rho\cos(\theta-\theta_0).
\]We claim that \begin{equation}
||B(\cdot)||_{W^{1,1}(\mathbb{R})}\lesssim 2^{2j}+2^j.\label{eq:B10}
\end{equation}
We first prove this claim. To estimate $||B||_{L^1}$, split the integral into two parts:\[
\int_{\mathbb{R}}|B(z)|\ dz=\int_{-\delta}^\delta+\int_{\mathbb{R}\backslash[-\delta,\delta]}|B(z)|\ dz=: I_1+I_2.
\]For $I_1$, we have the trivial bound\[
I_1\leq 2\delta\int_{\mathbb{R}^+} r\varphi_j(r)\ dr\lesssim 2^{2j}\delta.
\]For $I_2$, we integrate twice to see that\[
|B(z)|=\frac{1}{z^2}\cdot\left|\int_{\mathbb{R}^+}\frac{d^2}{dr^2}(r\varphi_j(r))\cdot e^{irz}\ dr\right|\lesssim\frac{1}{z^2}.
\]We obtain \[
I_2\lesssim\int_{\delta}^\infty\frac{1}{z^2}\ dz\approx\frac{1}{\delta}.
\]Letting $\delta\approx 2^{-j}$, we then have \[
||B||_{L^1}\lesssim 2^j.
\]The bound for $\frac{dB}{dz}$ can be similarly computed, we obtain\[
\left|\frac{dB}{dz}\right|=\left|\int_{0}^\infty r^2\varphi_j(r)e^{irz}\ dr\right|.
\]We again split into $I_1$ and $I_2$ with\[
I_1\leq 2\delta\int_{\mathbb{R}^+}r^2\varphi_j(r)\ dr\lesssim 2^{3j}\delta.
\]For $I_2$,\[
\left|\frac{dB}{dz}\right|=\frac{1}{z^2}\cdot\left|\int_{\mathbb{R}^+}\frac{d^2}{dr^2}(r^2\varphi_j(r))\cdot e^{irz}\ dr\right|\lesssim \frac{2^{j}}{z^2}.
\]Thus $\left\|\frac{dB}{dz}\right\|_{L^1}\lesssim 2^{2j}$ by choosing $\delta\approx 2^{-j}$. Combining these estimates, we can conclude that \eqref{eq:B10} holds. In addition, we can similarly prove that \[
||B(\cdot)||_{L^\infty}\lesssim 2^{2j}+2^j.
\]It remains to estimate the oscillatory integral\begin{equation}
\int_{0}^{2\pi} e^{i\sqrt{\alpha}t\cos\theta}B(\rho\cos(\theta-\theta_0))\ d\theta.\label{osci10}
\end{equation}
The phase function has critical points at $\theta=0,\pi,2\pi$. Let\[
\int_{\left[0,\delta\right]\cup\left[\pi-\delta,\pi+\delta\right] \cup[2\pi-\delta,2\pi]}+\int_{\text{complement}}e^{i\sqrt{\alpha}t\cos\theta}B(\rho\cos(\theta-\theta_0))\ d\theta=: h_1+h_2.
\]$h_1$ has the trivial bound\[
|h_1|\lesssim\delta||B(\rho\cos(\theta-\theta_0))||_{L^\infty_\theta}\lesssim\delta ||B(\cdot)||_{L^\infty_z(\mathbb{R})}\lesssim(2^{2j}+2^j)\delta.
\]For $h_2$, notice that in the domain of integration, $|\sin\theta|\gtrsim\delta$, so that by integrating by parts, we have\[
|h_2|\lesssim \frac{1}{\delta\sqrt{\alpha}t}\left(||B||_{L^\infty}+||\partial_\theta A||_{L^1_\theta}\right)\lesssim\frac{2^{2j}+2^j}{\delta\sqrt{\alpha}t}
\]Here $A(\theta)=B(\rho\cos(\theta-\theta_0))$ so that $||A'||_{L^1_\theta}\lesssim||B'||_{L^1_z}$. Choosing $\delta^2=\frac{1}{\sqrt{\alpha}t}$, we therefore have\[
|\mathcal{S}_1|\lesssim\frac{2^{2j}+2^j}{\alpha^\frac{1}{4}t^\frac{1}{2}}.
\]Summing over $j$, we have\[
  \left\|e^{\sqrt{\alpha}t\mathcal{L}} \psi\right\|_{L^\infty}\leq\sum_{j=-\infty}^\infty \frac{2^{2j}+2^j}{1+\alpha^\frac{1}{4}t^\frac{1}{2}}\left\|\mathcal{F}^{-1}\left(\varphi^1_j\cdot\widehat{\psi}\right)\right\|_{L^1}\lesssim\frac{1}{\langle\alpha^\frac{1}{2}t\rangle^\frac{1}{2}}||\psi||_{W^{3,1}}.
\]\hfill \qedsymbol

With some minor adaptions, the estimate  \eqref{eq:D1} can be simultaneously proved.
\section*{\textbf{Acknowledgment}}
This work is the author's master's thesis. The author would like to thank his supervisor, Professor Michele Coti Zelati, for his invaluable help.

\section*{Statements and Declarations}

\noindent\textbf{Competing interests.}
The author affirms that he has no conflict of interest regarding this work.\vspace{5pt}

\noindent\textbf{Data and code availability.}
The Python code used to generate the illustrative numerical figures is available from the corresponding author upon reasonable request. No separate datasets were generated.\vspace{5pt}

\noindent\textbf{Funding.}
No funding was received for conducting this study.

\printbibliography
\end{document}